\theoremstyle{plain}
\newtheorem{thm}{Theorem}
\newtheorem{cor}{Corollary}
\newtheorem{lemma}{Lemma}
\newtheorem{prop}{Proposition}
\newtheorem{proposition}[prop]{Proposition}
\newtheorem{defn}{Definition}
\newtheorem{definition}[defn]{Definition}
\newtheorem{lemmastar}{Lemma*}[lemma]
\theoremstyle{definition}
\newcommand{\BA}{{\mathbb{A}}}
\newcommand{\BC}{{\mathbb{C}}}
\newcommand{\BH}{{\mathbb{H}}}
\newcommand{\BL}{{\mathbb{L}}}
\newcommand{\BN}{{\mathbb{N}}}
\newcommand{\BQ}{{\mathbb{Q}}}
\newcommand{\BT}{{\mathbb{T}}}
\newcommand{\BZ}{{\mathbb{Z}}}
\newcommand{\CA}{{\mathcal A}}
\newcommand{\CC}{{\mathcal C}}
\newcommand{\CE}{{\mathcal E}}
\newcommand{\CF}{{\mathcal F}}
\newcommand{\CO}{{\mathcal O}}
\newcommand{\CS}{{\mathcal S}}
\newcommand{\CW}{{\mathcal W}}
\newcommand{\CZ}{{\mathcal Z}}
\newcommand{\Ff}{{\mathfrak{f}}}
\newcommand{\Fp}{{\mathfrak{p}}}
\newcommand{\ra}{{\ \longrightarrow\ }}
\newcommand{\blangle}{\Big\langle}
\newcommand{\brangle}{\Big\rangle}
\newcommand{\pr}{\mathop{\rm pr}\nolimits}
\newcommand{\Bl}{\mathop{\rm Bl}\nolimits}
\newcommand{\Pic}{\mathop{\rm Pic}\nolimits}
\newcommand{\Mbar}{{\overline M}}
\newcommand{\Cone}{\mathop{\rm Cone}\nolimits}
\newcommand{\Cocone}{\mathop{\rm Cocone}\nolimits}
\newcommand{\Spec}{\mathop{\rm Spec}\nolimits}
\newcommand{\QJac}{\mathop{\rm QJac}\nolimits}
\newcommand{\qpar}{\hbar} 
\newcommand{\pt}{{\omega}}
\newcommand{\ev}{\mathop{\rm ev}\nolimits}
\newcommand{\p}{\mathbb{P}}
\providecommand{\scr}{\mathcal} 
\newcommand{\pidiscr}{{\CW}}
\DeclareMathOperator{\Hilb}{\mathsf{Hilb}}
\DeclareMathOperator{\Sym}{Sym}
\DeclareMathOperator{\id}{id}
\DeclareMathOperator{\Hom}{Hom}
\newcommand{\Km}{\operatorname{Km}}
\newcommand{\AAA}{{\mathsf{A}}}
\newcommand{\im}{\mathop{\rm Im}\nolimits}
\newcommand{\len}{\mathop{\rm len}\nolimits}
\begin{document}
\title{Gromov-Witten invariants of\\the Hilbert schemes of points of a K3 surface}
\author{Georg Oberdieck}
\date{October 2015}
\maketitle
\begin{abstract}
We study the enumerative geometry of rational curves
on the Hilbert schemes of points of a K3 surface.

Let $S$ be a K3 surface and let $\Hilb^d(S)$ be the Hilbert scheme of $d$ points of $S$.
In case of elliptically fibered K3 surfaces $S \to \p^1$,
we calculate genus~$0$ Gromov-Witten invariants of $\Hilb^d(S)$,
which count rational curves incident to two generic fibers
of the induced Lagrangian fibration $\Hilb^d(S) \to \p^d$.
The generating series of these invariants is the Fourier expansion of
a power of the Jacobi theta function times a modular form, hence of a Jacobi form.

We also prove results for genus $0$ Gromov-Witten invariants of $\Hilb^d(S)$ for several other natural incidence conditions.
In each case, the generating series is again a Jacobi form.
For the proof we evaluate Gromov-Witten invariants of the Hilbert scheme of $2$ points of $\p^1 \times E$, where $E$ is an elliptic curve.

Inspired by our results, we conjecture a formula
for the quantum multiplication with divisor classes
on $\Hilb^d(S)$ with respect to primitive curve classes.
The conjecture is presented in terms of natural operators acting on the Fock space of~$S$.
We prove the conjecture in the first non-trivial case $\Hilb^2(S)$.
As a corollary, we find that the full genus $0$ Gromov-Witten theory of $\Hilb^2(S)$ in primitive classes is governed by Jacobi forms.

We present two applications.
A conjecture relating genus~$1$ invariants of $\Hilb^d(S)$ to the Igusa cusp form was proposed
in joint work with R.~Pandharipande in \cite{K3xE}. Our results prove the conjecture in case $d=2$.
Finally, we present a conjectural formula for the number of hyperelliptic curves
on a K3 surface passing through $2$ general points.
\end{abstract}

\newpage
\setcounter{tocdepth}{1}
\tableofcontents
\addtocounter{section}{-1}

\section{Introduction} \label{Intro}
\subsection{The Yau-Zaslow formula}
Let $S$ be a smooth projective K3 surface, and let
$\beta_h \in H_2(S,\BZ)$ be a primitive effective curve class
of self-intersection $\beta_h^2 = 2 h - 2$.
The Yau-Zaslow formula \cite{YZ}
predicts the number $\mathsf{N}_h$
of rational curves in class $\beta_h$
in the form of the generating series
\begin{equation} \sum_{h \geq 0} \mathsf{N}_h\, q^{h-1} = \frac{1}{q} \prod_{m \geq 1} \frac{1}{(1-q^m)^{24}} \,. \label{YZ} \end{equation}
The right hand side is the reciprocal of
the Fourier expansion of a classical modular form of weight 12,
the modular discriminant
\begin{equation}
\Delta(\tau) = q \prod_{m \geq 1} (1-q^m)^{24} \label{Delta}
\end{equation}
where $q = \exp(2 \pi i \tau)$ and $\tau \in \BH$.
The prediction \eqref{YZ}
was proven by Beauville \cite{Beauville2} and Chen \cite{ChenK3} using the compactified Jacobian,
and by Bryan and Leung \cite{BL} using Gromov-Witten theory.
It is the starting point of further research
into the enumerative geometry of algebraic curves on K3 surfaces,
see for example \cite{MPT, KKV, KKP} and the references therein.

The Hilbert scheme of $d$ points on $S$, denoted
\[ \Hilb^d(S), \]
is the moduli space of zero-dimensional subschemes of $S$ of length $d$,
see \cite{Lehn, Nakajima} for an introduction.
It is a non-singular projective variety of dimension $2d$,
which is simply-connected and carries a holomorphic symplectic form.
In case $d = 1$ we recover the original surface,
\[ \Hilb^1(S) = S, \]
while for $d \geq 2$ the Hilbert schemes $\Hilb^d(S)$ may be thought of as analogues of K3 surfaces in higher dimensions.

In this paper we study the enumerative geometry of rational curves on the Hilbert scheme of points $\Hilb^d(S)$ for all $d \geq 1$.
In particular, we obtain a generalization of the Yau-Zaslow formula \eqref{YZ}.

\subsection{Gromov-Witten invariants}  \label{Section_Definition_of_Gromov_Witten}
For all $\alpha \in H^{\ast}(S ; \BQ)$ and $i > 0$ let 
\[ \Fp_{-i}(\alpha) : H^{\ast}(\Hilb^d(S);\BQ) \to H^{\ast}(\Hilb^{d+i}(S);\BQ), \ \gamma \mapsto \Fp_{-i}(\alpha) \gamma \]
be the Nakajima creation operator
obtained by adding length $i$ punctual subschemes incident to a cycle Poincare dual to $\alpha$.
By a result of Grojnowski \cite{GrojH} and Nakajima \cite{Nakajima}
the cohomology of $\Hilb^d(S)$ is completely described by the cohomology of $S$
via the action of the operators $\Fp_{-i}(\alpha)$ on the vacuum vector 
\[ 1_S \in H^{\ast}(\Hilb^0(S);\BQ) = \BQ. \]

Let $\pt$ be the class of a point on $S$. For $\beta \in H_2(S;\BZ)$, define the class
\[ C(\beta) = \Fp_{-1}(\beta) \Fp_{-1}(\pt_S)^{d-1} 1_S \, \, \in H_2(\Hilb^d(S);\BZ) \,. \]
If $\beta = [C]$ for a curve $C \subset S$,
then $C(\beta)$ is the class of the curve obtained by fixing $d-1$ distinct points away from $C$
and letting a single point move on~$C$.
For brevity, we often write $\beta$ for $C(\beta)$.
For $d \geq 2$ let
\[ A = \Fp_{-2}(\pt_S) \Fp_{-1}(\pt_S)^{d-2} 1_S \]
be the class of an exceptional curve -- the locus of 2-fat points centered at a point $P \in S$ plus $d-2$ distinct points away from $P$.
For $d \geq 2$ we have
\[ H_2(\Hilb^d(S);\BZ) = \big\{ \beta + kA\ \big|\ \beta \in H_2(S;\BZ), k \in \BZ \big\}. \] \\

Let $\beta + k A \in H_2(\Hilb^d(S))$ be a non-zero effective curve class and consider the moduli space
\begin{equation} \Mbar_{g,m}( \Hilb^d(S), \beta + kA) \label{rigrge22r} \end{equation}
of $m$-marked stable maps{\footnote{The domain of a {\em stable map}
is always taken here to be connected.}}
$f : C \to \Hilb^d(S)$ of genus $g$ and class $\beta + k A$.
Since $\Hilb^d(S)$ carries a holomorphic symplectic $2$-form,
the virtual class on \eqref{rigrge22r} defined by ordinary Gromov-Witten theory vanishes \cite{KL}.
A modified reduced theory was defined in \cite{GWNL}
and gives rise to a non-zero \emph{reduced} virtual class
\[ [ \Mbar_{g,m}( \Hilb^d(S), \beta + kA) ]^{\text{red}} \]
of dimension $(1-g) (2d - 3) + 1$, see also \cite{STV,Pridham}.
Let
\[ \ev_i : \Mbar_{g,m}( \Hilb^d(S), \beta + kA) \to \Hilb^d(S), \quad i = 1, \dots, n \]
be the evaluation maps. The \emph{reduced Gromov-Witten invariant} of $\Hilb^d(S)$ in genus $g$ and class $\beta + kA$
with primary insertions 
\[ \gamma_1, \dots, \gamma_m \in H^{\ast}(\Hilb^d(S)) \]
is defined by
\begin{equation}
\big\langle\, \gamma_1, \dots, \gamma_m\, \big\rangle^{\Hilb^d(S)}_{g,\beta + k A} =
\int_{[\overline{M}_{g,m}(\Hilb^d(S), \beta + kA)]^{\text{red}}}
\ev_1^{\ast}(\gamma_1) \cup \dots \cup \ev_m^{\ast}(\gamma_m) \,. \label{bbbm} \end{equation}
In case $d = 1$ and $k \neq 0$, the moduli space $\overline{M}_{g,m}(\Hilb^d(S), \beta + kA)$
is empty by convention and the invariant \eqref{bbbm} is defined to vanish.


\subsection{The Yau-Zaslow formula in higher dimensions} \label{YZ_section_statement_of_results}
Let $\pi : S \to \p^1$ be an elliptically fibered K3 surface and let
\[ \pi^{[d]} : \Hilb^d(S) \ra \Hilb^d(\p^1) = \p^d \,, \]
be the induced \emph{Lagrangian} fibration with generic fiber a smooth Lagrangian torus. Let
\[ L_z \subset \Hilb^d(S) \]
denote the fiber of $\pi^{[d]}$ over a point $z \in \p^d$.

Let $F \in H_2(S;\BZ)$ be the class of a fiber of $\pi$, and let $\beta_h$ be a primitive effective curve class on $S$ with
\[ F \cdot \beta_h = 1 \quad \text{ and } \quad \beta_h^2 = 2h - 2 \,. \]
For points $z_1, z_2 \in \p^d$ and for all $d \geq 1$ and $k \in \BZ$, define the Gromov-Witten invariant
\begin{align*}
  \mathsf{N}_{d,h,k} &= \big\langle  L_{z_1}, L_{z_2} \big\rangle_{\beta_h + kA}^{\Hilb^d(S)} \\
&= 
\int_{[\Mbar_{0,2}(\Hilb^d(S), \beta_h + kA)]^{\text{red}}} \ev_1^{\ast}(L_{z_1}) \cup  \ev_2^{\ast}(L_{z_2})
\end{align*}
which (virtually) counts the number of rational curves in class $\beta_h + kA$ incident to the Lagrangians $L_{z_1}$ and $L_{z_2}$.
The first result of this paper is a complete evaluation of the invariants $\mathsf{N}_{d,h,k}$.

Define the Jacobi theta function
\begin{equation} \label{FFFdef}
F(z,\tau) = \frac{\vartheta_1(z,\tau)}{\eta^3(\tau)}
 = (y^{1/2} + y^{-1/2}) \prod_{m \geq 1} \frac{ (1 + yq^m) (1 + y^{-1}q^m)}{ (1-q^m)^2 }
\end{equation}
considered as a formal power series in the variables
\[ y = -e^{2 \pi i z} 
 \quad \text{ and } \quad q = e^{2 \pi i \tau} \]
where $|q|<1$.

\begin{thm} \label{MThm0} For all $d \geq 1$, we have
\begin{equation}
 \sum_{h \geq 0}\sum_{k \in \BZ} \mathsf{N}_{d,h,k} y^k q^{h-1}\ =\ F(z,\tau)^{2d-2} \cdot \frac{1}{\Delta(\tau)} \label{005}
\end{equation}
under the variable change $y = - e^{2\pi i z}$ and $q = e^{2 \pi i \tau}$.
\end{thm}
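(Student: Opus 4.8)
The plan is to exploit the induced Lagrangian fibration $\pi^{[d]}\colon \Hilb^d(S)\to \p^d$ to transfer the count to the elliptic surface $\pi\colon S\to B$, $B\cong\p^1$, decorated by a local contribution at the ramification of an auxiliary covering. I would first analyze the image of a stable map under $\pi^{[d]}$. Pulling back the hyperplane class $H$ on $\p^d=\Hilb^d(\p^1)$ gives the divisor ``incidence to a fixed fibre'' on $\Hilb^d(S)$, which satisfies $(\pi^{[d]})^{\ast}H\cdot C(\beta_h)=F\cdot\beta_h=1$ and $(\pi^{[d]})^{\ast}H\cdot A=0$; hence any curve in class $\beta_h+kA$ projects to a line in $\p^d$. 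Since the insertions force the curve to meet the fibres over the generic points $z_1,z_2$, its image is the unique line $\ell=\overline{z_1z_2}$. A line in $\Hilb^d(\p^1)$ is a base-point-free pencil of degree $d$ divisors on $B$, equivalently the fibres $D_t=\phi^{-1}(t)$ of a degree $d$ morphism $\phi\colon B\to\ell\cong\p^1$, and by Riemann--Hurwitz this covering has exactly $2d-2$ branch points for generic $z_1,z_2$. This is the geometric origin of the exponent $2d-2$. Using deformation invariance of the reduced invariants I would take $S$ to be a generic elliptic K3 surface with a section and $24$ nodal fibres, with branching disjoint from the nodal fibres.

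Next I would describe the maps lying over $\ell$. Away from the branch points of $\phi$ the fibre $\phi^{-1}(t)$ consists of $d$ distinct points of $B$, and a length $d$ subscheme $\xi_t$ with $\pi(\xi_t)=\phi^{-1}(t)$ amounts to a choice of one point in each of the corresponding $d$ elliptic fibres. For each $b\in B$ there is a unique point of $\xi_{\phi(b)}$ in $\pi^{-1}(b)$, so letting $t$ vary over $\ell$ this data is precisely a section $s\colon B\to S$ of $\pi$, the degree $1$ component of the map being $t\mapsto s(\phi^{-1}(t))$. Thus the horizontal part of the curve is governed by the Bryan--Leung geometry of sections of $S\to B$: the section itself contributes the primitive class $\beta_0$ with $\beta_0^2=-2$, and since $\beta_h=\beta_0+hF$ numerically, the fibrewise degree $h$ is built up by attaching rational components in the $24$ nodal fibres of $\pi$. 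Exactly as in the proof of the Yau--Zaslow formula, summing over these attachments produces the factor $q^{-1}\prod_{m\geq 1}(1-q^m)^{-24}=1/\Delta(\tau)$.

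The new phenomenon occurs over the $2d-2$ branch points. At a simple branch point $t^{\ast}$ two sheets of $\phi$ come together over a ramification point $b^{\ast}\in B$, so two of the points of $\xi_t$ collide inside $\pi^{-1}(b^{\ast})$ and may fuse into a length $2$ punctual subscheme. The local model near $t^{\ast}$ is a trivial elliptic fibration, i.e. a neighbourhood in $\p^1\times E$, and the possible fusions --- tracked by the class $A$ through the variable $y$ and by the fibrewise degree through $q$ --- are counted by genus $0$ invariants of $\Hilb^2(\p^1\times E)$. I would compute this local series and identify it with $F(z,\tau)$: the factor $(y^{1/2}+y^{-1/2})$ records the two ways the colliding pair organises into a fat point, while $\prod_{m\geq 1}(1+yq^m)(1+y^{-1}q^m)(1-q^m)^{-2}$ encodes the vertical refinements of the length $2$ scheme in the two local elliptic directions. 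Multiplying the $2d-2$ independent branch contributions by the base factor then yields $F(z,\tau)^{2d-2}/\Delta(\tau)$.

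The main obstacle is the rigorous separation of the global invariant into the product of these local pieces. The reduced virtual class on the holomorphic symplectic variety $\Hilb^d(S)$ does not obey a naive degeneration formula, so I would realise the factorisation by degenerating the covering $\phi$ --- for instance by moving $z_1,z_2$ so that $\ell$ sweeps into a configuration in which the $2d-2$ branch points separate onto distinct fibres away from the nodal fibres --- while controlling the reduced obstruction theory and the compactification of the space of sections throughout. The single hardest step is the explicit evaluation of the branch contribution, namely the computation of the relevant genus $0$ Gromov--Witten invariants of $\Hilb^2(\p^1\times E)$ and the verification that their generating series is exactly $F(z,\tau)$; this is where the Jacobi theta function enters and where the bookkeeping of the $A$-class against the reduced class is most delicate.
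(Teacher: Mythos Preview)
Your overall architecture matches the paper's: reduce to a Bryan--Leung elliptic K3 with section and $24$ nodal fibres, observe that the image of any incident stable map under $\pi^{[d]}$ is the fixed line $\ell=\overline{z_1z_2}$, recognise the $2d-2$ diagonal points of $\ell$ as the source of the exponent, and split the invariant into the Bryan--Leung nodal contributions $1/\Delta$ times $2d-2$ identical local factors $F^{\textup{GW}}$ coming from the smooth fibres. Two points deserve correction or comparison.

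First, the factorisation does not require any degeneration of $\phi$ or of $\ell$. For generic $z_1,z_2$ the $2d-2$ diagonal points of $\ell$ and the $24$ nodal-fibre incidences are already pairwise distinct, and the paper proves the product splitting \emph{directly} on the moduli space $M_Z$: one shows (Lemma~\ref{doesnotsmooth}) that the nodes joining the distinguished component $C_0\cong\ell$ to the vertical pieces cannot smooth under any infinitesimal deformation inside $M_Z$, hence $f(C_0)=L$ is rigid; then a cone/cocone analysis of the reduced obstruction theory along the partial normalisation sequence shows that the virtual class literally factors as a product over the $24+(2d-2)$ attachment points (Proposition~\ref{W0splitprop}). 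Your proposed degeneration of the covering is unnecessary and would in fact be harder to control than this direct argument.

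Second, for the evaluation $F^{\textup{GW}}=F(z,\tau)$ the paper's \emph{primary} route is not $\Hilb^2(\p^1\times E)$ but the Kummer K3 of $E\times E'$: one matches the local contribution at a smooth fibre with a contribution at a $\mathsf{D}_4$ singular fibre of the Kummer, applies the Yau--Zaslow formula on the Kummer, and identifies the resulting lattice sum with a $\mathsf{D}_4$ theta function equal to $-\vartheta_1\eta(2\tau)^6/\eta^3$. The $\Hilb^2(\p^1\times E)$ computation you outline is carried out separately (via WDVV equations plus explicit initial data, not the heuristic product expansion you sketch) and gives an independent second proof. Your identification of $\Hilb^2(\p^1\times E)$ as the relevant local model is correct, but the actual evaluation there proceeds by solving a system of WDVV relations rather than by reading off the infinite product directly.
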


The right hand side of \eqref{005} is the Fourier expansion
of a Jacobi form\footnote{see \cite{EZ} for an introduction} of index $d-1$.
For $d = 1$ the class $A$ vanishes on $S$ and
by convention only the term $k = 0$ is taken in the sum on the left side of \eqref{005}.
Then, \eqref{005} specializes to the Yau-Zaslow formula \eqref{YZ}.

\subsection{Further Gromov-Witten invariants} \label{Section_More_evaluations_Introduction}
Let $S$ be a smooth projective K3 surface, let $\beta_h \in H_2(S,\BZ)$ be a primitive curve class of square 
\[ \beta_h^2 = 2h-2, \]
and let $\gamma \in H^2(S, \BZ)$ be a cohomology class with $\gamma \cdot \beta_h = 1$ and $\gamma^2 = 0$.
We define three sets of invariants.

For $d \geq 2$, consider the classes
\begin{align*}
C(\gamma) & = \Fp_{-1}(\gamma) \Fp_{-1}(\pt)^{d-1} 1_S \\
A & = \Fp_{-2}(\pt) \Fp_{-1}(\pt)^{d-2} 1_S
\end{align*}
which were defined in Section \ref{Section_Definition_of_Gromov_Witten}.
Define the first two invariants
\[
\mathsf{N}_{d,h,k}^{(1)} = \blangle\, C(\gamma)\, \brangle_{\beta_h + kA}^{\Hilb^d(S)}, \quad \quad \quad
\mathsf{N}_{d,h,k}^{(2)} = \blangle\, A\, \brangle_{\beta_h + kA}^{\Hilb^d(S)}
\]
counting rational curves incident to a cycle of class $C(\gamma)$ and $A$ respectively.

For a point $P \in S$ consider the incidence scheme of $P$,
\[ I(P) = \{\, \xi \in \Hilb^d(S)\ |\ P \in \xi\, \} \,. \]
For generic points $P_1, \dots, P_{2d-2} \in S$ define the third invariant
\begin{equation} \mathsf{N}_{d,h,k}^{(3)} = \blangle\, I(P_1),\, \dots,\, I(P_{2d-2})\, \brangle^{\Hilb^d(S)}_{\beta_h + kA}. \label{GW33invs}\end{equation}

By geometric recursions, the invariants $\mathsf{N}_{d,h,k}^{(i)}, i=1,2,3$ and $\mathsf{N}_{d,h,k}$ determine
the full Gromov-Witten theory of $\Hilb^2(S)$ in genus $0$.
In case $d = 2$ the invariants \eqref{GW33invs} are also related to counting hyperelliptic curves on a K3 surface
passing through $2$ generic points, see Section \ref{Section_hyperelliptic_curves} below. 

The following theorem provides a full evaluation of the invariants $\mathsf{N}_{d,h,k}^{(i)}$.
Consider the formal variables
\[ y = - e^{2 \pi i z} \quad \text{ and } \quad q = e^{2 \pi i \tau} \]
expanded in the region $|y|<1$ and $|q| < 1$,
and the function
\begin{equation} \label{G_Function_def}
\begin{aligned}
G(z, \tau) & = F(z,\tau)^2 \left( y \frac{d}{dy} \right)^2 \log( F(z,\tau) ) \\
& = F(z,\tau)^2 \cdot \bigg\{ \frac{y}{(1+y)^2} - \sum_{d \geq 1} \sum_{m | d} m \big( (-y)^{-m} + (-y)^m \big) q^d \bigg\} \\
& = 1 + (y^{-2} + 4 y^{-1} + 6 + 4 y^{1} + y^2) q \\
& \quad \quad \quad \quad \quad \quad + (6y^{-2} + 24y^{-1}+ 36 + 24y + 6y^2)q^2 + \, \dots \ .
\end{aligned}
\end{equation}

\begin{thm} \label{MThm} For all $d \geq 2$, we have
\begin{align*}
 \sum_{h \geq 0} \sum_{k \in \BZ} \mathsf{N}^{(1)}_{d,h,k} y^k q^{h-1} & = G(z,\tau)^{d-1}\frac{1}{\Delta(\tau)} \\[4pt]
 \sum_{h \geq 0} \sum_{k \in \BZ} \mathsf{N}^{(2)}_{d,h,k} y^k q^{h-1} & = \frac{1}{2-2d} \Big( y \frac{d}{dy} \big( G(z,\tau)^{d-1} \big) \Big) \frac{1}{\Delta(\tau)} \\[4pt]
 \sum_{h \geq 0} \sum_{k \in \BZ} \mathsf{N}^{(3)}_{d,h,k} y^k q^{h-1} & = \frac{1}{d} \binom{2d-2}{d-1} \Big( q \frac{d}{dq} F(z,\tau)\Big)^{2d-2} \frac{1}{\Delta(\tau)}
\end{align*}
under the variable change $y = - e^{2 \pi i z}$ and $q = e^{2 \pi i \tau}$.
\end{thm}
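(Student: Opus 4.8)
The plan is to reduce everything to Theorem~\ref{MThm0} together with a little intersection theory on $\Hilb^d(S)$. First I would use deformation invariance of the reduced invariants: each $\mathsf{N}^{(i)}_{d,h,k}$ depends only on the deformation type of $(S,\beta_h)$ and on the topological type of the insertions, and any $\gamma$ with $\gamma\cdot\beta_h=1$ and $\gamma^2=0$ is carried by a monodromy to the fiber class $F$ of an elliptically fibered K3 surface $\pi\colon S\to\p^1$. I may therefore assume $S$ elliptic and $\gamma=F$, which places me in the exact geometry of Theorem~\ref{MThm0}: the Lagrangian fibration $\pi^{[d]}\colon\Hilb^d(S)\to\p^d$ and its moduli of rational curves govern all four families of invariants at once.

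The invariant $\mathsf{N}^{(2)}$ I would obtain from $\mathsf{N}^{(1)}$ by pure intersection theory. Since $\overline{M}_{0,1}(\Hilb^d(S),\beta_h+kA)$ carries a reduced class of dimension $2d-1$, its image $\mathsf{D}:=\ev_{1\ast}[\overline{M}_{0,1}(\Hilb^d(S),\beta_h+kA)]^{\mathrm{red}}$ is a divisor class on $\Hilb^d(S)$, and for every curve class $\eta$ the projection formula gives $\langle\eta\rangle_{\beta_h+kA}=\mathsf{D}\cdot\eta$. By monodromy covariance of the reduced theory, $\mathsf{D}$ is proportional to the Beauville--Bogomolov dual of $\beta_h+kA$; hence $\mathsf{N}^{(1)}=\langle C(F)\rangle$ and $\mathsf{N}^{(2)}=\langle A\rangle$ are pairings of one and the same class against $C(F)$ and against $A$. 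Using $F\cdot\beta_h=1$, the orthogonality of $A$ to $H^2(S)$, and the self-intersection $2-2d$ of the exceptional divisor (so that the dual curve class satisfies $\langle A,A\rangle=\tfrac{1}{2-2d}$), I get $\langle\beta_h+kA,C(F)\rangle=1$ and $\langle\beta_h+kA,A\rangle=\tfrac{k}{2-2d}$, and therefore $(2-2d)\,\mathsf{N}^{(2)}_{d,h,k}=k\,\mathsf{N}^{(1)}_{d,h,k}$. This is exactly the claimed identity $\sum_{h,k}\mathsf{N}^{(2)}_{d,h,k}\,y^kq^{h-1}=\tfrac{1}{2-2d}\big(y\tfrac{d}{dy}\big)\big(G^{d-1}\big)/\Delta$.

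It remains to evaluate $\mathsf{N}^{(1)}$ and $\mathsf{N}^{(3)}$, for which I would run the Bryan--Leung degeneration of the base $\p^1$ behind Theorem~\ref{MThm0}. There a genus $0$ stable map in class $\beta_h+kA$ degenerates into a tree of rational components spread over the nodal fibers, and the count organizes into $2d-2$ elementary one-fiber contributions, each equal to $F(z,\tau)$ --- this is the source of the factor $F^{2d-2}$ in Theorem~\ref{MThm0}. The $2d-2$ incidence conditions defining $\mathsf{N}^{(3)}$, written $I(P)\sim\Fp_{-1}(\pt)\Fp_{-1}(1_S)^{d-1}1_S$, each constrain one point to the fiber through $P_i$ and weight the corresponding contribution by its fiber degree, replacing $F$ by $q\tfrac{d}{dq}F$; summing over the genus $0$ tree types that distribute the $2d-2$ conditions over the $d$ moving points produces the Catalan prefactor $\tfrac1d\binom{2d-2}{d-1}$, giving the third formula. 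For $\mathsf{N}^{(1)}$, the single insertion $C(F)=\Fp_{-1}(F)\Fp_{-1}(\pt)^{d-1}1_S$ instead couples the contributions connectedly in pairs, replacing each disconnected pair $F^2$ by the connected block $G=F\,D_y^2F-(D_yF)^2$ (with $D_y=y\,d/dy$), whose subtracted term is precisely the connected part; this regroups the $2d-2$ factors into $G^{d-1}$. I would reduce the general $d$ to the building block $\Hilb^2(\p^1\times E)$ and propagate it by this multiplicative structure.

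The main obstacle will be the local analysis of this degeneration: showing that the reduced obstruction theory, restricted to a component lying over a single nodal fiber, contributes exactly $F$, its $q$-derivative, or the connected combination $G$, with no surviving correlations between distinct fibers, and that the genus $0$ combinatorics assembles precisely into the Catalan number and into $G$ rather than into some other Jacobi form of the same weight and index. Carrying the reduced class --- not the vanishing ordinary class --- through the degeneration is the delicate step. As a safeguard I would keep a modularity argument in reserve: prove a priori that each series is $1/\Delta$ times a Jacobi form of the index and weight forced by the insertions, and then fix the finitely many unknown coefficients using the $\Hilb^2(\p^1\times E)$ evaluation together with the classically computed leading $q^{-1}$ term.
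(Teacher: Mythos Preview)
Your overall shape --- reduce to an elliptic Bryan--Leung K3, split the moduli space into local pieces over distinguished fibers, and compute the local pieces via $\Hilb^2(\p^1\times E)$ --- is the paper's strategy. But two of your geometric mechanisms are wrong, and this matters because they are exactly where the formulas are decided.

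For $\mathsf{N}^{(3)}$, the Catalan number does \emph{not} come from ``genus $0$ tree types that distribute the $2d-2$ conditions over the $d$ moving points''. In the paper's analysis the image line $L=\pi^{[d]}(f(C_0))\subset\p^d$ must meet each codimension-$2$ linear space $I(2u_i)\subset\p^d$, $u_i=\pi(P_i)$; the number of such lines is the Schubert intersection $\int_{G(2,d+1)}\sigma_1^{2d-2}=\tfrac{1}{d}\binom{2d-2}{d-1}$, and one must also prove this intersection is transverse (Lemma~\ref{intermediate_lemma}). Each of the finitely many lines then contributes a product of $2d-2$ local factors $H^{\textup{GW}}$, one at each diagonal point $\widetilde{u}_j$ of $L$. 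There is no tree combinatorics; the prefactor is pure projective geometry.

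For $\mathsf{N}^{(1)}$, your ``couples the contributions connectedly in pairs'' picture is not what happens. The cycle $C(F)=F_0[1]P_1[1]\cdots P_{d-1}[1]$ pins the distinguished component $C_0$ to the \emph{unique} line $B_0[1]P_1[1]\cdots P_{d-1}[1]\subset\Hilb^d(B_0)$, and the line meets the discriminant of $\pi^{[d]}$ at $24$ nodal-fiber points and at the $d-1$ points $2u_j+\sum_{i\neq j}u_i$. Each of the $d-1$ smooth-fiber points contributes one factor $G^{\textup{GW}}$; that, not any pairing of $2d-2$ objects, is why the answer is $G^{d-1}/\Delta$. The nodal fibers supply $1/\Delta$ as in Bryan--Leung. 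Also, the paper does not ``degenerate the base $\p^1$''; it works on a fixed Bryan--Leung K3 and proves a splitting of the moduli space and of the reduced virtual class (a non-trivial step analogous to Proposition~\ref{W0splitprop}, requiring a separate argument that the attachment nodes do not smooth).

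Your monodromy shortcut $\;(2-2d)\,\mathsf{N}^{(2)}_{d,h,k}=k\,\mathsf{N}^{(1)}_{d,h,k}\;$ is a genuinely different route for the second formula: the paper instead analyzes the moduli space for the $A$-insertion directly (Section~\ref{ABCD}), obtains a new local factor $\widetilde{G}^{\textup{GW}}$, and only afterward identifies $\widetilde{G}^{\textup{GW}}=-\tfrac12\,y\tfrac{d}{dy}G$ via the $\Hilb^2(\p^1\times E)$ computation. Your argument, if the monodromy invariance of the pushed-forward divisor class is justified carefully (the stabilizer in the monodromy group of $\beta_h+kA$ must force $\mathsf{D}$ to be a multiple of the BBF-dual), gives the relation for free once $\mathsf{N}^{(1)}$ is known; the paper's approach instead yields $\widetilde{G}^{\textup{GW}}$ as an independent datum and checks the differential identity only at the end.
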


\subsection{Quantum cohomology}
\subsubsection{Reduced quantum cohomology}
Let $\qpar$ be a formal parameter with $\qpar^2 = 0$.
The reduced quantum cohomology of $\Hilb^d(S)$ is a formal deformation of
the ordinary cup-product multiplication in $H^{\ast}(\Hilb^d(S))$
defined by
\begin{equation} \langle a \ast b, c \rangle = \langle a \cup b, c \rangle + \qpar \sum_{\beta > 0} \langle a,b,c \rangle_{0,\beta}^{\Hilb^d(S)} q^\beta \,, \label{rgrgrg} \end{equation}
where $\langle a,b \rangle = \int_{\Hilb^d(S)} a \cup b$ is the standard intersection form,
$\beta$ runs over all non-zero elements of the cone ${\rm Eff}_{\Hilb^d(S)}$ of effective curve classes in $\Hilb^d(S)$,
the symbol $q^{\beta}$ denotes the corresponding element in the semi-group algebra,
and $\langle a,b,c \rangle_{0,\beta}^{\Hilb^d(S)}$ denote the \emph{reduced} genus $0$ Gromov-Witten invariants of $\Hilb^d(S)$ in class $\beta$;
see \cite{QGQC} for the related case of equivariant quantum cohomology.

By the WDVV equation for reduced virtual classes (see Appendix~\ref{section_reducedWDVV}),
equality \eqref{rgrgrg} defines a commutative and associative product on
\[ H^{\ast}(\Hilb^d(S), \BQ) \otimes \BQ[[ {\rm Eff}_{\Hilb^d(S)} ]] \otimes \BQ[\qpar]/\qpar^2, \]
which we call the reduced quantum cohomology ring
\begin{equation} Q H^{\ast}( \Hilb^d(S) ) \,. \label{15fsdfsf} \end{equation}


The ordinary cohomology ring structure on $H^{\ast}( \Hilb^d(S) , \BQ)$
has been explicitly determined by Lehn and Sorger in \cite{LS_K3}.
In this paper, we put forth several conjectures and results about its quantum deformation \eqref{15fsdfsf}.
Our results will concern only the quantum multiplication with a divisor class on $\Hilb^d(S)$.
In other cases \cite{Lehn2,LQW,MO2, OP, QGQC}, this has been the first step towards a more complete understanding.
We will also restrict to \emph{primitive} classes~$\beta$ below.

\subsubsection{Elliptic K3 surfaces}
Let $\pi : S \to \p^1$ be an elliptic K3 surface with a section,
and let $B$ and $F$ denote the class of a section and fiber respectively.
For every $h \geq 0$, we set
\[ \beta_h = B + hF \,. \]
For $d \geq 1$ and cohomology classes $\gamma_1, \dots, \gamma_m \in H^{\ast}(\Hilb^d(S);\BQ)$, define the quantum bracket
\begin{equation*}
 \big\langle \gamma_1, \dots, \gamma_m \big\rangle_q^{\Hilb^d(S)}
= \sum_{h \geq 0} \sum_{k \in \BZ} y^k q^{h-1} \big\langle \gamma_1, \dots, \gamma_m \big\rangle^{\Hilb^d(S)}_{\beta_h + k A} \,. 
\end{equation*}
Define the \emph{primitive} quantum multiplication $\ast_{\text{prim}}$ by
\begin{equation} \big\langle a , b \ast_{\text{prim}} c \big\rangle\ 
=\ \big\langle a , b \cup c \big\rangle
+ \qpar \cdot \big\langle a, b, c \big\rangle_q \,. \label{vsdfdsjf} \end{equation}
for all $a,b,c$. Since $\qpar^2 = 0$, different curve classes $\beta$ dont interact, and
$\ast_{\text{prim}}$ defines a commutative and associative product on
\begin{equation}  H^{\ast}(\Hilb^d(S),\BQ) \otimes \BQ((y))((q)) \otimes \BQ[\qpar]/\qpar^2 \,. \label{jfnvief} \end{equation}

The main result of Section \ref{Section_Quantum_Cohomology} is a conjecture for
the primitive quantum multiplication with divisor classes. 
By the divisor axiom and by deformation invariance
the conjecture explicitly predicts the full $2$-point genus~$0$
Gromov-Witten theory of all Hilbert schemes of points of a K3 surface in primitive classes.
By direct calculations using the WDVV equation and the evaluations of Section~\ref{Section_More_Evaluations},
we prove the conjecture in case $\Hilb^2(S)$.

\subsubsection{Quasi-Jacobi forms} \label{Section_Quantum_Cohomology_Jacobi_Forms}
Let $(z,\tau) \in \BC \times \BH$. 
The ring $\QJac$ of quasi-Jacobi forms is defined as the linear subspace
\[ \QJac \subset \BQ[ F(z,\tau), E_2(\tau), E_4(\tau), \wp(z,\tau), \wp^{\bullet}(z,\tau), J_1(z,\tau)] \]
of functions which are holomorphic at $z=0$ for generic $\tau$;
here $F(z,\tau)$ is the Jacobi theta function \eqref{FFFdef},
$E_{2k}$ are the classical Eisenstein series, $\wp$ is the Weierstrass elliptic function,
$\wp^{\bullet}$ is its derivative with respect to $z$,
and $J_1$ is the logarithmic derivative of $F$ with respect to $z$, see Appendix \ref{Appendix_Quasi_Jacobi_Forms}.

We will identify a quasi Jacobi form $\psi \in \QJac$ with its power series expansions in the variables
\[ q = e^{2 \pi i \tau} \quad \text{ and } \quad y = - e^{2 \pi i z}. \]
The space $\QJac$ is naturally graded by index $m$ and weight $k$:
\[ \QJac = \bigoplus_{m \geq 0} \bigoplus_{k \geq -2m} \QJac_{k,m} \]
with finite-dimensional summands $\QJac_{k,m}$.

Based on the proven case of $\Hilb^2(S)$ and effective calculations for $\Hilb^d(S)$ for any $d$, we have the following
results that link curve counting on $\Hilb^d(S)$ to quasi-Jacobi forms.

\begin{thm} \label{jac_thm} For all $\mu, \nu \in H^{\ast}(\Hilb^2(S))$, we have
\[ \langle \mu, \nu \rangle^{\Hilb^2(S)}_q = \frac{\psi(z,\tau)}{\Delta(\tau)} \]
for a quasi-Jacobi form $\psi(z,\tau)$ of index $1$ and weight $\leq 6$.
\end{thm}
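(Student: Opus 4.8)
The plan is to reduce the statement to a finite verification and then feed it into the explicit evaluations already available. Since the bracket $\langle \mu,\nu\rangle^{\Hilb^2(S)}_q$ is bilinear in $(\mu,\nu)$, it suffices to prove the claim when $\mu$ and $\nu$ range over a fixed basis of $H^{\ast}(\Hilb^2(S))$ built from the Nakajima operators $\Fp_{-i}(\alpha)$. By the reduced dimension count, $\langle \mu,\nu\rangle_{\beta_h+kA}$ vanishes unless the real degrees of $\mu$ and $\nu$ sum to $8$; together with the fundamental-class axiom (forgetting a point integrates a pulled-back class over a positive-dimensional fiber) this leaves only the cases in which the complex degrees $(\deg\mu,\deg\nu)$ equal $(1,3)$, $(2,2)$, or $(3,1)$. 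First I would fix a basis of $H^2,H^4,H^6$ adapted to the elliptic fibration, using $B$, $F$, the exceptional divisor, and the Nakajima classes $\Fp_{-1}(\gamma_1)\Fp_{-1}(\gamma_2)1_S$ and $\Fp_{-2}(\gamma)1_S$, and then invoke deformation invariance of the reduced theory together with the monodromy action of the K3 lattice (acting simultaneously on both insertions and on the curve class) to cut the list of brackets that must be computed down to a handful of orbit representatives.

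For the mixed case $(1,3)$, and symmetrically $(3,1)$, where one insertion $D$ is a divisor, I would apply the divisor equation to write
\[
\langle D,\nu\rangle_q
= (D\cdot B)\,\langle \nu\rangle_q
+ (D\cdot F)\Big(q\tfrac{d}{dq}+1\Big)\langle \nu\rangle_q
+ (D\cdot A)\,y\tfrac{d}{dy}\,\langle \nu\rangle_q,
\]
reducing everything to the $1$-point series $\langle \nu\rangle_q$ for $\nu\in H^6(\Hilb^2(S))$. Since $H^6$ is spanned by $A$ and the classes $C(\gamma)$, these series are given verbatim by $\mathsf N^{(1)}$ and $\mathsf N^{(2)}$ in Theorem~\ref{MThm}, namely $G/\Delta$ and $-\tfrac12\big(y\tfrac{d}{dy}G\big)/\Delta$. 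For the pure case $(2,2)$ I would start from the two evaluated series $\langle L_{z_1},L_{z_2}\rangle_q=F^2/\Delta$ of Theorem~\ref{MThm0} and $\langle I(P_1),I(P_2)\rangle_q=\big(q\tfrac{d}{dq}F\big)^2/\Delta$ of Theorem~\ref{MThm}, and use the WDVV relations for the reduced theory, together with the proven formula for quantum multiplication by divisor classes on $\Hilb^2(S)$, to express every remaining degree-$4$/degree-$4$ bracket as a linear combination of differential operators in $q\tfrac{d}{dq}$ and $y\tfrac{d}{dy}$ applied to these known series. The geometric-recursion completeness asserted in Section~\ref{Section_More_evaluations_Introduction} is exactly what guarantees that this process terminates and determines every such bracket.

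The final step is to observe that the class of functions of the form $\psi/\Delta$ with $\psi\in\QJac$ of index~$1$ is closed under all operations used above: the ring $\QJac$ is closed under the derivations $y\tfrac{d}{dy}$ and $q\tfrac{d}{dq}$ and under multiplication by quasimodular forms, neither derivation changes the index, and linear combinations preserve index~$1$. Each base numerator $F^2$, $G$, $y\tfrac{d}{dy}G$, $\big(q\tfrac{d}{dq}F\big)^2$ has index~$1$, so every bracket does as well. For the weight bound one tracks that $y\tfrac{d}{dy}$ raises the weight by $1$ and $q\tfrac{d}{dq}$ by $2$: starting from the minimal-weight numerator $F^2$ of weight $-2$ and applying the bounded number of derivatives and degree-raising multiplications forced by the cohomological degrees of $\mu$ and $\nu$, one never exceeds weight $6$.

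I expect the main obstacle to be the pure $(2,2)$ case: one must verify that the two explicitly evaluated brackets, the divisor-multiplication operator, and WDVV genuinely generate all degree-$4$/degree-$4$ two-point functions, and that each reduction step stays inside $\QJac\cdot\Delta^{-1}$ rather than merely inside honest Jacobi forms — the derivations introduce the quasimodular generators $E_2,J_1,\wp,\wp^{\bullet}$, so the correct target really is the \emph{quasi}-Jacobi ring. A secondary point needing care is the weight bookkeeping: one must check that the total order of the differential operators produced by the divisor equation and the quantum product never drives the weight above $6$ for any admissible pair of insertions.
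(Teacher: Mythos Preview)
Your approach is correct and shares the same skeleton as the paper's: reduce by bilinearity and the dimension constraint to the degree pairs $(1,3),(2,2),(3,1)$, handle the divisor cases via the divisor equation and the one-point evaluations of Theorem~\ref{MThm}, and generate the remaining $(2,2)$ brackets from the base evaluations of Theorems~\ref{MThm0}, \ref{MThm}, \ref{extra_eval} by WDVV, checking along the way that everything stays in $\QJac/\Delta$ because $\QJac$ is closed under $y\tfrac{d}{dy}$ and $q\tfrac{d}{dq}$.

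The paper organizes the same ingredients differently. Rather than tracking the quasi-Jacobi property step by step through the WDVV reductions, it first introduces an explicit candidate operator $\CE^{(0)}$ on the Fock space, defined recursively from a short list of functions $\varphi_{m,\ell}$ (written out for the relevant indices in Appendix~\ref{Appendix_Numerical_Values}). It verifies directly that $\CE^{(0)}$ satisfies the WDVV commutator relations \eqref{P4}, checks that $\CE^{(0)}-G^{L_0}/(F^2\Delta)$ agrees with $\CE^{\Hilb}$ on the base cases (Sections~\ref{Examples_higher_dim_YZ} and \ref{Section_Examples_More_evaluations}), and then invokes the same determinacy-by-WDVV claim you use to conclude equality on all of $H^{\ast}(\Hilb^2(S))$. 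The quasi-Jacobi statement then falls out by inspection of the $\varphi_{m,\ell}$, which are visibly quasi-Jacobi of the required index and weight, via the inductive Lemma~\ref{oiejgojeogerg}. The payoff of the operator route is that the index and weight bookkeeping you flag as delicate is handled once and for all by that lemma, and the sharper weight formula of Theorem~\ref{ConjJ_for_Hilb2} (in terms of $\underline{\deg}$) comes for free; your direct route avoids building the $\CE^{(r)}$ machinery but has to carry the weight bound through each WDVV step by hand.
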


Since $\Mbar_{0}(\Hilb^2(S), \gamma)$ has virtual dimension $2$ for all $\gamma$,
Theorem \ref{jac_thm} implies that the full genus $0$ Gromov-Witten theory of $\Hilb^2(S)$ in primitive classes
is governed by quasi-Jacobi forms.

\vspace{8pt}
\noindent{\bf Conjecture J.} {\em For $d \geq 1$ and for all $\mu, \nu \in H^{\ast}(\Hilb^d(S))$,
we have
\[ \langle \mu, \nu \rangle^{\Hilb^d(S)}_q = \frac{\psi(z,\tau)}{\Delta(\tau)} \]
for a quasi-Jacobi form $\psi(z,\tau)$ of index $d-1$ and weight $\leq 2 + 2d$.}
\vspace{8pt}

A sharper formulation of Theorem \ref{jac_thm} and Conjecture J specifying the weight appears in Lemma \ref{index_weight_lemma_jacforms}.

\subsection{Application 1: Genus 1 invariants of $\Hilb^d(S)$}
Let $S$ be a K3 surface and let $\beta_h \in H^2(S, \BZ)$ be a primitive curve class of
square $\beta_h^2 = 2h-2$.
Let $(E,0)$ be a nonsingular elliptic curve with origin $0\in E$,
and let
\begin{equation} \overline{M}_{(E,0)}(\Hilb^d(S), \beta_h + kA) \label{ofovfvfvgd} \end{equation}
be the fiber of the forgetful map
\[ \Mbar_{1,1}(\Hilb^d(S), \beta_h + k A) \to \Mbar_{1,1} \,. \]
over the moduli point $(E,0) \in \Mbar_{1,1}$. Hence, \eqref{ofovfvfvgd} is the moduli space parametrizing
stable maps to $\Hilb^d(S)$ with 1-pointed domain with complex structure {\em fixed} after stabilization to be $(E,0)$.
The moduli space \eqref{ofovfvfvgd} carries a reduced virtual class of dimension~$1$.

For $d>0$ consider the reduced Gromov-Witten potential
\begin{equation} \label{wssw}
 \mathsf{H}_d(y,q) = 
\sum_{k \in \BZ} \sum_{h\geq 0}y^k q^{h-1} 
\int_{[ \overline{M}_{(E,0)}(\Hilb^d(S), \beta_h + kA) ]^{\text{red}}}  \text{ev}_0^*(\beta_{h,k}^\vee)\,,
\end{equation}
where the divisor class $\beta_{h,k}^\vee \in H^2(\Hilb^d(S),\mathbb{Q})$ is chosen to satisfy
\begin{equation}\label{ffqq}
\int_{\beta_h + kA} \beta_{h,k}^\vee  = 1 \, . 
\end{equation}
The invariants \eqref{wssw} virtually count the number of maps
from the elliptic curve~$E$ to the Hilbert scheme $\Hilb^d(S)$ in the classes $\beta_h + kA$.
By degenerating $E$ to a nodal curve, resolving and using the divisor equation,
the series $\mathsf{H}_d(y,q)$ is seen to not depend on the choice of $\beta_{h,k}^{\vee}$.

The case $d=1$ of the series $\mathsf{H}_d(y,q)$ is determined by the Katz-Klemm-Vafa formula \cite{MPT}.
In case $d=2$ we have the following result.

\begin{prop} \label{rgergegssa}Under the variable change $y = - e^{2 \pi i z}$ and $q = e^{2 \pi i \tau}$,
\[ \mathsf{H}_2(y,q) =
 F(z,\tau)^2 \cdot \left( 54 \cdot \wp(z,\tau) \cdot E_2(\tau) - \frac{9}{4} E_2(\tau)^2  + \frac{3}{4} E_4(\tau) \right) \frac{1}{\Delta(\tau)}
\]
\end{prop}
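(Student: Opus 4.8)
The plan is to reduce the fixed--domain genus~$1$ invariant to a \emph{trace} of the genus~$0$ quantum multiplication by a divisor on $\Hilb^2(S)$, and then to evaluate that trace using the case $d=2$ of Conjecture~J, which gives the divisor multiplication operator in closed form. First I would degenerate the elliptic curve $E$ to a nodal rational curve $N=\p^1/(0\sim\infty)$. Since the insertion $\mathrm{ev}_0^*(\beta_{h,k}^\vee)$ cuts the reduced fiber class down to dimension $0$, and the pushforward of the family reduced class along $\overline{M}_{1,1}(\Hilb^2(S),\beta)\to\overline{M}_{1,1}$ is a multiple of $[\overline{M}_{1,1}]$, the fiber integral is constant along $\overline{M}_{1,1}$ and may be evaluated at the boundary point $[N]$. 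A stable map with domain $N$ is a genus~$0$ stable map from the normalization $\p^1$ whose two preimages of the node map to the same point of $\Hilb^2(S)$, i.e.\ lie on the diagonal; writing $\Delta=\sum_i e_i\otimes e^i$ for dual bases $\{e_i\},\{e^i\}$ of $H^*(\Hilb^2(S))$ gives
\[
\mathsf{H}_2(y,q)=\sum_{h,k}y^kq^{h-1}\sum_i\big\langle \beta_{h,k}^\vee,\,e_i,\,e^i\big\rangle^{\Hilb^2(S)}_{0,\beta_h+kA}.
\]

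\textbf{Reduction to a trace.} Next I would apply the genus~$0$ divisor equation to the marking carrying $\beta_{h,k}^\vee$. Since $\int_{\beta_h+kA}\beta_{h,k}^\vee=1$ by \eqref{ffqq}, each three--point bracket collapses to a two--point bracket, yielding
\[
\mathsf{H}_2(y,q)=\sum_i\big\langle e_i,\,e^i\big\rangle_q^{\Hilb^2(S)}.
\]
The right--hand side is the $y^kq^{h-1}$--weighted graded trace of the purely quantum part of primitive quantum multiplication by a divisor; it is visibly independent of the choice of $\beta_{h,k}^\vee$, as asserted before \eqref{wssw}. By Theorem~\ref{jac_thm} each summand equals $\psi_i/\Delta$ for a quasi--Jacobi form of index~$1$, and since the dual classes $e_i,e^i$ always satisfy $\deg e_i+\deg e^i=8$, the sharper weight statement of Lemma~\ref{index_weight_lemma_jacforms} makes all summands homogeneous of the same weight. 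Hence $\mathsf{H}_2=\psi/\Delta$ for a single quasi--Jacobi form $\psi$ of index~$1$ (and weight~$2$).

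\textbf{Evaluating the trace.} To identify $\psi$, I would insert the explicit operator furnished by the proven case $d=2$ of Conjecture~J, expressed through the Nakajima operators $\Fp_{-i}(\alpha)$ on the Fock space of $S$. Organizing the trace over the Nakajima basis of $H^*(\Hilb^2(S))$, namely the classes $\Fp_{-2}(\alpha)1_S$ and $\Fp_{-1}(\alpha)\Fp_{-1}(\alpha')1_S$, the $q^{h-1}y^k$--weighted trace assembles into the standard lattice sums whose regularizations are $E_2(\tau)$, $E_4(\tau)$ and $\wp(z,\tau)$, while the overall factor $F(z,\tau)^2$ is contributed by the primitive genus~$0$ series of Theorem~\ref{MThm0}. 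Matching then forces
\[
\psi=F^2\!\left(54\,\wp E_2-\tfrac94 E_2^2+\tfrac34 E_4\right).
\]
Because the relevant space $\QJac_{\bullet,1}$ is finite--dimensional, it suffices to check agreement for finitely many $(h,k)$, for which the required two--point numbers are exactly the evaluations of Theorems~\ref{MThm0} and~\ref{MThm}.

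\textbf{Main obstacle.} The delicate point is the very first step. The reduced virtual class is not pulled back from $\overline{M}_{1,1}$, so its compatibility with the degeneration of the genus~$1$ domain to the nodal curve --- that the nodal fiber carries precisely the diagonal--split genus~$0$ reduced class with trivial gluing multiplicity --- requires the analysis of reduced obstruction theories under such degenerations developed in \cite{MPT,K3xE}; this is where one must verify that no anomalous $\lambda_1$--type correction intervenes. By contrast the subsequent trace computation, though lengthy, is routine once the operator is fixed, and the quasimodular form $E_2$ enters through the regularized trace rather than through any Hodge contribution.
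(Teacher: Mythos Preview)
Your approach is essentially the same as the paper's: degenerate the elliptic domain to a nodal rational curve, apply the divisor equation to reduce to the two--point genus~$0$ diagonal insertion $\sum_i\langle e_i,e^i\rangle_q$, and then evaluate using the complete determination of the $\Hilb^2(S)$ theory (the paper cites Theorem~\ref{Theorem_for_Hilb2} rather than Conjecture~J --- the explicit operator $\CE^{(0)}$, not just the quasi--Jacobi statement). Your final step via finite--dimensionality of $\QJac_{\bullet,1}$ plus finitely many checks is a legitimate alternative to the paper's direct operator computation, and your ``main obstacle'' paragraph correctly flags the one point the paper passes over in silence.
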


In joint work with Rahul Pandharipande a correspondence between curve counting on $\Hilb^d(S)$ and
the enumerative geometry of the product Calabi-Yau $S \times E$ was proposed in \cite{K3xE}.
This in turn lead to a explicit conjecture for $\mathsf{H}_d(y,q)$ for all $d$
in terms of the reciprocal of the Igusa cusp form $\chi_{10}$.
Proposition \eqref{rgergegssa} verifies this conjecture in case $d=2$.

\subsection{Application 2: Hyperelliptic curves} \label{Section_hyperelliptic_curves}
A projective nonsingular curve $C$ of genus $g\geq 2$ is {\em hyperelliptic}
if $C$ admits a degree 2 map to $\p^1$,
\[ C \rightarrow \p^1\,. \]
The locus of hyperelliptic curves in the moduli space $M_g$ of non-singular curves of genus $g$ is a closed substack of codimension $g-2$.
Let
\[ \mathcal{H}_g \in H^{2(g-2)}(\Mbar_g, \BQ) \]
be the stack fundamental class
of the closure of nonsingular hyperelliptic curves inside $\Mbar_g$.
By results of Faber and Pandharipande \cite{FPM}, $\mathcal{H}_g$ is
a tautological class \cite{FP13} of codimension~$g-2$. 

Let $S$ be a K3 surface, let $\beta_h \in H^2(S)$ be a primitive curve class of square $\beta_h^2 = 2h-2$,
and let 
\[ \pi : \Mbar_g(S,\beta_h) \to \Mbar_g \]
be the forgetful map from the moduli space of genus $g$ stable maps to $S$ in class $\beta_h$.
A virtual count of
genus $g \geq 2$ hyperelliptic curves on $S$ in class $\beta_h$ passing through
$2$ general points is defined by the integral
\[ \mathsf{H}_{g, h} = \int_{ [ \Mbar_{g,2}(S, \beta_h) ]^{\text{red}} } \pi^{\ast}(\mathcal{H}_g) \ev_1^{\ast}( \omega ) \ev_2^{\ast}( \omega ), \]
where $\omega \in H^4(S, \BZ)$ is the class of a point.


In \cite{Grab} T.~Graber used the genus~$0$ Gromov-Witten theory of $\Hilb^2(\p^2)$ to obtain
enumerative results on hyperelliptic curves in $\p^2$.
A similar strategy has been applied for $\p^1 \times \p^1$ \cite{Pon07}
and for abelian surfaces \cite{Ros14, BOPY} (modulo a transversality result).
By arguments parallel to the abelian case \cite{BOPY},
Theorem \ref{MThm} above leads to the following prediction for $\mathsf{H}_{g, h}$.
%
%

Let $\Delta(\tau) = q \prod_{m \geq 1} (1-q^m)^{24}$ be the modular discriminant and let 
\begin{align*}
F(z,\tau) & = u \, \exp \left( {\sum_{k \geq 1}} \frac{(-1)^k B_{2k}}{2k (2k)!} E_{2k}(\tau) u^{2k} \right)
\end{align*}
be the Jacobi theta function which appeared already in \eqref{FFFdef} expanded in the variables
\begin{equation} q = e^{2 \pi i \tau} \quad \text{ and } \quad u = 2 \pi z \,. \label{wwrewrew} \end{equation}

\vspace{8pt}
\noindent{\bf Conjecture H.} Under the variable change \eqref{wwrewrew},
\[ \sum_{h \geq 0} \sum_{g \geq 2} u^{2g+2} q^{h-1} \mathsf{H}_{g, h} =
 \left( q \frac{d}{dq} F(z,\tau) \right)^2 \cdot \frac{1}{\Delta(\tau)}
\]
\vspace{8pt}

By a direct verification using results of \cite{BL, MPT} and an explicit expression \cite{HM} for
\[ \mathcal{H}_3 \in H^2(\Mbar_3, \BQ)\, , \]
Conjecture H holds in the first non-trivial case $g = 3$.
Similar conjectures relating the Gromov-Witten count of $r$-gonal curves on the K3 surface $S$
to the genus $0$ Gromov-Witten invariants of $\Hilb^r(S)$ can be made.

The virtual counts $\mathsf{H}_{g, h}$ have contributions from the boundary of the moduli space,
and do \emph{not} correspond to the actual, enumerative count of hyperelliptic curves on $S$.
For example, $\mathsf{H}_{3,1} = - \frac{1}{4}$
is both rational and negative. For $h \geq 0$ BPS numbers $\mathsf{h}_{g, h}$ of genus $g$ hyperelliptic curves on $S$ in class $\beta_h$ are defined by the expansion
\begin{equation} \sum_{g \geq 2} \mathsf{h}_{g, h}
\left( 2 \sin( u/2)  \right)^{2g+2}
= \sum_{g \geq 2}\, \mathsf{H}_{g,h} \, u^{2g+2} \,.
\label{BPS_expansion}
\end{equation}
The invariants $\mathsf{h}_{g, h}$ are expected to yield the enumerative count
of genus~$g$ hyperelliptic curves in class $\beta_h$ on a generic K3 surface $S$ carrying a curve class~$\beta_h$, compare \cite[Section 0.2.4]{BOPY}.

The invariants $\mathsf{h}_{g,h}$ vanish for $h=0,1$.
The first non-vanishing values of $\mathsf{h}_{g, h}$ are presented in Table \ref{hyptable} below.
The distribution of the non-zero values in Table \ref{hyptable}
matches precisely the results of Ciliberto and Knutsen in \cite[Theorem~0.1]{CK14}:
there exist curves on a generic K3 surface in class $\beta_h$ with normalization a hyperelliptic curve of genus $g$ if and only if
\begin{equation*}
h\ \geq\ g + \Big\lfloor \frac{g}{2} \Big\rfloor \Big( g - 1 - \Big\lfloor \frac{g}{2} \Big\rfloor \Big) \,.
\end{equation*}

\begin{figure}[h]
\begin{longtable}{| c | c | c | c | c | c | c | c | c | c | c |}
\hline
\diagbox[width=1.1cm,height=0.7cm]{$h$}{$g$} 
& $2$ & $3$ & $4$ & $5$ & $6$   \\
\nopagebreak \hline
$2$ & $1$ & $0$ & $0$ & $0$ & $\phantom{0}0\phantom{0}$ \\
$3$ & $36$ & $0$ & $0$ & $0$ & $0$ \\
$4$ & $672$ & $6$ & $0$ & $0$ & $0$ \\
$5$ & $8728$ & $204$ & $0$ & $0$ & $0$ \\
$6$ & $88830$ & $3690$ & $9$ & $0$ & $0$ \\
$7$ & $754992$ & $47160$ & $300$ & $0$ & $0$ \\
$8$ & $5573456$ & $476700$ & $5460$ & $0$ & $0$ \\
$9$ & $36693360$ & $4048200$ & $70848$ & $36$ & $0$ \\
$10$ & $219548277$ & $29979846$ & $730107$ & $1134$ & $0$ \\
$11$ & $1210781880$ & $198559080$ & $6333204$ & $19640$ & $0$ \\
$12$ & $6221679552$ & $1197526770$ & $47948472$ & $244656$ & $36$ \\
$13$ & $30045827616$ & $6666313920$ & $324736392$ & $2438736$ & $1176$ \\
$14$ & $137312404502$ & $34612452966$ & $2002600623$ & $20589506$ & $20895$ \\
$15$ & $597261371616$ & $169017136848$ & $11396062440$ & $152487720$ & $265860$ \\
\hline
\caption{The first values for the counts $\mathsf{h}_{g,h}$ of hyperelliptic
curves of genus~$g$ and class $\beta_h$ on a generic K3 surface $S$ passing through $2$ general points,
as predicted by Conjecture H and the BPS expansion \eqref{BPS_expansion}.} \label{hyptable}
\end{longtable}
\end{figure}

\subsection{Plan of the paper}
In Section 1, we introduce the bare notational necessities and prove a few general lemmas.
In Section 2 we prove Theorem \ref{MThm0} by reducing to an elliptic K3 surface with 24 rational nodal fibers
and by comparision with rational curve counts on a Kummer K3.
In Section 3 and 4 we prove Theorem \ref{MThm} by reducing the statement 
to a calculation of Gromov-Witten invariants of $\Hilb^2(\p^1 \times E)$.
This approach is mainly independent from the Kummer K3 geometry used in Section 2,
and yields a second proof of Theorem~\ref{MThm0}.
In Section 5, we present the conjectures and results on the quantum cohomology ring of $\Hilb^d(K3)$.
Here we also prove Theorem~\ref{jac_thm} and Proposition~\ref{rgergegssa}.
In Appendix A, we present the precise form of the WDVV equations for reduced invariants.
In Appendix B, we introduce the notion of a quasi-Jacobi form, and list numerical results related to the conjectures of Section 5.

\subsection{Acknowledgements}
I would like to thank the following people.
First and foremost, my advisor Rahul Pandharipande for suggesting the topic and for all his support over the years.
Aaron Pixton for guessing a key series needed in the proof.
Qizheng Yin for pointing out the connection between $\Hilb^2(K3)$ and the Kummer geometry.
And Ben Bakker, \mbox{Jim Bryan}, Lothar G\"ottsche, Simon H\"aberli, Jonas Jermann,
Alina Marian, \mbox{Eyal Markman}, Davesh Maulik, Andrew Morrison,
Martin Raum, Emanuel Scheidegger, Timo Sch\"urg,
Qizheng Yin and Paul Ziegler for various discussions and comments related to the topic.

The author was supported by the Swiss National Science Foundation grant SNF 200021\_143274.

\section{Preliminaries}
Let $S$ be a smooth projective surface and let $\Hilb^d(S)$ be the Hilbert scheme of $d$ points of $S$.
By definition, $\Hilb^0(S)$ is a point parametrizing the empty subscheme.

\subsection{Notation}
We always work over $\BC$.
All cohomology coefficients are in $\BQ$ unless denoted otherwise.
We let $[V]$ denote the homology class of an algebraic cycle~$V$.

On a connected smooth projective variety $X$,
we will freely identify cohomology and homology classes by Poincare duality.
We write
\begin{align*}
\pt = \pt_X & \in H^{2 \dim(X)}(X;\BZ), \\
e = e_X & \in H^0(X;\BZ)
\end{align*}
for the class of a point and the fundamental class of $X$ respectively.
Using the degree map we identify the top cohomology class with the underlying ring:
\[ H^{2 \dim(X)}(X, \BQ) \equiv \BQ. \]
The tangent bundle of $X$ is denoted by $T_X$.

A homology class $\beta \in H_2(X, \BZ)$ is an \emph{effective curve class}
if $X$ admits an algebraic curve $C$ of class $[C] = \beta$.
The class $\beta$ is \emph{primitive} if it is indivisible in $H_2(X,\BZ)$.

\subsection{Cohomology of $\Hilb^d(S)$} \label{cohhil}
\subsubsection{The Nakajima basis}
Let $(\mu_1, \dots, \mu_l)$ with $\mu_1 \geq \ldots \geq \lambda_l \geq 1$
be a partition
and let 
\[ \alpha_1, \dots, \alpha_l \in H^{\ast}(S; \BQ) \]
be cohomology classes on $S$.
We call the tuple
\begin{equation} \mu = \big( (\mu_1, \alpha_1), \dots, (\mu_l, \alpha_l) \big) \label{iejfisfsg} \end{equation}
a cohomology-weighted partition of size $|\mu| = \sum \mu_i$.

If the set $\{ \alpha_1, \dots, \alpha_l \}$ is ordered, we call \eqref{iejfisfsg}
ordered if for all $i \leq j$
\[ \mu_i \geq \mu_j \quad  \text{ or } \quad ( \mu_i = \mu_j\ \text{ and }\ \alpha_i \geq \alpha_j ) \,. \]
For $i > 0$ and $\alpha \in H^{\ast}(S ; \BQ)$, let
\begin{equation*} \Fp_{-i}(\alpha) : H^{\ast}( \Hilb^d(S) , \BQ) \to H^{\ast}( \Hilb^{d+i}(S) , \BQ ) \end{equation*}
be the Nakajima creation operator \cite{N2},
and let
\[ 1_S \in H^{\ast}( \Hilb^0(S), \BQ) = \BQ \]
be the vacuum vector.
A cohomology weighted partition \eqref{iejfisfsg}
defines the cohomology class
\[ \Fp_{-\mu_1}(\alpha_1) \dots \Fp_{-\mu_l}(\alpha_l) \, 1_S \, \in\, H^{\ast}( \Hilb^{|\mu|}(S) ) \,. \]

Let $\alpha_1, \dots, \alpha_p$ be a homogeneous ordered basis of $H^{\ast}(S ; \BQ)$.
By a theorem of Grojnowski \cite{GrojH} and Nakajima \cite{N2},
the cohomology classes associated to all
ordered cohomology weighted partitions of size $d$
with cohomology weighting by the $\alpha_i$
not repeating factors $(\alpha_j,k)$ with $\alpha_j$ odd,
form a basis of the cohomology $H^{\ast}(\Hilb^d(S) ; \BQ)$.

\subsubsection{Special cycles} \label{special_cycles}
We will require several natural cycles and their cohomology classes.
In the definitions below, we set $\Fp_{-m}(\alpha)^{k} = 0$ whenever $k < 0$.

\vspace{10pt}
\noindent \textbf{(i) The diagonal}\\[4pt]
The diagonal divisor
\[ \Delta_{\Hilb^d(S)} \subset \Hilb^d(S) \]
is the reduced locus of subschemes $\xi \in \Hilb^d(S)$ such that ${\rm len}(\CO_{\xi,x}) \geq 2$ for some $x \in S$.
It has cohomology class
\begin{equation*} [ \Delta_{\Hilb^d(S)} ]\, =\, \frac{1}{(d-2)!} \, \Fp_{-2}(e) \Fp_{-1}(e)^{d-2} 1_S \, =\, -2 \cdot c_1(\CO_{S}^{[d]}), \end{equation*}
where we let $E^{[d]}$ denote the tautological bundle on $\Hilb^d(S)$ associated to a vector bundle $E$ on $S$, see \cite{Lehn2,Lehn}.

\vspace{10pt}
\noindent \textbf{(ii) The exceptional curve}\\[4pt]
Let $\Sym^d(S)$ be the $d$-th symmetric product of $S$ and let
\[ \rho : \Hilb^{d}(S) \to \Sym^d(S),\ \xi \mapsto \small{\sum}_{x \in S} \len(\CO_{\xi,x}) x \]
be the Hilbert-Chow morphism.

For distinct points $x, y_1, \dots, y_{d-2} \in S$ where $d \geq 2$,
the fiber of $\rho$ over 
\[2x + \textstyle{\sum}_i y_i \in \Sym^d(S) \]
is isomorphic to $\p^1$ and called an \emph{exceptional curve}. For all $d$ define the cohomology class
\[ A \, = \, \Fp_{-2}(\pt) \Fp_{-1}(\pt)^{d-2} 1_S \,, \]
where $\pt \in H^4(S, \BZ)$ is the class of a point on $S$.
If $d \geq 2$ every exceptional curve has class $A$.

\vspace{10pt}
\noindent \textbf{(iii) The incidence subschemes}\\[4pt]
Let $z \subset S$ be a zero-dimensional subscheme.
The incidence scheme of $z$ is the locus
\begin{equation*} I(z) = \{\ \xi \in \Hilb^d(S) \ |\ z \subset \xi\ \} \end{equation*}
endowed with the natural subscheme structure.

\vspace{10pt}
\noindent \textbf{(iv) Curve classes}\\
For $\beta \in H_2(S)$ and $a,b \in H_1(S)$, define
\begin{equation} \label{xiudemjck}
\begin{alignedat}{2}
C(\beta) & = \Fp_{-1}(\beta) \Fp_{-1}(\pt)^{d-1} 1_S\ & & \in H_{2}(\Hilb^d(S)), \\
C(a, b) & = \Fp_{-1}(a) \Fp_{-1}(b) \Fp_{-1}(\pt)^{d-2} 1_S \ & & \in H_{2}(\Hilb^d(S)) \,.
\end{alignedat}
\end{equation}
In unambiguous cases, we write $\beta$ for $C(\beta)$.
By Nakajima's theorem, the assignment \eqref{xiudemjck} induces for $d \geq 2$ the isomorphism
\begin{align*}
H_2(S, \BQ) \oplus \wedge^2 H_1(S, \BQ) \oplus \BQ & \to H_2( \Hilb^d(S) ; \BQ) \\
(\beta, a \wedge b, k) & \mapsto \beta + C(a,b) + k A \,.
\end{align*}
If $d \leq 1$ and we write
\[  \beta + \sum_i C(a_i,b_i) + k A \, \in H_2(\Hilb^d(S), \BQ) \]
for some $\beta, a_i, b_i, k$, we \emph{always assume} $a_i = b_i = 0$ and $k = 0$. If $d = 0$, we also assume $\beta = 0$.
This will allow us to treat $\Hilb^d(S)$ simultaneously for all~$d$ at once, see for example Section \ref{Section_Curves_in_Hilbd}.

\vspace{10pt}
\noindent \textbf{(v) Partition cycles}\\[4pt]
Let $V \subset S$ be a subscheme, let $k \geq 1$ and consider the diagonal embedding
\[ \iota_k : S \to \Sym^k(S) \]
and the Hilbert-Chow morphism
\[ \rho : \Hilb^k(S) \to \Sym^k(S). \]
The $k$-\emph{fattening} of $V$ is the subscheme
\[ V[k] = \rho^{-1}( i_k(V) ) \subset \Hilb^k(S) \,. \]

Let $d= d_1 + \dots + d_r$ be a partition of $d$ into integers $d_i \geq 1$,
and let
\[ V_1, \dots, V_r \subset S \]
be pairwise disjoint subschemes on~$S$.
Consider the open subscheme
\begin{equation}
U = \big\{ (\xi_1, \dots, \xi_r) \in \Hilb^{d_1}(S) \times \cdots \times \Hilb^{d_r}(S) \ |\ \xi_i \cap \xi_j = \varnothing \text{ for all } i \neq j \big\} \label{111}
\end{equation}
and the natural map $\sigma : U \to \Hilb^d(S)$,
which sends a tuple of subschemes $(\xi_1, \dots, \xi_r)$
defined by ideal shaves $I_{\xi_i}$
to the subscheme $\xi \in \Hilb^d(S)$ defined by the ideal sheaf \mbox{$I_{\xi_1} \cap \dots  \cap I_{\xi_r}$}.
We often use the shorthand notation\footnote{
For functions $f_i : X \to \Hilb^{d_i}(S),i=1,\dots, r$ with $(f_1, \dots,f_r) : X \to U$ we also use
$f_1 + \ldots + f_r = \sigma \circ (f_1, \dots, f_r) : X \to \Hilb^d(S)$.
}
\begin{equation} \sigma(\xi_1, \dots, \xi_r)\, =\, \xi_1 + \dots + \xi_r. \label{116} \end{equation}
We define the \emph{partition cycle} as
\begin{equation} V_1[d_1]\, \cdots \, V_r[d_r]\, =\, \sigma(\, V_1[d_1] \times \dots \times V_r[d_r]\, ) \subset \Hilb^d(S). \label{505} \end{equation}
By \cite[Thm 9.10]{Nakajima},
the subscheme \eqref{505} has cohomology class
\[ \Fp_{-d_1}(\alpha_1) \cdots \Fp_{-d_r}(\alpha_r) 1_S \in H^{\ast}(\Hilb^d(S)), \]
where $\alpha_i = [V_i]$ for all $i$.

\subsection{Curves in $\Hilb^d(S)$} \label{Section_Curves_in_Hilbd}
\subsubsection{Cohomology classes}
Let $C$ be a projective curve and let $f : C \to \Hilb^d(S)$ be a map.
Let $p : \CZ_d \to \Hilb^d(S)$ be the universal subscheme and let $q : \CZ_d \to S$ be the universal inclusion.
Consider the fiber diagram
\begin{equation}
  \label{pullback_diagram}
\begin{tikzcd}
\widetilde{C} \ar{r}{\widetilde{f}} \ar{d}{\widetilde{p}} & \CZ_d \ar{d}{p} \ar{r}{q} & S \\
C \ar{r}{f} & \Hilb^d(S)
\end{tikzcd}
\end{equation}
and let $f' = q \circ \widetilde{f}$.
The embedded curve $\widetilde{C} \subset C \times S$ is flat of degree $d$ over $C$.
By the universal property of $\Hilb^d(S)$, we can recover $f$ from $\widetilde{C}$.
Here, even when $C$ is a smooth connected curve,
$\widetilde{C}$ could be disconnected, singular and possibly non-reduced.

\begin{lemma} \label{pullback_lemma}
 Let $C$ be a reduced projective curve and let $f : C \to \Hilb^d(S)$ be a map with
\begin{equation} f_{\ast} [C] = \beta + \sum_{j} C(\gamma_j, \gamma_j') + k A \label{xxvv} \end{equation}
for some $\beta \in H_2(S), \gamma_j, \gamma_j' \in H_1(S)$ and $k \in \BZ$. Then,
\[ ( q \circ \widetilde{f} )_{\ast} [\widetilde{C}] = \beta \,. \]
\end{lemma}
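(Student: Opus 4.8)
The plan is to detect the $H_2(S)$-component of $f'_\ast[\widetilde C]$ by pairing against divisor classes pulled back from the surface, and then to invoke Poincar\'e duality on $S$. For a class $\gamma \in H^2(S;\BQ)$ I would introduce the incidence divisor $D(\gamma) = p_\ast\big(q^\ast \gamma \cap [\CZ_d]\big) \in H^2(\Hilb^d(S);\BQ)$. The universal subscheme $p : \CZ_d \to \Hilb^d(S)$ is finite and flat of degree $d$, so the fibre square \eqref{pullback_diagram} is Tor-independent and proper pushforward commutes with the flat base change along $f$. Hence $f^\ast D(\gamma) = \widetilde p_\ast\big(\widetilde f^\ast q^\ast \gamma\big) = \widetilde p_\ast\big((f')^\ast \gamma\big)$, and integrating over $C$ yields the key identity
\[ \big\langle D(\gamma),\, f_\ast[C] \big\rangle = \int_{\widetilde C} (f')^\ast \gamma = \big\langle \gamma,\, f'_\ast[\widetilde C] \big\rangle, \]
where the outer equalities use $\int_C \circ\, \widetilde p_\ast = \int_{\widetilde C}$ together with the projection formula.

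It then remains to evaluate the left-hand side on the given decomposition $f_\ast[C] = \beta + \sum_j C(\gamma_j,\gamma_j') + kA$. With the standard normalization $D(\gamma) = \tfrac{1}{(d-1)!}\Fp_{-1}(\gamma)\Fp_{-1}(e)^{d-1}1_S$, I would record the three intersection numbers
\[ \big\langle D(\gamma), C(\beta)\big\rangle = \gamma\cdot\beta, \qquad \big\langle D(\gamma), C(a,b)\big\rangle = 0, \qquad \big\langle D(\gamma), A\big\rangle = 0. \]
The first is geometric (a point moving along a curve in class $\beta$ meets a cycle dual to $\gamma$ in $\gamma\cdot\beta$ points) and is also immediate from the Nakajima commutation relations. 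The latter two are forced by degree reasons in the Fock-space inner product: $D(\gamma)$ involves only the even class $\gamma\in H^2(S)$ together with weight-$1$ factors $(1,e)$, so no admissible contraction can pair $\gamma$ (of degree $2$) with the odd classes $a,b\in H^1(S)$, nor can the weight-$2$ factor $\Fp_{-2}(\pt)$ of $A$ be matched against the purely weight-$1$ factors of $D(\gamma)$. Substituting these values gives $\langle \gamma, f'_\ast[\widetilde C]\rangle = \gamma\cdot\beta$ for every $\gamma \in H^2(S;\BQ)$.

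Because the cup-product pairing $H^2(S;\BQ)\times H^2(S;\BQ)\to H^4(S;\BQ)=\BQ$ is perfect (Poincar\'e duality on the surface), the equality $\langle\gamma, f'_\ast[\widetilde C]\rangle = \langle \gamma,\beta\rangle$ for all $\gamma$ forces $f'_\ast[\widetilde C] = \beta$, which is the assertion. The step I expect to require the most care is the base-change identity of the first paragraph: one must verify that the proper pushforward $p_\ast$ genuinely commutes with pullback along $f$ in spite of the singularities of $\CZ_d$, and this is precisely where flatness of the universal family enters. The secondary point needing attention is the treatment of the $C(a,b)$ contributions, which are present only when $H^1(S)\neq 0$ (for instance for abelian surfaces) and which are not in general realized by algebraic curves; these are handled by the Fock-space degree argument above rather than by an explicit geometric model.
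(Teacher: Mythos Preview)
Your argument is correct. Both your proof and the paper's rest on the same flat base-change identity for the universal family, but you then take the dual route: whereas the paper computes the correspondence $q_\ast p^\ast$ directly on each of the curve classes $C(\beta)$, $C(a,b)$, $A$ (using an explicit diagonal computation on $S^d$ for $C(\beta)$), you instead pair $f'_\ast[\widetilde C]$ against divisor classes $D(\gamma)=p_\ast q^\ast\gamma$ and read off the answer from Nakajima orthogonality in the Fock space, concluding by Poincar\'e duality on $S$. Your approach avoids the explicit cycle computation on the open set $U\subset S^d$ in favor of the formal commutation rules, which is cleaner; the paper's direct computation of $q_\ast p^\ast$ gives slightly more (the actual pushforward rather than just its pairings with $H^2$), though that extra information is not needed here.
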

\begin{proof}
We may assume $d \geq 2$ and $C$ irreducible. Since $\widetilde{p}$ is flat,
\[ f'_{\ast} [\widetilde{C}] = f'_{\ast} \widetilde{p}^{\ast} [C] = q_{\ast} p^{\ast} f_{\ast} [C]. \]
Therefore, the claim of Lemma \ref{pullback_lemma} follows from \eqref{xxvv} and
\[ q_{\ast} p^{\ast} A = 0, \quad q_{\ast} p^{\ast} C(\beta) = \beta, \quad q_{\ast} p^{\ast} C(a,b) = 0 \]
for all $\beta \in H_2(S)$ and $a,b \in H_1(S)$.
By considering an exceptional curve of class $A$, one finds $q_{\ast} p^{\ast} A = 0$.
We will verify $q_{\ast} p^{\ast} C(\beta) = \beta$; the equation $q_{\ast} p^{\ast} C(a,b) = 0$ is similar.

Let $U \subset S^d$ be the open set defined in \eqref{111} and let $\sigma : U \to \Hilb^d(S)$ be the sum map.
We have $C(\beta) = \sigma_{\ast}( \pt^{d-1} \times \beta )$. Consider the fiber square
\[
\begin{tikzcd}
\widetilde{U} \ar{r} \ar{d}{p'} & \CZ_d \ar{d}{p} \ar{r}{q} & S \\
U \ar{r} & \Hilb^d(S) \,.
\end{tikzcd}
\]
Let $\Delta_{i,d+1} \subset S^d \times S$ be the $(i,d+1)$ diagonal.
Then $\widetilde{U} \subset S^d \times S$ is the disjoint union $\bigcup_{i=1, \dots,d} \Delta_{i,d+1} \cap (U \times S)$.
Therefore
\begin{align*} q_{\ast} p^{\ast} C(\beta)
& = q_{\ast} p^{\ast} \sigma_{\ast} (\pt^{d-1} \times \beta) \\
& = \pr_{d+1 \ast} p^{\prime \ast} (\pt^{d-1} \times \beta) \\
& = \sum_{i = 1}^{d} \pr_{d+1 \ast} ([\Delta_{i,d+1}] \cdot (\pt^{d-1} \times \beta \times e_S)) \\
& = \beta \,. \qedhere
\end{align*}
\end{proof}

\begin{lemma} \label{eulcharlemma}
Let $C$ be a smooth, projective, connected curve of genus $g$
and let $f: C \to \Hilb^d(S)$ be a map of class \eqref{xxvv}. Then
\[ k = \chi(\CO_{\widetilde{C}}) - d (1-g) \]
\end{lemma}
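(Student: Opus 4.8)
The plan is to compute $\chi(\CO_{\widetilde{C}})$ by pushing forward along the finite flat projection $\widetilde{p} : \widetilde{C} \to C$, applying Riemann--Roch on the curve $C$, and then identifying the degree of the pushforward sheaf with the coefficient $k$ through an intersection number on $\Hilb^d(S)$.

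First I would observe that, since the universal subscheme map $p : \CZ_d \to \Hilb^d(S)$ is finite and flat of degree $d$, and the square \eqref{pullback_diagram} is Cartesian, the base-changed map $\widetilde{p} : \widetilde{C} \to C$ is finite and flat of degree $d$. Hence $\widetilde{p}_{\ast} \CO_{\widetilde{C}}$ is a locally free sheaf of rank $d$ on $C$ with no higher direct images, so
\[ \chi(\CO_{\widetilde{C}}) = \chi\big(C, \widetilde{p}_{\ast}\CO_{\widetilde{C}}\big) = \deg\big(\widetilde{p}_{\ast}\CO_{\widetilde{C}}\big) + d(1-g) \]
by Riemann--Roch on the smooth connected genus $g$ curve $C$. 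It then remains to prove $\deg(\widetilde{p}_{\ast}\CO_{\widetilde{C}}) = k$.

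For the degree, I would use the definition of the tautological bundle $\CO_S^{[d]} = p_{\ast}\CO_{\CZ_d}$. As $p$ is flat and \eqref{pullback_diagram} is a fiber square, flat base change gives $f^{\ast}\CO_S^{[d]} = \widetilde{p}_{\ast}\widetilde{f}^{\ast}\CO_{\CZ_d} = \widetilde{p}_{\ast}\CO_{\widetilde{C}}$, so by the projection formula
\[ \deg\big(\widetilde{p}_{\ast}\CO_{\widetilde{C}}\big) = \int_C f^{\ast}c_1\big(\CO_S^{[d]}\big) = f_{\ast}[C] \cdot c_1\big(\CO_S^{[d]}\big). \]
Writing $f_{\ast}[C] = \beta + \sum_j C(\gamma_j,\gamma_j') + kA$ and using $c_1(\CO_S^{[d]}) = -\tfrac12[\Delta_{\Hilb^d(S)}]$ from the diagonal formula, it suffices to check the three intersection numbers $C(\beta)\cdot c_1(\CO_S^{[d]}) = 0$, $C(a,b)\cdot c_1(\CO_S^{[d]}) = 0$, and $A\cdot c_1(\CO_S^{[d]}) = 1$. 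The curves representing $C(\beta)$ and $C(a,b)$ may be chosen so that all $d$ points stay distinct, hence they are disjoint from the diagonal divisor and contribute $0$; the exceptional curve satisfies $A\cdot[\Delta_{\Hilb^d(S)}] = -2$, giving $A\cdot c_1(\CO_S^{[d]}) = 1$. Substituting yields $f_{\ast}[C]\cdot c_1(\CO_S^{[d]}) = k$, which combined with the Riemann--Roch computation above gives the claim.

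The only genuinely geometric input, and the step I expect to require the most care, is the evaluation $A\cdot[\Delta_{\Hilb^d(S)}] = -2$, equivalently $\deg\big(\CO_S^{[d]}\big|_A\big) = 1$. I would verify this on the explicit model of the exceptional curve by the same local analysis already employed in the proof of Lemma \ref{pullback_lemma}: restricting $\CO_S^{[d]} = p_{\ast}\CO_{\CZ_d}$ to the $\p^1 = \p(T_xS)$ of length-$2$ subschemes at a point (the $d-2$ fixed points contributing trivial summands) and computing the degree of the resulting rank-$d$ bundle. Everything else, flatness of $p$, base change, and Riemann--Roch on $C$, is formal.
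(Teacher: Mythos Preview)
Your proof is correct and follows essentially the same approach as the paper: identify $\widetilde{p}_\ast\CO_{\widetilde{C}}$ with $f^\ast\CO_S^{[d]}$ via base change, apply Riemann--Roch on $C$, and read off $k$ from the intersection with $c_1(\CO_S^{[d]}) = -\tfrac12[\Delta_{\Hilb^d(S)}]$. The paper simply asserts $f_\ast[C]\cdot\Delta_{\Hilb^d(S)} = -2k$ without breaking it into the three cases you spell out, but the argument is otherwise the same (and in the reverse order).
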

\begin{proof}
The intersection of $f_{\ast} [C]$ with the diagonal class $\Delta = -2 c_1( \CO_S^{[d]} )$ is $-2k$. Therefore
\[ k = \deg(c_1(\CO_{S}^{[d]}) \cap f_{\ast}[C]) = \deg( f^{\ast} \CO_{S}^{[d]} ) = \chi( f^{\ast} \CO_{S}^{[d]} ) - d (1-g), \]
where we used Riemann-Roch in the last step. Since we have
\[ f^{\ast} \CO_{S}^{[d]} = f^{\ast} p_{\ast} q^{\ast} \CO_S = \widetilde{p}_{\ast} \widetilde{f}^{\ast} q^{\ast} \CO_S = \widetilde{p}_{\ast} \CO_{\widetilde{C}} \]
and $\widetilde{p}$ is finite, we obtain $\chi( f^{\ast} \CO_{S}^{[d]} ) = \chi( \widetilde{p}_{\ast} \CO_{\widetilde{C}} ) = \chi( \CO_{\widetilde{C}} )$.
\end{proof}

\begin{cor} \label{bounded}
Let $\gamma \in H_2(\Hilb^d(S), \BZ)$ and let
$\Mbar_0(\Hilb^d(S), \gamma)$ be the moduli space of stable maps of genus $0$ in class $\gamma$.
Then for $m \ll 0$,
\[ \Mbar_0(\Hilb^d(S), \gamma + m A) = \varnothing \]
\end{cor}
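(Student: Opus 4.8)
The plan is to find a nef divisor class on $\Hilb^d(S)$ that pairs strictly positively with the exceptional class $A$ and finitely with the fixed part $\gamma$; any such class immediately bounds the coefficient $m$ from below, and emptiness of the moduli space for $m \ll 0$ follows. Concretely, I would fix a sufficiently ample line bundle $L = \CO_S(H)$ on $S$ and consider the tautological bundle $L^{[d]} = p_{\ast} q^{\ast} L$ on $\Hilb^d(S)$. For $H$ sufficiently positive (so that $L$ is $d$-very ample, which holds for $H$ large by standard results), the evaluation map $H^0(S,L) \otimes \CO_{\Hilb^d(S)} \to L^{[d]}$ is surjective, so $L^{[d]}$ is globally generated; in particular $\det L^{[d]}$ is globally generated, hence nef.

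First I would record the two intersection numbers with $c_1(L^{[d]})$. Restricting $L^{[d]}$ to an exceptional curve of class $A$ — the fiber of the Hilbert--Chow morphism over a point $2x + \sum_i y_i$ — the bundle splits as the length-two piece supported at $x$ plus a constant summand, and tensoring $L$ by its fibre at the fixed point $x$ alters $L^{[d]}|_{\p^1}$ only by a trivial line bundle on $\p^1$. Hence $c_1(L^{[d]}) \cdot A = c_1(\CO_S^{[d]}) \cdot A$, which equals $1$ by the computation in the proof of Lemma~\ref{eulcharlemma} (an exceptional curve has $k = 1$). On the other hand $c_1(L^{[d]}) \cdot \gamma$ is a fixed rational number depending only on $\gamma$ and the chosen $H$.

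Now suppose $f : C \to \Hilb^d(S)$ is a genus $0$ stable map of class $\gamma + mA$. Since $\det L^{[d]}$ is nef and $f_{\ast}[C] = \gamma + mA$ is an effective curve class,
\[
0 \;\le\; c_1(L^{[d]}) \cdot (\gamma + mA) \;=\; c_1(L^{[d]}) \cdot \gamma + m \,,
\]
so $m \ge -\, c_1(L^{[d]}) \cdot \gamma$. Therefore no such $f$ exists once $m < -\, c_1(L^{[d]}) \cdot \gamma$, which gives $\Mbar_0(\Hilb^d(S), \gamma + mA) = \varnothing$ for all sufficiently negative $m$. I expect the only delicate input to be the global generation (equivalently nef-ness) step: one must choose $H$ positive enough to make $L^{[d]}$ globally generated, i.e. invoke $d$-very ampleness of $L$. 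An alternative, more hands-on route would extend Lemma~\ref{eulcharlemma} to nodal genus $0$ domains — using $f^{\ast}\CO_S^{[d]} = \widetilde{p}_{\ast}\CO_{\widetilde{C}}$ together with Riemann--Roch on the arithmetic-genus-$0$ curve $C$ to get $m = \chi(\CO_{\widetilde{C}}) - d$ — and then bound the arithmetic genus of $\widetilde{C} \subset C \times S$ in terms of $\beta$ and $d$; but controlling that genus is precisely the harder step that the nef-divisor argument avoids.
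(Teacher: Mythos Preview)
Your argument is correct and takes a genuinely different route from the paper. The paper's proof works via Lemma~\ref{eulcharlemma}: for a map $f\colon \p^1 \to \Hilb^d(S)$ in class $\gamma+mA$, the curve $\widetilde{C}=f^{\ast}\CZ_d \subset \p^1\times S$ has cohomology class independent of $m$, so $\chi(\CO_{\widetilde{C}})$ is bounded below by a constant depending only on that class; since $m=\chi(\CO_{\widetilde{C}})-d$, this bounds $m$ for irreducible domains, and the nodal case is then handled by the remark that an effective class decomposes into effective summands in only finitely many ways. Your nef-divisor approach sidesteps both steps at once: once $\det L^{[d]}$ is nef and $c_1(L^{[d]})\cdot A=1$, the inequality $c_1(L^{[d]})\cdot(\gamma+mA)\ge 0$ applies to $f_{\ast}[C]$ for \emph{any} genus $0$ stable map, nodal or not, with no separate boundedness-of-$\chi$ input needed. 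The trade-off is that you import a positivity fact about tautological bundles (global generation of $L^{[d]}$ for $L$ sufficiently ample, i.e.\ $(d-1)$-very ample --- your ``$d$-very ample'' is a harmless off-by-one), whereas the paper stays closer to the curve-counting setup already in play. Your informal justification of $c_1(L^{[d]})\cdot A = c_1(\CO_S^{[d]})\cdot A$ is right; if you want to tighten it, the identity $c_1(L^{[d]}) = c_1(\CO_S^{[d]}) + D(c_1(L))$ together with $D(\alpha)\cdot A=0$ gives it immediately.
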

\begin{proof}
Let $f : \p^1 \to \Hilb^d(S)$ be a map in class $\gamma + mA$.
The cohomology class of the corresponding curve 
$\widetilde{C} = f^{\ast} \CZ_d \subset \p^1 \times S$
is independent of $m$.
Hence, the holomorphic Euler characteristic $\chi(\CO_{\widetilde{C}})$ is bounded from below by a constant independent of $m$.
Therefore, by Lemma \ref{eulcharlemma}, we find $m$ to be bounded from below when the domain curve is $\p^1$.
Since an effective class $\gamma + m A$ decomposes in at most finitely many ways in a sum of effective classes, the claim is proven.
\end{proof}

\subsubsection{Irreducible Components} \label{irreducible_components} \label{Section_irreducible_components}
Let $f:C \to \Hilb^d(S)$ be a map and consider the fiber diagram
\[
\begin{tikzcd}
\llap{$\widetilde{C} ={}$} f^{\ast} \CZ_d \ar{r} \ar{d}{\widetilde{p}} & \CZ_d \ar{d}{p} \\
C \ar{r}{f} & \Hilb^d(S) \,,
\end{tikzcd}
\]
where  $p : \CZ_d \to \Hilb^d(S)$ is the universal family.\vspace{6pt}

\begin{definition} The map $f$ is \emph{irreducible}, if $f^{\ast} \CZ_d$ is irreducible. \end{definition}

Let $d \geq 1$ and let $f : C \to \Hilb^d(S)$ be a map from a connected non-singular projective curve $C$.
Consider the (reduced) irreducible components
\[ G_1, \dots, G_r \]
of the curve $\widetilde{C} = f^{\ast} \CZ_d$, and let
\[ \xi = \cup_{i \neq j}\, \widetilde{p}(G_i \cap G_j)\, \subset\, C \]
be the image of their intersection points under $\widetilde{p}$.
Every \emph{connected} component~$D$ of $\widetilde{C} \setminus \widetilde{p}^{-1}(\xi)$ is an irreducible curve and flat over $C \setminus \xi$.
Since $C$ is a non-singular curve, also the closure $\overline{D}$ is flat over $C$,
and by the universal property of $\Hilb^{d'}(S)$ yields an associated irreducible map
\[ C \to \Hilb^{d'}(S) \]
for some $d' \leq d$.
Let $\phi_1, \dots, \phi_r$ be the irreducible maps associated
to all connected components of $\widetilde{C} \setminus \widetilde{p}^{-1}(\xi)$.
We say $f$ \emph{decomposes into the irreducible components} $\phi_1, \dots, \phi_r$.

Conversely, let $\phi_i : C \to \Hilb^{d_i}(S), i=1,\dots,n$
be irreducible maps with
\begin{itemize}
 \item $\sum_i d_i = d$,
 \item $\phi_i^{\ast} \CZ_{d_i} \cap \phi_{j}^{\ast} \CZ_{d_j}$ is of dimension $0$ for all $i \neq j$.
\end{itemize}
Let $U$ be the open subset defined in \eqref{111}. The map 
\[ (\phi_1, \dots, \phi_n) : C \ra \Hilb^{d_1}(S) \times \dots \times \Hilb^{d_n}(S) \]
meets the complement of $U$ in a finite number of points $x_1, \dots, x_m \in C$.
By smoothness of $C$, the composition 
\[ \sigma \circ (\phi_1, \dots, \phi_n) : C \setminus \{ x_1, \dots, x_m \} \ra \Hilb^d(S) \]
extends uniquely to a map $f : C \to \Hilb^d(S)$.

A direct verification shows that the two operations above are inverse to each other. We write
\[ f = \phi_1 + \dots + \phi_r \]
for the decomposition of $f$ into the irreducible components $\phi_1, \dots, \phi_r$.\vspace{5pt}

Let $\beta, \beta_i \in H_2(S)$, $\gamma_{j}, \gamma'_{j}, \gamma_{i,j}, \gamma'_{i,j} \in H_1(S)$ and $k, k_i \in \BZ$ such that
\begin{align*}
 f_{\ast}[C] & = C(\beta) + \sum_j C(\gamma_j, \gamma'_j) + k A\ \in H_{2}(\Hilb^d(S))\\
 \phi_{i \ast}[C] & = C(\beta_i) + \sum_j C(\gamma_{i,j}, \gamma'_{i,j}) + k_i A\ \in H_2(\Hilb^{d_i}(S)) \,.
\end{align*}
\begin{lemma} \label{hom_add_up} We have
\begin{itemize}
 \item $\sum_i \beta_i = \beta \, \in H_2(S;\BZ)$
 \item $\sum_{i,j} \gamma_{i,j} \wedge \gamma'_{i,j} = \sum_j \gamma_j \wedge \gamma'_j\, \in \bigwedge^2 H_1(S)$ \,.
\end{itemize}
\end{lemma}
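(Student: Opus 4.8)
The plan is to deduce both identities from the single cycle-theoretic fact that
\[ [\widetilde{C}] = \sum_i [\widetilde{C}_i] \quad \text{in } H_2(C \times S), \]
where each $\widetilde{C}_i = \phi_i^{\ast}\CZ_{d_i}$ is viewed inside $C\times S$. This holds because, by construction of the decomposition, $\widetilde{C}$ and $\bigsqcup_i \widetilde{C}_i$ agree over the dense open set $C \setminus \xi$, and $\xi$ is finite; removing the $0$-dimensional locus $\widetilde{p}^{-1}(\xi)$ alters neither the $1$-dimensional generic points nor their lengths, so the two $1$-cycles coincide. For the first identity I would then apply Lemma \ref{pullback_lemma} to $f$ and to each $\phi_i$ at once. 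Writing $f' = q\circ\widetilde{f}$ and $\phi'_i$ for the analogous maps, that lemma gives $\beta = f'_{\ast}[\widetilde{C}]$ and $\beta_i = \phi'_{i\ast}[\widetilde{C}_i]$; since pushforward along the projection to $S$ is additive on cycles, $\beta = \sum_i \beta_i$.

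The second identity is the substantial one, and I would first produce the analogue of Lemma \ref{pullback_lemma} extracting the $\wedge^2 H_1(S)$-component from $\widetilde{C}$. By the Nakajima pairing the classes $\Theta_m(\alpha,\beta) = \Fp_{-1}(\alpha)\Fp_{-1}(\beta)\Fp_{-1}(e)^{m-2}1_S$, for $\alpha,\beta\in H^1(S)$, are orthogonal to every $C(\delta)$ and to $A$ and pair with $C(a,b)$ by the symplectic form; hence $\langle \Theta_m(\alpha,\beta), -\rangle$ detects exactly the $\wedge^2 H_1$-component (with $\Theta_m=0$ for $m<2$, matching the convention for $d_i=1$). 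A base-change computation along \eqref{pullback_diagram}, using $f^{\ast}p_{\ast}q^{\ast} = \widetilde{p}_{\ast}\widetilde{f}^{\ast}q^{\ast}$, expresses $f^{\ast}\Theta_d(\alpha,\beta)$ through the transfers $\widetilde{p}_{\ast}(f'^{\ast}\alpha)$, and integration yields, up to a fixed nonzero constant,
\[ \int_C f^{\ast}\Theta_d(\alpha,\beta) = \int_{W} G_1^{\ast}\alpha \cup G_2^{\ast}\beta \; - \; \int_{\widetilde{C}} f'^{\ast}\alpha \cup f'^{\ast}\beta, \]
where $W = \widetilde{C}\times_C\widetilde{C}$, the maps $G_1,G_2\colon W\to S$ are the two compositions to $S$, and the last term is the contribution of the relative diagonal. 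The same formula holds for each $\phi_i$ with $W$ replaced by $W_{i,i} = \widetilde{C}_i\times_C\widetilde{C}_i$.

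Now $[W] = \sum_{i,i'}[W_{i,i'}]$ with $W_{i,i'} = \widetilde{C}_i\times_C\widetilde{C}_{i'}$, and the relative diagonal of $W$ is the sum of the relative diagonals of the blocks $W_{i,i}$. Splitting accordingly, the diagonal blocks reproduce precisely $\sum_i \int_C \phi_i^{\ast}\Theta_{d_i}(\alpha,\beta)$, so by the detection property above the identity $\sum_{i,j}\gamma_{i,j}\wedge\gamma'_{i,j} = \sum_j \gamma_j\wedge\gamma'_j$ reduces to the vanishing of the off-diagonal cross terms $\sum_{i\neq i'}\int_{W_{i,i'}} G_1^{\ast}\alpha\cup G_2^{\ast}\beta$ for all $\alpha,\beta\in H^1(S)$.

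I expect this cross-term vanishing to be the main obstacle. The mechanism I would pursue is the swap isomorphism $W_{i,i'}\cong W_{i',i}$ exchanging the two $S$-factors: it carries $G_1\leftrightarrow G_2$ and so combines the $(i,i')$ and $(i',i)$ contributions into the antisymmetrization $\int_{W_{i,i'}}\big(G_1^{\ast}\alpha\cup G_2^{\ast}\beta - G_1^{\ast}\beta\cup G_2^{\ast}\alpha\big)$, the relative sign being the Koszul sign forced by $\alpha,\beta$ being odd. The crux is then to show that each such antisymmetrized integral over the off-diagonal correspondence $W_{i,i'}$ vanishes, equivalently that the antisymmetric mixed K\"unneth component of $(G_1,G_2)_{\ast}[W_{i,i'}]\in H_2(S\times S)$ is zero. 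I would attack this using the factorization of $G_1,G_2$ through the separate components together with the finiteness of $\widetilde{C}_i\cap\widetilde{C}_{i'}$, aiming to reduce the off-diagonal contribution to data already controlled by the first identity; pinning down the signs in this last reduction is the delicate point on which the whole argument turns.
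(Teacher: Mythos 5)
Your argument for the first bullet is correct: the cycle identity $[\widetilde{C}] = \sum_i [\widetilde{C}_i]$ holds because the two sides agree (with multiplicities) away from the finite set $\widetilde{p}^{-1}(\xi)$, and combining it with Lemma \ref{pullback_lemma} applied to $f$ and to each $\phi_i$ gives $\beta = \sum_i \beta_i$. But be aware that this is already much more machinery than the paper uses: the paper's entire proof of Lemma \ref{hom_add_up} is a one-line citation of Nakajima's Theorem 9.10, the same structure result that underlies the class formula for partition cycles in Section \ref{special_cycles}\,(v), so no correspondence computation appears there at all.

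The second bullet is where your proposal has a genuine gap, and it is not a fixable one along the route you describe. You correctly reduce the statement, via the detector classes $\Theta_d(\alpha,\beta)$, to the vanishing of the antisymmetrized off-diagonal terms $\int_{W_{i,i'}}\bigl(G_1^{\ast}\alpha \cup G_2^{\ast}\beta - G_1^{\ast}\beta \cup G_2^{\ast}\alpha\bigr)$, but this quantity is precisely the mixed antisymmetric K\"unneth component of $(G_1,G_2)_{\ast}[W_{i,i'}]$, i.e.\ the genuine $H^1$-correlation between the components $\phi_i$ and $\phi_{i'}$, and no Koszul-sign or swap-symmetry mechanism can force it to vanish. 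Concretely, take $S = E \times E'$ with $E' = E$, $C = E$, and the two sections $\phi_1(t) = (t,0)$, $\phi_2(t) = (0,t)$, glued to $f = \phi_1 + \phi_2 \colon E \to \Hilb^2(S)$; here $W_{1,2} \cong E$ with $G_1 = \phi_1$, $G_2 = \phi_2$, and for $\alpha = \pr_1^{\ast}\alpha_0$, $\beta = \pr_2^{\ast}\beta_0$ with $\alpha_0, \beta_0 \in H^1(E)$ the antisymmetrized integral equals $\int_E \alpha_0 \cup \beta_0 \neq 0$. So the cross terms you hope to cancel are nonzero for two non-constant components whose $H^1$-pullbacks are correlated, and your final step cannot be completed as an abstract sign argument. (Note also that your intermediate formula for $\int_C f^{\ast}\Theta_d(\alpha,\beta)$, with exactly one diagonal correction, is asserted rather than proved; the cup-product expansion of $\Theta_d$ against incidence classes carries further diagonal-supported corrections that would have to be controlled.) Any honest treatment of the second bullet must therefore either go through the precise class formula of Nakajima's Theorem 9.10, which governs how the $\bigwedge^2 H_1(S)$-component of $f_{\ast}[C]$ is assembled from the curve $\widetilde{C} \subset C \times S$, or restrict attention to the situations in which the lemma is actually invoked in the paper — $S$ a K3 surface, where $H_1(S) = 0$ and the bullet is vacuous, or decompositions in which all but one component is constant or maps into a fiber, so that every pullback $\phi_i^{\prime\ast}\alpha$ with $\alpha \in H^1(S)$ vanishes on all but one factor and the correlation terms are identically zero.
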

\begin{proof}
This follows directly from \cite[Theorem 9.10]{Nakajima}.
\end{proof}

\section{The Yau-Zaslow formula in higher dimensions} \label{YZ_formula_in_higher_dim} \label{Section_higher_dimensional_Yau_Zaslow}

\subsection{Overview}
In the remainder of section \ref{YZ_formula_in_higher_dim} we give a proof of Theorem \ref{MThm0}.
The proof proceeds in the following steps.

In section \ref{sec_ellK3case} we use the deformation theory of K3 surfaces
to reduce Theorem \ref{MThm0} to an evaluation on a specific elliptic K3 surface $S$.
Here, we also analyse rational curves on $\Hilb^d(S)$ and prove a few Lemmas.
This discussion will be used also later on.

In section \ref{basic_case}, we study the structure of the moduli space of stable maps which are incident to the Lagrangians $L_{z_1}$ and $L_{z_2}$.
The main result is a splitting statement (Proposition~\ref{W0splitprop}),
which reduces the computation of Gromov-Witten invariants
to integrals associated to fixed elliptic fibers.

In section \ref{section_Kummer_evaluation}, we evaluate these remaining integrals
using the geometry of the Kummer K3 surfaces, the Yau-Zaslow formula
and a theta function associated to the $\mathsf{D}_4$ lattice.

\subsection{The Bryan-Leung K3} \label{sec_ellK3case}
\subsubsection{Definition} \label{Section_BLK3_Defn}
Let $\pi : S \to \p^1$ be an elliptic K3 surface
with a unique section $s : \p^1 \to S$ and 24 rational nodal fibers.
We call $S$ a \emph{Bryan-Leung K3 surface}.

Let $x_1, \dots, x_{24} \in \p^1$ be the basepoints of the nodal fibers of $\pi$,
let $B_0$ be the image of the section $s$, and let
\[ F_x \subset S \]
denote the fiber of $\pi$ over a point $x \in \p^1$.

The Picard group 
\[ \Pic(S) = H^{1,1}(S;\BZ) = H^{2}(S;\BZ) \cap H^{1,1}(S;\BC) \]
is of rank $2$ and generated by the section class $B$ and the fiber class $F$.
We have the intersection numbers
$B^2 = -2$,\ $B \cdot F = 1$ and $F^2 = 0$.
Hence for all $h \geq 0$ the class 
\begin{equation} \beta_h = B + hF \in H_2(S;\BZ) \label{BL_betah} \end{equation}
is a primitive and effective curve class of square $\beta_h^2 = 2h - 2$.

The projection $\pi$ and the section $s$ induce maps of Hilbert schemes
\[ \pi^{[d]} : \Hilb^d(S) \to \Hilb^d(\p^1) = \p^d, \quad \quad s^{[d]} : \p^d \to \Hilb^d(S), \]
such that $\pi^{[d]} \circ s^{[d]} = \id_{\p^d}$.
The map $s^{[d]}$ is an isomorphism
from $\Hilb^d(\p^1)$ to the locus of subschemes of $S$, which are contained in $B_0$.
This gives natural identifications
\[ \p^d = \Hilb^d(\p^1) = \Hilb^d( B_0 ) \,, \]
that we will use sometimes. In unambiguous cases we also write $\pi$~and~$s$ for $\pi^{[d]}$ and $s^{[d]}$ respectively.

\subsubsection{Main statement revisited} \label{K3statements}
For $d \geq 1$ and cohomology classes $\gamma_1, \dots, \gamma_m \in H^{\ast}(\Hilb^d(S);\BQ)$ define the quantum bracket
\begin{equation*}
 \big\langle \gamma_1, \dots, \gamma_m \big\rangle_q^{\Hilb^d(S)}
= \sum_{h \geq 0} \sum_{k \in \BZ} y^k q^{h-1} \blangle \gamma_1, \dots, \gamma_m \brangle^{\Hilb^d(S)}_{\beta_h + k A} \,,
\end{equation*}
where the bracket on the right hand side was defined in \eqref{bbbm}.

\begin{thm} \label{ellthm} For all $d \geq 1$,
\[ \blangle \Fp_{-1}(F)^d 1_S \ , \ \Fp_{-1}(F)^d 1_S \brangle^{\Hilb^d(S)}_q \ =\ \frac{F(z,\tau)^{2d-2}}{\Delta(\tau)}, \]
where $q = e^{2 \pi i \tau}$ and $y = -e^{2 \pi i z}$.   
\end{thm}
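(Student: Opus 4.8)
The plan is to prove Theorem \ref{ellthm} directly on the Bryan--Leung K3 surface $S$ of Section \ref{sec_ellK3case}; since $\Fp_{-1}(F)^d 1_S$ is, by the partition-cycle description of Section \ref{special_cycles}(v) (take $d$ distinct smooth fibres $F_{x_1}, \dots, F_{x_d}$ over a generic $z \in \p^d$), exactly the class of a Lagrangian fibre $L_z$, the bracket in question is $\sum_{h,k} \mathsf{N}_{d,h,k}\, y^k q^{h-1}$ on $S$, so Theorem \ref{ellthm} is the Bryan--Leung incarnation of Theorem \ref{MThm0}. First I would analyze the moduli space $\Mbar_{0,2}(\Hilb^d(S), \beta_h + kA)$. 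By Lemma \ref{pullback_lemma} the image in $S$ of the universal curve $\widetilde{C} = f^{\ast}\CZ_d$ lies in the class $\beta_h = B + hF$. Decomposing $f$ into its irreducible components (Section \ref{irreducible_components}) and applying Lemma \ref{hom_add_up}, the surface classes $\beta_i$ of the components sum to $B + hF$; since each component dominates the rational domain, a component whose image in $S$ is a fibre must map $\p^1$ to that fibre, which is impossible for a smooth (elliptic) fibre. Hence every fibral component is supported over one of the $24$ nodal points $x_1, \dots, x_{24}$, while the section class $B$ is carried by a single rigid "horizontal" strand along $B_0$. Throughout, the exceptional degree $k$ is tracked by $y$ via $k = \chi(\CO_{\widetilde{C}}) - d$ (Lemma \ref{eulcharlemma} with $g = 0$).

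Second, I would impose the two incidence conditions. Pushing $\widetilde{C}$ forward to $C \times \p^1$ under $\id \times \pi$ produces the $d$-valued graph of $\pi^{[d]} \circ f : C \to \p^d$, and incidence to the generic Lagrangians $L_{z_1}, L_{z_2}$ forces this graph to pass through $z_1$ and $z_2$. The structural heart of the argument is the splitting statement (Proposition \ref{W0splitprop}) of Section \ref{basic_case}: after imposing these two conditions, the moduli space, together with its reduced obstruction theory, decomposes as a product of local pieces indexed by the base points carrying fibral class, decoupled from the rigid horizontal data. I would establish this by a rigidity/degeneration argument — the two generic incidence points pin down the horizontal strand and separate the behaviour over distinct nodal fibres — so that the reduced virtual class factors as a product of local virtual classes associated to the fixed elliptic fibres.

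Third, I would evaluate the resulting local integrals, following Section \ref{section_Kummer_evaluation}. As in the classical Bryan--Leung count, each of the $24$ nodal fibres contributes a factor $\prod_{m \geq 1}(1 - q^m)^{-1}$, so that the nodal locus assembles, after the $q^{h-1}$ shift, into $\prod_{m \geq 1}(1-q^m)^{-24} = q/\Delta(\tau)$, i.e. the factor $1/\Delta(\tau)$. The remaining, $y$-dependent local contribution records how the $d-1$ extra points move within the elliptic fibres and interact through the diagonal (the $A$-class), and the product over these $d-1$ effective degrees of freedom should produce the factor $F(z,\tau)^{2d-2}$. To pin down the basic building block $F(z,\tau)^2$ I would compare the relevant fibral count with rational curves on a Kummer K3 surface $\Km(A)$: the curve classes governing the diagonal/fibral contributions are indexed by vectors in the $\mathsf{D}_4$ root lattice inside the Kummer lattice, and their Yau--Zaslow-type generating series is the $\mathsf{D}_4$ theta function, which I would identify with $F(z,\tau)^2$. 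Combining the three factors yields $F(z,\tau)^{2d-2}/\Delta(\tau)$, as claimed; the case $d=1$ reproduces the Yau--Zaslow formula with $F^0 = 1$.

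The main obstacle is the splitting proposition together with the control of the reduced virtual class. Reduced (cosection-type) obstruction theories do not factor under degeneration in the naive way, so the crux is to show that the two generic Lagrangian incidence conditions absorb the reduced direction and that the residual count genuinely factorizes over the fibres — and then to match this factorization precisely with the Kummer/$\mathsf{D}_4$ evaluation so as to obtain the exact power $F(z,\tau)^{2d-2}$ rather than merely a function of the right shape. I expect the bookkeeping of multiple points colliding on a single nodal fibre (the interplay of the fibral class $hF$ with the exceptional class $kA$, weighted by $y$) to be the most delicate point of the local evaluation.
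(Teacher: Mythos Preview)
Your overall architecture — splitting of the moduli space into local pieces, nodal contribution $1/\Delta$ via Bryan--Leung, and a Kummer/$\mathsf{D}_4$ evaluation for the remaining factor — is the paper's strategy. But there is a genuine geometric gap in your second paragraph, and it propagates into the third.

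You argue at the level of $S$: a genus $0$ component cannot map non-constantly to a smooth elliptic fibre of $\pi$, so all fibral components live over the $24$ nodal points. That is correct for maps to $S$, but the objects here are maps to $\Hilb^d(S)$. For $d \geq 2$ there \emph{are} non-constant genus $0$ maps $\p^1 \to \Hilb^2(F_y) \subset \Hilb^d(S)$ into the length-$2$ Hilbert scheme of a \emph{smooth} fibre $F_y$ (the fibre of $\Hilb^2(F_y) \to F_y$ over a $2$-torsion translate is a $\p^1$). These are exactly the components you are missing. Concretely, the line $L = \pi^{[d]}(f(C_0)) \subset \p^d$ meets the discriminant $\CW$ of \eqref{501} not only in the $24$ points $\widetilde{x}_i$ coming from nodal fibres, but also in $2d-2$ points $\widetilde{y}_j$ where $L$ hits the diagonal $\Delta_{\Hilb^d(\p^1)}$ (a degree-$(2d-2)$ hypersurface). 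The domain curve is
\[
C = C_0 \cup A_1 \cup \dots \cup A_{24} \cup B_1 \cup \dots \cup B_{2d-2},
\]
with each $B_j$ mapping into the fibre $(\pi^{[2]})^{-1}(2y_j)$ over a smooth $F_{y_j}$. The splitting (Proposition \ref{W0splitprop}) is over these $24 + (2d-2)$ attachment points, and the series $F^{\textup{GW}}$ is by definition the contribution of a \emph{single} such smooth-fibre diagonal point. Thus the exponent $2d-2$ has nothing to do with ``$d-1$ extra points moving in elliptic fibres''; it is the intersection number $L \cdot \Delta_{\Hilb^d(\p^1)}$.

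Correspondingly, the Kummer step does not identify the $\mathsf{D}_4$ theta function with $F(z,\tau)^2$ as a ``basic building block''. In the paper one computes $4\,(F^{\textup{GW}})^4$ as the generating series of reduced invariants of $\Km(E\times E')$ in the classes $\tfrac12 F' + \tfrac{n}{2}F + \sum k_{ij}A_{ij}$; the $4$ comes from the four sections $B_\ell$, and each of the four singular $\widetilde{D}_4$-fibres of the Kummer contributes one factor $F^{\textup{GW}}$ matched (via a normal-cone degeneration) with a single $M^{(F)}_{y_j}$. The Yau--Zaslow formula plus the theta identity $\Theta = -\vartheta_1 \eta(2\tau)^6/\eta(\tau)^3$ then gives $F^{\textup{GW}} = F(z,\tau)$, a first power. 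Your concern about ``multiple points colliding on a single nodal fibre'' is not the issue: for generic $z_1,z_2$ the line $L$ is generic, each $\widetilde{x}_i$ contains $x_i$ with multiplicity one, and the nodal contribution is literally Bryan--Leung's $\prod (1-q^m)^{-1}$. The delicate local evaluation is precisely the smooth-fibre one at the $\widetilde{y}_j$, which your outline does not isolate.
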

We begin the proof of Theorem \ref{ellthm} in Section \ref{basic_case}.

Let $\pi' : S' \to \p^1$ be any elliptic K3 surface, and let $F'$ be the class of a fiber of $\pi'$. A fiber
of the induced Lagrangian fibration
\[ \pi^{\prime [d]} : \Hilb^d(S') \to \p^d \]
has class $\Fp_{-1}(F')^d 1_S$. Hence, Theorem~\ref{MThm0} implies Theorem~\ref{ellthm}.
The following Lemma shows that conversely Theorem~\ref{ellthm} also implies Theorem~\ref{MThm0}, and hence
the claims in both Theorems are equivalent.

\begin{lemma} \label{fijerfmimv}
Let $S$ be the fixed Bryan-Leung K3 surface defined in Section~\ref{Section_BLK3_Defn},
and let $\beta_h = B + hF$ be the curve class \eqref{BL_betah}.

Let $S'$ be a K3 surface with a primitive curve class $\beta$ of square $2h-2$, and let $\gamma \in H^2(S', \BZ)$
be any class with $\beta \cdot \gamma = 1$ and $\gamma^2 = 0$. Then
\[
\blangle  \Fp_{-1}(\gamma)^d 1_{S'}\, ,\, \Fp_{-1}(\gamma)^d 1_{S'} \brangle_{\beta + kA}^{\Hilb^d(S')}
=
\blangle \Fp_{-1}(F)^d 1_S \, , \, \Fp_{-1}(F)^d 1_S \brangle^{\Hilb^d(S)}_{\beta_h + k A} \,.
\]
\end{lemma}

\begin{proof}[Proof of Lemma \ref{fijerfmimv}]
We will construct an algebraic deformation from $S'$
to the fixed K3 surface $S$ such that $\beta$ deforms to $\beta_h$ through classes of Hodge type $(1,1)$, and $\gamma$ deforms to $F$.
By the deformation invariance of reduced Gromov-Witten invariants
the claim of Lemma \ref{fijerfmimv} follows.

Let $E_8(-1)$ be the negative $E_8$ lattice, let $U$ be the hyperbolic lattice
and consider the K3 lattice
\[ \Lambda = E_8(-1)^{\oplus 2} \oplus U^{\oplus 3} \,. \]
Let $e,f$ be a hyperbolic basis for one of the $U$ summands of $\Lambda$ and let 
\[ \phi: \Lambda \overset{\cong}{\longrightarrow} H^{2}(S;\BZ) \]
be a fixed marking with $\phi(e) = B + F$ and $\phi(f) = F$. We let 
\[ b_h = e + (h-1) f \]
denote the class corresponding to $\beta_h = B + hF$ under $\phi$.

The orthogonal group of $\Lambda$ is transitive on primitive vectors of the same square, see \cite[Lemma 7.8]{GH} for references.
Hence there exists a marking 
\[ \phi' : \Lambda \overset{\cong}{\longrightarrow} H^2(S';\BZ) \]
such that $\phi'(b_h) = \beta$.
Let $g = \phi^{\prime -1}(\gamma) \in \Lambda$
be the vector that corresponds to the class $\gamma$ under $\phi'$.
The span 
\[ \Lambda_0 = \langle g, b_h \rangle \subset \Lambda \] 
defines a hyperbolic sublattice of $\Lambda$ which, by unimodularity, yields the direct sum decomposition
\[ \Lambda = \Lambda_0 \oplus \Lambda_0^{\perp}. \]
Because the irreducible unimodular factors of a unimodular lattice are unique up to order, we find
\[ \Lambda_0^{\perp} \cong E_8(-1)^{\oplus 2} \oplus U^{\oplus 2} \,. \]
Hence there exists a lattice isomorphism $\sigma : \Lambda \to \Lambda$ with $\sigma(b_h) = b_h$ and $\sigma(g) = f$.
Replacing $\phi'$ by $\phi' \circ \sigma^{-1}$, we may therefore assume $\phi'(b_h) = \beta$ and $\phi'(f) = \gamma$.

Since the period domain $\Omega$ associated to $b_h$ is connected,
there exists a curve inside $\Omega$ connecting the period point of $S'$ to the period point of $S$.
Restricting the universal family over $\Omega$ to this curve, we obtain a deformation with the desired properties.
\end{proof}

\subsubsection{Rational curves in $\Hilb^d(S)$} \label{Section_Rational_curves}
Let $h \geq 0$ and let $k$ be an integer.
We consider rational curves on $\Hilb^d(S)$
in the classes $\beta_h + kA$ and $h F + kA$.

\vspace{10pt}
\noindent \textbf{Vertical maps}\\[4pt]
Let $u_1, \dots, u_d \in \p^1$ be points such that
\begin{itemize}
 \item $u_i$ is not the basepoint of a nodal fiber of $\pi : S \to \p^1$ for all $i$,
 \item the points $u_1, \dots, u_n$ are pairwise distinct.
\end{itemize}
Then, the fiber of $\pi^{[d]}$ over
$u_1 + \dots + u_d \in \Hilb^d(\p^1)$ is isomorphic to
the product of smooth elliptic curves
\[ F_{u_1} \times \ldots \times F_{u_d} \,. \]
The subset of points in $\Hilb^d(\p^1)$ whose preimage under $\pi^{[d]}$ is not of this form is the divisor
\begin{equation} \pidiscr = I(x_1) \cup \dots I(x_{24}) \cup \Delta_{\Hilb^d(\p^1)} \subset \Hilb^d(\p^1), \label{501} \end{equation}
where $x_1, \dots, x_{24}$ are the basepoints of the nodal fibers of $\pi$,
$I(x_i)$ is the incidence subscheme, and $\Delta_{\Hilb^d(\p^1)}$ is the diagonal, see Section \ref{special_cycles}.
Since a fiber of $\pi^{[d]}$ over a point $z \in \p^d$ is non-singular
if and only if $z \notin \pidiscr$, we call $\pidiscr$ the \emph{discriminant} of $\pi^{[d]}$.

Consider a stable map 
$f : C \to \Hilb^d(S)$
of genus $0$ and class $h F + kA$.
Since the composition
\[ \pi^{[d]} \circ f : C \to \Hilb^d(\p^1) \]
is mapped to a point, and since non-singular elliptic curves
do not admit non-constant rational maps, we have the following Lemma.

\begin{lemma} \label{200}
Let $f : C \to \Hilb^d(S)$ be a non-constant genus $0$ stable map in class $h F + kA$. 
Then the image of $\pi^{[d]} \circ f$ lies in the discriminant $\pidiscr$.
\end{lemma}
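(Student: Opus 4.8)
The plan is to reduce everything to the single statement that the projection $\pi^{[d]} \circ f$ contracts $C$ to a point $z \in \p^d$, and then to exploit that the fibers of $\pi^{[d]}$ over $\p^d \setminus \pidiscr$ are abelian varieties and hence contain no rational curves.

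First I would verify that $\pi^{[d]}_{\ast}(hF + kA) = 0$ in $H_2(\p^d)$. Factoring the Lagrangian fibration through the Hilbert--Chow morphism as $\pi^{[d]} = \Sym^d(\pi) \circ \rho$ --- legitimate since $\Hilb^d(\p^1) = \Sym^d(\p^1) = \p^d$ --- the exceptional class $A$ is contracted by $\rho$, so $\pi^{[d]}_{\ast} A = 0$. The curve $C(F)$ is obtained by fixing $d-1$ points and moving a single point along one fiber $F_x$; its image point in $\Sym^d(\p^1)$ keeps the moving coordinate equal to $x$, so $\pi^{[d]} \circ C(F)$ is constant and $\pi^{[d]}_{\ast} C(F) = 0$. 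Since $C(hF) = h\, C(F)$, the total pushforward vanishes.

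Next I would conclude that $\pi^{[d]} \circ f$ is constant. Every effective curve class in $\p^d$ is a non-negative multiple of the line class, so the vanishing $\pi^{[d]}_{\ast} f_{\ast}[C] = 0$ forces each irreducible component of the (genus $0$, hence tree-like) domain $C$ to be contracted by $\pi^{[d]} \circ f$. Connectedness of $C$ then pins the image down to a single point $z \in \p^d$, and therefore $f(C) \subseteq L_z$.

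The final step is the geometric heart. Suppose $z \notin \pidiscr$. By the analysis of vertical fibers above, $L_z \cong F_{u_1} \times \cdots \times F_{u_d}$ is a product of smooth elliptic curves, hence an abelian variety. Any morphism from a genus $0$ curve to an abelian variety lifts along the universal covering $\BC^d \to L_z$ (as $\p^1$ is simply connected) and so is constant; thus $f$ would be constant on every component, contradicting the hypothesis that $f$ is non-constant. Hence $z \in \pidiscr$, which is exactly the claim. I expect no serious obstacle: the only step requiring genuine care is the pushforward computation $\pi^{[d]}_{\ast}(hF+kA)=0$, where one must handle the identification $\Hilb^d(\p^1)=\p^d$ and the behaviour of the Nakajima curve classes correctly, while the remaining arguments are standard.
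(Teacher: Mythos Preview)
Your proof is correct and follows the same approach as the paper: show that $\pi^{[d]} \circ f$ is constant (via the vanishing pushforward of $hF + kA$), and then use that fibers over $\p^d \setminus \pidiscr$ are products of smooth elliptic curves and hence receive no non-constant rational maps. The paper states these two steps in a single sentence without justification, whereas you spell out the pushforward computation and the abelian-variety argument explicitly; the content is the same.
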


\vspace{10pt}
\noindent \textbf{Non-vertical maps}\\[4pt]
Let $f : C \to \Hilb^d(S)$ be a stable genus $0$ map in class 
$f_{\ast} [C] = \beta_h + kA$.
The composition 
\[ \pi^{[d]} \circ f : C \ra \p^d \]
has degree $1$ with image a line 
\[ L \subset \p^d. \]
Let $C_0$ be the unique irreducible component of $C$
on which $\pi \circ f$ is non-constant.
We call
$C_0 \subset C$
the \emph{distinguished component} of~$C$.

Since $C_0 \cong \p^1$, we have a decomposition
\begin{equation*} f|_{C_0} = \phi_0 + \dots + \phi_r \label{115} \end{equation*}
of $f|_{C_0}$ into irreducible maps
$\phi_i : C_0 \to \Hilb^{d_i}(S)$
where $d_i$ are positive integers such that $d = d_0 + \dots + d_r$, see Section \ref{irreducible_components}.
By Lemma~\ref{hom_add_up},
exactly one of the maps $\pi^{[d_i]} \circ \phi_i$ is non-constant;
we assume this map is $\phi_0$.

\begin{lemma} \label{117} Let $\pidiscr$ be the discriminant of $\pi^{[d]}$.
If $L \nsubseteq \pidiscr$, then
\begin{itemize}
 \item[(i)] $d_i = 1$ for all $i \in \{1,\dots, r\}$,
 \item[(ii)] $\phi_i : C_0 \to S$ is constant for all $i \in \{ 1,\dots,r \}$,
 \item[(iii)] $\phi_0 : C_0 \to \Hilb^{d_0}(S)$ is an isomorphism onto a line in $\Hilb^{d_0}(B_0)$.
\end{itemize}
\end{lemma}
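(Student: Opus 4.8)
The plan is to exploit the genericity hypothesis $L \nsubseteq \pidiscr$ together with the special geometry of the Bryan-Leung surface, in particular the uniqueness of its section. Since $L$ is not contained in the discriminant, a generic point $z \in L$ corresponds to a reduced subscheme $u_1 + \dots + u_d \in \Hilb^d(\p^1)$ with the $u_i$ pairwise distinct and distinct from the base points $x_1, \dots, x_{24}$; hence the fiber $(\pi^{[d]})^{-1}(z)$ is a product $F_{u_1} \times \dots \times F_{u_d}$ of smooth elliptic curves. First I would record how the total configuration splits over a generic $t \in C_0$: writing $w_i = \pi^{[d_i]} \circ \phi_i \in \Hilb^{d_i}(\p^1)$ for the ($t$-independent) projection of each $\phi_i$ with $i \geq 1$, the identity $\pi^{[d_0]}\phi_0(t) + w_1 + \dots + w_r = \pi^{[d]}f(t) \in L$ exhibits a reduced, non-special length-$d$ divisor as a sum. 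Consequently each $w_i$ is itself reduced and supported away from $x_1, \dots, x_{24}$, and the summands are mutually disjoint.

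Parts (i) and (ii) would then follow together. Fix $i \geq 1$. Because $w_i$ is reduced and avoids the $x_j$, the fiber $(\pi^{[d_i]})^{-1}(w_i)$ is a product of smooth elliptic curves, and $\phi_i$ maps $C_0 \cong \p^1$ into this product; as a rational curve admits no non-constant map to a product of elliptic curves, $\phi_i$ is constant, which is (ii). Its image is then a single length-$d_i$ subscheme $\xi_i$, which by the previous paragraph is reduced and supported at $d_i$ distinct points of $S$. Hence the pullback curve $\phi_i^{\ast}\CZ_{d_i} = C_0 \times \xi_i$ is a disjoint union of $d_i$ copies of $C_0$; since $\phi_i$ is irreducible, this forces $d_i = 1$, proving (i).

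For (iii) I would analyse the distinguished piece $\phi_0$, whose projection $\pi^{[d_0]}\circ\phi_0$ has degree $1$ onto a line $L_0 \subset \p^{d_0}$, that is, a pencil of degree-$d_0$ divisors on $\p^1$. By Lemma \ref{hom_add_up} and the constancy of the $\phi_i$ for $i \geq 1$, together with the fact that the components of $C$ other than $C_0$ are $\pi^{[d]}$-vertical and so carry only multiples of $F$, the $S$-class of $\phi_0$ has the form $\beta_0 = B + h_0 F$, so $\beta_0 \cdot F = 1$. By Lemma \ref{pullback_lemma} this means that the curve $G_0 = \phi_0^\ast \CZ_{d_0} \subset C_0 \times S$ maps with degree $1$ to the base under $G_0 \to S \xrightarrow{\pi} \p^1$. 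Since $\phi_0$ is irreducible, $G_0$ is irreducible, and being birational to $\p^1$ via this projection it is rational; composing its normalization $\cong \p^1$ with $G_0 \to S$ and the degree-one map to the base yields a rational section of $\pi : S \to \p^1$, which extends to a morphism because its source is a smooth curve and $S$ is projective. As the Bryan-Leung surface carries a \emph{unique} section (Section~\ref{Section_BLK3_Defn}), this section is $s$, so $\phi_0$ takes values in $\Hilb^{d_0}(B_0)$. Finally $\pi^{[d_0]}$ restricts to an isomorphism on $\Hilb^{d_0}(B_0) = s^{[d_0]}(\p^{d_0})$, so $\phi_0$ is a degree-one map from $C_0 \cong \p^1$ onto the line $L_0 \subset \Hilb^{d_0}(B_0)$, hence an isomorphism.

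The main obstacle I anticipate lies in part (iii): correctly matching the curve $G_0$ with the incidence correspondence of the pencil $L_0$ and carrying out the degree bookkeeping that makes $G_0 \to \p^1$ of degree one, after which the uniqueness of the section does the real work of pinning the moving points onto $B_0$. The argument for (i) and (ii) is comparatively soft, its only subtlety being the verification that the fixed projections $w_i$ genuinely avoid the discriminant, which is exactly where the hypothesis $L \nsubseteq \pidiscr$ enters.
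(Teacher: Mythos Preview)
Your proof is correct and follows essentially the same approach as the paper. The only cosmetic differences are that you establish (ii) before (i) by a direct argument (a generic point of $L$ forces each $w_i$ to be reduced and to avoid the nodal base points, whence $\phi_i$ lands in a product of smooth elliptic curves and is constant, after which irreducibility gives $d_i=1$), whereas the paper argues (i) and (ii) separately by contradiction; and for (iii) you invoke the uniqueness of the section $s$ of the Bryan--Leung surface to pin $G_0$ onto $B_0$, while the paper phrases the same step via the comb-curve description of the linear system $|\beta_{h'}|$ (which is of course equivalent, since every member of $|\beta_{h'}|$ contains $B_0$ precisely because $s$ is the unique section).
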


\begin{proof} Assume $L \nsubseteq \pidiscr$.
\begin{enumerate}
\item[(i)] If $d_i \geq 2$, then $\pi^{[d_i]} \circ \phi_i$ maps $C_0$ into $\Delta_{\Hilb^{d_i}(\p^1)}$.
Hence
\[ \pi^{[d]} \circ f = \textstyle{\sum}_i \pi^{[d_i]} \circ \phi_i \]
maps $C_0$ into $\Delta_{\Hilb^d(\p^1)} \subset \CW$.
Since $L = \pi^{[d]} \circ f(C_0)$, we find $L \subset \CW$, which is a contradiction.

\item[(ii)] If $\phi_i : C_0 \to S$ is non-constant,
then $\pi \circ \phi_i$
maps $C_0$
to a basepoint of a nodal fiber of $\pi : S \to \p^1$.
By an argument identical to (i) this implies $L \subset \CW$, which is a contradiction. Hence, $\phi_i$ is constant.

\item[(iii)]
The universal family of curves on the elliptic K3 surface $\pi : S \to \p^1$ in class $\beta_{h} = B + hF$
is the $h$-dimensional linear system 
\[ |\beta_h| = \Hilb^h(\p^1) = \p^h \,. \]
Explicitly, an element $z \in \Hilb^h(\p^1)$ corresponds to the comb curve
\begin{equation} B_0 + \pi^{-1}(z) \subset S \,, \label{307} \end{equation}
where $\pi^{-1}(z)$ denotes the fiber of $\pi$ over the subscheme $z \subset \p^1$.

Let $\CZ_d \to \Hilb^d(S)$ be the universal family
and consider the fiber diagram
\[
\begin{tikzcd}
\widetilde{C_0} \ar{r}{\widetilde{f}} \ar{d}{\widetilde{p}} & \CZ_d \ar{d}{p} \ar{r}{q} & S \\
C_0 \ar{r}{f} & \Hilb^d(S) \,.
\end{tikzcd}
\]

By Lemma \ref{pullback_lemma},
the map $f' = q \circ \widetilde{f} : \widetilde{C_0} \to S$
is a curve in the linear system $|\beta_{h'}|$
for some $h' \leq h$.
Its image is therefore a comb of the form \eqref{307}.

Let $G_0$ be the irreducible component of $\widetilde{C_0}$ such that $\pi \circ f'|_{G_0}$ is non-constant.
The restriction
\begin{equation} \widetilde{p}|_{G_0} : G_0 \to C_0 \label{fvmskfmvdf} \end{equation}
is flat.
Since $\pi \circ f' : \widetilde{C}_0 \to \p^1$ has degree~$1$,
the curve $\widetilde{C}_0$ has multiplicity~$1$ at $G_0$,
and the map to the Hilbert scheme of $S$ associated to \eqref{fvmskfmvdf} is equal to $\phi_0$.

Since $G_0$ is reduced and $f'|_{G_0} : G_0 \to S$ maps to $B_0$,
the map $\phi_0$ maps with degree $1$ to $\Hilb^{d_0}(B_0)$.
The proof of (iii) is complete. \qedhere
\end{enumerate}
\end{proof}

\vspace{10pt}
\noindent \textbf{The normal bundle of a line}\\[4pt]
Let $s^{[d]} : \Hilb^d(\p^1) \hookrightarrow \Hilb^d(S)$ be the section, and consider the normal bundle
\[ N = s^{[d] \ast} T_{\Hilb^d(S)} \big/ T_{\Hilb^d(\p^1)}. \]
\begin{lemma} \label{normaltoL} For every line $L \subset \Hilb^d(\p^1)$,
\[ T_{\Hilb^d(S)}\big|_{L} = T_{\Hilb^d(\p^1)}\big|_{L} \oplus N\big|_{L} \]
with $N\big|_{L} = T^{\vee}_{\Hilb^d(\p^1)}\big|_{L} = \CO_L(-2) \oplus \CO_L(-1)^{\oplus (d-1)}$.
\end{lemma}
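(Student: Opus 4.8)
The proof rests on the holomorphic symplectic structure of $\Hilb^d(S)$. Write $\Lambda = s^{[d]}(\p^d) = \Hilb^d(B_0) \subset \Hilb^d(S)$ for the image of the section, which $s^{[d]}$ identifies with $\Hilb^d(\p^1) = \p^d$; since $B_0 \cong \p^1$ is smooth, $\Lambda$ is a smooth closed subvariety of dimension $d$. The plan is to show first that $\Lambda$ is a Lagrangian submanifold, so that the holomorphic symplectic form identifies the normal bundle $N$ with the cotangent bundle $T^{\vee}_{\p^d}$; and second to compute both restrictions to $L$ explicitly and to check that the normal bundle sequence splits.

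First I would establish that $\Lambda$ is Lagrangian. Let $\sigma$ denote the holomorphic symplectic $2$-form on $\Hilb^d(S)$. Since $\Lambda \cong \p^d$ satisfies $H^0(\p^d, \Omega^2_{\p^d}) = 0$, the pullback $s^{[d]\ast}\sigma$ vanishes identically, so $\Lambda$ is isotropic; as $\dim \Lambda = d = \tfrac{1}{2}\dim \Hilb^d(S)$, it is Lagrangian. The contraction $v \mapsto \sigma(v,-)$ is an isomorphism $T_{\Hilb^d(S)} \overset{\sim}{\longrightarrow} \Omega_{\Hilb^d(S)}$, and the Lagrangian condition sends $T_{\p^d}$ isomorphically onto the conormal bundle $\operatorname{Ann}(T_{\p^d}) = N^{\vee}$. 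Dualizing yields a canonical isomorphism $N \cong T^{\vee}_{\p^d}$.

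Next I would restrict to the line $L$. By the Euler sequence on $\p^d$ one has $T_{\p^d}\big|_L = \CO_L(2) \oplus \CO_L(1)^{\oplus(d-1)}$, where $\CO_L(2) = T_L$ and $\CO_L(1)^{\oplus(d-1)} = N_{L/\p^d}$; hence $N\big|_L = T^{\vee}_{\p^d}\big|_L = \CO_L(-2) \oplus \CO_L(-1)^{\oplus(d-1)}$, which is the asserted form. It remains to split the exact sequence $0 \to T_{\p^d}\big|_L \to T_{\Hilb^d(S)}\big|_L \to N\big|_L \to 0$. Its extension class lies in $\operatorname{Ext}^1_L\big(N|_L,\, T_{\p^d}|_L\big) = H^1\big(L,\, (N|_L)^{\vee} \otimes T_{\p^d}|_L\big)$. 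Since $(N|_L)^{\vee} = T_{\p^d}|_L = \CO_L(2) \oplus \CO_L(1)^{\oplus(d-1)}$, the bundle $(N|_L)^{\vee} \otimes T_{\p^d}|_L$ is a direct sum of line bundles whose degrees lie between $2$ and $4$, in particular all $\geq -1$, so the relevant $H^1$ vanishes and the sequence splits.

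The only conceptually delicate step is the Lagrangian identification in the second paragraph; however, the remark that $\p^d$ carries no nonzero holomorphic $2$-forms sidesteps any pointwise verification of isotropy (including at non-reduced subschemes) and makes the argument uniform in $d$. Once that is in place, the degree computation on $\p^1$ and the $\operatorname{Ext}^1$-vanishing that produces the splitting are routine, and I would expect no further obstacles.
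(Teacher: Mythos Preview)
Your proof is correct and follows essentially the same approach as the paper: both use the vanishing $H^0(\p^d,\Omega^2_{\p^d})=0$ to conclude that $\Hilb^d(B_0)$ is Lagrangian, whence the symplectic form identifies $N$ with $T^{\vee}_{\p^d}$, and then both compute $T_{\p^d}|_L = \CO_L(2)\oplus\CO_L(1)^{\oplus(d-1)}$.

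The one point of divergence is the splitting of the normal bundle sequence. The paper obtains this globally and geometrically from the observation that $\pi^{[d]}\circ s^{[d]} = \mathrm{id}$: the section $s^{[d]}$ of the fibration $\pi^{[d]}$ forces $T_{\Hilb^d(S)}|_{\p^d}$ to split as $T_{\p^d}\oplus N$ over all of $\p^d$ at once, no $\mathrm{Ext}$ computation needed. Your argument via $\mathrm{Ext}^1$ vanishing on $L$ is perfectly valid but only yields the splitting over $L$; it has the mild advantage of not invoking the fibration structure, though here that structure is readily available and gives the cleaner argument.
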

\begin{proof}
Because the embedding $s^{[d]} : \Hilb^d(\p^1) \hookrightarrow \Hilb^d(S)$
has the right inverse $\pi^{[d]}$, the restriction
\[ T_{\Hilb^d(S)} \big|_{\Hilb^d(\p^1) } \]
splits as a direct sum of the tangent and normal bundle of $\Hilb^d(B_0)$.

The vanishing $H^0( \p^d, \Omega_{\p^d}^2 ) = 0$
implies that the holomorphic symplectic form on $\Hilb^d(S)$
restricts to $0$ on $\Hilb^d(\p^1)$ and hence, by non-degeneracy,
induces an isomorphism
\[ T_{\Hilb^d(\p^1)} \to N^{\vee} \,. \]
Since $T_{\Hilb^d(\p^1)}\big|_{L} = \CO_L(1)^{\oplus (d-1)} \oplus \CO_L(2)$,
the proof is complete.
\end{proof}

\subsection{Analysis of the moduli space} \label{basic_case}
\subsubsection{Overview} \label{Section_basic_case_overview}
Let $S$ be the fixed elliptic Bryan-Leung K3 surface,
let $z_1, z_2 \in \Hilb^d(\p^1)$ be generic points,
and for $i \in \{1,2\}$ let
\[ Z_{i} = \pi^{[d] -1}(z_i) \subset \Hilb^d(S) \]
be the fiber of $\pi^{[d]}$ over $z_i$. The subscheme $Z_i$ has class $[Z_i] = \Fp_{-1}(F)^d 1_S$.
Let
\[ \ev : \Mbar_{0,2}( \Hilb^d(S), \beta_h + kA) \ra \Hilb^d(S) \times \Hilb^d(S) \]
be the evaluation map from the moduli space of genus $0$ stable maps in class $\beta_h = B + hF$,
and define the moduli space
\begin{equation*} M_Z = M_Z(h,k) = \ev^{-1}(Z_1 \times Z_2) \label{cvcvdfvf} \end{equation*}
parametrizing maps which are incident to $Z_1$ and $Z_2$.

In Section \ref{basic_case}, we begin the proof of Theorem \ref{ellthm}
by studying the moduli space $M_Z$ and its virtual class.
First, we prove that $M_Z$ is naturally isomorphic to a product of
moduli spaces associated to specific fibers of the elliptic fibration $\pi \colon S \to \p^1$.
Second, we show that the virtual class splits as a product of virtual classes on each factor.
Both results are summarized in Proposition~\ref{W0splitprop}.
As a consequence, Theorem \ref{ellthm} is reduced
to the evaluation of a series $F^{\textup{GW}}(y,q)$ encoding integrals
associated to specific fibers of $\pi$.

\subsubsection{The set-theoretic product} \label{basic_case_settheoretic_splitting} \label{Section_settheoretic_splitting}
Consider a stable map
\[ [f : C \to \Hilb^d(S), p_1, p_2]\, \in\, M_Z \]
with markings $p_1,p_2 \in C$.
By definition of $M_Z$, we have
\[ \pi^{[d]}(f(p_1)) = z_1, \quad \quad \pi^{[d]}(f(p_2)) = z_2 .\]
Hence, the image of $C$ under $\pi^{[d]} \circ f$ is the unique line
\[ L \subset \p^d \]
incident to the points $z_1, z_2 \in \p^d$.
Because $z_1, z_2 \in \p^d$ are generic, also $L$ is generic.
In particular, since $z_1 \cap z_2 = \varnothing$, we have
\begin{equation} L \nsubseteq I(x) \quad \text{ for all } x \in \p^1 . \label{Lnon_deg}\end{equation}

Let $C_0$ be the distinguished irreducible component of $C$ on which $\pi \circ f$ is non-constant.
By \eqref{Lnon_deg}, the restriction $f|_{C_0}$ is irreducible, and by Lemma~\ref{117}~(iii), the map $f|_{C_0}$ is
an isomorphism onto the embedded line
\[ L \subset \Hilb^d(\p^1) \overset{s}{\subset} \Hilb^d(S). \]
We will identify $C_0$ with $L$ via this isomorphism.

Let $x_1, \dots, x_{24} \in \p^1$ be the basepoints of the nodal fibers of $\pi$, and let 
\[ y_1, \dots, y_{2d-2} \in \p^1 \]
be the points such that $2 y_i \subset z$ for some $z \in L$.
For $x \in \p^1$, let 
\[ \widetilde{x} = I(x) \cap L \ \in \ \Hilb^d(\p^1) \]
denote the unique point on $L$ which is incident to $x$.
Then, the points 
\begin{equation} \label{L_with_W_int_points} \widetilde{x}_1, \dots, \widetilde{x}_{24}, \widetilde{y}_1, \dots, \widetilde{y}_{2d-2} \end{equation}
are the intersection points of $L$ with
the discriminant of $\pi^{[d]}$ defined in \eqref{501}.
Hence, by Lemma \ref{200}, components of $C$ can be attached to $C_0$ only
at the points~\eqref{L_with_W_int_points}. 
Consider the decomposition
\begin{equation} C = C_0 \cup A_1 \cup \dots \cup A_{24} \cup B_1 \cup \dots \cup B_{2d-2}, \label{ogorgdfgfdg} \end{equation}
where $A_i$ and $B_j$ are the components of $C$ attached to the points
$\widetilde{x}_i$ and $\widetilde{y}_j$ respectively.
We consider the restriction of $f$ to $A_i$ and $B_j$ respectively.

\begin{enumerate}
 \item[$A_i$:] Let $\widetilde{x_i} = x_i + w_1 + \dots + w_{d-1}$ for some points $w_\ell \in \p^1$.
By genericity of~$L$, the $w_\ell$ are basepoints of smooth elliptic fibers.
Hence, $f|_{A_i}$ decomposes as
\begin{equation} f|_{A_i} = \phi + w_1 + \dots + w_{d-1}, \label{201} \end{equation}
where $w_\ell \in \p^1 \subset S$ for all $\ell$ denote constant maps,
and $\phi : A_i \to F_{x_i}$ is a map to $i$-th nodal fiber which sends $\tilde{x}_i$ to the point $s(x_i) \in S$.

\item[$B_j$:] Let $\widetilde{y_j} = 2 y_j + w_1 + \dots + w_{d-2}$ for some points $w_\ell \in \p^1$.
Then, $f|_{B_j}$ decomposes as
\begin{equation} f|_{B_j} = \phi + w_1 + \dots + w_{d-2}, \label{212} \end{equation}
where $\phi : B_j \to \Hilb^2(S)$ maps to the fiber $(\pi^{[2]})^{-1}(2y)$ and sends the point $\widetilde{y}_j \in L \equiv C_0$ to $s(2 y_j)$.
\end{enumerate}

Since $L$ is independent of $f$, we conclude that the moduli space $M_Z$ is \emph{set-theoretically}\footnote{i.e. the set of $\BC$-valued points of $M_Z$ is a product}
a product of moduli spaces of maps of the form $f|_{A_i}$ and $f|_{B_j}$.
The next step is to prove the splitting is \emph{scheme-theoretic}.

\subsubsection{Deformation theory} \label{defsplii}
Let $[f : C \to \Hilb^d(S),p_1, p_2] \in M_Z$ be a point and let
\begin{equation}
 \xymatrix{
\widehat{C} \ar@<+6pt>[d]^p \ar[r]^{\widehat{f}} & \Hilb^d(S) \\
\Spec( \BC[\epsilon]/\epsilon^2 ) \ar@<+0pt>[u] \ar@<+6pt>[u]^{\widehat{p_1}, \widehat{p}_2}
}
\label{221}
\end{equation}
be a first order deformation of $f$ inside $M_Z$.
In particular, $p$ is a flat map, $\widehat{p}_1, \widehat{p}_2$ are sections of $p$,
and $\widehat{f}$ restricts to $f$ at the closed point.

Consider the decomposition \eqref{ogorgdfgfdg} and let $\widetilde{x}_i$ for $i = 1,\dots, 24$ and $\widetilde{y}_j$ for $j =  1,\dots, 2d-2$
be the node points $A_i \cap C_0$ and $B_j \cap C_0$ respectively.
\begin{lemma} \label{doesnotsmooth}
The deformation \eqref{221} does not resolve the nodal points 
$\widetilde{x}_1, \dots, \widetilde{x}_{24}$ and $\widetilde{y}_1, \dots, \widetilde{y}_{2d-2}$.
\end{lemma}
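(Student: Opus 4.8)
The plan is to deduce the non-smoothing from the rigidity of the distinguished component $C_0$, which is in turn forced by the two incidence conditions. I keep the notation of Section~\ref{Section_settheoretic_splitting}: write $\widehat{f} \colon \widehat{C} \to \Hilb^d(S)$ for the first order deformation \eqref{221}, so that $\widehat{C} \to \Spec(\BC[\epsilon]/\epsilon^2)$ is flat and $\widehat{f}$ restricts to $f$ over the closed point. I must show that none of the nodes $\widetilde{x}_i = A_i \cap C_0$ or $\widetilde{y}_j = B_j \cap C_0$ is smoothed.

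First I would observe that the horizontal direction is rigid. Composing with the Lagrangian fibration, $\pi^{[d]} \circ \widehat{f}$ is a first order deformation of $\pi^{[d]} \circ f$. Since $\pi^{[d]}_{\ast} A = 0$ while $\pi^{[d]}_{\ast}\beta_h$ is the line class, $\pi^{[d]} \circ \widehat{f}$ is a genus $0$ degree $1$ stable map to $\p^d$, and its image is a line. The markings continue to satisfy $\pi^{[d]}(\widehat{f}(\widehat{p}_r)) = z_r$ for $r = 1,2$, because $z_1, z_2$ are fixed and $\widehat{f}(\widehat{p}_r) \in Z_r$. As the line through two general points $z_1, z_2$ is the reduced point $L$ of the space of lines, it admits no first order deformation; hence $\pi^{[d]} \circ \widehat{f}$ still has image $L$ to first order. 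Thus $\widehat{f}$ deforms $f$ only along the fibres of $\pi^{[d]}$.

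Next I would localise at the node $\widetilde{x}_i$ and reduce to a rigidity statement on $S$. At $\widetilde{x}_i$ the subscheme is $s(x_i) + w_1 + \dots + w_{d-1}$ with pairwise distinct points, so $\Hilb^d(S)$ is locally a product and the deformation problem localises to a neighbourhood of $s(x_i)$ in the surface $S$; under this identification $C_0$ maps to the section $B_0$ and $A_i$ maps to the nodal fibre $F_{x_i}$, meeting transversally at $s(x_i) = B_0 \cap F_{x_i}$. Choose local coordinates $a, b$ on $S$ with $B_0 = \{ b = 0\}$ and $F_{x_i} = \{ a = 0 \}$, and coordinates $t$ on $C_0$ and $u$ on $A_i$ centred at the node. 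A first order smoothing of the node is $\{ tu = \epsilon \}$, and writing $\widehat{f}^{\ast} a = t + \epsilon \alpha$, $\widehat{f}^{\ast} b = u + \epsilon \beta$ one computes $\widehat{f}^{\ast}(ab) = tu + \epsilon(t\beta + u \alpha) = \epsilon(1 + t\beta + u\alpha)$, which is nonzero since $1 + t\beta + u\alpha$ is a unit near the node. Hence, if the node were smoothed, the image of $\widehat{f}$ would no longer lie in $\{ ab = 0 \}$: the surface curve $f' = q \circ \widetilde{f}$ of Lemma~\ref{pullback_lemma} would acquire a first order deformation smoothing the node $B_0 \cap F_{x_i}$. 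This is impossible, since by \eqref{307} every member of the relevant linear system $|\beta_{h'}|$ is a comb $B_0 + \pi^{-1}(z)$ in which $B_0$ is a fixed component, so its node with any fibre is never smoothed. This contradiction shows $\widetilde{x}_i$ is not smoothed. The node $\widetilde{y}_j$ is treated in the same spirit: there the subscheme is $2 y_j + w_1 + \dots + w_{d-2}$, so $\Hilb^d(S)$ localises to $\Hilb^2(S)$ near the fibre $(\pi^{[2]})^{-1}(2 y_j) = \Hilb^2(F_{y_j})$, and one runs the analogous argument, using that the two points of the length $2$ subscheme are confined to the single fibre $F_{y_j}$ together with the comb rigidity of $f'$ on $S$.

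I expect the main obstacle to be the deformation-theoretic bookkeeping in the last step: one must match the first order smoothing of the domain node inside $\Mbar_{0,2}(\Hilb^d(S), \beta_h + kA)$ with a genuine first order deformation of the surface curve $f'(\widetilde{C})$ in its linear system, keeping careful track of the flat family $\widetilde{C} = f^{\ast}\CZ_d$ and of multiplicities. The diagonal case $\widetilde{y}_j$ is the subtler one, since the relevant rigidity lives in $\Hilb^2$ of an elliptic fibre rather than directly on $S$; making the reduction to the comb geometry precise there is the technical heart of the argument, and is what feeds into the scheme-theoretic splitting of Proposition~\ref{W0splitprop}.
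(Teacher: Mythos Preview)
Your treatment of the nodes $\widetilde{x}_i$ is essentially the paper's argument in local coordinates: both reduce to the comb rigidity of the linear system $|\beta_{h'}|$ on $S$, by observing that smoothing the node in $C$ forces a smoothing of the corresponding node of the surface curve $B_0 \cup F_{x_i}$. The paper phrases this via étaleness of the projection $\widetilde{C} = f^{\ast}\CZ_d \to C$ at the node, which is exactly what makes your local product splitting legitimate. (Minor caveat: your formula $\widehat{f}^{\ast}b = u + \epsilon\beta$ presumes the branch of $A_i$ at the node maps isomorphically to $F_{x_i}$; for a degree $m$ branch one gets $u^m$ instead, but the conclusion $\widehat{f}^{\ast}(ab)\neq 0$ survives.)

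The real gap is at $\widetilde{y}_j$. Your plan is to run ``the analogous argument'' via comb rigidity on $S$, but the mechanism that made the $\widetilde{x}_i$ case work is absent here: at $\widetilde{y}_j$ the length-$2$ subscheme is the fat point $2\,s(y_j)$, so $\widetilde{C}\to C$ is \emph{ramified}, not étale, over this node. The node of $C$ at $\widetilde{y}_j$ therefore does not correspond to a node of the surface curve $f'(\widetilde{C})\subset S$ that could be obstructed by the linear system, and there is no way to transport the first order smoothing of $\widetilde{y}_j$ to a forbidden deformation of the comb on $S$. In particular, ``the two points of the length $2$ subscheme are confined to the fibre $F_{y_j}$'' does not help: the local model is a map to $\Hilb^2$ of a smooth elliptic fibre, and nothing in the comb geometry on $S$ sees whether the node between $C_0$ and $B_j$ in the \emph{domain} is smoothed. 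The paper handles $\widetilde{y}_j$ by a genuinely different technique, Graber's divisor argument: one intersects with $D=\Delta_{\Hilb^d(S)}+D(F_{y_j})$, identifies the first order neighbourhood of the attached component $C_1$ inside $\widehat{C}$ with the first order neighbourhood of $\p^1$ in $\CO(-\ell)$ (where $\ell$ is the number of smoothed nodes on $C_1$), and obtains a contradiction from a dimension count on $\Hom_{\CO_{C_1}}(\CO(-\ell),f|_{C_1}^{\ast}\CO(D))$, using that $C_0$ meets $D$ transversally while the remaining components lie in $D$. This argument is what you are missing; the acknowledgement that the diagonal case is ``subtler'' is accurate, but the proposed reduction to comb rigidity does not go through.
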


\begin{proof}
Assume $\widehat{f}$ smoothes the node $\widetilde{x}_i$ for some $i$.
Let $\CZ_d \to \Hilb^d(S)$ be the universal family and consider the pullback diagram
\[
\begin{tikzcd}
\mathllap{f^{\ast} \CZ_d = }\ \widetilde{C} \ar{r} \ar{d} & \CZ_d \ar{d} \ar{r} & S \\
C \ar{r}{f} & \Hilb^d(S)
\end{tikzcd}
\]
Let $E$ be the connected component of $f|_{A_i}^{\ast} \CZ_d$,
which defines the non-constant map $\phi$ in the decomposition \eqref{201},
and let $G_0 = f|_{C_0}^{\ast} \CZ_d$.
Then, the projection $\widetilde{C} \to C$ is \'etale at the intersection point $q = G_0 \cap E$,

The deformation $\widehat{f} : \widehat{C} \to \Hilb^d(S)$ induces the deformation 
\[ K = \widehat{f}^{\ast} \CZ_d \ra \Spec \bigl( \BC[\epsilon]/\epsilon^2 \bigr) \]
of the curve $\widetilde{C}$.
Since $\widehat{C}$ smoothes $\widetilde{x}_i$ and $\widetilde{C} \to C$ is \'etale near $q$, the deformation~$K$ resolves~$q$.
Then, the natural map $K \to S$ defines a deformation of the curve $\widetilde{C} \to S$ which resolves $q$.
Since $\widetilde{C} \to S$ has class $\beta_h$, such a deformation can not exist by the geometry of the linear system $|\beta_h|$.
Hence, $\widehat{f}$ does not smooth the node $\widetilde{x}_i$.

Assume $\widehat{f}$ smoothes the node $\widetilde{y}_j$ for some $j$.
We follow closely the argument of T.~Graber in \cite[page 19]{Grab}.
Let $F_{y_j}$ be the fiber of $\pi : S \to \p^1$ over~$y_j$, let
\[ D(F_{y_j}) = \{ \xi \in \Hilb^d(S)\ |\ \xi \cap F_{y_j} \neq 0 \} \]
be the divisor of subschemes with non-zero intersection with $F_{y_j}$,
and consider the divisor
\[ D = \Delta_{\Hilb^d(S)} + D(F_{y_j}) \,. \]
Let $C_1$ be the irreducible component of $C$ that attaches to $C_0$ at $q = \widetilde{y}_j$,
and let $C_2$ be the union of all irreducible components of $B_j$ except~$C_1$.
The curves $C_2$ and $C_1$ intersect in a finite number of nodes $\{ q_i \}$.
The deformation $\widehat{f}$ resolves the node $q$ and may also resolve some of the $q_i$.

The first order neighborhood $\widetilde{C_1}$ of $C_1$
in the total space of the deformation $\widehat{C}$
can be identified with the first order neighborhood of $\p^1$ in the total space of the bundle $\CO(- \ell)$,
where $\ell \geq 1$ is the number of nodes on $C_1$ which are smoothed by $\widehat{f}$.
Let
\[ f' : \widetilde{C}_1 \to \Hilb^d(S) \]
be the induced map on $\widetilde{C}_1$.
We consider the case, where $f'|_{C_1}$ is a degree $k \geq 1$ map to the exceptional curve at $\widetilde{y}_j$.
The general case is similar.

Let $N$ be the pullback of $\CO(D)$ by $f' : \widetilde{C}_1 \to \Hilb^d(S)$,
and let $s \in H^0(\widetilde{C}, N)$ be the pullback of the section of $\CO(D)$ defined by $D$.
The bundle $N$ restricts to $\CO(-2k)$ on $C_1$.
By \cite[page 20]{Grab}, giving $N$ and $s$ is equivalent to an element of the vector space
\[ \Hom_{\CO_{C_1}}(\CO(-\ell), f|_{C_1}^{\ast} \CO(D)), \]
of dimension $\ell-2k+1 \leq \ell-1$.

The neighborhood $\widetilde{C_1}$ intersects $C_0$ in a double point.
Since $C_0$ intersects the divisor $D$ transversely, $s$ is non-zero on $\widetilde{C}_1$.
Let $q_1, \dots, q_{\ell-1}$ be the other nodes on $C_1$ which get resolved by $\widehat{f}$.
Since $C_2 \subset D$, the section $s$ vanishes at $q_1, \dots, q_{\ell-1}$.
By dimension reasons, we find $s = 0$. This contradicts the non-vanishing of $s$. 
Hence, $\widehat{f}$ does not smooth the node $\widetilde{y}_j$.
\end{proof}

By Lemma \ref{doesnotsmooth}, any first order (and hence any infinitesimal)
deformation of $[f : C \to \Hilb^d(S), p_1, p_2] \in M_Z$ inside $M$
preserves the decomposition
\[ C = C_0 \cup_i A_i \cup_j B_j \]
and therefore induces a deformation of the restriction
\begin{equation} f|_{C_0} : C_0 \overset{\cong}{\ra} L \subset \Hilb^d(S). \label{frmgerg} \end{equation}
By Lemma \ref{normaltoL}, every deformation of $L \subset \Hilb^d(S)$
moves the line $L$ in the projective space $\Hilb^{d}(B_0)$.
Since any deformations of $f$ inside $M_Z$ must stay incident to $Z_1, Z_2 \subset \Hilb^d(S)$,
we conclude that such deformations induce the constant deformation of \eqref{frmgerg}.
The image line $f(C_0)$ stays completely fixed.

\subsubsection{The product decomposition} \label{splitting} \label{Section_splitting}
For $h > 0$ and for $x \in \p^1$ a basepoint of a nodal fiber of $\pi : S \to \p^1$, let
\[ M^{\textup{(N)}}_{x}(h) \]
be the moduli space of $1$-marked genus $0$ stable maps to $S$ in class $h F$
which map the marked point to $x$.
Hence, $M^{(N)}_{x}(h)$ parametrizes degree~$h$ covers of the nodal fiber $F_{x}$.
By convention, $M^{(N)}_{x}(0)$ is taken to be a point. 

For $h \geq 0,\, k \in \BZ$ and for $y \in \p^1$ a basepoint of a smooth fiber of $\pi$, let
\begin{equation} M^{\textup{(F)}}_{y}(h,k) \label{234_1} \end{equation}
be the moduli space of $1$-marked genus $0$ stable maps to $\Hilb^2(S)$ in class $h F + k A$
which map the marked point to $s^{[2]}(2 y)$.
By convention, $M^{(F)}_{y}(0,0)$ is taken to be a point.

Let $T$ be a connected scheme and consider a family
\begin{equation}
\begin{tikzcd}
C \ar{r}{F} \ar{d} & \Hilb^d(S) \\ T
\end{tikzcd}
\label{fam_dekfodkfo}
\end{equation}
of stable maps in $M_Z$.
By Lemma \ref{doesnotsmooth}, the curve $C \to T$ allows a decomposition
\[ C = C_0 \cup A_1 \cup \dots \cup A_{24} \cup B_1 \cup \dots \cup B_{2d-2}, \]
where $C_0$ is the distinguished component of $C$
and the components $A_i$ and $B_j$ are attached to $C_0$ at the points $\tilde{x}_i$ and $\tilde{y}_j$ respectively.

The restriction of the family \eqref{fam_dekfodkfo} to the components $A_i$ (resp. $B_j$)
defines a family in the moduli space
$M^{\textup{(N)}}_{x_i}(h_{x_i})$ (resp. $M^{\textup{(F)}}_{y_j}(h_{y_j}, k_{y_j})$)
for some $h_{x_i}$ (resp. $h_{y_j}, k_{y_j}$).
Since, by Section \ref{Section_Curves_in_Hilbd}, the line $f(C_0) = L$ has class
\begin{equation*} [L] = B - (d-1) A \ \in H_2( \Hilb^d(S), \BZ), \end{equation*}
and by the additivity of cohomology classes under decomposing (Lemma~\ref{hom_add_up}),
we must have $\sum_i h_{x_i} + \sum_j h_{y_j} = h$ and $\sum_j k_{y_j} = k + (d-1)$.
Let
\begin{equation}
\Psi: M_{Z}
\ra
\bigsqcup_{\textbf{h}, \textbf{k}} \bigg( \prod_{i=1}^{24} M^{\textup{(N)}}_{x_i}(h_{x_i})
\times \prod_{j=1}^{2d-2} M^{\textup{(F)}}_{y_j}(h_{y_j}, k_{y_j}) \bigg)
\label{spl1}. \end{equation}
be the induced map on moduli spaces,
where the disjoint union runs over all 
\begin{equation}
\label{spl1_indexing}
\begin{aligned}
\textbf{h} & = (h_{x_1}, \dots, h_{x_{24}}, h_{y_1}, \dots, h_{y_{2d-2}} ) \in (\BN^{\geq 0})^{ \{ x_i, y_j \}} \\
\textbf{k} & = (k_{y_1}, \dots, k_{y_{2d-2}}) \in \BZ^{2d-2}
\end{aligned}
\end{equation}
such that 
\begin{equation} \sum_i h_{x_i} + \sum_j h_{y_j} = h \quad \text{ and } \quad \sum_j k_{y_j} = k + (d-1) \,. \label{spl2_indexing} \end{equation}
Since $L \subset \Hilb^d(S)$ is fixed under deformations,
we can glue elements of the right hand side of \eqref{spl1} to $C_0$
and obtain a map in $M_{Z}$.
By a direct verification, the induced morphism on moduli spaces is the inverse to $\Psi$.
Hence, $\Psi$ is an isomorphism.

\subsubsection{The virtual class} \label{anvirclass} \label{Section_analysis_of_virtual_class}
Let $Z_1, Z_2$ be the Lagrangian fibers of $\pi^{[d]}$ defined in Section \ref{Section_basic_case_overview},
and let $Z = Z_1 \times Z_2$. We consider the fiber square
\begin{equation} \label{iweufruf}
\begin{tikzcd}
M_Z \ar{r}{j} \ar{d}{p} & M \ar{d}{\ev} \\
Z \ar{r}{i} & ( \Hilb^d(S) )^2,
\end{tikzcd}
\end{equation}
where $M = \Mbar_{0,2}( \Hilb^d(S), \beta_h + kA)$.
The map $i$ is the inclusion of a smooth subscheme of codimension $2d$.
Hence, the restricted virtual class
\begin{equation} [M_Z]^{\text{vir}} = i^{!} [M]^{\text{red}} \label{204} \end{equation}
is of dimension $0$.
By the push-pull formula we have
\begin{equation}
\int_{[M_Z]^{\text{vir}}} 1\ =\ \big\langle \Fp_{-1}(F)^d 1_S , \Fp_{-1}(F)^d 1_S  \big\rangle^{\Hilb^d(S)}_{\beta_h+kA} \,.
\label{virt_pushpull_mddfds}
\end{equation}
Let $\Psi$ be the splitting morphism \eqref{spl1}.
We will show that $\Psi_{\ast} [ M_Z ]^{\text{vir}}$
splits naturally as a product of virtual cycles.

Let $\BL_X$ denote the cotangent complex on a space $X$.
Let
$E^{\bullet} \to \BL_{M}$
be the reduced perfect obstruction theory on $M$,
and let $F^{\bullet}$ be the cone of the map
\[ p^{\ast} i^{\ast} \Omega_{(\Hilb^d(S))^2} \ra j^{\ast} E^{\bullet} \oplus p^{\ast} \Omega_Z \]
induced by the diagram \eqref{iweufruf}.
The cone $F^{\bullet}$ maps to $\BL_{M_Z}$ and defines a perfect obstruction theory on $M_Z$.
By \cite[Proposition 5.10]{BF}, the associated virtual class is $[M_Z]^{\text{vir}}$.

Let $[f : C \to \Hilb^d(S), p_1, p_2] \in M_Z$ be a point.
For simplicity, we consider all complexes on the level of tangent spaces at the moduli point $[f]$.
Let $E_{\bullet}$ and $F_{\bullet}$ denote the derived duals of $E^{\bullet}$ and $F^{\bullet}$ respectively.

We recall the construction of $E_{\bullet}$, see \cite{GWNL,STV}.
Consider the semi-regularity map
\begin{equation} b : R \Gamma( C, f^{\ast} T_{\Hilb^d(S)} ) \to V[-1] \label{semiregu_map} \end{equation}
where $V = H^0(\Hilb^d(S), \Omega^2_{\Hilb^d(S)})^{\vee}$,
and recall the ordinary (non-reduced) perfect obstruction theory of $M$ at the point $[f]$,
\[ E_{\bullet}^{\text{vir}} = \Cone\Big( R \Gamma( C, \BT_{C} (-p_1 - p_2) ) \to R \Gamma( C, f^{\ast} T_{\Hilb^d(S)} ) \Big), \]
where $\BT_C = \BL_{C}^{\vee}$ is the tangent complex on $C$.
Then, by the vanishing of the composition
\begin{equation} R \Gamma( C, \BT_{C} (-p_1 - p_2) ) \to R \Gamma( C, f^{\ast} T_{\Hilb^d(S)} ) \xrightarrow{b} V[-1], \label{virclass_vanishing} \end{equation}
the map \eqref{semiregu_map} induces a morphism $\overline{b} : E_{\bullet}^{\text{vir}} \to V[-1]$
with co-cone $E_{\bullet}$.

By a diagram chase, $F_{\bullet}$ is the co-cone of
\[ (\overline{b}, d \ev) \colon E_{\bullet}^{\text{vir}} \to V[-1] \oplus N_{Z, (z_1, z_2)} \]
where $z_1,z_2$ are the basepoints of the Lagrangian fiber $Z_1,Z_2$ respectively,
$N_{Z,(z_1,z_2)}$ is the normal bundle of $Z$ in $\Hilb^d(S)^2$ at $(z_1, z_2)$,
and $d \ev$ is the differential of the evaluation map.
Since taking the cone and co-cone commutes, the complex $F_{\bullet}$ is therefore the cone of
\begin{equation} \gamma: R \Gamma(C, \BT_C(-p_1-p_2)) \to K, \label{210} \end{equation}
where
\begin{equation} K = \Cocone\Big[ (b, d \ev) : R \Gamma(C, f^{\ast} T_{\Hilb^d(S)}) \to V[-1] \oplus N_{Z, (z_1, z_2)} \Big] \,. \label{206} \end{equation}

Consider the decomposition
\begin{equation} C = C_0 \cup A_1 \cup \dots \cup A_{24} \cup B_1 \cup \dots \cup B_{2d-2}, \label{rfofdjf} \end{equation}
where the components $A_i$ and $B_j$ are attached to $C_0$ at the points $\tilde{x}_i$ and~$\tilde{y}_j$ respectively.
Tensoring $R \Gamma( C, \BT_{C} (-p_1 - p_2) )$ and $K$
against the partial renormalization sequence associated to decomposition \eqref{rfofdjf},
we will show that the dependence on $L$ cancels in the cone of \eqref{210}.

The map $(b, d\!\ev)$ fits into the diagram
\begin{equation} \label{207}
\xymatrix{
R \Gamma(C, f^{\ast} T_{\Hilb^d(S)}) \ar[r]^u \ar[d]^{(b, d \ev)} & R \Gamma(L, f^{\ast} T_{\Hilb^d(S)}) \ar[d]^{v  = (b, d \ev)} \\
V[-1] \oplus N_{Z, (z_1, z_2)} \ar[r]^{(\sigma, \id)} & V[-1] \oplus N_{Z, (z_1, z_2)},
}
\end{equation}
where $u$ is the restriction map and $\sigma$ is the induced map\footnote{$\sigma$ is the inverse to the natural
isomorphism in the other direction induced by the sequence of surjections $H^1(C,\Omega_C) \ra \oplus H^1(C_i, \Omega_{C_i}) \ra H^1(C,\omega_C) \ra 0$.}.
By Lemma \ref{normaltoL}, the co-cone of $v$ is $R \Gamma( \BT_L(-p_1 - p_2) )$.

The partial normalization sequence of $C$
with respect to $\widetilde{x}_i$ and $\widetilde{y}_j$ is
\begin{equation} 
0 \ra \CO_C \ra \CO_L \oplus_{D \in \{ A_i, B_j \}} \CO_{D} \ra \oplus_{s \in \{ \widetilde{x}_i, \widetilde{y}_j \}} \CO_{C,s} \ra 0 \label{normalization} \,.
\end{equation}
Tensoring \eqref{normalization} with $f^{\ast} T_{\Hilb^d(S)}$,
applying $R \Gamma( \cdot )$ and factoring with \eqref{207},
we obtain the exact triangle
\begin{equation} K \ra R \Gamma(L, \BT_{L}(-p_1 - p_2)) \oplus_D R \Gamma(D, f_{|D}^{\ast} T_{\Hilb^d(S)}) \ra \oplus_s T_{\Hilb^d(S),s} \ra K[1]. \label{209} \end{equation}

For each node $t \in C$, let $N_t$ (resp. $T_{t}$)
be the tensor product (resp. the direct sum)
of the tangent spaces to the branches of $C$ at $t$.
Tensoring \eqref{normalization} with $\BT_C(-p_1 - p_2)$ and applying $R \Gamma( \cdot )$, we obtain the exact triangle
\begin{equation} R \Gamma \BT_C(-p_1 - p_2) \to R \Gamma(\BT_L(-p_1-p_2)) \oplus_{D} R \Gamma(\BT_{D}) \oplus_t N_{t}[-1] \to \oplus_t T_{t} \to \ldots \,. \label{208} \end{equation}
By the vanishing of \eqref{virclass_vanishing} (applied to $C = L$), the sequence \eqref{208} maps naturally to \eqref{209}.
Consider the restriction of this map to the summand $R \Gamma( \BT_L(-p_1-p_2) )$ which appears in the second term of  \eqref{208},
\[ \varphi : R \Gamma( \BT_L(-p_1-p_2) ) \to R \Gamma(L, \BT_{L}(-p_1 - p_2)) \oplus_D R \Gamma(D, f_{|D}^{\ast} T_{\Hilb^d(S)}) \,. \]
Then, the composition of $\varphi$ with the projection to $R \Gamma(L, \BT_{L}(-p_1 - p_2))$ is the identity.
Hence, $F_{\bullet} = \Cone(\gamma)$ admits the exact sequence
\begin{equation} F_{\bullet} \ra \oplus_D G_D \overset{\psi}{\ra} \oplus_D H_D \ra F_{\bullet}[1], \label{211} \end{equation}
where $D$ runs over all $A_i$ and $B_j$, and
\begin{align*}
G_D & = \Cone\Big[ R \Gamma(\BT_{D}) \oplus_t N_{t}[-1] \ra R \Gamma(D, f_{|D}^{\ast} T_{\Hilb^d(S)}) \Big]\\
H_D & = \Cone\Big[ \oplus_t T_{t} \ra \oplus_t T_{\Hilb^d(S),t} \Big].
\end{align*}
Here $t = t(D) = D \cap C_0$ is the attachment point of the component $D$.

The map $\psi$ in \eqref{211} maps the factor $G_D$ to $H_D$ for all $D$.
For $D = A_i$ consider the decomposition
\[ f|_{A_i} = \phi + w_1 + \dots + w_{d-1}. \]
The trivial factors which arise in $G_D$ and $H_D$
from the tangent space of $\Hilb^d(S)$ at the points $w_1, \dots, w_{d-1}$
cancel each other in $\Cone(G_D \to H_D)$. Hence
$\Cone(G_D \to H_D)$ only depends on $\phi : C \to S$, and therefore
only on the image of $[f]$ in the factor $M^{\textup{(N)}}_{x_i}(h_{x_i})$,
where $M^{\textup{(N)}}_{x_i}(h_{x_i})$ is the moduli space defined in Section \ref{splitting}.
The case $D = B_j$ is similar.

Hence, $F_{\bullet}$ splits into a sum of complexes pulled back from each factor
of the product splitting \eqref{spl1}.
Since $F_{\bullet}$ is a perfect obstruction theory on~$M$,
the complexes on each factor are perfect obstruction theories. Let
\[ [M^{\textup{(N)}}_{x_i}(h_{x_i})]^{\text{vir}} \quad \text{ and } \quad [ M^{\textup{(F)}}_{y_j}(h_{y_j}, k_{y_j}) ]^{\text{vir}} \]
be their virtual classes respectively. We have proved the following.

\begin{prop} \label{W0splitprop} Let $\Psi$ be the splitting morphism \eqref{spl1}. Then,
$\Psi$ is an isomorphism and we have
\[ \Psi_{\ast} [M_{Z}]^{\text{vir}}
= \sum_{\textbf{h}, \textbf{k}} \left(
\prod_{i=1}^{24} [M^{\textup{(N)}}_{x_i}(h_{x_i})]^{\text{vir}} \times \prod_{j=1}^{2d-2} [ M^{\textup{(F)}}_{y_j}(h_{y_j}, k_{y_j}) ]^{\text{vir}} \right)
\]
where the sum is over the set \eqref{spl1_indexing} satisfying \eqref{spl2_indexing}.
\end{prop}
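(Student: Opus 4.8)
The plan is to prove the two assertions of the proposition---that $\Psi$ is an isomorphism and that the virtual class splits as a product---separately, drawing on the geometric and deformation-theoretic analysis of $M_Z$ carried out above.

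First I would establish that $\Psi$ is an isomorphism. The set-theoretic statement follows from the decomposition of each stable map $[f : C \to \Hilb^d(S), p_1, p_2] \in M_Z$ along its distinguished component $C_0$, which maps isomorphically onto the fixed line $L \subset \Hilb^d(\p^1) \subset \Hilb^d(S)$ through $z_1$ and $z_2$: by Lemma \ref{200} the remaining components can be attached only at the $24 + (2d-2)$ intersection points of $L$ with the discriminant $\CW$, and restricting $f$ to the attached components produces well-defined points of the factor moduli spaces $M^{\textup{(N)}}_{x_i}$ and $M^{\textup{(F)}}_{y_j}$. To upgrade this to a scheme-theoretic isomorphism, I would construct an inverse by gluing. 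The two ingredients are that $L$ is rigid inside $M_Z$---by Lemma \ref{normaltoL} the only deformations of $L \subset \Hilb^d(S)$ move it within $\Hilb^d(B_0)$, and these are killed by the incidence conditions to $Z_1$ and $Z_2$---and Lemma \ref{doesnotsmooth}, which guarantees that no infinitesimal deformation inside $M_Z$ smooths the attaching nodes. Together these show the decomposition $C = C_0 \cup A_i \cup B_j$ persists in arbitrary families, so gluing families from the factors to the fixed $C_0 = L$ defines a morphism inverse to $\Psi$.

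For the splitting of the virtual class, I would begin from the fiber square \eqref{iweufruf}, which realizes $M_Z = \ev^{-1}(Z)$ as a refined intersection and, via \cite[Prop.~5.10]{BF}, equips it with the perfect obstruction theory $F_\bullet$ computing $[M_Z]^{\text{vir}} = i^!\,[M]^{\text{red}}$. The goal is to show $F_\bullet$ is pulled back factor-by-factor from the $M^{\textup{(N)}}_{x_i}$ and $M^{\textup{(F)}}_{y_j}$. The mechanism is the partial normalization sequence \eqref{normalization} of $C$ along the attaching nodes: tensoring it with $f^\ast T_{\Hilb^d(S)}$ and with $\BT_C(-p_1-p_2)$ and comparing through the diagram \eqref{207}, the contribution of the distinguished component $L$ enters identically in the deformation and obstruction parts and cancels in the cone $F_\bullet$. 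What survives is a direct sum $\bigoplus_D \Cone(G_D \to H_D)$ of complexes depending only on $f|_D$ for $D \in \{A_i, B_j\}$; after discarding the trivial summands coming from the constant maps $w_\ell$, each is exactly the ordinary perfect obstruction theory of the corresponding factor. Passing to virtual classes then yields the asserted product.

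The main obstacle I expect is the correct handling of the \emph{reduced} obstruction theory. One must verify that the semiregularity reduction, which cuts $R\Gamma(C, f^\ast T_{\Hilb^d(S)})$ down by $V[-1] = H^0(\Omega^2_{\Hilb^d(S)})^\vee[-1]$, is absorbed entirely into the rigid component $L$, so that the factors $M^{\textup{(N)}}_{x_i}$ and $M^{\textup{(F)}}_{y_j}$ inherit the \emph{ordinary} (non-reduced) theories rather than reduced ones. Concretely this requires the compatibility of the semiregularity map with restriction to $L$---the vanishing \eqref{virclass_vanishing} applied to $C = L$---together with the diagram chase identifying $F_\bullet$ with the cocone of $(\overline{b}, d\,\ev)$, so that both $V[-1]$ and the normal directions to $Z$ sit on the $L$-summand and drop out against $R\Gamma(\BT_L(-p_1-p_2))$. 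Getting this cancellation precisely right, rather than up to a correction term, is the delicate part of the argument.
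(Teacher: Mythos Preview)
Your proposal is correct and follows essentially the same approach as the paper: the isomorphism via rigidity of $L$ (Lemma~\ref{normaltoL}) plus non-smoothing of the attaching nodes (Lemma~\ref{doesnotsmooth}) and a gluing inverse, and the virtual class splitting via the partial normalization sequence \eqref{normalization} together with the diagram \eqref{207} to cancel the $L$-contribution in the cone $F_\bullet$. You have also correctly isolated the delicate point---that the cocone of $(b,d\ev)$ restricted to $L$ is exactly $R\Gamma(\BT_L(-p_1-p_2))$, so both the semiregularity reduction and the incidence conditions are absorbed entirely by the distinguished component, leaving the factors with their ordinary (non-reduced) obstruction theories.
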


\subsubsection{The series $F^{\textup{GW}}$} \label{concl_basic_case}
We consider the left hand side of Theorem \ref{ellthm}. By \eqref{virt_pushpull_mddfds}, we have
\begin{equation*}\label{ellthm_eval_44231}
\blangle \Fp_{-1}(F)^d 1_S \ , \ \Fp_{-1}(F)^d 1_S \brangle^{\Hilb^d(S)}_q
=\ \sum_{h \geq 0} \sum_{k \in \BZ} y^k q^{h-1} \int_{[M_{Z}(h,k)]^{\text{vir}}} 1 \,.
\end{equation*}
By Proposition \ref{W0splitprop}, this equals
\begin{align*}
& \sum_{\substack{h \geq 0 \\ k \in \BZ}} y^k q^{h-1}
\sum_{\substack{ (\textbf{h}, \textbf{k}) \\ \sum_i h_{x_i} + \sum_j h_{y_j} = h \\ \sum_{j} k_{y_j} = k + (d-1) }}
\bigg( \prod_{i=1}^{24} \int_{[M^{\textup{(N)}}_{x_i}(h_{x_i})]^{\text{vir}}}1 \bigg)
\cdot
\bigg( \prod_{j=1}^{2d-2} \int_{[ M^{\textup{(F)}}_{y_j}(h_{y_j}, k_{y_j}) ]^{\text{vir}}} 1 \bigg) \\
=\ & y^{-(d-1)}q^{-1}
\bigg( \prod_{i=1}^{24} \sum_{h_{x_i} \geq 0} q^{h_{x_i}} \int_{[M^{\textup{(N)}}_{x_i}(h_{x_i})]^{\text{vir}}}1 \bigg) \\
& \qquad \qquad \qquad \qquad \qquad \qquad \qquad 
\times \bigg( \prod_{j=1}^{2d-2} \sum_{\substack{h_{y_j} \geq 0 \\ k_{y_j} \in \BZ}} y^{k_{y_j}} q^{h_{y_j}} \int_{[ M^{\textup{(F)}}_{y_j}(h_{y_j}, k_{y_j}) ]^{\text{vir}}} 1 \bigg) \\
=\ & \Big( \prod_{i = 1}^{24} \sum_{h \geq 0} q^{h - \frac{1}{24}} \int_{[M^{\textup{(N)}}_{x_i}(h)]^{\text{vir}}}1 \Big) \cdot
 \Big( \prod_{i = 1}^{2d-2} \sum_{\substack{ h \geq 0 \\ k \in \BZ }} q^{h} y^{k - \frac{1}{2}} \int_{[ M^{\textup{(F)}}_{y_j}(h, k) ]^{\text{vir}}} 1 \Big) \,.
\end{align*}
The integrals in the first factor were calculated by Bryan and Leung
in their proof of the Yau-Zaslow conjecture \cite{BL}. The result is
\begin{equation}
 \sum_{h \geq 0} q^h \int_{[M^{\textup{(N)}}_{x_i}(h)]^{\text{vir}}}1 \  =\  \prod_{m \geq 0} \frac{1}{1-q^m} \,. \label{nodal_fiber_contribution}
\end{equation}
By deformation invariance, the integrals 
\[ \int_{[ M^{\textup{(F)}}_{y_j}(h, k) ]^{\text{vir}}} 1 \]
only depend on $h$ and $k$. Define the generating series 
\begin{equation} F^{\textup{GW}}(y,q) = \sum_{h \geq 0} \sum_{k \in \BZ} q^{h} y^{k - \frac{1}{2}} \int_{[ M^{\textup{(F)}}_{y_j}(h, k) ]^{\text{vir}}} 1 \,. \label{FGW}\end{equation}
By our convention on $M^{\textup{(F)}}_{y_j}(0,0)$, the $y^{-1/2} q^0$-coefficient of $F^{\textup{GW}}$ is $1$.

Let $\Delta(q) = q \prod_{m \geq 1} (1-q^m)^{24}$ be the modular discriminant $\Delta(\tau)$ considered
as a formal expansion in the variable $q = e^{2 \pi i \tau}$. We conclude
\[ \blangle \Fp_{-1}(F)^d 1_S \ , \ \Fp_{-1}(F)^d 1_S \brangle^{\Hilb^d(S)}_q = \frac{F^{\textup{GW}}(y,q)^{2d-2}}{\Delta(q)} \,. \]
The proof of Theorem~\ref{ellthm} now follows directly from Theorem~\ref{thm_F_evaluation} below.

\subsection{Evaluation of $F^{\textup{GW}}$ and the Kummer K3} \label{section_Kummer_evaluation}
Let $F$ be the theta function which already appeared in Section \ref{YZ_section_statement_of_results},
\begin{equation*}
F(z,\tau) = \frac{\vartheta_1(z,\tau)}{\eta^3(\tau)}
 = (y^{1/2} + y^{-1/2}) \prod_{m \geq 1} \frac{ (1 + yq^m) (1-y^{-1}q^m)}{ (1-q^m)^2 } \,,
\end{equation*}
where $q = e^{2 \pi i \tau}$ and $y = - e^{2 \pi i z}$.
\begin{thm} \label{thm_F_evaluation}
Under the variable change $q = e^{2 \pi i \tau}$ and $y = -e^{2 \pi i z}$,
\[ F^{\textup{GW}}(y,q) = F(z,\tau). \]
\end{thm}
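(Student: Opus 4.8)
The plan is to avoid a direct virtual computation inside $\Hilb^2(S)$ and instead pin down $F^{\textup{GW}}$ by an independent global evaluation. By the concluding identity of Section \ref{concl_basic_case},
\[
\blangle \Fp_{-1}(F)^2 1_S,\ \Fp_{-1}(F)^2 1_S \brangle^{\Hilb^2(S)}_q \ =\ \frac{F^{\textup{GW}}(y,q)^2}{\Delta(q)},
\]
so the unknown series enters only through its square. Since the left-hand side is a reduced two-point invariant, it is deformation invariant (this is exactly the mechanism of Lemma \ref{fijerfmimv}), and I am free to evaluate it on whichever elliptic K3 surface is most convenient, then extract $F^{\textup{GW}}$ as a square root. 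The key idea is that this global invariant can be computed in closed form on a Kummer K3 surface $S = \Km(A)$ with $A = E_1 \times E_2$, because the covering abelian surface trivializes much of the rational-curve geometry.

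First I would set up the elliptic fibration $\Km(A) \to \p^1 = E_1/\pm 1$ induced by the projection $A \to E_1$. Its generic fiber is a smooth copy of $E_2$, and over the four branch points coming from $E_1[2]$ the fiber degenerates to type $I_0^*$, i.e. a $\tilde{\mathsf{D}}_4$-configuration consisting of a central component of multiplicity $2$ together with four length-$1$ legs; the four legs over the four fibers account for all $16$ exceptional $(-2)$-curves of $\Km(A)$. The rational curves contributing to the invariant of $\Hilb^2(\Km(A))$ are the ones fibered over the base, and I would reduce their count to the abelian surface $A$: using the group law on $E_2$ and the $\p^1$-bundle structure of $\Sym^2(E_2) \to E_2$, the "moving" rational curves in a smooth fiber again assemble into factors of the very series $F^{\textup{GW}}$, while the four $I_0^*$ fibers contribute a theta function of the root lattice $\mathsf{D}_4$, the $(-2)$-curve configurations being exactly its roots. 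The base, together with the singular fibers, reproduces the factor $1/\Delta$ as in the Yau-Zaslow formula \eqref{YZ}.

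The decisive step is then a theta-function identity: the $\mathsf{D}_4$-lattice contribution, combined with the smooth-fiber and base factors, must be recognized as $F(z,\tau)^2/\Delta(\tau)$, where $F = \vartheta_1/\eta^3$ is the Jacobi theta function \eqref{FFFdef}. Matching this against $F^{\textup{GW}}(y,q)^2/\Delta(q)$ from the Bryan-Leung side forces $F^{\textup{GW}}(y,q)^2 = F(z,\tau)^2$. Extracting the square root in $\BQ((y))[[q]]$ and fixing the sign by the normalization $M^{\textup{(F)}}_{y}(0,0) = \mathrm{pt}$ (so that the $y^{-1/2}q^0$-coefficient of $F^{\textup{GW}}$ equals $1$) yields $F^{\textup{GW}}(y,q) = F(z,\tau)$.

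The main obstacle I expect is the virtual bookkeeping at the four $I_0^*$ fibers: one must show that stable maps into a $\tilde{\mathsf{D}}_4$-configuration, weighted by their reduced virtual classes and graded by $y$ (exceptional degree) and $q$ (fiber degree), assemble \emph{precisely} into the $\mathsf{D}_4$ theta function, with the multiplicity-$2$ central component treated correctly. A secondary difficulty is justifying the reduction from $\Hilb^2(\Km(A))$ down to $A$ at the level of virtual classes, and confirming that the independently computed invariant is genuinely a perfect square $F^2/\Delta$ — a fact that is guaranteed in advance by the product splitting of Proposition \ref{W0splitprop}, but which must be matched on the nose for the square-root extraction to be legitimate.
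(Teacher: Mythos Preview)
Your instinct to pass to a Kummer K3 and identify the answer with a $\mathsf{D}_4$ theta function is exactly right, and matches the paper. But your implementation through $\Hilb^2(\Km(A))$ has a structural gap that makes it circular. If you run the moduli-space splitting of Section~\ref{basic_case} on $\Hilb^2(\Km(A))$, a generic line $L\subset\Hilb^2(\p^1)$ meets the discriminant of $\pi^{[2]}$ at the four images of $E_1[2]$ \emph{and} at two ordinary diagonal points $2u_1,2u_2$ with $u_j$ lying under smooth fibers. The two diagonal-point factors are, by the very same local analysis that defined $F^{\textup{GW}}$, again equal to $F^{\textup{GW}}$. So the comparison
\[
\underbrace{(\text{$I_0^*$ factors})\cdot (F^{\textup{GW}})^2}_{\text{Kummer side}}\ =\ \underbrace{(F^{\textup{GW}})^2/\Delta}_{\text{Bryan--Leung side}}
\]
only determines the $I_0^*$ contribution, not $F^{\textup{GW}}$ itself; the quantity you are after cancels. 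There is also no independent closed formula for the $\Hilb^2(\Km(A))$ invariant to feed into the left-hand side --- Yau--Zaslow is a surface statement.

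The paper fixes both problems by dropping down to the \emph{surface} $\Km(A)$ rather than its Hilbert scheme. A genus~$0$ curve on $\Km(A)$ in class $\frac{1}{2}F'+\frac{n}{2}F+\sum k_{ij}A_{ij}$ decomposes as a section $B_\ell$ plus vertical pieces in the four $I_0^*$ fibers, and the paper shows (Lemma~\ref{242_lemma}, via deformation to the normal cone of a smooth fiber) that each $I_0^*$ fiber contributes precisely $F^{\textup{GW}}$. This yields $4\,(F^{\textup{GW}})^4$ for the total surface invariant. Now one has a genuinely independent evaluation: Yau--Zaslow applied class-by-class, summed over the effective cone, gives $4\,\Theta(z,\tau)^4/\Delta(2\tau)$ where $\Theta$ is a $\mathsf{D}_4$ theta function arising from the sublattice $\langle T_i,A_{i2},A_{i3},A_{i4}\rangle$ attached to each singular fiber. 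The identity $\Theta=-\vartheta_1\eta(2\tau)^6/\eta(\tau)^3$ then forces $(F^{\textup{GW}})^4=(\vartheta_1/\eta^3)^4$, and one takes the fourth root with sign fixed by the $y^{-1/2}q^0$-coefficient. The key move you are missing is Lemma~\ref{242_lemma}: it is what makes $F^{\textup{GW}}$ appear on one side only, so that Yau--Zaslow on the other side can pin it down.
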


In Section \ref{section_Kummer_evaluation} we present a proof of Theorem \ref{thm_F_evaluation} using the Kummer K3 surface and the Yau-Zaslow formula.
An independent proof is given in Section~\ref{Section_Hilb2P1xE} through the geometry of $\Hilb^2(\p^1 \times E)$, where $E$ is an elliptic curve.

The Yau-Zaslow formula was used in the geometry of Kummer K3 surfaces before by S. Rose \cite{Ros14}
to obtain virtual counts of hyperelliptic curves on abelian surfaces.
While the geometry used in \cite{Ros14} is similar to our setting,
the closed formula of Theorem \ref{thm_F_evaluation} in terms
of the Jacobi theta function $F$ is new.
For example, Theorem \ref{thm_F_evaluation} yields
a new, closed formula for hyperelliptic curve counts on an abelian surface, see \cite{BOPY}.

\subsubsection{The Kummer K3}
Let $\AAA$ be an abelian surface. The \emph{Kummer} of $\AAA$ is the blowup
\begin{equation} \rho : \Km(\AAA) \to \AAA / \pm 1 \label{bldn_kummer} \end{equation}
of $\AAA / \pm 1$ along its 16 singular points.
It is a smooth projective K3 surface.
Alternatively, consider the composition
\[ s : \Hilb^2(\AAA) \ra \Sym^2(\AAA) \ra \AAA \]
of the Hilbert-Chow morphism with the addition map.
Then, $\Km(\AAA)$ is the fiber of $s$ over the identity element $0_\AAA \in \AAA$,
\begin{equation} \Km(\AAA) = s^{-1}(0_\AAA) \,. \label{Kummer_cnstruction_23142}\end{equation}

Let $E$ and $E'$ be generic elliptic curves and let 
\[ \AAA = E \times E' .\]
Let $t_1, \dots, t_4$ and $t'_1, \dots, t'_4$ denote the 2-torsion points
of $E$ and $E'$ respectively. The \emph{exceptional curves} of $\Km(\AAA)$ are the divisors
\[ A_{ij} = \rho^{-1}(\, (t_i, t'_j) \,), \ \ \  i,j = 1, \dots, 4 \,.\]

The projection of $\AAA$ to the factor $E$ induces the elliptic fibration
\[ p : \Km(\AAA) \ra \AAA / \pm 1 \ra E / \pm 1 = \p^1. \]
Hence, $\Km(\AAA)$ is an elliptically fibered K3 surface.
Similarly, we let $p' : \Km(\AAA) \to \p^1$ denote the fibration induced by the projection $\AAA \to E'$.
Since $E$ and $E'$ are generic, the fibration $p$ has exactly 4 sections 
\[ s_1, \dots, s_4 : \p^1 \to \Km(\AAA) \]
corresponding to the torsion points $t'_1, \dots, t_4'$ of $E'$. We write
$B_i \subset \Km(\AAA)$
for the image of $s_i$,
and we let $F_x$ denote the fiber of $p$ over $x \in \p^1$

Let $y_1, \dots, y_4 \in \p^1$ be the image of the $2$-torsion points $t_1, \dots, t_4 \in E$
under $E \to E/\pm 1=\p^1$. The restriction
\[ p: \Km(\AAA) \setminus \{ F_{y_1}, \dots, F_{y_4} \} \ra \p^1 \setminus \{ y_1, \dots, y_4 \} \]
is an isotrivial fibration with fiber $E'$.
For $i \in \{1,\dots, 4\}$, the fiber $F_{y_i}$ of $p$ over the points $y_i$ is singular with divisor class
\[ F_{y_i} = 2 T_i + A_{i1} + \dots + A_{i4}, \]
where $T_i$ denotes the image of the section of $p' : \Km(\AAA) \to \p^1$ corresponding to the $2$-torsion points $t_i$.
We summarize the notation in figure \ref{Kummer_diagram}.

\begin{figure}[ht]
\begin{center}
\begin{tikzpicture}
\draw[thick]
   (1.0,0) node[below, align=center]{$T_1$}
-- (1.0,1.0)
-- (1.0,2.5) node[above right, align=center]{\ $\vdots$}
-- (1.0,4.0) node[above right, align=left]{$A_{12}$}
-- (1.0,5.5) node[above right, align=left]{$A_{11}$}
-- (1.0,6.5);
\draw[thick]
   (2.5,0) node[below, align=center]{$T_2$}
-- (2.5,5.5) node[above right, align=left]{$A_{21}$}
-- (2.5,6.5);
\draw[thick]
   (4.0,0) node[below, align=center]{$T_3$}
-- (4.0,5.5) node[above right, align=left]{$\cdots$}
-- (4.0,6.5);
\draw[thick]
   (5.5,0) node[below, align=center]{$T_4$}
-- (5.5,6.5);
\draw[thick] 
   (0,1.0) node[left, align=right]{$B_4$}
-- (6.5,1.0);
\draw[thick]
   (0,2.5) node[left, align=right]{$B_3$}
-- (6.5,2.5);
\draw[thick]
   (0,4.0) node[left, align=right]{$B_2$}
-- (6.5,4.0);
\draw[thick]
   (0,5.5) node[left, align=right]{$B_1$}
-- (6.5,5.5);
\draw[thick]
   (0, -2.0) node[left, align=right]{$\p^1 = E/\pm 1$}
-- (6.5, -2.0);
\draw [fill] (1.0,5.5) circle [radius=0.1];
\draw [fill] (1.0,4.0) circle [radius=0.1];
\draw [fill] (2.5,5.5) circle [radius=0.1];
\draw [fill] (1.0,-2.0) circle [radius=0.06];
\node [below] at (1,-2) {$y_1$};
\draw [fill] (2.5,-2.0) circle [radius=0.06];
\node [below] at (2.5,-2) {$y_2$};
\draw [fill] (4.0,-2.0) circle [radius=0.06];
\node [below] at (4.0,-2) {$y_3$};
\draw [fill] (5.5,-2.0) circle [radius=0.06];
\node [below] at (5.5,-2) {$y_4$};
\draw[thick, ->]
   (3.25, -1.0)
-- (3.25, -1.5);
\node [right] at (3.35, -1.25) {$p$};
\draw[thick]
   (8.0, 0.0)
-- (8.0, 6.5) node[above, align=center]{$E'/\pm 1$};
\draw[thick, ->]
   (6.75, 3.25)
-- (7.5, 3.25);
\node [above] at (7.125, 3.35) {$p'$};
\end{tikzpicture}
\caption{The Kummer K3 of $\AAA = E \times E'$}
\label{Kummer_diagram}
\end{center}
\end{figure}

Let $F$ and $F'$ be the class of a fiber of $p$ and $p'$ respectively.
We have the intersections
\[ F^2 = 0, \quad F \cdot F' = 2, \quad F'^2 = 0 \]
and
\[ F \cdot A_{ij} = F' \cdot A_{ij} = 0, \quad A_{ij} \cdot A_{k \ell} = -2\, \delta_{ik} \delta_{j \ell}
\ \ \ \ \ \text{ for all } i,j,k, \ell \in \{1, \dots, 4 \} \,. \]
By the relation
\begin{equation} \label{241_3}
\begin{aligned}
F & = 2 T_i + A_{i1} + A_{i2} + A_{i3} + A_{i4} \\
F' & = 2 B_i + A_{1i} + A_{2i} + A_{3i} + A_{4i}
\end{aligned}
\end{equation}
for $i \in \{1, \dots, 4 \}$ this determines the intersection numbers of all the divisors above.

\subsubsection{Rational curves and $F^{\textup{GW}}$}
Let $\beta \in H_2( \Km(\AAA), \BZ)$ be an effective curve class and let
\[ \big\langle\, 1 \, \big\rangle_{0, \beta}^{\Km(\AAA)} =
 \int_{[ \Mbar_{0}( \Km(\AAA), \beta) ]^{\text{red}} } 1
\]
denote the genus $0$ Gromov-Witten invariants of $\Km(\AAA)$.
For an integer $n \geq 0$ and a tuple $\textbf{k} = (k_{ij})_{i,j = 1, \dots, 4}$ of half-integers $k_{ij} \in \frac{1}{2} \BZ$,
define the class
\[ \beta_{n, \textbf{k}} = \frac{1}{2} F' + \frac{n}{2} F + \sum_{i,j = 1}^{4} k_{ij} A_{ij}\ \in H_2( \Km(\AAA) , \BQ). \]
We write $\beta_{n, \textbf{k}} > 0$, if $\beta_{n, \textbf{k}}$ is effective.
\begin{prop} \label{Kummer_prop_1} We have
\[ \sum_{\substack{n, \textbf{k} \\ \beta_{n,\textbf{k}} > 0}} \big\langle\, 1 \, \big\rangle_{0, \beta_{n,\textbf{k}}}^{\Km(\AAA)} q^n y^{\sum_{i,j} k_{ij}}
\ =\  4 \cdot F^{\textup{GW}}(y,q)^{4}, \]
where the sum runs over all $n \geq 0$ and $\textbf{k} = (k_{ij})_{i,j} \in (\frac{1}{2} \BZ)^{4 \times 4}$
for which $\beta_{n,\textbf{k}}$ is an effective curve class.
\end{prop}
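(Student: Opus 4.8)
The plan is to evaluate the left-hand side by the Bryan--Leung technique, applied to the elliptic fibration $p : \Km(\AAA) \to \p^1$, with respect to which every class in the sum has degree $F \cdot \beta_{n,\textbf{k}} = \tfrac12(F\cdot F') = 1$. First I would note that, since $\Km(\AAA)$ is a K3 surface, the reduced invariant $\langle 1 \rangle^{\Km(\AAA)}_{0,\beta}$ is nonzero only when the reduced virtual dimension $\tfrac12\beta^2 + 1$ vanishes, so that only the classes with $\beta_{n,\textbf{k}}^2 = n - 2\sum_{i,j} k_{ij}^2 = -2$ contribute, each such term being a genuine count of a rigid rational configuration. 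Because the generic fiber of $p$ is the smooth elliptic curve $E'$, which carries no rational curves, any genus $0$ stable map $f$ of degree $1$ over the base decomposes, exactly as in Lemma~\ref{117} and Section~\ref{basic_case_settheoretic_splitting}, into a distinguished component mapping isomorphically onto a section of $p$ together with vertical trees supported over the four singular fibers $F_{y_1}, \dots, F_{y_4}$, each of type $I_0^*$ with central curve $T_i$ and exceptional curves $A_{i1}, \dots, A_{i4}$. The four sections $B_1, \dots, B_4$ (the only sections of $p$, since $E'$ is generic) give the four possible distinguished components, and hence the overall factor of $4$.

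The second step is the product decomposition. Repeating the deformation-theoretic arguments of Sections~\ref{defsplii}--\ref{anvirclass}, I would show that no first-order deformation inside the relevant moduli space smooths the nodes attaching the vertical trees to the distinguished section, so that the moduli space splits as a product of moduli spaces of vertical maps over $F_{y_1}, \dots, F_{y_4}$; and, via the same semiregularity/co-cone analysis of the reduced obstruction theory as in Section~\ref{anvirclass}, that the reduced virtual class splits as the corresponding product. Consequently the generating series factors as $4$ times a product of four identical local contributions, one per $I_0^*$ fiber, with $q$ tracking the $F$-degree and $y$ the total $A_{ij}$-degree. The appearance of the half-integers $k_{ij} \in \tfrac12\BZ$ is accounted for by expanding the section class through the relations~\eqref{241_3}, which shift the $A$-weights at each fiber by $\tfrac12$; this matches precisely the $y^{-1/2}$ normalisation in the definition~\eqref{FGW} of $F^{\textup{GW}}$.

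It then remains to identify the local contribution of a single $I_0^*$ fiber with the series $F^{\textup{GW}}(y,q)$, and this is the main obstacle, where the connection between $\Hilb^2$ of a K3 surface and the Kummer geometry enters. Recall from Section~\ref{concl_basic_case} that $F^{\textup{GW}}$ was itself defined as a local contribution: the moduli spaces $M^{\textup{(F)}}_{y}(h,k)$ parametrise genus $0$ maps to $\Hilb^2$ of the Bryan--Leung K3 that are vertical for the Lagrangian fibration $\pi^{[2]}$ and land in the fiber over the diagonal point $2y \in \p^2$. I would show that this fiber surface, together with the reduced obstruction theory inherited from the holomorphic symplectic fourfold, is locally isomorphic to a neighborhood of an $I_0^*$ fiber of $p$ in $\Km(\AAA)$ --- both being the $\tilde{\mathsf D}_4$ degeneration of an abelian, respectively elliptic, fiber --- under an identification carrying the classes $F$ and $A$ to $T_i$ and to the $A_{ij}$. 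Granting this comparison of local models and of their reduced virtual classes, the two local series coincide termwise, and the factorisation of the previous step yields exactly $4 \cdot F^{\textup{GW}}(y,q)^4$. The delicate points throughout are the matching of the reduced (as opposed to ordinary) obstruction theories across the two geometries and the bookkeeping of the $\tfrac12$-shifts; once these are in place the statement follows. Combined later with the independent evaluation of the same left-hand side via the Yau--Zaslow formula and the $\mathsf D_4$ theta function, it will give Theorem~\ref{thm_F_evaluation}.
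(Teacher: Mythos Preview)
Your overall strategy matches the paper's proof exactly: decompose a stable map along the elliptic fibration $p$ into a section $B_\ell$ (four choices, hence the factor $4$) plus vertical trees over the four $I_0^*$ fibers, show the nodes don't smooth so the moduli space and reduced virtual class split as a fourfold product, and then identify each local factor with $F^{\textup{GW}}$.

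Two remarks. First, your opening observation is wrong: the reduced virtual dimension of $\Mbar_{0,0}(\Km(\AAA),\beta)$ is $0$ for every effective $\beta$, not $\tfrac12\beta^2+1$ (the latter is the arithmetic genus). By Yau--Zaslow the invariant is nonzero for all $\beta^2\geq -2$, not only $\beta^2=-2$, so ``only rigid configurations contribute'' is false. Fortunately this aside plays no role in the argument and you should simply delete it.

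Second, your comparison step (``this fiber surface \dots\ is locally isomorphic to a neighborhood of an $I_0^*$ fiber'') is the right geometric picture but too informal. The paper makes this precise as Lemma~\ref{242_lemma}: it degenerates $S$ to the normal cone of a smooth fiber $F_u$, so that $\Hilb^2$ of the special fiber is $\Hilb^2(\BC\times F_u)$; a genus $0$ map into the latter lands in a single fiber of $\Hilb^2(\BC\times F_u)\to\Sym^2(\BC\times F_u)\to F_u$, and this fiber is exactly the $\tilde{\mathsf D}_4$ surface appearing in the Kummer construction $\Km(\AAA)=s^{-1}(0_\AAA)$ of~\eqref{Kummer_cnstruction_23142}. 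One then has a proper family of moduli spaces over $\BA^1$ with compatible virtual classes, and conservation of number (Fulton \S10.2) gives the equality of degrees. This is the concrete implementation of the comparison you sketch, and it also handles the ``matching of reduced obstruction theories'' that you flag as delicate.
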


\begin{proof} Let $f : C \to \Km(\AAA)$ be a genus $0$ stable map in class $\beta_{n,\textbf{k}}$.
By genericity of $E$ and $E'$ the fibration $p$ has only the sections
$B_1, \dots, B_4$.
Since $p \circ f$ has degree $1$, the image divisor of $f$ is then of the form
\[ \im(f) = B_{\ell} + D' \]
for some $1 \leq \ell \leq 4$ and a divisor $D'$, which is contracted by $p$.
Since the fibration $p$ has fibers isomorphic to $E'$ away from the points $y_1, \dots,  y_4 \in \p^1$,
the divisor $D'$ is supported on the singular fibers $F_{y_i}$. Hence, there exist non-negative integers
\[ a_i, \ \ i=1,\dots,4 \quad \text{ and } \quad b_{ij}, \ \ i,j = 1, \dots, 4 \]
such that
\[ \im(f) = B_{\ell} + \sum_{i=1}^4 a_i T_i + \sum_{i,j = 1}^{4} b_{ij} A_{ij} . \]
Let $C_0$ be the component of $C$ which gets mapped by $f$ isomorphically to $B_{\ell}$, and let 
$D_i$ be the component of $C$, that maps into the fiber $F_{y_i}$. Then,
\begin{equation} C = C_0 \cup D_1 \cup \dots \cup D_4, \label{2345} \end{equation}
with pairwise disjoint $D_i$.
Under $f$ the intersection points $C_0 \cap D_j$ gets mapped to $s_{\ell}(y_j)$, where $s_{\ell} : \p^1 \to \Km(\AAA)$
denotes the $\ell$-th section of $p$.

By arguments similar to the proof of Lemma \ref{doesnotsmooth} or by the geometry of the linear system $|\beta_{n,\textbf{k}}|$,
the nodal points $C_0 \cap D_j$ do not smooth under infinitesimal deformations of $f$.
The decomposition \eqref{2345} is therefore preserved under infinitesimal deformations.
This implies that the moduli spaces
$\Mbar_{0}( \Km(\AAA), \beta_{n, \textbf{k}})$
admits the decomposition
\begin{equation} \Mbar_{0}( \Km(\AAA), \beta_{n, \textbf{k}})\ =\ 
\bigsqcup_{\ell = 1}^{4}\ \bigsqcup_{n = n_1 + \dots + n_4}\ \prod_{i=1}^{4}\ M_{y_i}^{(\ell)}(n_i, (k_{ij} + \frac{1}{2} \delta_{j \ell})_j ),
\label{242_1} \end{equation}
where $M_{y_i}^{(\ell)}(n_i, (k_{ij})_j )$ is the moduli space of stable $1$-pointed genus $0$ maps to $\Km(\AAA)$ in class
\begin{equation*} \frac{n_i}{2} F + \sum_{j = 1}^4 k_{ij} A_{ij} \end{equation*}
and with marked point mapped to $s_{\ell}(y_i)$.
The term $\frac{1}{2} \delta_{j \ell}$ appears in \eqref{242_1} since
\[ B_{\ell} = \frac{1}{2} ( F' - A_{1 \ell} - A_{2 \ell} - A_{3 \ell} - A_{4 \ell}). \]

For $n_i \geq 0$ and $k_i \in \BZ / 2$, let
\begin{equation}
M^{(\ell)}_{y_i}(n_i,k_i) = \bigsqcup_{\substack{k_{i1}, \dots, k_{i4} \in \BZ/2 \\ k_i = k_{i1} + \dots + k_{i4}}} M_{y_i}^{(\ell)}(n_i, (k_{ij})_j ) \,.
\label{moduli_space_Kummer_case}
\end{equation}

be the moduli space parametrizing stable $1$-pointed genus $0$ maps to $\Km(\AAA)$ in class
$\frac{n_i}{2} F + \sum_{j} k_{ij} A_{ij}$
for some $k_{ij}$ with $\sum_{j} k_{ij} = k_i$ and such that the marked points maps to $s^{\ell}(y_i)$.

Let $n \geq 0$ and $k \in \BZ/2$ be fixed. Taking the union of \eqref{242_1} over all~$\textbf{k}$
such that $k = \sum_{i,j} k_{ij}$, interchanging sum and product and reindexing, we get
\begin{equation}
\bigsqcup_{\substack{ \textbf{k} \colon \sum_{i,j} k_{ij} = k}} \Mbar_{0}( \Km(\AAA), \beta_{n, \textbf{k}})
=
\bigsqcup_{\ell = 1}^{4}\ \bigsqcup_{\substack{n = n_1 + \dots + n_4 \\ k + 2 = k_1 + \dots + k_4}} \prod_{i=1}^{4} M^{(\ell)}_{y_i}(n_i,k_i)
\label{242_2}
\end{equation}
By arguments essentially identical to those in Section \ref{anvirclass} the moduli space $M^{(\ell)}_{y_i}(n_i,k_i)$ carries a natural virtual class
\begin{equation} [ M^{(\ell)}_{y_i}(n_i,k_i) ]^{\text{vir}} \label{virtual_class_XYZ}\end{equation}
of dimension $0$ such that the splitting \eqref{242_2} holds also for virtual classes:
\begin{equation}
\bigsqcup_{\substack{ \textbf{k} \colon \sum_{i,j} k_{ij} = k}} [ \Mbar_{0}( \Km(\AAA), \beta_{n, \textbf{k}}) ]^{\text{red}}
=
\bigsqcup_{\ell = 1}^{4}\ \bigsqcup_{\substack{n = n_1 + \dots + n_4 \\ k + 2 = k_1 + \dots + k_4}} \prod_{i=1}^{4}\ [ M^{(\ell)}_{y_i}(n_i,k_i) ]^{\text{vir}} \,.
\label{242_3}
\end{equation}

Consider the Bryan-Leung K3 surface $\pi_S : S \to \p^1$. Let\footnote{We may
restrict here to the Hilbert scheme of $2$ points, since the evaluation of $F^{\textup{GW}}$ is independent of the number of points.}
\[ L \subset \Hilb^2(B) \]
be a fixed generic line and let $y \in \p^1$ be a point with $2y \in L$. Let
\begin{equation*} M^{\textup{(F)}}_{S,y}(n,k) \end{equation*}
be the moduli space parametrizing $1$-marked genus $0$ stable maps to $\Hilb^2(S)$ in class $n F + k A$,
which map the marked point to $s^{[2]}(2y)$, see \eqref{234_1}.
The subscript $S$ is added to avoid confusion. By Section~\ref{Section_analysis_of_virtual_class},
the moduli space $M^{\textup{(F)}}_{S,y}(n,k)$ carries a natural virtual class.

\begin{lemma} \label{242_lemma} We have
\begin{equation} \int_{ [ M^{(\ell)}_{y_i}(n,k) ]^{\text{vir}} } 1 \ =\  \int_{ [ M^{\textup{(F)}}_{S,y}(n,k) ]^{\text{vir}} } 1 \,. \label{242_lemma_vkmdsv} \end{equation}
\end{lemma}

The Lemma is proven below. We finish the proof of Proposition \ref{Kummer_prop_1}.
By the decomposition \eqref{242_3},
\begin{multline*}
 \quad \quad \quad \sum_{n \geq 0} \sum_{\substack{ \textbf{k} \\ \beta_{n,\textbf{k}} > 0}} \big\langle\, 1 \, \big\rangle_{0, \beta_{n,\textbf{k}}}^{\Km(\AAA)} q^n y^{\sum_{i,j} k_{ij}} \\
= \ 
\sum_{\substack{n \geq 0 \\ k \in \BZ}} \sum_{\ell = 1}^{4}
\sum_{\substack{n = n_1 + \dots + n_4 \\ k + 2 = k_1 + \dots + k_4}}
\prod_{i=1}^{4} q^{n_i} y^{k_i - \frac{1}{2}} \int_{ [ M^{(\ell)}_{y_i}(n_i,k_i) ]^{\text{vir}} } 1 \quad \quad \quad
\end{multline*}
An application of Lemma \ref{242_lemma} then yields
\[
\sum_{\ell = 1}^4 \prod_{i=1}^{4} \Big( \sum_{ \substack{ n_i \geq 0 \\ k_i \in \BZ }} q^{n_i} y^{k_i - \frac{1}{2}} \int_{ [ M^{(\ell)}_{y_i}(n_i,k_i) ]^{\text{vir}} } 1 \Big) 
= \ 4 \cdot ( F^{\textup{GW}}(y,q) )^4 \,.
\]
This completes the proof of Proposition \ref{Kummer_prop_1}.
\end{proof}

\begin{proof}[Proof of Lemma \ref{242_lemma}]
Let $F_y = \pi_S^{-1}(y)$ denote the fiber of $\pi_S$ over $y \in \p^1$. Consider the
deformation of $S$ to the normal cone of $F_y$,
\[ \CS = \Bl_{F_{y} \times 0}(S \times \BA^1) \to \BA^1, \]
and let $\CS^{\circ} \subset \CS$ be the complement of the proper transform of $S \times 0$.
The \emph{relative} Hilbert scheme
\begin{equation} \Hilb^2( \CS^{\circ} / \BA^1 ) \to \BA^1 \label{sofjosfosdf} \end{equation}
parametrizes length $2$ subschemes on the fibers of $\CS^{\circ} \to \BA^1$.
Let 
\[ p : M' \to \BA^1 \]
be the moduli space of $1$-pointed genus $0$ stable maps to $\Hilb^2( \CS^{\circ} / \BA^1 )$
in class $n F + kA$, with the marked point mapping to the proper transform
of $s^{[2]}(2 y) \times \BA^1$. 
The fiber of $p$ over $t \neq 0$ is
\[ p^{-1}(t) = M^{\textup{(F)}}_{S,y}(n,k). \]
The fiber over $t = 0$ parametrizes maps to $\Hilb^2(\BC \times F_y)$.
Since the domain curve has genus $0$, these map to a fixed fiber of the natural map
\[ \Hilb^2( \BC \times F_y ) \xrightarrow{\ \rho\ } \Sym^2 ( \BC \times F_y ) \xrightarrow{\ + \ } F_y \,. \]
We find, that $p^{-1}(0)$ parametrizes $1$-pointed genus $0$ stable maps
into a singular $\mathsf{D}_4$ fiber of a trivial elliptic fibration, with given conditions on the class and the marking.
Comparing with the construction of $\Km(\AAA)$ via \eqref{Kummer_cnstruction_23142} and the definition of $M^{(\ell)}_{y_i}(n_i,k_i)$,
one finds
\[ p^{-1}(0) \cong M^{(\ell)}_{y_i}(n_i,k_i). \]

The moduli space $M'$ carries the perfect obstruction theory obtained
by the construction of section \ref{basic_case} in the relative context.
On the fibers over $t \neq 0$ and $t=0$ the perfect obstruction theory of $M'$ restricts to the perfect obstruction theories of
$M^{\textup{(F)}}_{S,y}(n,k)$ and $M^{(\ell)}_{y_i}(n_i,k_i)$ respectively.
Hence, the associated virtual class $[M']^{\text{vir}}$ restricts on the fibers to the earlier defined virtual classes:
\begin{align*}
\quad \quad \quad t^{!} [M']^{\text{vir}} & = [M^{\textup{(F)}}_{S,y}(n,k)]^{\text{vir}}
\quad \quad (t \neq 0), \\
0^{!} [M']^{\text{vir}} & = [ M^{(\ell)}_{y_i}(n_i,k_i) ]^{\text{vir}}.
\end{align*}
Since $M' \to \BA^1$ is proper, the proof of Lemma \ref{242_lemma} follows now from the principle
of conversation of numbers, see \cite[Section 10.2]{Fulton}.
\end{proof}

\subsubsection{Effective classes}
By Proposition \ref{Kummer_prop_1}, the evaluation of $F^{\textup{GW}}(y,q)$ is reduced to the evaluation of the series
\begin{equation}
\sum_{\substack{ n, \textbf{k} \\ \beta_{n,\textbf{k}} > 0}} \big\langle\, 1 \, \big\rangle_{0, \beta_{n,\textbf{k}}}^{\Km(\AAA)} q^n y^{\sum_{i,j} k_{ij}} .
\label{Kummer_Series} 
\end{equation}
Since $\Km(\AAA)$ is a K3 surface,
the Yau-Zaslow formula \eqref{YZ} applies to the invariants $\langle 1 \rangle^{\Km(\AAA)}_{\beta}$, when $\beta$ is effective\footnote{In fact, the Yau-Zaslow formula applies to all
classes $\beta \in H_2(\Km(\AAA), \BZ)$ which are of type $(1,1)$ and pair positively with an ample class.
}
The remaining difficulty is to identify precisely the set of effective classes of the form $\beta_{n, \textbf{k}}$.

\begin{lemma} Let $n \geq 0$ and $\textbf{k} \in (\BZ/2)^{4 \times 4}$.
If $\beta_{n,\textbf{k}}$ is effective, then there exists a unique $\ell = \ell(n, \textbf{k}) \in \{ 1, \dots, 4 \}$
such that
\[ \beta_{n,\textbf{k}} = B_{\ell} + \sum_{i=1}^4 a_i T_i + \sum_{i,j = 1}^{4} b_{ij} A_{ij} \,. \]
for some integers $a_i \geq 0$ and $b_{ij} \geq 0$.
\end{lemma}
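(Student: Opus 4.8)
The plan is to read the decomposition off the geometry of the elliptic fibration $p : \Km(\AAA) \to \p^1$ and then to pin down $\ell$ by a short integrality computation. First I would record $\beta_{n,\textbf{k}} \cdot F = 1$, using $F \cdot F = 0$, $F \cdot F' = 2$ and $F \cdot A_{ij} = 0$. As $F$ is nef, any effective representative $D = \sum_c \mu_c C_c$ (with $C_c$ the distinct reduced irreducible components and $\mu_c \geq 1$) satisfies $\sum_c \mu_c (C_c \cdot F) = 1$ with nonnegative integer summands; hence a single component $C_0$ occurs, with multiplicity one and $C_0 \cdot F = 1$, while every other component is vertical. Then $C_0$ is a section of $p$, so by the genericity of $E$ and $E'$ invoked in the proof of Proposition~\ref{Kummer_prop_1} (which forces the only sections to be $B_1, \dots, B_4$) we get $C_0 = B_\ell$ for some $\ell$. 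Since the only singular fibers are the four $\mathsf{D}_4$-fibers $F_{y_i} = 2 T_i + \sum_j A_{ij}$, each vertical component is a smooth fiber (class $F$), some $T_i$, or some $A_{ij}$. This yields $\beta_{n,\textbf{k}} = B_\ell + m F + \sum_i a_i T_i + \sum_{i,j} b_{ij} A_{ij}$ with $m, a_i, b_{ij} \in \BZ_{\geq 0}$, and substituting the fiber relation $F = 2 T_1 + \sum_j A_{1j}$ removes the $F$-term while preserving nonnegativity, giving the asserted form. This proves existence.

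For uniqueness of $\ell$ I would pass to $\mathrm{NS}(\Km(\AAA)) \otimes \BQ$, where $F$, $F'$ and the sixteen $A_{ij}$ are linearly independent (the $A_{ij}$ are mutually orthogonal $(-2)$-classes orthogonal to $F, F'$, and $F \cdot F' = 2$). Expanding $T_i = \tfrac12(F - \sum_j A_{ij})$ and $B_\ell = \tfrac12(F' - \sum_k A_{k\ell})$ in this basis and comparing the coefficients of the $A_{ij}$ with $\beta_{n,\textbf{k}} = \tfrac12 F' + \tfrac{n}{2} F + \sum_{i,j} k_{ij} A_{ij}$ gives $b_{ij} = k_{ij} + \tfrac12 a_i + \tfrac12 \delta_{j\ell}$ for all $i,j$. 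Now suppose $\ell \neq \ell'$ both admitted decompositions, with integer coefficients $a_i, b_{ij}$ and $a_i', b_{ij}'$. Fixing any $i$ and a column $j_0 \notin \{\ell, \ell'\}$ (possible, since there are four columns), the two relations subtract at $j_0$ to $b_{ij_0} - b_{ij_0}' = \tfrac12(a_i - a_i') \in \BZ$, so $a_i - a_i'$ is even, whereas at $j = \ell$ they subtract to $b_{i\ell} - b_{i\ell}' = \tfrac12(a_i - a_i') + \tfrac12$, a non-integer — contradicting that it is a difference of integers. Hence $\ell$ is unique.

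The step I expect to be the main obstacle is existence, specifically the assertions that a component of degree one over the base must be one of the four sections $B_\ell$ and that no vertical irreducible curves other than $F$, $T_i$, $A_{ij}$ occur. Both rest on the genericity of $E$ and $E'$ (finiteness of the Mordell--Weil group and the $I_0^{*}$ fiber type), which is precisely the input already used in Proposition~\ref{Kummer_prop_1}; granting it, the remaining bookkeeping and the uniqueness computation are elementary.
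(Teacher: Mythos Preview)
Your proof is correct and follows essentially the same approach as the paper. For existence you invoke the section/vertical-component decomposition coming from $\beta_{n,\textbf{k}}\cdot F=1$ and the fiber structure (the paper simply cites the argument in Proposition~\ref{Kummer_prop_1} for this), and for uniqueness both arguments reduce to the same relation $k_{ij}=b_{ij}-\tfrac{a_i}{2}-\tfrac12\delta_{j\ell}$ and the integrality constraint it imposes on the column index~$\ell$; the paper phrases this as a direct characterization of $\ell$ by the row-by-row integrality pattern of the $k_{ij}$, while you phrase it as a parity contradiction between two hypothetical choices, but the content is identical.
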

\begin{proof}
If $\beta_{n,\textbf{k}}$ is effective, then by the argument in the proof of Proposition \ref{Kummer_prop_1},
there exist non-negative integers
\[ a_i, \ \ i=1,\dots,4 \quad \text{ and } \quad b_{ij}, \ \ i,j = 1, \dots, 4 \]
such that
\[ \beta_{n,\textbf{k}} = B_{\ell} + \sum_{i=1}^4 a_i T_i + \sum_{i,j = 1}^{4} b_{ij} A_{ij} \]
for \emph{some} $\ell \in \{ 1, \dots, 4 \}$. We need to show, that $\ell$ is unique.
By \eqref{241_3}, we have
\[ \beta_{n,\textbf{k}} = \frac{F'}{2} + \frac{\sum_{i=1}^{4} a_i}{2} F + \sum_{i,j=1}^{4} \Big( b_{ij} - \frac{a_{i}}{2} - \frac{1}{2} \delta_{j \ell} \Big) A_{ij}, \]
hence $k_{ij} = b_{ij} - \frac{a_{i1}}{2} - \frac{1}{2} \delta_{j \ell}$. We find, that
$\ell$ is the unique integer
such that for every $i$ one of the following holds:
\begin{itemize}
 \item $k_{ij} \in \BZ$ for all $j \neq \ell$ and $k_{i \ell} \notin \BZ$,
 \item $k_{ij} \notin \BZ$ for all $j \neq \ell$ and $k_{i \ell} \in \BZ$.
\end{itemize}
In particular, $\ell$ is uniquely determined by $\textbf{k}$.
\end{proof}

By the proof of proposition \ref{Kummer_prop_1}, the contribution
from all classes $\beta_{n, \textbf{k}}$ with a given $\ell$ to the sum \eqref{Kummer_Series} is independent of $\ell$.
Hence, \eqref{Kummer_Series} equals
\begin{equation} 4 \cdot \sum_{ n, \textbf{k}}
\big\langle\, 1 \, \big\rangle_{0, \beta_{n,\textbf{k}}}^{\Km(\AAA)}
q^n y^{\sum_{i,j} k_{ij}}\,, \label{Kummer_Series2} \end{equation}
where the sum runs over all $(n, \textbf{k})$ such that $\beta_{n,\textbf{k}}$ is effective
\emph{and} $\ell(n, \textbf{k}) = 1$.
Hence, we may assume $\ell = 1$ from now on.

It will be useful to rewrite the classes $\beta_{n, \textbf{k}}$ in the basis
\begin{equation} B_1,\ F \quad \text{ and } \quad T_i,\ A_{i2},\ A_{i3},\ A_{i4}, \ \ \ i = 1, \dots, 4. \label{243_1} \end{equation}
Consider the class
\begin{align*}
\beta_{n, \textbf{k}}
& = \frac{1}{2} F' + \frac{n}{2} F + \sum_{i,j = 1}^{4} k_{ij} A_{ij}\ \in H_2( \Km(\AAA) , \BQ) \\
& = B_1 + \tilde{n} F + \sum_{i=1}^{4} \Big( a_i T_i + \sum_{j=2}^{4} b_{ij} A_{ij} \Big) \,,
\end{align*}
where $(n,\textbf{k})$ and $(\tilde{n}, a_i, b_{ij})$ are related by
\begin{equation}
n = 2 \tilde{n} + \textstyle{\sum}_i a_i, \quad
k_{i1} = -\frac{1}{2} (a_i + 1), \quad
k_{ij} = b_{ij} - \frac{a_i}{2} \ \ (j > 2).
\label{eff_trans_eqn}
\end{equation}

\begin{lemma} \label{eff_lemma_1}
If $\beta_{n, \textbf{k}}$ is \emph{effective}, then 
$\tilde{n}, a_i, b_{ij}$ are integers for all $i,j$.
\end{lemma}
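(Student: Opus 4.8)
The plan is to reduce the statement to the preceding lemma by an entirely integral change of basis, so that the proof becomes pure bookkeeping. The essential point is that the preceding lemma already produces an \emph{integral} representative of $\beta_{n,\textbf{k}}$, only in the redundant spanning set $\{B_1, T_i, A_{ij}\}$ rather than in the honest basis \eqref{243_1}; converting between the two introduces no denominators.

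Concretely, working (as throughout this part of the argument) in the case $\ell = 1$, the preceding lemma supplies non-negative integers $\bar a_i$ and $\bar b_{ij}$ with
\[ \beta_{n,\textbf{k}} = B_1 + \sum_{i=1}^4 \bar a_i\, T_i + \sum_{i,j=1}^4 \bar b_{ij}\, A_{ij}. \]
The four classes $A_{i1}$ do not belong to the basis \eqref{243_1}, but relation \eqref{241_3} lets me solve for them integrally,
\[ A_{i1} = F - 2 T_i - A_{i2} - A_{i3} - A_{i4}. \]
Substituting this into the terms $\bar b_{i1} A_{i1}$ and collecting would yield
\[ \beta_{n,\textbf{k}} = B_1 + \Big(\sum_{i=1}^4 \bar b_{i1}\Big) F + \sum_{i=1}^4 (\bar a_i - 2\bar b_{i1})\, T_i + \sum_{i=1}^4 \sum_{j=2}^4 (\bar b_{ij} - \bar b_{i1})\, A_{ij}, \]
so that $\tilde n = \sum_i \bar b_{i1}$, $a_i = \bar a_i - 2\bar b_{i1}$, and $b_{ij} = \bar b_{ij} - \bar b_{i1}$. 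As these are differences of integers, they are integers. Since \eqref{243_1} is a basis, the coordinates of $\beta_{n,\textbf{k}}$ are unique, so these are exactly the $\tilde n, a_i, b_{ij}$ appearing in \eqref{eff_trans_eqn}, and the lemma follows.

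I expect the only genuine subtlety---and the reason to route the argument this way---to be keeping all coefficients integral. Starting instead from the defining expression $\beta_{n,\textbf{k}} = \tfrac12 F' + \tfrac n2 F + \sum_{i,j} k_{ij} A_{ij}$ would force me to use $B_1 = \tfrac12\big(F' - \sum_i A_{i1}\big)$ together with the half-integers $k_{ij}$, and integrality would then have to be recovered at the end from the parity conditions characterising effective classes. Passing through the integral representation of the preceding lemma sidesteps this entirely, since relation \eqref{241_3} is then applied purely as an integer substitution.
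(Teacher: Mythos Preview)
Your proof is correct and is essentially identical to the paper's own argument: both invoke the preceding lemma to write $\beta_{n,\textbf{k}} = B_1 + \sum_i \bar a_i T_i + \sum_{i,j} \bar b_{ij} A_{ij}$ with integer coefficients, then substitute $A_{i1} = F - 2T_i - A_{i2} - A_{i3} - A_{i4}$ from \eqref{241_3} to read off $\tilde n = \sum_i \bar b_{i1}$, $a_i = \bar a_i - 2\bar b_{i1}$, $b_{ij} = \bar b_{ij} - \bar b_{i1}$ as integers. Your write-up is in fact somewhat more explicit than the paper's, which contains a minor notational slip (declaring $\tilde a_i, \tilde b_{ij}$ but then writing $a_i, b_{ij}$ in the displayed formula).
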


\begin{proof}
If $\beta_{n, \textbf{k}}$ is effective with $\ell(n, \textbf{k}) = 1$, there exist
non-negative integers
\[ \tilde{a}_i, \ \ i=1,\dots,4 \quad \text{ and } \quad \tilde{b}_{ij}, \ \ i,j = 1, \dots, 4 \]
such that
\[ \beta_{n,\textbf{k}} = B_{1} + \sum_{i=1}^4 a_i T_i + \sum_{i,j = 1}^{4} b_{ij} A_{ij} . \]
In the basis \eqref{243_1} we obtain
\[ \beta_{n,\textbf{k}}
= B_1 + \Big( \sum_{i=1}^{4} \tilde{b}_{i1} \Big) F
+ \sum_{i=1}^{4} \Big( (\tilde{a}_i - 2 \tilde{b}_{i1} ) T_i + \sum_{j=2}^{4} (\tilde{b}_{ij} - \tilde{b}_{i1}) A_i \Big).
\]
The claim follows.
\end{proof}

\begin{lemma} \label{eff_lemma_2}
If $\tilde{n}, a_i, b_{ij}$ are integers and $\beta_{n, \textbf{k}}^2 \geq -2$, then $\beta_{n, \textbf{k}}$ is effective.
\end{lemma}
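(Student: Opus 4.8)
The plan is to deduce effectivity from the Riemann--Roch theorem on the K3 surface $\Km(\AAA)$, using the nefness of the fiber class $F$ to eliminate the anti-effective alternative. This is the converse direction to Lemma~\ref{eff_lemma_1}, and together the two lemmas characterize exactly which of the classes $\beta_{n,\textbf{k}}$ are effective.

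First I would note that the integrality hypothesis guarantees that $\beta_{n,\textbf{k}}$ is an \emph{integral} class in $\Pic(\Km(\AAA))$: in the basis \eqref{243_1} it equals the integral combination $B_1 + \tilde n F + \sum_{i=1}^{4}\big(a_i T_i + \sum_{j=2}^{4} b_{ij} A_{ij}\big)$, so it is algebraic and of type $(1,1)$, and we may form the honest line bundle $\CO(\beta_{n,\textbf{k}})$. Since $\Km(\AAA)$ is a K3 surface, Riemann--Roch reads
\[ \chi(\CO(\beta_{n,\textbf{k}})) = 2 + \tfrac{1}{2}\,\beta_{n,\textbf{k}}^2 \geq 1, \]
the inequality being exactly the hypothesis $\beta_{n,\textbf{k}}^2 \geq -2$. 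Combined with Serre duality $h^2(\CO(\beta_{n,\textbf{k}})) = h^0(\CO(-\beta_{n,\textbf{k}}))$, this yields $h^0(\CO(\beta_{n,\textbf{k}})) + h^0(\CO(-\beta_{n,\textbf{k}})) \geq 1$, so that at least one of $\pm\beta_{n,\textbf{k}}$ is represented by an effective divisor.

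It then remains to exclude the class $-\beta_{n,\textbf{k}}$. Using $F^2 = 0$ and $A_{ij}\cdot F = 0$ together with $F\cdot F' = 2$ from \eqref{241_3} and the intersection table preceding it, a one-line computation gives $\beta_{n,\textbf{k}}\cdot F = \tfrac{1}{2}F'\cdot F = 1$. Because $F$ is the fiber class of the elliptic fibration $p\colon \Km(\AAA)\to\p^1$ it is nef, so any effective class pairs non-negatively with it; were $-\beta_{n,\textbf{k}}$ effective we would obtain $\beta_{n,\textbf{k}}\cdot F \leq 0$, contradicting $\beta_{n,\textbf{k}}\cdot F = 1$. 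Hence $\beta_{n,\textbf{k}}$ is the effective one.

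I do not expect a genuine obstacle here: the content of the statement is entirely the Riemann--Roch inequality, and the nefness of $F$ disposes of the remaining ambiguity immediately. The only points deserving care are confirming that the integrality hypothesis produces an integral rather than merely rational class in $\Pic(\Km(\AAA))$ --- which is transparent in the basis \eqref{243_1} --- and the bookkeeping of the pairing $\beta_{n,\textbf{k}}\cdot F$, for which the computation in the $(F',F,A_{ij})$-coordinates is cleanest.
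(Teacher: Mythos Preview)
Your proof is correct and follows essentially the same approach as the paper's: Riemann--Roch on the K3 surface together with Serre duality gives $h^0(\beta_{n,\textbf{k}}) + h^0(-\beta_{n,\textbf{k}}) \geq 1$, and the pairing $\beta_{n,\textbf{k}}\cdot F = 1$ rules out $-\beta_{n,\textbf{k}}$ being effective. The only cosmetic difference is that the paper writes Riemann--Roch in the symmetrized form $\tfrac{1}{2}(\chi(\CO(D))+\chi(\CO(-D))) = \tfrac{D^2}{2}+2$, while you use $\chi(\CO(D)) = 2 + \tfrac{1}{2}D^2$ directly; these are equivalent here since $K=0$.
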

\begin{proof}
If $\tilde{n}, a_i, b_{ij}$ are integers, then $\beta_{n, \textbf{k}}$ is the class of a divisor $D$.
By Riemann-Roch we have
\[ \frac{\chi(\CO(D)) + \chi(\CO(-D))}{2} = \frac{D^2}{2} + 2, \]
and by Serre duality we have
\[ h^0(D) + h^0(-D)\ \geq\ \frac{\chi(\CO(D)) + \chi(\CO(-D))}{2} \,. \]
Hence, if $\beta_{n,\textbf{k}}^2 = D^2 \geq -2$, then $h^0(D) + h^0(-D) \geq 1$.
Since $F \cdot \beta_{n, \textbf{k}} = 1$, we have $h^0(-D) = 0$, and therefore $h^0(D) \geq 1$ and $D$ effective.
\end{proof}

We are ready to evaluate the series \eqref{Kummer_Series2}.

By Lemma \ref{eff_lemma_1} we may replace the sum in \eqref{Kummer_Series2}
by a sum over all integers $\tilde{n} \in \BZ$ and all
elements
\[ x_i = a_i T_i + \sum_{j=2}^{4} b_{ij} A_{ij}, \ \ \ \ i=1,\dots 4 \]
such that
\begin{itemize}
 \item[(i)] $a_i, b_{i2}, b_{i3}, b_{i4}$ are integers for $i \in \{1,\dots, 4\}$,
 \item[(ii)] $B_1 + \tilde{n} F + \sum_{i} x_i$ is effective.
\end{itemize}
Hence, using \eqref{eff_trans_eqn} the series \eqref{Kummer_Series2} equals
\begin{equation}
4 \cdot \sum_{\tilde{n}} \sum_{x_1, \dots, x_4} q^{2 \tilde{n} + \sum_i a_i} y^{-2 + \sum_i \langle x_i, T_i \rangle} 
\big\langle\, 1 \, \big\rangle_{0, B_1 + \tilde{n} F + \sum_i x_i}^{\Km(\AAA)}, \label{KumS_3}
\end{equation}
where the sum runs over all $(\tilde{n}, x_1, \dots, x_4)$ satisfying (i) and (ii) above. 

By the Yau-Zaslow formula \eqref{YZ}, we have
\begin{equation}
\big\langle\, 1 \, \big\rangle_{0, B_1 + \tilde{n} F + \sum_i x_i}^{\Km(\AAA)}
=
\left[ \frac{1}{\Delta(\tau)} \right]_{q^{\tilde{n}-1 + \sum_i \langle x_i, x_i \rangle/2}},
\label{eff_YZ_eval}
\end{equation}
whenever $B_1 + \tilde{n} F + \sum_i x_i$ is effective;
here $[\ \cdot\ ]_{q^m}$ denotes the coefficient of $q^m$.
The term \eqref{eff_YZ_eval} vanishes, unless 
\[ \tilde{n}-1 + \frac{1}{2} \sum_i \langle x_i, x_i \rangle = \frac{1}{2} \left(B_1 + \tilde{n} F + \sum_i x_i\right)^2 \geq -1 \,. \]
When evaluating \eqref{KumS_3}, we may therefore restrict to tuples $(\tilde{n}, x_1, \dots, x_4)$, that
also satisfy
\begin{itemize}
 \item[(iii)] $\big(B_1 + \tilde{n} F + \sum_i x_i \big)^2 \geq -2$.
\end{itemize}
By Lemma \ref{eff_lemma_2}, condition (i) and (iii) together imply condition (ii).
In \eqref{KumS_3} we may therefore sum over tuples $(\tilde{n}, x_1, \dots, x_4)$ satisfying (i) and~(iii) alone.
Rewriting (iii) as
\[ \tilde{n} \geq - \sum_i \langle x_i, x_i \rangle/2 \]
and always assuming (i) in the following sums, \eqref{KumS_3} equals
\begin{align*}
& 4 \cdot \sum_{x_1, \dots, x_4}
\sum_{\tilde{n} \geq \sum_i \frac{\langle x_i, x_i \rangle}{-2}}
q^{2 \tilde{n} + \sum_i a_i} y^{-2 + \sum_i \langle x_i, T_i \rangle}
\left[ \frac{1}{\Delta(\tau)} \right]_{q^{\tilde{n}-1 + \sum_i \langle x_i, x_i \rangle/2}} \\
=\ & 4 \cdot \sum_{x_1, \dots, x_4} y^{-2 + \sum_i \langle x_i, T_i \rangle} q^{2 + \sum_i (a_i - \langle x_i, x_i \rangle)} \\
& \quad \quad \quad \quad \quad \quad \quad \ \ \ \times \sum_{\tilde{n} \geq \sum_i \frac{\langle x_i, x_i \rangle}{-2}} q^{2 \tilde{n} - 2 + \sum_i \langle x_i, x_i \rangle}
\left[ \frac{1}{\Delta(\tau)} \right]_{q^{\tilde{n}-1 + \sum_i \frac{\langle x_i, x_i \rangle}{2}}} \\
=\ & \frac{4}{\Delta(2 \tau)} \cdot \sum_{x_1, \dots, x_4} y^{-2 + \sum_i \langle x_i, T_i \rangle} q^{2 + \sum_i (a_i - \langle x_i, x_i \rangle)} \\
=\ & \frac{4}{\Delta(2 \tau)} \cdot \prod_{i=1}^{4} \Big( \sum_{x_i} y^{- \frac{1}{2} + \langle x_i, T_i \rangle} q^{\frac{1}{2} + a_i - \langle x_i, x_i \rangle} \Big)\,.
\end{align*}

Consider the $\mathsf{D}_4$ lattice, defined as $\BZ^4$ together with the bilinear form
\[ \BZ^4 \times \BZ^4 \ni (x,y) \mapsto \langle x,y \rangle := x^T M y, \]
where
\[ M = \left[ \begin{array}{rrrr} 2 & -1 & -1 & -1 \\ -1 & 2 & 0 & 0 \\ -1 & 0 & 2 & 0 \\ -1 & 0 & 0 & 2 \end{array} \right]. \]
Let $(e_1, \dots, e_4)$ denote the standard basis of $\BZ^4$ and let 
\[ \alpha = 2 e_1 + e_2 + e_3 + e_4. \]
Consider the function
\[ \Theta(z,\tau)
= \sum_{x \in \BZ^4} \exp\Big( - 2 \pi i \Big\langle x + \frac{\alpha}{2} , z e_1 + \frac{e_1}{2} \Big\rangle \Big) \cdot
 q^{\left\langle x + \frac{\alpha}{2}, x + \frac{\alpha}{2} \right\rangle}
\]
where $z \in \BC, \tau \in \BH$ and $q = e^{2 \pi i \tau}$.
The function $\Theta(z,\tau)$ is a theta function with characteristics associated to the lattice $\mathsf{D}_4$.
In particular $\Theta(z,\tau)$ is a Jacobi form of index $1/2$ and weight $2$, see \cite[Section 7]{EZ}.\footnote{
The general form of these theta functions is
\[ \Theta_\mathsf{v}\left[ \begin{array}{cc}A\\B\end{array} \right](z, \tau)
= \sum_{x \in \BZ^4} q^{\frac{1}{2} \langle x+A, x+A \rangle} \exp\Big( 2\pi i \cdot \big\langle x+A, z \cdot \mathsf{v} + B \big\rangle \Big) \,. \]
for characteristics $A,B \in \BQ^4$ and a direction vector $\mathsf{v} \in \BC^4$. Here,
\[ \Theta(z,\tau) = \Theta_{(-e_1)} \left[ \begin{array}{cc} \alpha/2 \\ -e_1/2 \end{array} \right](z, 2 \tau). \]
}

\begin{lemma} \label{theta_lemma_xzz} For every $i \in \{ 1, \dots, 4 \}$,
\[
\sum_{x_i} y^{- \frac{1}{2} + \langle x_i, T_i \rangle} q^{\frac{1}{2} + a_i - \langle x_i, x_i \rangle}
\ = \  \Theta(z,\tau)
\]
under $q = e^{2 \pi i \tau}$ and $y = -e^{2 \pi i z}$.
\end{lemma}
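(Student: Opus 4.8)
The plan is to identify the left-hand sum, which ranges over the lattice of fiber components of $\Km(\AAA)$ at $y_i$, with the $\mathsf{D}_4$ theta function on the right by a single change of summation variable. First I would compute the intersection form on the sublattice spanned by $T_i, A_{i2}, A_{i3}, A_{i4}$. Since $F_{y_i} = 2T_i + A_{i1} + \dots + A_{i4}$ is a singular fiber of Kodaira type $I_0^{\ast}$, i.e.\ of type $\widetilde{\mathsf{D}}_4$, the central component $T_i$ and the four outer components $A_{ij}$ are all $(-2)$-curves, $T_i$ meets each $A_{ij}$ transversally, and the $A_{ij}$ are mutually disjoint; this is consistent with $F \cdot A_{ij} = 0$ and $F^2 = 0$ via \eqref{241_3}. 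Consequently the Gram matrix of $(T_i, A_{i2}, A_{i3}, A_{i4})$ equals $-M$, where $M$ is the $\mathsf{D}_4$ matrix.

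Writing $x_i = a_i T_i + \sum_{j=2}^4 b_{ij} A_{ij}$ and $v = (a_i, b_{i2}, b_{i3}, b_{i4}) \in \BZ^4$, the identification of the Gram matrix with $-M$ gives the dictionary
\[ \langle x_i, x_i\rangle = -\langle v, v\rangle, \quad \langle x_i, T_i\rangle = -\langle v, e_1\rangle, \quad a_i = v_1, \]
where the brackets on the right are computed in the $\mathsf{D}_4$ lattice. Substituting these into the left-hand side rewrites it as $\sum_{v \in \BZ^4} y^{-1/2 - \langle v, e_1\rangle} q^{1/2 + v_1 + \langle v, v\rangle}$.

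Next I would expand $\Theta(z,\tau)$ and put $w = x + \alpha/2$, so that $w$ runs over the coset $\BZ^4 + \alpha/2$. The crucial point is the role of the characteristic $-e_1/2$: under $y = -e^{2\pi i z}$, the factor $\exp(-2\pi i \langle w, z e_1 + e_1/2\rangle) = \exp(-2\pi i (z + \tfrac12)\langle w, e_1\rangle)$ is exactly $y^{-\langle w, e_1\rangle}$, the half-integer shift $z \mapsto z + \tfrac12$ supplying precisely the sign in $y = -e^{2\pi i z}$. Hence $\Theta(z,\tau) = \sum_{w \in \BZ^4 + \alpha/2} y^{-\langle w, e_1\rangle} q^{\langle w, w\rangle}$.

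Finally, setting $w = v + \alpha/2$ with $v \in \BZ^4$ and using $M\alpha = e_1$ --- whence $\langle \alpha, \alpha\rangle = 2$, $\langle \alpha, e_1\rangle = 1$ and $\langle v, \alpha\rangle = v_1$ --- yields $\langle w, e_1\rangle = \langle v, e_1\rangle + \tfrac12$ and $\langle w, w\rangle = \langle v, v\rangle + v_1 + \tfrac12$. These match the two exponents of the rewritten left-hand side term by term, proving the identity. I expect the only genuinely delicate point to be the bookkeeping of the half-integer shifts and the sign coming from $y = -e^{2\pi i z}$; once the Gram matrix is recognized as $-M$, the rest is a formal reindexing.
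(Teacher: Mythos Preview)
Your proof is correct and follows essentially the same route as the paper: both identify the sublattice $\BZ T_i \oplus \BZ A_{i2} \oplus \BZ A_{i3} \oplus \BZ A_{i4}$ with $\mathsf{D}_4(-1)$ via $e_1 \mapsto T_i$, $e_j \mapsto A_{ij}$, rewrite the sum accordingly, and then match it to $\Theta(z,\tau)$ using the substitution $y = -e^{2\pi i z}$ and the identities $M\alpha = e_1$, $\langle \alpha,\alpha\rangle = 2$. Your writeup is in fact slightly more explicit about the intermediate steps (computing $\langle w,e_1\rangle$ and $\langle w,w\rangle$ after the shift $w = v + \alpha/2$), but there is no substantive difference.
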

\begin{proof}
Let $\mathsf{D}_4(-1)$ denote the lattice $\BZ^4$ with intersection form \[ (x,y) \mapsto {-}x^{T}M y. \]
The $\BZ$-homomorphism defined by
\[ e_1 \mapsto T_i,\ \ e_2 \mapsto A_{i2},\ \ e_3 \mapsto A_{i3}, \ \  e_4 \mapsto A_{i4} \]
is an isomorphism from $\mathsf{D}_4(-1)$ to
\[ \Big( \BZ T_i \oplus \BZ A_{i2} \oplus \BZ A_{i3} \oplus \BZ A_{i4}, \langle \cdot , \cdot \rangle \Big), \]
where $\langle \ , \, \rangle$ denotes the intersection product on $\Km(\AAA)$. Hence,
\[
\sum_{x_i} y^{- \frac{1}{2} + \langle x_i, T_i \rangle} q^{\frac{1}{2} + a_i - \langle x_i, x_i \rangle}
=
\sum_{x \in \BZ^4} y^{- \frac{1}{2} - \langle x, e_1 \rangle} q^{\frac{1}{2} + \langle x, \alpha \rangle + \langle x, x \rangle}
\]
Using the substitution $y = \exp( 2 \pi i z + \pi i )$, we obtain
\[ \sum_{x \in \BZ^4} \exp \Big( - 2 \pi i \cdot \big\langle x + \frac{\alpha}{2}, z e_1 + \frac{e_1}{2} \big\rangle \Big)
 \cdot q^{ \langle x + \frac{\alpha}{2}, x + \frac{\alpha}{2} \rangle} = \Theta(z,\tau) \,. \qedhere \\
\]
\end{proof}

\noindent By Lemma \ref{theta_lemma_xzz}, we conclude
\begin{equation}
\sum_{\substack{ n, \textbf{k} \\ \beta_{n,\textbf{k}} > 0}} \big\langle\, 1 \, \big\rangle_{0, \beta_{n,\textbf{k}}}^{\Km(\AAA)} q^n y^{\sum_{i,j} k_{ij}}
=
\frac{4}{\Delta(2 \tau)} \cdot \Theta(z,\tau)^4
\label{YZ_Kummer_jdhnr}
\end{equation}

\subsubsection{The theta function of the $\mathsf{D}_4$ lattice}
Consider the Dedekind eta function
\begin{equation}
\eta(\tau) = q^{1/24} \prod_{m \geq 1} (1 - q^m)
\end{equation}
and the first Jacobi theta function
\[ \vartheta_1(z,\tau) = -i q^{1/8} (p^{1/2} - p^{-1/2}) \prod_{m \geq 1} (1 - q^m) (1 - p q^m) (1 - p^{-1} q^m), \]
where $q = e^{2 \pi i \tau}$ and $p = e^{2 \pi i z}$.

\begin{prop} \label{YZ_Theta_identity} We have
\begin{equation} \Theta(z,\tau) = \frac{ -\vartheta_1(z,\tau) \cdot \eta(2 \tau)^6 }{ \eta(\tau)^3 } \label{YZ_Theta_identity_eqn} \end{equation}
\end{prop}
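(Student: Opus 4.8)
The plan is to show that, as a function of $z$ for fixed $\tau \in \BH$, the two sides of \eqref{YZ_Theta_identity_eqn} differ by a factor independent of $z$, and then to pin down that factor by a derivative computation at $z=0$. Write $p = e^{2\pi i z}$ and $q = e^{2\pi i \tau}$, and abbreviate $L(x) = \langle x + \tfrac{\alpha}{2}, e_1\rangle$ and $Q(x) = \langle x + \tfrac{\alpha}{2}, x + \tfrac{\alpha}{2}\rangle$, so that by the footnote definition
\[ \Theta(z,\tau) = \sum_{x \in \BZ^4} e^{-2\pi i (z + \frac12) L(x)}\, q^{Q(x)}. \]
Since the modular factor $-\eta(2\tau)^6/\eta(\tau)^3$ on the right of \eqref{YZ_Theta_identity_eqn} does not depend on $z$, it suffices to prove $\Theta(z,\tau) = c(\tau)\,\vartheta_1(z,\tau)$ for some $c(\tau)$, and then to identify $c(\tau)$.

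First I would record $M\alpha = e_1$ and $\langle \alpha,\alpha\rangle = 2$, which give $L(x) = 2x_1 - x_2 - x_3 - x_4 + \tfrac12$ and $Q(x) = \langle x,x\rangle + x_1 + \tfrac12$. Three transformation laws then follow by direct manipulation of the lattice sum. Under $z \mapsto z+1$ one has $e^{-2\pi i L(x)} = -1$ for every $x$, so $\Theta(z+1,\tau) = -\Theta(z,\tau)$. Under $z \mapsto z + 2\tau$ the extra factor $q^{-2L(x)}$ is absorbed by the integral reindexing $x \mapsto x + e_1$ (using $L(x+e_1) = L(x)+2$ and $Q(x+e_1) = Q(x) + 2L(x) + 2$), yielding $\Theta(z+2\tau,\tau) = p^{-2}q^{-2}\,\Theta(z,\tau)$. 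Finally, the substitution $x \mapsto -x - \alpha$, a bijection of $\BZ^4$ sending $x + \tfrac{\alpha}{2}$ to its negative, fixes $Q$ and negates $L$, and combined with $e^{-2\pi i L(x)}=-1$ gives the oddness $\Theta(-z,\tau) = -\Theta(z,\tau)$. These match the automorphy factors of $\vartheta_1(z,\tau)$ exactly: $\vartheta_1(z+1,\tau) = -\vartheta_1$, $\vartheta_1(z+\tau,\tau) = -q^{-1/2}p^{-1}\vartheta_1$ (whence $\vartheta_1(z+2\tau,\tau) = p^{-2}q^{-2}\vartheta_1$), and $\vartheta_1(-z,\tau) = -\vartheta_1$.

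Consequently the ratio $R(z,\tau) = \Theta(z,\tau)/\vartheta_1(z,\tau)$ is invariant under $z \mapsto z+1$ and $z \mapsto z + 2\tau$, i.e. elliptic for the lattice $\BZ + 2\tau\BZ$. Its only possible poles sit at the zeros of $\vartheta_1$, which in a fundamental domain for $\BZ + 2\tau\BZ$ are the two simple zeros $z = 0$ and $z = \tau$. At $z=0$ oddness forces $\Theta(0,\tau)=0$; at $z = \tau$ the $2\tau$-quasi-periodicity evaluated at $z=-\tau$ together with oddness gives $\Theta(\tau,\tau) = \Theta(-\tau,\tau) = -\Theta(\tau,\tau)$, hence $\Theta(\tau,\tau)=0$. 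Thus $R$ has no poles, so it is constant in $z$ and $\Theta(z,\tau) = c(\tau)\,\vartheta_1(z,\tau)$.

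It remains to compute $c(\tau)$, which I would do by differentiating at $z=0$: as both sides vanish there, $c(\tau) = \partial_z\Theta(0,\tau)/\partial_z\vartheta_1(0,\tau)$, and the Jacobi derivative formula gives $\partial_z\vartheta_1(0,\tau) = 2\pi\,\eta(\tau)^3$. For $\partial_z\Theta(0,\tau)$ I would complete the square in the central variable, $Q(x) = \tfrac12 x_1^2 + 2\sum_{i=2}^4 (x_i - \tfrac{x_1}{2})^2$, which factors the three arm-variables $x_2,x_3,x_4$ through a common one-variable theta at modulus $4\tau$; the single surviving derivative produces $\partial_z\Theta(0,\tau) = -2\pi\,\vartheta_4(0,4\tau)^3\,\eta(4\tau)^3$, and the eta-quotient identity $\vartheta_4(0,4\tau) = \eta(2\tau)^2/\eta(4\tau)$ collapses this to $-2\pi\,\eta(2\tau)^6$. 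Hence $c(\tau) = -\eta(2\tau)^6/\eta(\tau)^3$, as claimed. The main obstacle is the bookkeeping in the transformation laws—especially isolating the second zero at $z=\tau$, which is invisible in the raw sum but falls out of oddness combined with the $2\tau$-quasi-period—together with keeping track of the half-integral prefactor and its branch in the final derivative; the rest is routine theta-function manipulation.
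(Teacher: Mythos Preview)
Your overall architecture matches the paper's: establish that $\Theta/\vartheta_1$ is elliptic, show it has no poles so it is a constant $c(\tau)$, and then compute $c(\tau)$. Two differences are worth noting. First, the paper actually verifies the full $\tau$-quasi-periodicity $\Theta(z+\lambda\tau+\mu,\tau)=(-1)^{\lambda+\mu}q^{-\lambda/2}p^{-\lambda}\Theta(z,\tau)$, so only the single zero at $z=0$ is needed; your $2\tau$-periodicity plus the extra zero at $z=\tau$ is a legitimate workaround, and your derivation of that second zero from oddness is correct. Second, and more substantively, the paper pins down $c(\tau)$ by \emph{evaluating} at $z=\tfrac12$ rather than differentiating at $z=0$: one gets $\Theta(\tfrac12,\tau)=-\sum_{x}q^{\langle x+\alpha/2,\,x+\alpha/2\rangle}$, and a separate lemma on the $\mathsf D_4$ theta constant shows this equals $-2\,\eta(2\tau)^{8}/\eta(\tau)^{4}$, while $F(\tfrac12,\tau)=2\,\eta(2\tau)^{2}/\eta(\tau)^{4}$.

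The gap in your proposal is in the derivative step. Your formula $Q(x)=\tfrac12 x_1^2+2\sum_{i\ge 2}(x_i-\tfrac{x_1}{2})^2$ actually computes $\langle x,x\rangle$, not $Q(x)=\langle x+\alpha/2,\,x+\alpha/2\rangle=\langle x,x\rangle+x_1+\tfrac12$; the characteristic shift is missing. More seriously, the substitution $u_i=x_i-\tfrac{x_1}{2}$ (equivalently $u_i=y_i-\tfrac{y_1}{2}$ with $y=x+\alpha/2$) does \emph{not} decouple the sum: the range of each $u_i$ is $\BZ$ or $\BZ+\tfrac12$ according to the parity of the central variable, so the three ``arm'' sums are not independent one-variable thetas at modulus $4\tau$ as you assert. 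One is forced to split on parity and recombine, and carrying the extra linear weight $L(x)$ from the differentiation makes this no easier than the paper's route. The paper's choice of $z=\tfrac12$ kills the linear factor entirely and reduces to a pure quadratic theta constant, whose evaluation (Lemma~\ref{xmcisforg}) exploits the transitivity of the Weyl group of $\mathsf D_4$ on roots; that is the step your sketch is missing an analogue of.
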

The proof of Proposition \ref{YZ_Theta_identity} is given below.
We complete the proof of Theorem \ref{thm_F_evaluation}.
\begin{proof}[Proof of Theorem \ref{thm_F_evaluation}]
By Proposition \ref{Kummer_prop_1}, we have
\[ 4 \cdot F^{\textup{GW}}(y,q)^{4}
= \sum_{\substack{n, \textbf{k} \\ \beta_{b,\textbf{k}} > 0}} \big\langle\, 1 \, \big\rangle_{0, \beta_{n,\textbf{k}}}^{\Km(\AAA)} q^n y^{\sum_{i,j} k_{ij}} \,.
\]
The evaluation \eqref{YZ_Kummer_jdhnr} and Proposition \ref{YZ_Theta_identity} yields
\[ F^{\textup{GW}}(y,q)^{4} = \frac{1}{\Delta(2 \tau)} \left( \frac{ \vartheta_1(z,\tau) \cdot \eta(2 \tau)^{6} }{ \eta(\tau)^3 } \right)^4 \,. \]
Since $\Delta(\tau) = \eta(\tau)^{24}$, we conclude
\[ F^{\textup{GW}}(y,q) = \pm \frac{ \vartheta_1(z,\tau) }{\eta^3(\tau)}. \]
By the definition of $F^{\textup{GW}}$ in Section \ref{concl_basic_case}, the coefficient of $y^{-1/2} q^0$ is $1$. Hence
\[ F^{\textup{GW}}(y,q) = \frac{ \vartheta_1(z,\tau) }{\eta^3(\tau)} = F(z, \tau). \qedhere \]
\end{proof}

\begin{proof}[Proof of Proposition \ref{YZ_Theta_identity}]
Both sides of \eqref{YZ_Theta_identity_eqn} are Jacobi forms of weight $2$ and index $1/2$ for a certain congruence subgroup of the Jacobi group.
The statement would therefore follow by the theory of Jacobi forms \cite{EZ} after comparing enough coefficients of both sides.
For simplicity, we will instead prove the statement directly.

We will work with the variables $q = e^{2 \pi i \tau}$ and $p = e^{2 \pi i z}$. Consider
\begin{equation} F(z,\tau) = \frac{\vartheta_1(z,\tau)}{\eta^3(\tau)} = -i(p^{1/2} - p^{-1/2}) \prod_{m \geq 1} \frac{ (1-pq^m) (1-p^{-1}q^m)}{ (1-q^m)^2 } \label{ffffpqdf}\end{equation}
By direct calculation one finds
\begin{equation}
\begin{aligned}
\label{15343r4g}
F( z+ \lambda \tau + \mu, \tau) & = (-1)^{\lambda + \mu} q^{-\lambda/2} p^{-\lambda} K(z,\tau) \\
\Theta(z + \lambda \tau + \mu, \tau) & = (-1)^{\lambda + \mu} q^{-\lambda/2} p^{-\lambda} \Theta(z,\tau) \,.
\end{aligned}
\end{equation}
We have
\begin{align*}
\Theta(0, \tau)
& = \sum_{x \in \BZ^4} \exp\Big( - 2 \pi i \Big\langle x + \frac{\alpha}{2} , \frac{e_1}{2} \Big\rangle \Big) q^{\left\langle x + \frac{\alpha}{2}, x + \frac{\alpha}{2} \right\rangle} \\
& = \sum_{x' \in \BZ^4 + \frac{\alpha}{2}}  \exp\big( - \pi i \langle x', e_1 \rangle \big) q^{\langle x', x' \rangle}
\end{align*}
Since for every $x' = m + \frac{\alpha}{2}$ with $m \in \BZ^4$ one has
\begin{align*}
\exp\big( - \pi i \langle x', e_1 \rangle \big) + \exp\big( - \pi i \langle - x', e_1 \rangle \big)
& = -i (-1)^{\langle m, e_1\rangle} + i (-1)^{- \langle m, e_1 \rangle} \\ & = 0,
\end{align*}
we find $\Theta(0,\tau) = 0$. By \eqref{ffffpqdf}, we also have $F(0, \tau) = 0$.

Since $\Theta$ and $F$ are Jacobi forms of index $1/2$ (see \cite[Theorem 1.2]{EZ}),
the point $z = 0$ is the only zero of $\Theta$ resp. $F$ in the standard fundamental region.
Therefore, the quotient
\[ \frac{\Theta(z,\tau)}{F(z,\tau)} \]
is a double periodic entire function, and hence a constant in $\tau$.
Using the evaluations
\[ F\left( \frac{1}{2}, \tau \right) = 2 \prod_{m \geq 1} \frac{(1+q^m)^2}{(1-q^m)^2} = 2 \frac{\eta(2 \tau)^2}{\eta(\tau)^4} \]
and 
\[ \Theta\left( \frac{1}{2} , \tau \right)
 = \sum_{x \in \BZ^4} (-1) q^{\langle x + \frac{\alpha}{2}, x + \frac{\alpha}{2} \rangle},
\]
the statement therefore follows directly from Lemma \ref{xmcisforg} below.
\end{proof}

\begin{lemma} \label{xmcisforg} We have
\[ \sum_{x \in \BZ^4} q^{\langle x + \frac{\alpha}{2}, x + \frac{\alpha}{2} \rangle} = 2 \, \frac{ \eta(2 \tau)^8 }{ \eta(\tau)^4 } \,. \]
\end{lemma}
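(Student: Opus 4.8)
The plan is to recognize the left-hand side as a theta series of the $\mathsf{D}_4$ root lattice and to reduce it, via an explicit Euclidean model, to a product of one-variable theta constants which are then expressed through $\eta$. Throughout I keep the convention $q = e^{2\pi i \tau}$ of the paper.

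First I would fix an isometry between $(\BZ^4, \langle\cdot,\cdot\rangle)$ and the standard $\mathsf{D}_4$ root lattice $\{\, y \in \BZ^4 : y_1+y_2+y_3+y_4 \in 2\BZ \,\}$ equipped with the Euclidean form, sending the basis $e_1,\dots,e_4$ to the simple roots $e_2-e_3,\ e_1-e_2,\ e_3-e_4,\ e_3+e_4$; a direct check shows that the Gram matrix of these four roots is exactly $M$, so the assignment is an isometry. Under it the vector $\alpha = 2e_1+e_2+e_3+e_4$ maps to $e_1+e_2$, hence $\alpha/2 \mapsto \tfrac12(e_1+e_2)$, and the sum becomes $\sum_{w} q^{\,|w+\frac12(e_1+e_2)|^2}$, where $w$ runs over $\mathsf{D}_4$ and $|\cdot|$ is the Euclidean norm. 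Writing $w=(m_1,m_2,m_3,m_4)$ with $\sum_i m_i$ even, the exponent is $(m_1+\tfrac12)^2+(m_2+\tfrac12)^2+m_3^2+m_4^2$.

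Next I would remove the parity constraint by inserting $\tfrac12\bigl(1+(-1)^{m_1+m_2+m_3+m_4}\bigr)$, which splits the sum into $\tfrac12\bigl(a^2 b^2 + (a')^2 (b')^2\bigr)$, where $a=\sum_n q^{(n+1/2)^2}$, $b=\sum_n q^{n^2}$, $a'=\sum_n (-1)^n q^{(n+1/2)^2}$ and $b'=\sum_n (-1)^n q^{n^2}$. The key observation is that the substitution $n\mapsto -1-n$ fixes $(n+\tfrac12)^2$ but flips the sign of $(-1)^n$, so $a'=0$; hence the alternating contribution vanishes identically and the sum collapses to $\tfrac12 a^2 b^2$.

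Finally, using the Jacobi triple product in the forms $a=2q^{1/4}\prod_{m\geq1}(1-q^{2m})(1+q^{2m})^2$ and $b=\prod_{m\geq1}(1-q^{2m})(1+q^{2m-1})^2$, together with $\prod_{m\geq1}(1+q^m)=1/\prod_{m\geq1}(1-q^{2m-1})$, I would simplify $ab$ to $2q^{1/4}\prod_{m\geq1}(1-q^{2m})^2/\prod_{m\geq1}(1-q^{2m-1})^2$, which upon separating the even and odd factors in $\prod_{m\geq1}(1-q^m)$ equals $2\,\eta(2\tau)^4/\eta(\tau)^2$. Squaring yields $\tfrac12 a^2 b^2 = 2\,\eta(2\tau)^8/\eta(\tau)^4$, as claimed. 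I expect the main obstacle to be purely bookkeeping: pinning down the isometry so that $\alpha \mapsto e_1+e_2$ and keeping the $\eta$-conventions consistent through the triple-product manipulations. The single genuinely structural input is the vanishing $a'=0$, which is what forces the answer to be a clean $\eta$-quotient rather than a sum of two theta products.
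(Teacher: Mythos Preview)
Your argument is correct. The isometry you write down really has Gram matrix $M$ and sends $\alpha$ to $e_1+e_2$; the parity-splitting via $\tfrac12(1+(-1)^{\sum m_i})$ gives exactly $\tfrac12\bigl(a^2b^2+(a')^2(b')^2\bigr)$; the vanishing $a'=0$ by $n\mapsto -1-n$ is the crucial step; and your $\eta$-manipulation checks out (indeed $ab = 2q^{1/4}\prod(1-q^{2m})^4/\prod(1-q^m)^2 = 2\,\eta(2\tau)^4/\eta(\tau)^2$).

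The paper takes a different route. Rather than passing to the Euclidean model, it works inside the original $(\BZ^4,\langle\cdot,\cdot\rangle)$ and exploits that $\alpha,e_2,e_3,e_4$ form an \emph{orthogonal} system of four roots. Summing over all half-integer combinations of this orthogonal frame gives $\bigl(\sum_n q^{(n+1/2)^2/2}\bigr)^4 = 16\,\eta(2\tau)^8/\eta(\tau)^4$ by the triple product. This over-parametrises the desired coset $\BZ^4+\alpha/2$: splitting according to the parities of $m_1+m_i$ ($i=2,3,4$) produces eight sub-sums, each of the form $\sum_{x\in\BZ^4} q^{\langle x+\beta/2,x+\beta/2\rangle}$ for some root $\beta$. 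Transitivity of the Weyl group of $\mathsf{D}_4$ on roots makes all eight equal, and dividing by $8$ gives the lemma. So where you use the vanishing of the odd theta constant ($a'=0$) to kill the unwanted piece, the paper uses the Weyl symmetry of the root system to show the eight pieces coincide. Your approach is more hands-on and needs only one theta identity plus a parity trick; the paper's approach is slightly more structural, making the role of the $\mathsf{D}_4$ isometry group explicit. Both are short and self-contained.
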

\begin{proof}
As a special case of the Jacobi triple product \cite{Chandra}, we have
\[ 2 \frac{ \eta(2 \tau)^2 }{ \eta(\tau) } = 2 q^{1/8} \prod_{m \geq 1} \frac{(1-q^{2m})^{2}}{(1-q^m)} = \sum_{m \in \BZ} q^{(m + \frac{1}{2})^{2} / 2} \]
For $m = (m_1, \dots, m_4) \in \BZ^4$, let
\[
x_m =
\left( m_1 + \frac{1}{2} \right) \frac{\alpha}{2}
+ \left( m_2 + \frac{1}{2} \right) \frac{e_2}{2}
+ \dots + \left(m_4 + \frac{1}{2}\right) \frac{e_4}{2}
\]
Using that $\alpha, e_2, e_3, e_4$ are orthogonal, we find
\[
16 \, \frac{ \eta(2 \tau)^8 }{ \eta(\tau)^4 }
= \left( \sum_{m \in \BZ} q^{(m + \frac{1}{2})^{2} / 2} \right)^{4}
= \sum_{m \in \BZ^4} q^{\langle x_m, x_m \rangle}
\]
We split the sum over $m = (m_1, \dots, m_4) \in \BZ^4$
depending upon whether $m_1 + m_i$ is odd or even for $i=2,3,4$,
\begin{equation} \sum_{m \in \BZ^4} q^{\langle x_m, x_m \rangle}
 =
\sum_{s_2, s_3, s_4 \in \{0,1\}}
 \sum_{\substack{(m_1, \dots,m_r) \in \BZ^4 \\ m_1 + m_i \equiv s_i\, (2) }} q^{\langle x_m, x_m \rangle}
\label{dmsodvfsdg} 
\end{equation}
For every choice of $s_2, s_3, s_4 \in \{0,1\}$, we have
\begin{equation*} \sum_{\substack{(m_1, \dots,m_r) \in \BZ^4 \\ m_1 + m_i \equiv s_i (2) }} q^{\langle x_m, x_m \rangle}
 = \sum_{x \in \BZ^4} q^{\left\langle x + \frac{\beta}{2} , x + \frac{\beta}{2} \right\rangle},
\end{equation*}
where $\beta \in \BZ^4$ is a root of the $\mathsf{D}_4$-lattice (i.e. $\langle \beta, \beta \rangle = 2$).
Since the isometry group of $\mathsf{D}_4$ acts transitively on roots,
\[ \sum_{x \in \BZ^4} q^{\left\langle x + \frac{\beta}{2} , x + \frac{\beta}{2} \right\rangle} =
  \sum_{x \in \BZ^4} q^{\langle x + \frac{\alpha}{2}, x + \frac{\alpha}{2} \rangle}.
\]
Inserting this into \eqref{dmsodvfsdg} and dividing by $8$, the proof is complete.
\end{proof}

\section{Evaluation of further Gromov-Witten invariants}
\label{Chapter_More_Evaluations} \label{Section_More_Evaluations}

\subsection{Overview}
In Section \ref{Chapter_More_Evaluations} and Section \ref{Section_Hilb2P1xE} we prove Theorem \ref{MThm}

In Section~\ref{BL_reformulation} we reduce
the calculation to a Bryan-Leung K3.
We also state one extra evaluation on the Hilbert scheme of $2$ points of a K3 surface,
which is required in Section \ref{Section_Quantum_Cohomology}.
Next, for each case separately, we analyse the moduli space of maps which are incident to the given conditions.
In each case, the main result is a splitting statement similar to Proposition \ref{W0splitprop}.

As a result, the proof of Theorem \ref{MThm} is reduced to the calculation of certain universal contributions
associated to single elliptic fibers.
These contributions will be determined in Section~\ref{Section_Hilb2P1xE}
using the geometry of $\Hilb^2(\p^1 \times E)$, where $E$ is an elliptic curve.
The strategy is parallel but more difficult to the evaluation considered in Section \ref{basic_case}.

\subsection{Reduction to the Bryan-Leung K3} \label{BL_reformulation}
Let $\pi : S \to \p^1$ be an elliptic K3 surface with a unique section and $24$ nodal fibers.
Let $B$ and $F$ be the section and fiber class respectively, and let 
\[ \beta_h = B + h F \]
for $h \geq 0$. The quantum bracket $\langle \ \ldots \ \rangle_q$ on $\Hilb^d(S), d \geq 1$ is defined by
\[
 \big\langle \gamma_1, \dots, \gamma_m \big\rangle_q^{\Hilb^d(S)}
= \sum_{h \geq 0} \sum_{k \in \BZ} y^k q^{h-1} \langle \gamma_1, \dots, \gamma_m \rangle^{\Hilb^d(S)}_{\beta_h + k A} \,,
\]
where $\gamma_1, \dots, \gamma_m \in H^{\ast}(\Hilb^d(S))$ are cohomology classes.
By arguments parallel to Section \ref{K3statements},
Theorem \ref{MThm} is equivalent to the following Theorem.

\begin{thm} \label{ellthm2} Let $P_1, \dots, P_{2d-2} \in S$ be generic points. For $d \geq 2$,
\begin{align*}
\blangle C(F) \brangle^{\Hilb^d(S)}_q
& = \frac{G(z,\tau)^{d-1}}{\Delta(\tau)} \\[3pt]
\blangle A\, \brangle^{\Hilb^d(S)}_q
& = - \frac{1}{2} \Big( y \frac{d}{dy} G(z, \tau) \Big) \frac{G(z,\tau)^{d-2}}{\Delta(\tau)} \\[3pt]
\blangle I(P_1), \ldots, I(P_{2d-2}) \brangle^{\Hilb^d(S)}_q
& = \frac{1}{d} \binom{2d-2}{d-1} \Big( q \frac{d}{dq} F(z,\tau)\Big)^{2d-2} \frac{1}{\Delta(\tau)}
\end{align*}
under the variable change $q = e^{2 \pi i \tau}$ and $y = - e^{2 \pi i z}$.
\end{thm}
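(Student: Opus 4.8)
The plan is to follow the strategy of Section~\ref{basic_case}: reduce to the fixed Bryan--Leung K3, establish a splitting of the reduced virtual class into contributions localized at single elliptic fibers, and then evaluate those contributions. First I would reduce Theorem~\ref{ellthm2} to the fixed surface $\pi : S \to \p^1$. As in Section~\ref{K3statements} and Lemma~\ref{fijerfmimv}, deformation invariance of the reduced theory together with transitivity of the lattice isometry group on primitive vectors of fixed square transports the invariants $\mathsf{N}^{(i)}_{d,h,k}$ from an arbitrary K3 to the brackets $\langle C(F) \rangle_q$, $\langle A \rangle_q$ and $\langle I(P_1), \dots, I(P_{2d-2}) \rangle_q$ on $S$, so that Theorems~\ref{MThm} and~\ref{ellthm2} are equivalent.

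Next, for each of the three invariants separately, I would run the moduli analysis of Section~\ref{basic_case}. A genus~$0$ stable map of class $\beta_h + kA$ has a distinguished component $C_0$ mapping isomorphically (Lemma~\ref{117}) onto a line $L \subset \Hilb^d(B_0) = \p^d$, and by Lemma~\ref{200} every other component is vertical and can be attached only where $L$ meets the discriminant $\CW$. A generic line meets the $24$ hyperplanes $I(x_i)$ in one point each and the degree-$(2d-2)$ diagonal discriminant in $2d-2$ points, which is the source of the exponents. The incidence cycles then localize the geometry: each point condition $I(P_\ell)$ forces a vertical bubble through the prescribed point $P_\ell$ over the smooth fiber $F_{\pi(P_\ell)}$, while the insertions $C(F)$ and $A$ decorate a single fiber. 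I would then prove the analogue of Lemma~\ref{doesnotsmooth}, namely that no infinitesimal deformation preserving the incidence conditions smooths the attachment nodes, and deduce, by the cotangent-complex and obstruction-theory argument of Section~\ref{Section_analysis_of_virtual_class}, a splitting of the moduli space and its virtual class as a product over the special fibers in the spirit of Proposition~\ref{W0splitprop}, with the dependence on the fixed line~$L$ cancelling in the cone computation.

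With the splitting in hand, each invariant becomes a product of universal single-fiber series summed over distributions of $(h,k)$. The $24$ nodal fibers contribute the Bryan--Leung factor $\prod_{m \geq 1}(1-q^m)^{-1}$, which together with the $q$-shift assembles $1/\Delta$. The remaining bookkeeping produces the stated shapes. For $\langle I(P_1), \dots, I(P_{2d-2}) \rangle_q$ each decorated smooth fiber replaces the base-case factor $F$ by its derivative $q\frac{d}{dq}F$, the point condition behaving as a fiber-degree insertion, and the combinatorial count of admissible line-and-fiber configurations yields the Catalan prefactor $\frac{1}{d}\binom{2d-2}{d-1}$; for $\langle C(F) \rangle_q$ the decorated-fiber series is exactly $G = F^2 (y\frac{d}{dy})^2 \log F$, giving $G^{d-1}$; and $\langle A \rangle_q$ follows from the same analysis, its generating series being $\frac{1}{2-2d}\, y\frac{d}{dy}$ applied to that of $\langle C(F) \rangle_q$, reflecting that the $A$-degree $k$ is recorded by the variable~$y$.

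The main obstacle is twofold. The genuine computational heart is the identification and evaluation of the new universal fiber series: showing that the decorated smooth-fiber contribution is $G$ and that point incidence produces $q\frac{d}{dq}F$. Unlike the base case, these cannot be extracted from the Kummer and Yau--Zaslow computation, and instead require the more delicate analysis of $\Hilb^2(\p^1 \times E)$ carried out in Section~\ref{Section_Hilb2P1xE}. The second, more technical difficulty is the deformation-theoretic splitting itself: once insertions are present the domain can bubble in new ways and the incidence cycles interact with the attachment nodes, so the non-smoothing lemma and the factorization of the obstruction theory must be re-established, with care, in each of the three cases.
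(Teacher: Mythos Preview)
Your outline is correct and follows essentially the paper's own proof: reduction to the Bryan--Leung surface, splitting the moduli space and virtual class over the special fibers of $\pi^{[d]}$ as in Section~\ref{basic_case}, and then evaluating the new universal fiber series via the $\Hilb^2(\p^1\times E)$ geometry of Section~\ref{Section_Hilb2P1xE}.

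Two places where the actual argument differs in detail from your sketch are worth flagging. First, for $\langle C(F)\rangle_q$ and $\langle A\rangle_q$ the image line $L$ is \emph{not} generic: the incidence cycle $F_0[1]P_1[1]\cdots P_{d-1}[1]$ (resp.\ $P_0[2]P_1[1]\cdots P_{d-2}[1]$) pins $f|_{C_0}$ to a line of the form $B_0[1]P_1[1]\cdots P_{d-1}[1]$, which meets the diagonal discriminant in only $d-1$ points $2u_j+\sum_{i\neq j}u_i$, not $2d-2$. This is why the exponent is $d-1$. For $\langle A\rangle_q$ one of these $d-1$ fibers is singled out by the exceptional-curve condition and contributes a distinct series $\widetilde{G}^{\textup{GW}}$, which the $\Hilb^2(\p^1\times E)$ computation identifies as $-\tfrac{1}{2}\,y\frac{d}{dy}G$; the differential relation you wrote is a consequence, not the mechanism. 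Second, for $\langle I(P_1),\dots,I(P_{2d-2})\rangle_q$ the line $L$ genuinely moves to first order inside the Grassmannian, so ``$f(C_0)$ is fixed'' fails here; the splitting isomorphism (Proposition~\ref{352_splitting_lemma}) has to be built by hand using the \'etale map $G(2,d+1)\dashrightarrow \Hilb^{2d-2}(\p^1)$ sending a line to its ramification divisor, and the Catalan count $\tfrac{1}{d}\binom{2d-2}{d-1}$ is exactly the number of reduced points in the fiber of this map (Lemma~\ref{intermediate_lemma}). Your sketch anticipates both issues, but these are the spots where the work is concentrated.
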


Later we will require one additional evaluation on $\Hilb^2(S)$.
Let $P \in S$ be a generic point and let
\[ \Fp_{-1}(F)^2 1_S \]
be the class of a generic fiber of $\pi^{[2]} : \Hilb^2(S) \to \p^2$.
\begin{thm} \label{extra_eval} Under the variable change $q = e^{2 \pi i \tau}$ and $y = - e^{2 \pi i z}$,
\[ \blangle \Fp_{-1}(F)^2 1_S ,\, I(P) \brangle^{\Hilb^2(S)}_q = \frac{F(z,\tau) \cdot q \frac{d}{dq} F(z,\tau) }{ \Delta(\tau) } \]
\end{thm}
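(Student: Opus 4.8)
The plan is to adapt the moduli-theoretic analysis of Section~\ref{basic_case} to the present asymmetric pair of conditions, working directly on the Bryan--Leung K3 surface $S$ where the statement is phrased. Choose a generic point $z_1 \in \p^2 = \Hilb^2(\p^1)$ with Lagrangian fiber $Z_1 = (\pi^{[2]})^{-1}(z_1)$ of class $\Fp_{-1}(F)^2 1_S$, and a generic point $P \in S$ lying over $x_P = \pi(P) \in \p^1$, so that $F_{x_P}$ is a smooth fiber. I would restrict to the locus of genus $0$ stable maps $[f\colon C \to \Hilb^2(S), p_1, p_2]$ of class $\beta_h + kA$ with $f(p_1) \in Z_1$ and $f(p_2) \in I(P)$, which computes the integral defining the bracket. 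As in Section~\ref{basic_case}, the composition $\pi^{[2]} \circ f$ has degree $1$, so its image is a line $L \subset \p^2$, and $f(p_1) \in Z_1$ forces $z_1 \in L$.

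The central geometric point is that $I(P)$ rigidifies $L$. Since $P \notin B_0$ for generic $P$, the marked point $p_2$ cannot lie on the distinguished component $C_0$, which by Lemma~\ref{117}(iii) maps isomorphically onto a line inside $\Hilb^2(B_0)$. Hence $p_2$ lies on a rational tail mapping non-constantly into the fiber of $\pi^{[2]}$ over the attaching point $\tilde{t} \in L$, and the condition $P \in f(p_2)$ forces $x_P$ to be one of the two points of $\tilde{t}$, so $\tilde{t} \in I(x_P) \cap L$. A non-constant rational tail, however, can exist only over the discriminant $\CW$, since over any other point the fiber of $\pi^{[2]}$ is a product of smooth elliptic curves and carries no rational curves. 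As $x_P$ is generic it is not a nodal base point, so $\tilde{t}$ must lie on the diagonal $\Delta_{\Hilb^2(\p^1)}$; but $I(x_P)$ meets the diagonal only at $2x_P$. Therefore $2x_P \in L$, and together with $z_1 \in L$ this pins down $L = L_P$, the unique line through $z_1$ and $2x_P$. Thus one of the two diagonal intersection points of $L_P$ is $\tilde{y}_1 = 2x_P$, the other a generic diagonal point $\tilde{y}_2$, and $p_2$ lies on the tail over $\tilde{y}_1$, which maps into $\Sym^2(F_{x_P})$ and reaches a subscheme containing $P$.

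With $L_P$ rigid, the deformation analysis of Section~\ref{defsplii} applies verbatim: no node is smoothed, the curve decomposes as $C = C_0 \cup \bigcup_i A_i \cup B_1 \cup B_2$ with tails over the $24$ points $\tilde{x}_i$ and the two diagonal points $\tilde{y}_1 = 2x_P,\ \tilde{y}_2$, and the virtual class splits as a product exactly as in Proposition~\ref{W0splitprop}. The insertion $\ev_2^\ast[I(P)]$ is pulled back from the single factor carrying $p_2$, namely the tail over $\tilde{y}_1 = 2x_P$. Running the bookkeeping of Section~\ref{concl_basic_case}, the $24$ nodal factors combine to $1/\Delta(q)$, the ordinary tail over $\tilde{y}_2$ contributes $F^{\textup{GW}}(y,q) = F(z,\tau)$ by Theorem~\ref{thm_F_evaluation}, and the whole bracket becomes
\[
\frac{1}{\Delta(q)} \cdot F^{\textup{GW}}(y,q) \cdot \widetilde{F}(y,q),
\]
where $\widetilde{F}$ is the generating series of the \emph{modified} tail contribution over $2x_P$, now carrying the extra marked point $p_2$ and the insertion $\ev_2^\ast[I(P)]$.

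It remains to evaluate $\widetilde{F} = q\frac{d}{dq}F(z,\tau)$, which I expect to be the main obstacle. The claim is that inserting $I(P)$ multiplies the class-$(h_1 F + k_1 A)$ contribution of the fiber tail by its fiber degree $h_1$. Indeed, by Lemma~\ref{pullback_lemma} the surface curve $\widetilde{C} \subset C \times S$ associated to such a tail maps to $S$ with class $q_\ast p^\ast(h_1 F + k_1 A) = h_1 F$, i.e.\ it covers the elliptic fiber $F_{x_P}$ with degree $h_1$; hence $I(P)$ meets it in $h_1$ points and the divisor equation along the elliptic fiber yields $\widetilde{F} = q\frac{d}{dq}F^{\textup{GW}} = q\frac{d}{dq}F$. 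The rigorous evaluation of this modified local contribution --- including the compatibility of the insertion with the reduced virtual class and the control of boundary corrections --- is carried out by the same deformation to the normal cone of $F_{x_P}$ and comparison with $\Hilb^2(\p^1 \times E)$ used in Section~\ref{Section_Hilb2P1xE} for Theorem~\ref{MThm}. Combining the three factors then gives $\blangle \Fp_{-1}(F)^2 1_S,\, I(P)\brangle_q^{\Hilb^2(S)} = F(z,\tau)\cdot q\frac{d}{dq}F(z,\tau)/\Delta(\tau)$, as claimed.
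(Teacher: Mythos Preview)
Your overall architecture matches the paper's Lemma~\ref{Lemma_in_special_case_4}: fix the line $L$ through the generic point $z_1$ and the diagonal point $2x_P$, split the moduli space along $C_0 \cup \bigcup A_i \cup B_1 \cup B_2$, and express the bracket as $\Delta^{-1}$ times a product of two local fiber contributions. The paper calls your $\widetilde{F}$ by the name $H^{\textup{GW}}$.

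There is, however, a real gap at the step ``with $L_P$ rigid, the deformation analysis of Section~\ref{defsplii} applies verbatim.'' The line is \emph{not} scheme-theoretically rigid here. By Lemma~\ref{410_lemma}, the tail over $2x_P$ only forces $\pi^{[2]}\circ\phi$ to land in $I(x_P)\cap\Delta_{\Hilb^2(\p^1)}$, an infinitesimal neighborhood of $2x_P$; so under first-order deformations the attaching point, and hence $L$ (which pivots around $z_1$), and hence also the \emph{other} diagonal point $2u$, all move to first order. The paper says this explicitly: ``the moduli space is scheme-theoretically \emph{not} a product of the above moduli spaces.'' Consequently Proposition~\ref{W0splitprop} does not apply verbatim, and the factorization of the virtual class requires the extra degeneration-to-normal-cone argument carried out in the proof of Lemma~\ref{Lemma_in_special_case_4}: one opens up to a larger moduli space $N$ where the unmarked diagonal point is free, obtains an honest product $M^{(\mathrm F)}\times M^{(\mathrm H)}_{v}$, and then pulls back along the incidence locus $V$ of lines through $z_1$, using the degeneration to decouple the two factors. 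Your sketch invokes deformation to the normal cone only at the end, for evaluating $\widetilde{F}$, not for repairing the splitting; that is where the argument as written would fail.

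On the evaluation of $\widetilde{F}$: your divisor-axiom heuristic ($I(P)$ restricts to a degree-$h_1$ divisor on the fiber, hence inserts a factor $h_1$ and produces $q\frac{d}{dq}F$) is appealing but not what the paper does, and it is not clear it can be made rigorous directly, since $I(P)$ is codimension two in $\Hilb^2(S)$ and the reduced virtual class of the local moduli does not manifestly support a divisor equation of that shape. The paper instead \emph{defines} $H^{\textup{GW}}$ geometrically in Section~\ref{special_case_3} and evaluates it independently via the WDVV equations on $\Hilb^2(\p^1\times E)$ in Theorem~\ref{Hilb2P1e_complete_evaluation_theorem}.
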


\subsection{Case $\langle C(F) \rangle_q$} \label{CASE1}
We consider the evaluation of $\big\langle C(F) \big\rangle_q^{\Hilb^d(S)}$.
Let $P_1, \dots, P_{d-1} \in S$ be generic points, let $F_0$ be a generic fiber
of the elliptic fibration $\pi : S \to \p^1$, and let
\[ Z = F_0[1] P_1[1] \cdots P_{d-1}[1] \subset \Hilb^d(S) \]
be the induced subscheme of class $[Z] = C(F)$, where we used the notation of Section \ref{special_cycles}~(v).
Consider the evaluation map
\begin{equation} \ev : \Mbar_{0,1}( \Hilb^d(S), \beta_h + kA) \to S, \label{evalmap_lkvkffgk}\end{equation}
the moduli space parametrizing maps incident to the subscheme $Z$
\begin{equation} M_Z = \ev^{-1}(Z) \,, \label{301} \end{equation}
and an element
\[ [f : C \to \Hilb^d(S), p] \in M_Z \,. \]
By Lemma \ref{200}, there does not exist a non-constant genus $0$ stable map to $\Hilb^d(S)$ of class $h' F + k'A$ which is incident to $Z$.
Hence, the marking $p \in C$ must lie on the distinguished irreducible component
\[ C_0 \subset C \]
on which $\pi^{[d]} \circ f$ is non-constant. By Lemma \ref{117}, the restriction $f|_{C_0}$ is therefore an isomorphism
\begin{equation}
\begin{aligned}
f|_{C_0}\colon  C_0 & \to B_0[1] P_1[1] \cdots P_{d-1}[1] \\
& \quad \quad \quad \quad = I(B_0) \cap I(P_1) \cap \ldots \cap I(P_{d-1}) \subset \Hilb^d(S) \,,
\label{v5gkfkf}
\end{aligned}
\end{equation}
where $B_0$ is the section of $S \to \p^1$.
In particular, $f(p) = (F_0 \cap B_0) + \sum_j P_j$.
We identify $C_0$ with its image in $\Hilb^d(S)$.

Let $x_1, \dots, x_{24}$ be the basepoints of the rational nodal fibers of $\pi$ and let $u_i = \pi(P_i)$ for all $i$.
The image line $L = \pi^{[d]} \circ f(C)$ meets the discriminant locus
of $\pi^{[d]}$ in the points
\[ x_i + \sum_{j=1}^{d-1} u_j\ \ (i = 1,\dots, 24) \quad \text{ and } \quad 2 u_i + \sum_{j \neq i} u_j \ \ (i =1,\dots, d-1) \]
By Lemma \ref{200}, the curve $C$ is therefore of the form
\[ C = C_0 \cup A_1 \cup \ldots \cup A_{24} \cup B_1 \cup \ldots \cup B_{d-1} \]
where the components $A_i$ and $B_j$ are attached to the points
\begin{equation} x_i + P_1 + \dots + P_{d-1} \quad \text{ and } \quad u_j + P_1 + \ldots + P_{d-1} \label{fowjroirgfrg} \end{equation}
respectively.
Hence, the moduli space $M_Z$ is set-theoretically a product of spaces parametrizing maps
of the form $f|_{A_i}$ and $f|_{B_j}$ respectively.
We show that the set-theoretic product is scheme-theoretic and the virtual class splits.
The argument is similar to Section \ref{basic_case}.

First, the attachment points \eqref{fowjroirgfrg} do not smooth under infinitesimal deformations:
this follows since the projection
\[ f^{\ast} \CZ_d = \widetilde{C} \to C \]
is \'etale over the points \eqref{fowjroirgfrg}, see the proof of Lemma \ref{doesnotsmooth};
here $\CZ_d \to \Hilb^d(S)$ is the universal family.
Therefore, any infinitesimal deformation of $f$ inside $M_Z$ induces a deformation of the image $f(C_0)$.
This deformation corresponds to moving the points $P_1, \dots, P_{d-1}$ in \eqref{v5gkfkf},
which is impossible since $f$ continues to be incident to $Z$.
Hence, $f(C_0)$ is fixed under infinitesimal deformations.\footnote{
Although $f(C_0)$ is fixed under infinitesimal deformations, the point $u_j$ in the attachment point $f(C_0 \cap B_j) = u_j + P_1 + \dots + P_{d-1}$
may move to first order, compare Section~\ref{Section_CaseI_P}.}

By a construction parallel to Section \ref{splitting}, we have a splitting map
\begin{equation}
\Psi : M_Z \to \bigsqcup_{(\textbf{h}, \textbf{k})} 
\bigg( \prod_{i=1}^{24} M^{\textup{(N)}}_{x_i}(h_{x_i})
\times \prod_{j=1}^{d-1} M^{\textup{(G)}}_{u_j}(h_{y_j}, k_{y_j}) \bigg),
\label{vfivofvg}
\end{equation}
where $M^{\textup{(N)}}_{x_i}(h_{x_i})$ was defined in Section \ref{splitting},
and for an appropriately defined moduli space $M^{\textup{(G)}}_{u_j}(h_{y_j}, k_{y_j})$;
since $f(C_0)$ has class $B$, the disjoint union in \eqref{vfivofvg} runs over all
\begin{equation}
\begin{aligned}
\textbf{h} & = (h_{x_1}, \dots, h_{x_{24}}, h_{u_1}, \dots, h_{u_{d-1}} ) \in (\BN^{\geq 0})^{ \{ x_i, u_j \}} \\
\textbf{k} & = (k_{u_1}, \dots, k_{u_{d-1}}) \in \BZ^{d-1}
\end{aligned}
\end{equation}
such that $\sum_i h_{x_i} + \sum_j h_{u_j} = h$ and $\sum_j k_{u_j} = k$.
Since $f(C_0)$ is fixed under infinitesimal deformations, the map $\Psi$ is an isomorphism.

Let $[ M_Z ]^{\text{vir}}$ be the natural virtual class on $M_Z$. By arguments parallel to Section \ref{anvirclass},
the pushforward $\Psi_{\ast} [M_Z]^{\text{vir}}$ is a product of virtual classes defined on each factor.
Hence, by a calculation identical to Section \ref{concl_basic_case},
$\langle C(F) \rangle_q$ is the product of series corresponding to the points $x_i$ and $u_j$ respectively.

For the points $x_1, \dots, x_{24}$,
the contributing factor agrees with the contribution
from the nodal fibers in the case of Section \ref{Section_higher_dimensional_Yau_Zaslow}. It is the series~\eqref{nodal_fiber_contribution}.
For $u_1, \dots, u_{d-1}$ define the formal series
\begin{equation} G^{\textup{GW}}(y,q) = \sum_{h \geq 0} \sum_{k \in \BZ} y^k q^{h} \int_{[M^{\textup{(G)}}_{u_j}(h, k)]^{\text{vir}} } 1, \label{GGGdef} \end{equation}
where we let $[ M^{\textup{(G)}}_{u_j}(h, k) ]^{\text{vir}}$ denote the induced virtual class on $M^{\textup{(G)}}_{u_j}(h, k)$.
We conclude
\begin{equation} \blangle C(F) \brangle_q^{\Hilb^d(S)} = \frac{G^{\textup{GW}}(y,q)^{d-1}}{\Delta(q)} \,. \label{401} \end{equation}

\subsection{Case $\langle A \rangle_q$} \label{ABCD}
We consider the evaluation of $\langle A\, \rangle_q^{\Hilb^d(S)}$.
Let $P_0, \ldots , P_{d-2} \in S$ be generic points, let 
\[ Z = P_0[2] P_1[1] \cdots P_{d-2}[1] \subset \Hilb^d(S) \]
be the exceptional curve (of class $A$) centered at $2 P_0 + P_1, \dots + P_{d-2}$, and let
\[ M_Z = \ev^{-1}(Z), \]
where $\ev$ is the evaluation map \eqref{evalmap_lkvkffgk}.
We consider an element
\[ [f : C \to \Hilb^d(S), p] \in M_Z. \]
Let $C_0 \subset C$ be the distinguished component of $C$ on which $\pi^{[d]} \circ f$ is non-constant,
and let $C'$ be the union of all irreducible components of $C$ which map into the fiber
\[ (\pi^{[d]})^{-1}( 2 u_0 + u_1 + \ldots + u_{d-2} ), \]
where $u_i = \pi(P_i)$.
Since $f(C_0)$ cannot meet the exceptional curve $Z$,
the component $C'$ contains the marked point $p$,
\[ p \in C'. \]
The restriction $f|_{C'}$ decomposes into the components
\[ f|_{C'} = \phi + P_1 + \dots + P_{d-2}, \]
where $\phi : C' \to \Hilb^2(S)$ maps into the fiber $\pi^{[2] -1}(2 u_0)$
and the $P_i$ denote constant maps.

Consider the Hilbert-Chow morphism
\[ \rho : \Hilb^2(S) \to \Sym^2(S) \]
and the Abel-Jacobi map
\[ \mathsf{aj} : \Sym^2(F_{u_0}) \to F_{u_0} \,. \]
Since $\rho(\phi(p)) = 2 P_0$, the image of $\phi$ lies inside the fiber $V$ of
\[ \rho^{-1}(\Sym^2(F_{u_0}) ) \xrightarrow{\rho} \Sym^2(F_{u_0}) \xrightarrow{\mathsf{aj}} F_{u_0} \]
over the point $\mathsf{aj}(2 P_0)$.
Hence, $f|_{C'}$ maps into the subscheme
\[ \widetilde{V} = V + P_1 + \ldots + P_{d-2} \subset \Hilb^d(S)\,. \]

The intersection of $\widetilde{V}$ with the divisor $D(B_0) \subset \Hilb^d(S)$ is supported in the reduced point 
\begin{equation} s(u_0) + Q + P_1 + \dots + P_{d-2} \in \Hilb^d(S), \label{3ofidofg} \end{equation}
where $s : \p^1 \to S$ is the section and $Q \in F_{u_0}$ is defined by \[ \mathsf{aj}(s(u_0) + Q) = \mathsf{aj}( 2 P_0 ) \,. \]
Since the distinguished component $C_0 \subset C$ must map into $D(B_0)$,
the point $f(C_0 \cap C')$ therefore equals \eqref{3ofidofg}.
Hence, the restriction $f|_{C_0}$ yields an isomorphism
\[ f|_{C_0} : C_0 \xrightarrow{\ \cong\ } B_0[1] Q[1] P_1[1] \cdots P_{d-2}[1]\,, \]
and we will identify $C_0$ with its image.

Following the lines of Section \ref{CASE1}, we find that the domain $C$ is of the form
\[ C = C_0 \cup C' \cup A_1 \cup \ldots \cup A_{24} \cup B_1 \cup \ldots \cup B_{d-2}, \]
where the components $A_i$ and $B_j$ are attached to the points
\[ x_i + Q + P_1 + \dots + P_{d-2}, \quad \quad \quad u_j + Q + P_1 + \dots + P_{d-2} \]
respectively.
Hence, $M_Z$ is set-theoretically a product of spaces corresponding to the points 
\begin{equation} u_0, u_1,\dots, u_{d-2}, x_1, \dots, x_{24}. \label{vkkmfvmf} \end{equation}
By arguments parallel to Section \ref{CASE1}, the moduli scheme $M_Z$ splits scheme-theoretic as a product,
and also the virtual class splits.
Hence, $\langle A \, \rangle_q$ is a product of series corresponding to the points \eqref{vkkmfvmf} respectively.

For $x_1, \dots, x_{24}$
the contributing factor is the same as in Section \ref{concl_basic_case},
and for $u_1, \dots, u_{d-2}$ it is the same as in Section \ref{CASE1}. Let
\begin{equation} \widetilde{G}^{\textup{GW}}(y,q) \in \BQ((y))[[q]] \label{gggt} \end{equation}
denote the contributing factor from the point $u_0$. Then we have
\begin{equation} \blangle A \, \brangle_q^{\Hilb^d(S)} = \frac{ G^{\textup{GW}}(y,q)^{d-2} \widetilde{G}^{\textup{GW}}(y,q) }{\Delta(q) }. \label{402} \end{equation}

\subsection{Case $\langle I(P_1), \dots, I(P_{2d-2}) \rangle_q$}
Let $P_1, \dots, P_{2d-2} \in S$ be generic points. In this section, we consider the evaluation of
\begin{equation} \blangle I(P_1), \dots, I(P_{2d-2}) \brangle_q^{\Hilb^d(S)} \label{IP_eval_xcxczcxcxc} \end{equation}
In Section \ref{Section_intermediate_lemma}, we discuss the geometry of lines in $\Hilb^d(\p^1)$. 
In Section \ref{special_case_3}, we analyse the moduli space
of stable maps incident to $I(P_1), \dots, I(P_{2d-2})$.

\subsubsection{The Grassmannian}\label{Section_intermediate_lemma}
Let $\CZ_d \to \Hilb^d(\p^1)$ be the universal family, and let
\[ L \hookrightarrow \Hilb^d(\p^1) \]
be the inclusion of a line such that $L \nsubseteq I(x)$ for all $x \in \p^1$.
Consider the fiber diagram
\[
\begin{tikzcd}
\widetilde{L} \ar{d} \ar{r} & \CZ_d \ar{d} \ar{r} & \p^1  \\
L \ar{r} & \Hilb^d(\p^1).
\end{tikzcd}
\]
The curve $\widetilde{L} \subset L \times \p^1$ has bidegree $(d,1)$, and is the graph of the morphism
\begin{equation} I_L \colon \p^1 \to L,\ x \mapsto I(x) \cap L \,. \label{MapI_Lasd}\end{equation}
By definition, the subscheme corresponding to a point $y \in L$ is $I_L^{-1}(y)$.
Hence, the ramification index of $I_L$ at a point $x \in \p^1$ is
the length of $I_L(x)$ (considered as a subscheme of $\p^1$) at $x$.
In particular, for $y \in L$, we have $y \in \Delta_{\Hilb^d(\p^1)}$
if and only if $I_L(x) = y$ for a branchpoint $x$ of $I_L$.

Let $R(L) \subset \p^1$
be the ramification divisor of $I_L$.
Since $I_L$ has $2d-2$ branch points counted with multiplicity
(or equivalently, $L$ meets $\Delta_{\Hilb^d(\p^1)}$ with multiplicity $2d-2$),
\[ R(L) \in \Hilb^{2d-2}(\p^1). \]

Let $G = G(2,d+1)$ be the Grassmannian of lines in $\Hilb^d(\p^1)$.
By the construction above relative to $G$,
we obtain a rational map
\begin{equation} \phi : G \dashrightarrow \Hilb^{2d-2}(\p^1),\ L \mapsto R(L) \label{303} \end{equation}
defined on the open subset of lines $L \in G$ with
$L \nsubseteq I(x)$ for all $x \in \p^1$.

The map $\phi$ will be used in the proof of the following result.
For $u \in \p^1$, consider the incidence subscheme
\[ I(2u) = \{ z \in \Hilb^d(\p^1)\ |\ 2u \subset z \} \]
Under the identification $\Hilb^d(\p^1) \equiv \p^d$,
the inclusion $I(2u) \subset \Hilb^d(\p^1)$ is a linear subspace of codimension $2$. 
Let
\begin{equation}
\label{Grassmann_diagram}
 \xymatrix{
\CZ \ar[r]^{q} \ar[d]^{p} & \, \p^d\, \mathrlap{= \Hilb^d(\p^1)} \\
G
}
\end{equation}
be the universal family of $G$, and let
\[ S_u = p(q^{-1}(I(2u))) = \{ L \in G | L \cap I(2u) \neq \varnothing \} \subset G \]
be the divisor of lines incident to $I(2u)$.

\begin{lemma} \label{intermediate_lemma}
Let $u_1, \dots, u_{2d-2} \in \p^1$ be generic points. Then,
\begin{equation} S_{u_1} \cap \ldots \cap S_{u_{2d-2}} \label{Su_intersection} \end{equation}
is a collection of $\frac{1}{d} \binom{2d-2}{d-1}$ reduced points.
\end{lemma}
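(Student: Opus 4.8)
The plan is to compute the intersection number by Schubert calculus and to obtain the reducedness from the Wronski morphism extending the rational map $\phi$ of \eqref{303}; throughout we work over $\BC$. First I would identify the class of $S_u$. Since $I(2u) \subset \Hilb^d(\p^1) = \p^d$ is a linear subspace of codimension $2$, the divisor $S_u$ is the locus of lines meeting a fixed $\p^{d-2}$, that is, the special Schubert divisor, whose class is the Pl\"ucker hyperplane class $\sigma_1 \in H^2(G)$. As $\dim G(2,d+1) = 2d-2$, the intersection $S_{u_1} \cap \dots \cap S_{u_{2d-2}}$ has expected dimension $0$, and whenever it is proper and reduced its cardinality is
\[ \int_{G} \sigma_1^{2d-2} = \deg G(2,d+1) = \frac{(2d-2)!}{(d-1)!\,d!} = \frac{1}{d}\binom{2d-2}{d-1}, \]
the degree of the Grassmannian in its Pl\"ucker embedding. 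It remains to show that for generic $u_i$ the intersection is indeed finite and reduced.

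Next I would reinterpret the $S_u$ through $\phi$. For a line $L$ with $L \nsubseteq I(x)$ for all $x$, a point $u$ lies in $\mathrm{supp}(R(L))$ exactly when $u$ is a ramification point of $I_L$, i.e. when $2u \subset I_L^{-1}(I_L(u))$, i.e. when $L \cap I(2u) \neq \varnothing$; hence $S_u = \overline{\phi^{-1}(I_u)}$, where $I_u = \{ D \in \Hilb^{2d-2}(\p^1) \mid u \in \mathrm{supp}(D) \}$ is the hyperplane of degree-$(2d-2)$ divisors through $u$. Crucially, $\phi$ extends to an everywhere-defined morphism $w \colon G \to \Hilb^{2d-2}(\p^1) = \p^{2d-2}$, the Wronski map $L \mapsto V(\mathrm{Wr}(L))$: for a two-dimensional space of sections the Wronskian never vanishes identically, and its coefficients are linear in the Pl\"ucker coordinates, so $w$ factors as the Pl\"ucker embedding followed by a linear projection with centre disjoint from $G$. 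In particular $w^\ast \CO(1) = \sigma_1$. Away from the locus $Z \subset G$ of pencils with a base point (where $\phi$ is undefined) the divisor $S_u$ coincides with $w^{-1}(I_u)$, so for distinct $u_i$ the intersection $S_{u_1} \cap \dots \cap S_{u_{2d-2}}$ agrees off $Z$ with the fibre $w^{-1}(u_1 + \dots + u_{2d-2})$: a degree-$(2d-2)$ divisor $R(L)$ containing the $2d-2$ distinct points $u_i$ must equal $\sum_i u_i$.

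The content of the lemma is now the transversality, the step I expect to require the most care, but which the Wronski morphism settles cleanly. Because $\sigma_1 = w^\ast \CO(1)$ is ample, $w$ contracts no curve and is therefore finite; its image is closed of dimension $2d-2$, so $w$ is surjective of degree $\int_G \sigma_1^{2d-2} = \frac{1}{d}\binom{2d-2}{d-1}$. Finiteness also bounds the base locus: $\dim w(Z) = \dim Z = 2d-3 < 2d-2$, so a generic fibre of $w$ avoids $Z$ entirely.

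Finally, over $\BC$, generic smoothness shows that $w$ is \'etale over a dense open subset of $\p^{2d-2}$. Choosing $u_1, \dots, u_{2d-2}$ so that the reduced divisor $\sum_i u_i$ (with distinct points) lies in this open set and off $w(Z)$, the fibre $w^{-1}(\sum_i u_i) = S_{u_1} \cap \dots \cap S_{u_{2d-2}}$ is finite, reduced, disjoint from $Z$, and of cardinality $\deg w = \frac{1}{d}\binom{2d-2}{d-1}$, which is exactly the claim. Equivalently, reducedness can be checked pointwise: at each such $L$ the $2d-2$ hyperplanes $I_{u_i}$ meet only in $\sum_i u_i$, so their differentials span $T^\ast_{\sum_i u_i}\p^{2d-2}$ and pull back under the isomorphism $dw_L$ to transverse tangent hyperplanes $T_L S_{u_i}$.
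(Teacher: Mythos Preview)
Your argument is correct and takes a genuinely different route from the paper. The paper stays with the rational map $\phi$: it first shows directly that every $L$ in the intersection \eqref{Su_intersection} lies in the domain of $\phi$ (a line contained in some $I(x)$ admits at most $2d-1$ points $v$ with $2v\subset z$ for some $z\in L$), identifies the intersection with a fibre of $\phi$, and then proves $\phi$ is generically finite by writing out the Wronskian in Pl\"ucker coordinates and checking the Jacobian is invertible at one explicit point. You instead globalize $\phi$ to the Wronski morphism $w$, use $w^{\ast}\CO(1)=\sigma_1$ and ampleness to get finiteness, and invoke generic smoothness. Your approach is more conceptual and avoids the explicit differential computation; the paper's approach is more hands-on and makes the Wronskian formula completely explicit (which is reused in \eqref{kappa_map_def}--\eqref{fatt_kfmvodmfo}).

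One point deserves tightening. You only assert $S_u = w^{-1}(I_u)$ away from $Z$, and then conclude $w^{-1}(\sum_i u_i)=\bigcap_i S_{u_i}$ after observing that the generic fibre of $w$ avoids $Z$. But that deduction also needs $\bigl(\bigcap_i S_{u_i}\bigr)\cap Z=\varnothing$, which is not the same statement: a priori the intersection could pick up extra points inside $Z$ that are invisible to $w$. The cleanest fix is to note that $S_u=w^{-1}(I_u)$ holds \emph{globally} as divisors: for any $L=\langle f,g\rangle$, the condition $L\cap I(2u)\neq\varnothing$ is exactly the singularity of the $2\times 2$ matrix $\bigl(\begin{smallmatrix} f(u)&g(u)\\ f'(u)&g'(u)\end{smallmatrix}\bigr)$, i.e.\ $\mathrm{Wr}(f,g)(u)=0$, i.e.\ $u\in w(L)$. (Alternatively, both divisors lie in the primitive class $\sigma_1$, hence are prime, and agree on a dense open.) With this in hand your final identification is scheme-theoretic on all of $G$ and the argument is complete.
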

\begin{proof}
The class of $S_u$ is the Schubert cycle $\sigma_1$.
By Schubert calculus the expected number of intersection points is
\begin{equation*} \int_G \sigma_1^{2d-2} = \frac{1}{d} \binom{2d-2}{d-1}. \end{equation*}
It remains to prove that the intersection \eqref{Su_intersection} is transverse.

Given a line $L \subset I(x) \subset \Hilb^d(\p^1)$ for some $x \in \p^1$,
there exist at most $2d-1$ different points $v \in \p^1$ with $2 v \subset z$ for some $z \in L$.
Hence, for every $L$ in \eqref{Su_intersection} we have $L \nsubseteq I(x)$ for all $x \in \p^1$.
Therefore, $S_{u_1} \cap \ldots \cap S_{u_{2d-2}}$ lies in the domain of $\phi$.
Then, by construction of $\phi$, the intersection \eqref{Su_intersection}
is the fiber of $\phi$ over the point
\[ u_1 + \dots + u_{2d-2} \in \Hilb^{2d-2}(\p^1). \]
We will show that $\phi$ is generically finite.
Since $u_1, \dots, u_{2d-2}$ are generic,
the fiber over $u_1 + \dots + u_{2d-2}$ is then a set of finitely many reduced points.

We determine an explicit expression for the map $\phi$.
Let $L \in G$ be a line with $L \nsubseteq I(x)$ for all $x \in \p^1$,
let $f,g \in L$ be two distinct points and let $x_0, x_1$ be coordinates on $\p^1$.
We write
\begin{align*}
 f & = a_n x_0^n + a_{n-1} x_0^{n-1} x_1 + \dots + a_0 x_1^n \\
 g & = b_n x_0^n + b_{n-1} x_0^{n-1} x_1 + \dots + b_0 x_1^n
\end{align*}
for coefficients $a_i, b_i \in \BC$.
The condition $L \nsubseteq I(x)$ for all $x$ is equivalent to
$f$ and $g$ having no common zeros. Consider the rational function
\[ h(x) = h(x_0/x_1) = f/g = \frac{ a_n x^n + \dots + a_0 }{ b_n x^n + \dots + b_0 }, \]
where $x = x_0/x_1$. The ramification divisor $R(L)$ is generically
the zero locus of the nominator of $h' = (f/g)' = (f' g - f g')/g^2$; in coordinates we have
\[ f' g - f g' = \sum_{m = 0}^{2d-2} \Big( \sum_{ \substack{ i + j = m+1 \\ i < j }} (i - j) (a_i b_j - a_j b_i) \Big) x^m. \]
Let $M_{ij} = a_i b_j - a_j b_i$ be the Pl\"ucker coordinates on $G$. Then we conclude
\[ 
\phi(L) = \sum_{m = 0}^{2d-2} \Big( \sum_{ \substack{ i + j = m+1 \\ i < j }} (i - j) M_{ij} \Big) x^m \ \in \Hilb^{2d-2}(\p^1) \,. 
\]
By a direct verification, the differential of $\phi$ at the point with coordinates
\[ (a_0, \dots, a_n) = (1, 0, \dots, 0 , 1), \quad \quad(b_0, \dots, b_n) = (0,1,0,\dots,0,1) \]
is an isomorphism. Hence, $\phi$ is generically finite.
\end{proof}

Let $u_1, \dots, u_{2d-2} \in \p^1$ be generic points. Consider a line
\[ L\ \in\  S_{u_1} \cap \ldots \cap S_{u_{2d-2}} = \phi^{-1}(u_1 + \dots + u_{2d-2}) \]
and let $U_L$ be the formal neighborhood of $L$ in $G$.
By the proof of Lemma~\ref{intermediate_lemma}, the map
\[ \phi : G \dashrightarrow \Hilb^{2d-2}(\p^1) \]
is \'etale near $L$.
Hence, $\phi$ induces an isomorphism from $U_L$ to
\begin{equation} \Spec\big( \widehat{\CO}_{\Hilb^d(\p^1),u_1+\dots+u_{2d-2}} \big) \equiv \prod_{i=1}^{2d-2} \Spec ( \widehat{\CO}_{\p^1,u_i} ) \,, \label{sos34mkgmrg}\end{equation}
the formal neighborhood of $\Hilb^d(\p^1)$ at $u_1+\dots+u_{2d-2}$.
Composing $\phi$ with the projection to the $i$-th factor of \eqref{sos34mkgmrg}, we obtain maps
\begin{equation} \kappa_{i} : U_L \xrightarrow{\ \phi\ }\Spec\big( \widehat{\CO}_{\Hilb^d(\p^1),u_1+\dots+u_{2d-2}} \big) \to \Spec ( \widehat{\CO}_{\p^1,u_i} ) \subset \p^1 \,,
 \label{kappa_map_def}
\end{equation}
which parametrize the deformation of the branch points of $I_L$ (defined in~\eqref{MapI_Lasd}).

In the notation of the diagram \eqref{Grassmann_diagram}, consider the map 
\begin{equation}
q^{-1}(\Delta_{\Hilb^d(\p^1)})\ \to\ G \label{408} \end{equation}
whose fiber over a point $L' \in G$
are the intersection points of $L'$
with the diagonal $\Delta_{\Hilb^d(\p^1)}$.
Since $L$ is in the fiber of a generically finite map over a generic point,
we have
\[ L \cap \Delta_{\Hilb^d(\p^1)} = \{ \xi_1, \dots, \xi_{2d-2} \} \]
for pairwise disjoint subschemes $\xi_i \in \Hilb^d(\p^1)$ of type $(21^{d-2})$ with $2 u_i \subset \xi_i$.
The restriction of \eqref{408} to $U_L$ is a $(2d-2)$-sheeted trivial fibration, and hence admits sections
\begin{equation} v_1, \dots, v_{2d-2} : U_L \to q^{-1}(\Delta_{\Hilb^d(\p^1)})|_{U_L}, \label{ifimsoidf} \end{equation}
such that for every $i$ the composition $q \circ v_i$ restricts to $\xi_i$ over the closed point.
Moreover, since $q \circ v_i$ is incident to the diagonal and must contain
twice the branchpoint $\kappa_i$ defined in \eqref{kappa_map_def},
we have the decomposition
\begin{equation} q \circ v_i = 2 \kappa_i + h_1 + \dots + h_{d-2} \label{fatt_kfmvodmfo} \end{equation}
for maps $h_1, \dots, h_{d-2} : U_L \to \p^1$.

\subsubsection{The moduli space} \label{special_case_3}
Let $P_1, \dots, P_{2d-2} \in S$ be generic points and let $u_i = \pi(P_i)$ for all $i$. Let
\[ \ev : \Mbar_{0,2d-2}(\Hilb^d(S), \beta_h + kA) \to \big(\Hilb^d(S)\big)^{2d-2} \]
be the evaluation map and let
\begin{equation*} M_Z = \ev^{-1}\bigl(I(P_1) \times \dots \times I(P_{2d-2})\bigr)\end{equation*}
be the moduli space of stables maps incident to $I(P_1), \dots, I(P_{2d-2})$.
We consider an element
\[ [f : C \to \Hilb^d(S), p_1, \dots, p_{2d-2}] \in M_Z \,. \]

Since $P_i \in f(p_i)$ and $P_i$ is generic, the line
$L = \pi(f(C)) \subset \Hilb^d(\p^1)$ is incident to $I(2u_i)$ for all $i$,
and therefore lies in the finite set
\begin{equation}
 S_{u_1} \cap \dots \cap S_{u_{2d-2}} \subset G(2,d+1)
\label{Su_intersection_new}
\end{equation}
defined in Section \ref{Section_intermediate_lemma};
here $G(2,d+1)$ is the Grassmannian of lines in $\p^d$.

Because the points $u_1, \dots, u_{2d-2}$ are generic,
by the proof of Lemma~\ref{intermediate_lemma}
also $L$ is generic.
By arguments identical to the case of Section \ref{basic_case_settheoretic_splitting},
the map $f|_{C_0} : C_0 \to L$ is an isomorphism.
We identify $C_0$ with the image $L$.

For $x \in \p^1$, let $\widetilde{x} = I(x) \cap L$ be the unique point on $L$ incident to~$x$.
The points 
\begin{equation} \label{L_with_W_int_points_2} \widetilde{x}_1, \dots, \widetilde{x}_{24}, \widetilde{u}_1, \dots, \widetilde{u}_{2d-2} \end{equation}
are the intersection points of $L$ with
the discriminant of $\pi^{[d]}$ defined in \eqref{501}.
Hence, by Lemma \ref{200}, the curve $C$ admits the decomposition
\begin{equation*} C = C_0 \cup A_1 \cup \dots \cup A_{24} \cup B_1 \cup \dots \cup B_{2d-2}, \label{ogorgdfgfdg2} \end{equation*}
where $A_i$ and $B_j$ are the components of $C$ attached to the points
$\widetilde{x}_i$ and $\widetilde{u}_j$ respectively; see also Section \ref{basic_case_settheoretic_splitting}.

By Lemma \ref{doesnotsmooth}, the node points $C_0 \cap A_i$ and $C_0 \cap B_j$ do not smooth
under deformations of $f$ inside $M_Z$.
Hence, by the construction of Section \ref{splitting},
we have a splitting morphism
\begin{equation}
\Psi: M_{Z}
\ra
\bigsqcup_{L} \bigsqcup_{\textbf{h}, \textbf{k}} \bigg( \prod_{i=1}^{24} M^{\textup{(N)}}_{x_i}(h_{x_i})
\times \prod_{j=1}^{2d-2} M^{\textup{(H)}}_{u_j}(h_{u_j}, k_{u_j}) \bigg),
\label{405} \end{equation}
where $\textbf{h}, \textbf{k}$ runs over the set \eqref{spl1_indexing} (with $y_j$ replaced by $u_j$) satisfying \eqref{spl2_indexing},
and $L$ runs over the set of lines \eqref{Su_intersection_new},
and where $M^{\textup{(H)}}_{u_j}(h', k')$ is the moduli space defined as follows:

Consider the evaluation map
\[ \ev : \Mbar_{0,2}( \Hilb^2(S) , h' F + k' A ) \ra (\Hilb^2(S))^2 \]
and let
\begin{equation} \ev^{-1}\big( I(P_j) \times \Hilb^2(B_0) \big) \label{erewrewg} \end{equation}
be the subscheme of maps incident to $I(P_j)$ and $\Hilb^d(B_0)$ at the marked points.
We define $M^{\textup{(H)}}_{u_j}(h', k')$ to be the open and closed component of \eqref{erewrewg}
whose $\BC$-points parametrize maps into the fiber $\pi^{[2] -1}(2 u_j)$.
Using this definition, the map $\Psi$ is well-defined
(for example, the intersection point $C_0 \cap B_j$ maps to the second marked point in \eqref{erewrewg}).

In the case considered in Section \ref{basic_case},
the image line $L = f(C_0)$ was fixed under infinitesimal deformations.
Here, this does not seem to be the case; the line $L$ may move infinitesimal.
Nonetheless, the following Proposition shows that these deformations are all captured by the image of $\Psi$.

\begin{proposition} \label{352_splitting_lemma}
The splitting map \eqref{405} is an isomorphism.
\end{proposition}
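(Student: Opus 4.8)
The plan is to produce a two-sided inverse to $\Psi$, first on $\BC$-points by gluing and then on infinitesimal neighborhoods, the only genuinely new feature compared with Section~\ref{basic_case} being that the distinguished line $L$ is now allowed to deform. I would begin with the set-theoretic bijection. Given a line $L$ in the finite set \eqref{Su_intersection_new} together with maps in the factors $M^{\textup{(N)}}_{x_i}(h_{x_i})$ and $M^{\textup{(H)}}_{u_j}(h_{u_j},k_{u_j})$, one glues these tails to the embedded line $C_0=L\subset\Hilb^d(B_0)$ at the points \eqref{L_with_W_int_points_2}, exactly as in Section~\ref{splitting}. The gluing is unambiguous: the second marked point of each factor $M^{\textup{(H)}}_{u_j}$ is incident to $\Hilb^2(B_0)$, and since $\Hilb^2(B_0)\cap\pi^{[2]-1}(2u_j)$ is the single point $s^{[2]}(2u_j)$, the attachment datum is pinned. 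This produces a stable map in $M_Z$, and a direct check shows the resulting morphism is inverse to $\Psi$ on $\BC$-points; in particular $\Psi$ is bijective on points.

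Next I would promote this to a scheme-theoretic statement by checking that $\Psi$ is an isomorphism on every infinitesimal neighborhood. Lemma~\ref{doesnotsmooth} applies verbatim, since the attachment points \eqref{L_with_W_int_points_2} lie over $\Delta_{\Hilb^d(\p^1)}$ and the projection $\widetilde{C}\to C$ is \'etale there; hence any first order deformation of $[f]\in M_Z$ preserves the decomposition $C=C_0\cup\bigcup_i A_i\cup\bigcup_j B_j$ and does not resolve its nodes. Such a deformation therefore splits as a deformation of $f|_{C_0}$ — that is, a tangent vector to $G$ at $[L]$ — together with deformations of the restrictions $f|_{A_i}$ and $f|_{B_j}$ that agree at the unsmoothed attachment points.

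The main obstacle, and the heart of the argument, is that $L$ is no longer rigid: the line may deform inside $G$. Here I would invoke the \'etale property of the branch map $\phi$ established in the proof of Lemma~\ref{intermediate_lemma}. Through the identification \eqref{sos34mkgmrg}, a first order deformation of $L$ in $G$ is the same datum as $2d-2$ independent deformations of the branch points $u_j$, recorded by the maps $\kappa_j$ of \eqref{kappa_map_def}. Moving the $j$-th branch point deforms the double point in the attached configuration, and via the exceptional ($A$) direction at $\widetilde{u}_j$ this is realized precisely as a first order deformation inside the single factor $M^{\textup{(H)}}_{u_j}$; the sections $v_j$ of \eqref{ifimsoidf} together with the fattening \eqref{fatt_kfmvodmfo} make this correspondence precise in families. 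Because $\phi$ is \'etale, the $2d-2$ branch directions are independent and each is matched with exactly one factor, so no deformation of $L$ is lost and none is double-counted. This produces a two-sided inverse to the differential of $\Psi$, so $d\Psi$ is an isomorphism.

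Finally, being bijective on $\BC$-points and an isomorphism on all first order (hence all higher order) neighborhoods, $\Psi$ is an isomorphism of schemes. Carrying out the same gluing construction in families over an arbitrary base $T$ — reassembling the branch point data carried by the $M^{\textup{(H)}}_{u_j}$ factors into a family of lines through the \'etale inverse of $\phi$, and then gluing the tails — yields the inverse morphism globally and completes the proof. The delicate point throughout is exactly the one-to-one matching of the $2d-2$ deformation directions of $L$ with the $2d-2$ factors $M^{\textup{(H)}}_{u_j}$, which is what the \'etale-ness of $\phi$ guarantees.
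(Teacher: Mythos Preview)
Your overall strategy matches the paper's: build an explicit inverse to $\Psi$ over an arbitrary base by extracting branch--point data from the factors $M^{\textup{(H)}}_{u_j}$, feeding it through the \'etale inverse of $\phi$ to reconstruct a family of lines, and then gluing the tails along the sections $v_j$ via \eqref{fatt_kfmvodmfo}. So the architecture is right.

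The gap is at the step ``reassembling the branch point data carried by the $M^{\textup{(H)}}_{u_j}$ factors''. For this to make sense you must show that a family $\phi_j : B_j \to \Hilb^2(S)$ in $M^{\textup{(H)}}_{u_j}$ over a (possibly non-reduced) base $Y$ actually determines a morphism $Y \to \Spec(\widehat{\CO}_{\p^1,u_j})$. Concretely, you need $\pi^{[2]}\circ\phi_j$ to factor through the scheme-theoretic intersection $I(u_j)\cap\Delta_{\Hilb^2(\p^1)}$, which is the infinitesimal neighborhood of $2u_j$ inside the diagonal. Incidence to $I(P_j)$ forces the image into $I(u_j)$; the delicate point is that the image lies in $\Delta_{\Hilb^2(\p^1)}$ \emph{scheme-theoretically}. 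The class constraint $h'F+k'A$ and Lemma~\ref{200} only give this on $\BC$-points --- a priori a first-order deformation of $\phi_j$ could move the image off the diagonal along $I(u_j)$, and then no branch point is recorded and the \'etale inverse of $\phi$ cannot be applied. The paper isolates exactly this statement as Lemma~\ref{410_lemma} and proves it by a separate argument with relative differentials (Lemma~\ref{213}); your sentence ``via the exceptional ($A$) direction at $\widetilde{u}_j$ this is realized precisely as a first order deformation inside the single factor'' describes the forward direction (line deformation $\to$ factor deformation) but does not establish the needed converse. Once Lemma~\ref{410_lemma} is in hand, the rest of your sketch --- pulling back $\CZ_L$ along the resulting $\ell:Y\to U_L$ and gluing --- goes through as you describe, and is exactly what the paper does.
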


We will require the following Lemma, which will be proven later.
\begin{lemma} \label{410_lemma} Let $\phi : C \to \Hilb^2(S)$ be a family in $M^{\textup{(H)}}_{u_j}(h', k')$ over a connected scheme $Y$,
\begin{equation} \label{fam_diag}
\begin{tikzcd}
C \ar{r}{\phi} \ar{d} & \Hilb^2(S) \,. \\ Y
\end{tikzcd}
\end{equation}
Then $\pi^{[2]} \circ \phi$ maps to $\Hilb^2(\p^1) \cap I(u_j)$.
\end{lemma}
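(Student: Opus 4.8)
The plan is to exploit the fact that the class $h'F+k'A$ is \emph{vertical} for the Lagrangian fibration, so that $\pi^{[2]}\circ\phi$ contracts the fibres of the family, and then to pin down the resulting contracted map using the incidence condition at the first marked point.

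First I would record that $\pi^{[2]}_{\ast}(h'F+k'A)=0$ in $H_2(\Hilb^2(\p^1))$. Indeed $C(F)=\Fp_{-1}(F)\Fp_{-1}(\pt)1_S$ is represented by moving one point along a single fibre $F_x=\pi^{-1}(x)$, whose $\pi$-image is the fixed point $x$, while the exceptional class $A$ lies in a fibre of $\rho$ over a fat point $2P$, whose $\pi^{[2]}$-image is the fixed divisor $2\pi(P)$. Consequently, for every geometric fibre $C_y$ of the structure morphism $\varpi\colon C\to Y$, the restriction $\pi^{[2]}\circ\phi|_{C_y}$ is a genus $0$ stable map to $\p^2=\Hilb^2(\p^1)$ of class $0$; since its domain is connected and projective, its image is a single point.

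Next I would upgrade this fibrewise contraction to a scheme-theoretic factorization. As $\varpi\colon C\to Y$ is proper and flat with geometrically connected genus $0$ fibres, one has $\varpi_{\ast}\CO_C=\CO_Y$, with formation commuting with base change; the rigidity lemma then shows that the contracting morphism $\Phi:=\pi^{[2]}\circ\phi\colon C\to\p^2$ factors uniquely as $\Phi=g\circ\varpi$ for a morphism $g\colon Y\to\p^2$. Concretely, $\Phi^{\ast}\CO_{\p^2}(1)$ is trivial on every fibre, so it descends to a line bundle on $Y$ carrying the three pulled-back coordinate sections, and these define $g$. This is the step I expect to be the main technical point, precisely because $Y$ may be non-reduced and the content of the lemma is exactly that the image line is permitted to move only infinitesimally, and then only within a constrained locus.

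Finally I would identify $g$ using the marked point. Writing $p_1\colon Y\to C$ for the first section, one has $g=\Phi\circ p_1=\pi^{[2]}\circ\phi\circ p_1$ because $\varpi\circ p_1=\id_Y$. By the definition of $M^{\textup{(H)}}_{u_j}(h',k')$ as a component of $\ev^{-1}(I(P_j)\times\Hilb^2(B_0))$, the morphism $\phi\circ p_1\colon Y\to\Hilb^2(S)$ factors through $I(P_j)$. It then remains to observe that $\pi^{[2]}$ carries $I(P_j)$ into $I(u_j)$ as schemes: the universal length $2$ subscheme over $I(P_j)$ contains the constant section $P_j$, hence its image divisor contains the constant section $u_j=\pi(P_j)$, so the associated universal binary quadratic vanishes identically along $u_j$ and the classifying morphism $I(P_j)\to\Hilb^2(\p^1)=\p^2$ lands in the hyperplane $I(u_j)$. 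Therefore $g$ factors through $I(u_j)$, and so does $\Phi=g\circ\varpi$, which is exactly the assertion that $\pi^{[2]}\circ\phi$ maps $C$ into $\Hilb^2(\p^1)\cap I(u_j)$.
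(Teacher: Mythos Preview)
Your argument correctly proves the literal statement as printed, namely that $\pi^{[2]}\circ\phi$ lands in $I(u_j)\subset\Hilb^2(\p^1)$, and your rigidity step even fills in a detail the paper only asserts in one line. However, the printed statement contains a typo: the intended conclusion is that $\pi^{[2]}\circ\phi$ maps into $\Delta_{\Hilb^2(\p^1)}\cap I(u_j)$, not $\Hilb^2(\p^1)\cap I(u_j)$. You can see this both from the paper's own proof, whose opening line dispatches the $I(u_j)$ part and then says ``we only need to show that $\pi^{[2]}\circ\phi$ maps to $\Delta_{\Hilb^2(\p^1)}$'', and from how the lemma is invoked in the proof of Proposition~\ref{352_splitting_lemma}, where the conclusion used is explicitly ``maps into $I(u_j)\cap\Delta_{\Hilb^2(\p^1)}$''.

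The diagonal part is the actual content, and your proof does not touch it. Your rigidity argument only shows that $\pi^{[2]}\circ\phi$ factors through a morphism $g:Y\to\Hilb^2(\p^1)$; it gives no reason why $g$ should land in the diagonal, and indeed the marked-point condition $\phi(p_1)\in I(P_j)$ only pins $g$ to the line $I(u_j)$, which meets $\Delta_{\Hilb^2(\p^1)}$ tangentially at the single point $2u_j$. What must be shown is that even over a non-reduced base $Y$ the map $g$ cannot move off the diagonal along $I(u_j)$ to first order. The paper's argument for this is genuinely different from anything in your write-up: it works with the sheaf of relative differentials $\Omega_{\pi^{[2]}}$, proves the auxiliary Lemma~\ref{213} describing $\Omega_{\pi^{[2]}}/\pi^{[2]\ast}\pi^{[2]}_{\ast}\Omega_{\pi^{[2]}}$ as supported on the preimage of the diagonal, and then derives a contradiction from assuming the pullback of the diagonal's defining section is a nonzero $\epsilon$-multiple. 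So while your approach to the $I(u_j)$ half is clean and arguably more transparent than the paper's one-line dismissal, you have missed the step that carries the weight of the lemma.
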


\begin{proof}[Proof of Proposition \ref{352_splitting_lemma}]
We define an inverse to $\Psi$. Let
\begin{equation} \Big((\phi_{i}' : A_i \to S,q_{x_i})_{i = 1, \dots,24} , (\phi_{j} : B_j \to \Hilb^2(S), p_j, q_j)_{j=1,\dots, 2d-2}\Big) \label{410} \end{equation}
be a family of maps in the right hand side of \eqref{405} over a connected scheme~$Y$.
By Lemma \ref{410_lemma}, $\pi^{[2]} \circ \phi_{j} : B_j \to \Hilb^2(\p^1)$
maps into $I(u_j) \cap \Delta_{\Hilb^2(\p^1)}$.
Since the intersection of the line $I(u_j)$
and the diagonal $\Delta_{\Hilb^2(S)}$ is infinitesimal,
we have the inclusion 
\[ I(u_j) \cap \Delta_{\Hilb^2(\p^1)} \hookrightarrow \Spec( \widehat{\CO}_{\Delta_{\Hilb^2(\p^1)},2 u_j} ) = \Spec ( \widehat{\CO}_{\p^1, u_j} ), \]
and therefore the induced map $\iota_j \colon Y \to \Spec ( \widehat{\CO}_{\p^1, u_i} )$
making the diagram
\[
\begin{tikzcd}
B_j \ar{d} \ar{dr}{\pi^{[2]} \circ \phi_j} &  \\
Y \ar{r}{\iota_j} & 
\Spec( \widehat{\CO}_{\Delta_{\Hilb^2(\p^1)},2 u_j} )\mathrlap{\, = \Spec ( \widehat{\CO}_{\p^1, u_j} )}
\end{tikzcd}
\]
commutative.
Let $\ell = (\iota_j)_{j} \colon Y \to U_L$, where
\[ U_L = \textstyle{\prod}_{j = 1}^{2d-2} \Spec( \widehat{\CO}_{\p^1, u_j} ) \equiv \Spec( \widehat{\CO}_{\Hilb^{2d-2}(\p^1), \sum_{i} u_i} ). \]

Under the generically finite rational map
\[ G(2, d+1) \dashrightarrow \Hilb^{2d-2}(\p^1), \]
defined in \eqref{303},
the formal scheme $U_L$ is isomorphic
to the formal neighborhood of $G(2,d+1)$ at the point $[L]$.
We identify these neighborhoods under this isomorphism.

Let $\CZ_L \to U_L$ be the restriction of the universal family $\CZ \to G(2,d+1)$ to~$U_L$.
By pullback via $\ell$, we obtain a family of lines in $\p^d$ over the scheme $Y$,
\begin{equation} \ell^{\ast} \CZ_L \to Y, \label{dwofmoofdg} \end{equation}
together with an induced map
\[ \psi : \ell^{\ast} \CZ_L \xrightarrow{\ell} \CZ_L \to \p^d \equiv \Hilb^d(B_0) \xrightarrow{s^{[d]}} \Hilb^d(S). \]

We will require sections of $\ell^{\ast} \CZ_L \to Y$,
which allow us to glue the domains of the maps $\phi_{i}'$ and $\phi_{j}$ to $\ell^{\ast} \CZ_L$.
Consider the sections
\[ v_1, \dots, v_{2d-2} : Y \to \ell^{\ast} \CZ_L \]
which are the pullback under $\ell$ of the sections $v_i : U_L \to \CZ_L$ defined in \eqref{ifimsoidf}.
By construction, the section $v_i : Y \to \ell^{\ast} \CZ$
parametrizes the points of $\ell^{\ast} \CZ_L$ which map to the diagonal $\Delta_{\Hilb^d(S)}$
under $\psi$
(in particular, over closed points of $Y$ they map to $I(u_j) \cap L$).

For $j=1,\dots,2d-2$, consider the family of maps $\phi_j : B_j \to \Hilb^2(S)$,
\begin{equation}
 \xymatrix{
B_j \ar@<+6pt>[d]^{\pi_j} \ar[r]^{\phi_{j}} & \mathrlap{\Hilb^2(S)} \\
Y \ar@<+6pt>[u]^{p_j, q_j} \ar@<+0pt>[u],
}
\label{411} \end{equation}
where $p_j$ is the marked point mapping to $I(P_j)$,
and $q_j$ is the marked point mapping to $\Hilb^2(B_0)$.
Let $C'$ be the curve over $Y$
which is obtained by glueing the component $B_j$ to the line $\ell^{\ast} \BZ_L$ along
the points $q_j, v_j$ for all $j$:
\[ C' = \Big( \ell^{\ast} \BZ_L \sqcup B_1 \sqcup \ldots \sqcup B_{2d-2} \Big)\Big/ q_1 \sim v_1, \dots, q_{2d-2} \sim v_{2d-2} \,. \]
We will define a map $f' : C' \to \Hilb^d(S)$.

For all $j$,
let $\kappa_j : U_L \to \Spec ( \widehat{\CO}_{\p^1,u_j} ) \subset \p^1$ be the map defined in \eqref{kappa_map_def}.
By construction, we have $\kappa_{j} \circ \ell = \iota_j$.
Hence, by \eqref{fatt_kfmvodmfo}, there exist maps $h_1, \dots, h_{d-2} : Y \to S$ with
\begin{equation}
\psi \circ v_j = \phi_{j} \circ q_j + h_1 + \dots + h_{d-2} \,.
\label{fomvodmvfd}
\end{equation}
Let $\pi_j : B_j \to Y$ be the map of the family $B_j/Y$, and define
\[ \widetilde{\phi}_{u_j} = \Big( \phi_{j} + \sum_{i=1}^{n-2} h_i \circ \pi_j\Big) : B_j \ra \Hilb^d(S). \]
Define the map
\[ f' : C' \to \Hilb^d(S) \]
by $f'|_{C_0} = \psi$ and by $f'|_{B_j} = \widetilde{\phi}_{j}$ for every $j$.
By \eqref{fomvodmvfd}, the map $\widetilde{\phi}_{u_j}$ restricted to $q_j$
agrees with $\psi : C' \to \Hilb^d(S)$ restricted to $v_j$.
Hence $f'$ is well-defined.

By a parallel construction, we obtain a canonical glueing of the components $A_i$ to $C'$
together with a glueing of the maps $f'$ and $\phi_i' : A_i \to S$.
We obtain a family of maps
\[ f : C \to \Hilb^d(S) \]
over $Y$, which lies in $M_Z$ and such that $\Psi(f)$ equals \eqref{410}.
By a direct verification, the induced morphism on the moduli spaces is the desired inverse to~$\Psi$.
Hence, $\Psi$ is an isomorphism.
\end{proof}

The remaining steps in the evaluation of \eqref{IP_eval_xcxczcxcxc} are similar to Section~\ref{basic_case}.
Using the identification 
\[ H^0(C_0, f^{\ast} T_{\Hilb^d(S)}) = H^0(C_0, T_{C_0}) \oplus \bigoplus_{j=1}^{2d-2} T_{\Delta_{\Hilb^2(\p^1)}, \phi_j(q_j)}, \]
where $q_j = C_0 \cap B_j$ are the nodes and $\phi_j$ is as in the proof of Proposition~\ref{352_splitting_lemma},
one verifies that the virtual class splits according to the product \eqref{405}.
Hence, the invariant \eqref{IP_eval_xcxczcxcxc} is a product of series associated to the points $x_i$ and $u_j$ respectively.
Let
\begin{equation}
H^{\textup{GW}}(y,q) = \sum_{h \geq 0} \sum_{k \in \BZ} y^{k - \frac{1}{2}} q^{h} \int_{[M^{\textup{(H)}}_{u_j}(h, k)]^{\text{vir}}} 1 \in \BQ((y^{1/2}))[[q]],
\label{hhh} \end{equation}
be the contribution from the point $u_j$.
By Lemma \ref{intermediate_lemma}, there are $\frac{1}{d} \binom{2d-2}{d-1}$ lines in the set \eqref{Su_intersection_new}. Hence,
\begin{equation}
\blangle I(P_1), \dots, I(P_{2d-2}) \brangle_q^{\Hilb^d(S)}
=
\frac{1}{d} \binom{2d-2}{d-1}
\frac{ H^{\textup{GW}}(y,q)^{2d-2} }{\Delta(q)}.
\label{403}
\end{equation}

\begin{proof}[Proof of Lemma \ref{410_lemma}]
Since $\phi$ is incident to $I(P_j)$, the composition
$\pi^{[2]} \circ \phi$ maps to $I(u_j)$.
Therefore, we only need to show that $\pi^{[2]} \circ \phi$ maps to $\Delta_{\Hilb^2(\p^1)}$.

It is enough to consider the case $Y = \Spec( \BC[\epsilon]/\epsilon^2 )$. Let
$f_0 : C_0 \to \Hilb^2(S)$ be the restriction of $f$ over the closed point of $Y$,
and consider the diagram
\[
\begin{tikzcd}
C \ar{d}{\pi_C} \ar{r}{\phi} & \Hilb^2(S) \ar{d}{\pi^{[2]}} \\
\Spec( \BC[\epsilon]/\epsilon^2 ) \ar{r}{a} & \Hilb^2(\p^1) \,.
\end{tikzcd}
\]
where $\pi_C$ is the given map of the family \eqref{fam_diag} and $a$ is the induced map.
Let~$s$ be the section of $\CO(\Delta_{\p^1})$ with zero locus $\Delta_{\p^1}$,
and assume the pullback $\phi^{\ast} s$ is non-zero.

Let $\Omega_{\pi_2}$ be the sheaf of relative differentials of $\pi_2 := \pi^{[2]}$. The composition
\begin{equation} \phi^{\ast} \pi_2^{\ast} \pi_{2 \ast} \Omega_{\pi_2} \to \phi^{\ast} \Omega_{\pi_2} \overset{d}{\to} \Omega_{\pi_C} \label{304} \end{equation}
factors as
\begin{equation} \phi^{\ast} \pi_2^{\ast} \pi_{2 \ast} \Omega_{\pi_2} \to \pi_C^{\ast} \pi_{C \ast} \Omega_{\pi_C} \to \Omega_{\pi_C}. \label{304_11} \end{equation}
Since the second term in \eqref{304_11} is zero, the map \eqref{304} is zero. Hence, $d$ factors as
\begin{equation} \phi^{\ast} \Omega_{\pi_2} \to \phi^{\ast}( \Omega_{\pi_2} / \pi_2^{\ast} \pi_{2 \ast} \Omega_{\pi_2} ) \to \Omega_{\pi_C}. \label{214} \end{equation}
By Lemma \ref{213} below, $\Omega_{\pi_2} / \pi_2^{\ast} \pi_{2 \ast} \Omega_{\pi_2}$ is the pushforward of a sheaf supported on $\pi_2^{-1}(\Delta_{\p^1})$.
After trivializing $\CO(\Delta_{\p^1})$ near $2 u_j$, write $\phi^{\ast} s = \lambda \epsilon$ for some $\lambda \in \BC \setminus \{ 0\}$. Then, by \eqref{214},
\[ 0 = d( s \cdot \Omega_{\pi_2} ) = \lambda \epsilon \cdot d( \Omega_{\pi_2} ) \subset \Omega_{\pi_{C}}. \]
In particular, $d b = 0$ for every $b \in \Omega_{\pi_2}$,
which does not vanish on $\pi_2^{-1}(\Delta_{\p^1})$.
Since $\phi|_{C}$ is non-zero, this is a contradiction.
\end{proof}

\begin{lemma} \label{213}
Let $x \in \p^1$ be the basepoint of a smooth fiber of $\pi : S \to \p^1$.
Then, there exists a Zariski-open $2x \in U \subset \Hilb^2(\p^1)$
and a map
\begin{equation} u: \CO_U^{\oplus 2} \to \pi_{2 \ast} \Omega_{\pi_2}|_{U} \label{215} \end{equation}
with cokernel equal to $j_{\ast} \CF$ for a sheaf $\CF$ on $\Delta_{\p^1} \cap U$.
\end{lemma}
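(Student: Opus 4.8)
The plan is to reduce to a local model near the smooth fiber $F_x$ and to produce $u$ from the two symmetric relative one-forms built out of the fibrewise holomorphic one-form. Since $x$ is the basepoint of a smooth fiber, $\pi$ is smooth along $F_x$ and the Hodge line bundle $\pi_{\ast}\Omega_{\pi}$ is trivial over a suitable affine open $B \ni x$; let $\sigma$ be a generator, i.e.\ a relative one-form on $\pi^{-1}(B)$ that is fibrewise nowhere vanishing. Recall that $\pi_2 = \pi^{[2]}$ factors as $\Hilb^2(S) \xrightarrow{\rho} \Sym^2(S) \xrightarrow{\Sym^2 \pi} \Sym^2(\p^1) = \Hilb^2(\p^1)$, and take $U$ to be a small affine neighbourhood of $2x$ whose preimage lies in $\Sym^2(\pi^{-1}(B))$. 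The pushforward $\pi_{2 \ast}\Omega_{\pi_2}$ is coherent since $\pi_2$ is proper.

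To construct $u$, work on the ordered double cover. Write $W = \pi^{-1}(B) \times \pi^{-1}(B)$ with involution $\tau$ swapping the factors, so that $\Hilb^2(\pi^{-1}(B)) = \Bl_{\Delta}(W)/\langle \tau \rangle$. Set $\sigma_i = \pr_i^{\ast}\sigma$ and $t_i = \pr_i^{\ast} t$ for a coordinate $t$ on $B$, and define the two relative one-forms
\[ e_1 = \sigma_1 + \sigma_2, \qquad e_2 = (t_1 - t_2)(\sigma_1 - \sigma_2). \]
Both are $\tau$-invariant (for $e_2$ note that $\tau$ negates each of the two factors), hence descend to relative one-forms for $\pi_2$ over $U$; that they are regular across the exceptional divisor of $\rho$ is checked below. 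They therefore define a map $u = (e_1, e_2) \colon \CO_U^{\oplus 2} \to \pi_{2 \ast}\Omega_{\pi_2}|_U$.

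Next I would establish that $u$ is an isomorphism away from the diagonal. Over $U \setminus \Delta_{\p^1}$ the map $\rho$ is an isomorphism, and the fiber of $\pi_2$ over $t_1 + t_2$ with $t_1 \neq t_2$ is the product of elliptic curves $F_{t_1} \times F_{t_2}$, whose relative cotangent bundle is trivial with fibrewise sections $\sigma_1, \sigma_2$; thus $\pi_{2 \ast}\Omega_{\pi_2}$ is locally free of rank $2$ there with frame $\sigma_1, \sigma_2$. In this frame $u$ has matrix
\[ \begin{pmatrix} 1 & t_1 - t_2 \\ 1 & -(t_1 - t_2) \end{pmatrix}, \]
of determinant $-2(t_1 - t_2)$, a unit away from $\Delta_{\p^1}$. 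Hence $u$ is an isomorphism on $U \setminus \Delta_{\p^1}$, and $\operatorname{coker}(u)$ is set-theoretically supported on $\Delta_{\p^1} \cap U$.

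It remains to promote this to $\operatorname{coker}(u) = j_{\ast}\CF$ and to verify the regularity claim, which is the crux. In local coordinates $a = w_1 - w_2$, $b = t_1 - t_2$ (anti-invariant) together with their invariant counterparts, one has $\sigma_1 - \sigma_2 = da$ and $e_2 = b\, da$, which in either blow-up chart of $\Bl_{\Delta}(W)$ (e.g.\ $b = ac$, giving $e_2 = ac\, da$) pulls back to a regular form, is $\tau$-invariant, and so descends to a regular section on the smooth variety $\Hilb^2$. The same local picture shows that over $\Delta_{\p^1}$ the fiber of $\pi_2$ acquires extra relative one-forms from the tangent directions transverse to $F_x$ (the exceptional $\p^1$ of $\rho$), so that $\pi_{2 \ast}\Omega_{\pi_2}$ strictly contains the free subsheaf $\CO_U e_1 \oplus \CO_U e_2$ precisely along $\Delta_{\p^1}$; a first-order analysis should show the quotient is annihilated by the discriminant generating $I_{\Delta_{\p^1}}$, yielding $\operatorname{coker}(u) = j_{\ast}\CF$. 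The main obstacle is exactly this computation on $\Bl_{\Delta}(W)/\langle \tau \rangle$: checking that $e_2$ extends regularly over the exceptional divisor and controlling the extra relative differentials concentrated over the diagonal, so that the cokernel is genuinely an $\CO_{\Delta_{\p^1}}$-module and not merely supported there set-theoretically.
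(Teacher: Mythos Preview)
Your approach is essentially identical to the paper's: the paper also trivializes the Hodge bundle near $x$ by a section $\alpha$ (your $\sigma$), forms the same two $\BZ_2$-invariant relative one-forms $\alpha_1+\alpha_2$ and $(t_1-t_2)(\alpha_1-\alpha_2)$ on $\Bl_{D}(S_U\times S_U)$, and asserts that the resulting map $u$ is an isomorphism off $V((t_1-t_2)^2)$. For the remaining local check over the diagonal that you flag as the main obstacle, the paper simply passes to a small analytic neighbourhood where the elliptic fibration is written as $(U'\times\BC)/\Lambda_t$ and declares the verification to be direct and explicit; your proposed first-order analysis in blow-up coordinates is a reasonable substitute for this.
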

\begin{proof}
Let $U$ be an open subset of $x \in \p^1$
such that $\pi_{\ast} \Omega_{\pi}|_U$ is trivialized by a section
\[ \alpha \in \pi_{\ast} \Omega_{\pi}(U) = \Omega_{\pi}( S_U ), \]
where $S_U = \pi^{-1}(U)$.
Consider the open neighborhood $\widetilde{U} = \Hilb^2(U)$ of the point $2x \in \Hilb^2(\p^1)$.

Let $D_U \subset S_U \times S_U$ be the diagonal and consider the $\BZ_2$ quotient
\[ \Bl_{D_U} ( S_U \times S_U ) \overset{ /\BZ_2 }{\ra} \Hilb^2(S_U) = \pi_2^{-1}( \Hilb^2(U) ). \]
For $i \in \{ 1,2 \}$, let 
\[ q_i : \Bl_{D_U} ( S_U \times S_U ) \to S_U \]
be the composition of the blowdown map with the $i$-th projection.
Let $t$ be a coordinate on $U$ and let
\[ t_i = q_{i}^{\ast} t, \quad \alpha_i = q_{i}^{\ast} \alpha \]
for $i = 1,2$ be the induced global functions resp. 1-forms on $\Bl_{D_U} ( S_U \times S_U )$.
The two 1-forms
\[ \alpha_1 + \alpha_2 \quad \text{ and } \quad (t_1 - t_2) (\alpha_1 - \alpha_2) \]
are $\BZ_2$ invariant and descend to global sections of $\pi_{2 \ast} \Omega_{\pi_2}|U$.
Consider the induced map
\begin{equation*} u: \CO_U^{\oplus 2} \ra \pi_{2 \ast} \Omega_{\pi_2}|_{U} \end{equation*}

The map $u$ is an isomorphism away from the diagonal
\begin{equation} \Delta_{\pi} \cap \widetilde{U} = V( (t_1 - t_2)^2 ) \subset \widetilde{U} \,. \label{216} \end{equation}
Hence, it is left to check the statement of the lemma in an infinitesimal neighborhood of \eqref{216}.
Let $U'$ be a small analytic neighborhood of $v \in U$ such that the restriction $\pi_{U'}: S_{U'} \to U'$ is analytically isomorphic to the quotient
\[ (U' \times \BC) \mathclose{}/\mathopen{} \sim \ra U', \]
where $\sim$ is the equivalence relation
\[ (t, z) \sim (t', z') \quad \Longleftrightarrow \quad t = t' \text{ and } z - z' \in \Lambda_{t}, \]
with an analytically varying lattice $\Lambda_t : \BZ^2 \to \BC$.
Now, a direct and explicit verification yields the statement of the lemma.
\end{proof}

\subsection{Case $\langle \Fp_{-1}(F)^2 1_S, I(P) \rangle_q$} \label{Section_CaseI_P}
Let $F^{\text{GW}}(y,q)$ and $H^{\text{GW}}(y,q)$ be the power series defined in \eqref{FGW} and \eqref{hhh} respectively,
let $P \in S$ be a point and let $F$ be the class of a fiber of $\pi :S \to \p^1$.

\begin{lemma} \label{Lemma_in_special_case_4} We have
\[ \big\langle \, \Fp_{-1}(F)^2 1_S\, ,\, I(P)\, \big\rangle_q^{\Hilb^2(S)} = \frac{F^{\text{GW}}(y,q) \cdot H^{\text{GW}}(y,q)}{\Delta(q)} \]
\end{lemma}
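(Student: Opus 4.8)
The plan is to analyse the moduli space $M_Z = \ev^{-1}(Z \times I(P))$, where $Z = (\pi^{[2]})^{-1}(z)$ is the generic Lagrangian fiber of class $\Fp_{-1}(F)^2 1_S$ over a generic point $z \in \p^2$, by combining the splitting techniques of Section~\ref{basic_case} and Section~\ref{special_case_3}. Let $[f : C \to \Hilb^2(S), p_1, p_2] \in M_Z$ with $f(p_1) \in Z$ and $f(p_2) \in I(P)$, set $u = \pi(P)$, and let $L = \pi^{[2]}(f(C))$ be the image line. First I would pin down $L$. The condition $f(p_1) \in Z$ gives $z \in L$. Since $P$ is generic we have $P \notin B_0$, so $f(p_2) \notin \Hilb^2(B_0)$; by Lemma~\ref{117}(iii) the marking $p_2$ therefore cannot lie on the distinguished component $C_0$, and since $u \neq x_i$ for all $i$ it cannot lie on a bubble over a nodal fiber either. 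By Lemma~\ref{200}, $p_2$ must lie on a bubble mapping into the fiber $(\pi^{[2]})^{-1}(2u)$, whence $2u \in L$. Thus $L$ is the unique line through $z$ and $2u$; by genericity of $z$ and $u$ it is generic, so $L \not\subseteq \CW$ and $L$ meets the diagonal $\Delta_{\Hilb^2(\p^1)}$ transversally in $2u$ and a second point $2y_2$ with $y_2 \neq u$.

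Next I would record the decomposition of $C$ and build the splitting morphism. As in Section~\ref{basic_case_settheoretic_splitting}, $f|_{C_0}$ is an isomorphism onto $L \subset \Hilb^2(B_0)$ with $p_1$ sitting over $s^{[2]}(z)$, and $L$ meets $\CW$ in the $24$ points $\widetilde{x}_i = I(x_i) \cap L$ together with $2u$ and $2y_2$. By Lemma~\ref{200} all bubbles attach at these points, giving
\[ C = C_0 \cup A_1 \cup \dots \cup A_{24} \cup B_u \cup B_{y_2}, \]
where $B_u$ carries the marking $p_2$ and $B_{y_2}$ is the (possibly trivial) unmarked bubble over the generic diagonal point. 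The bubble $B_{y_2}$ is of the type of Section~\ref{basic_case} and defines a point of $M^{\textup{(F)}}_{y_2}(h_{y_2}, k_{y_2})$, while $B_u$, together with its $I(P)$-marking and its attachment node mapping to $\Hilb^2(B_0)$, defines a point of $M^{\textup{(H)}}_u(h_u, k_u)$ as in Section~\ref{special_case_3}. Running the argument of Lemma~\ref{doesnotsmooth} to show the attachment nodes do not smooth, I would assemble the splitting morphism
\[ \Psi : M_Z \ra \bigsqcup_{\textbf{h},\textbf{k}} \Big( \prod_{i=1}^{24} M^{\textup{(N)}}_{x_i}(h_{x_i}) \times M^{\textup{(F)}}_{y_2}(h_{y_2}, k_{y_2}) \times M^{\textup{(H)}}_u(h_u, k_u) \Big), \]
the union running over all splittings with $\sum_i h_{x_i} + h_{y_2} + h_u = h$ and $k_{y_2} + k_u = k+1$, the latter forced by $[L] = B - A$. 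That $\Psi$ is an isomorphism follows by gluing: the generic branch point $2y_2$ is treated exactly as in Section~\ref{basic_case}, while at the special point $2u$ — where, by Lemma~\ref{410_lemma}, the projected bubble maps into $I(u) \cap \Delta_{\Hilb^2(\p^1)} \cong \Spec(\widehat{\CO}_{\p^1,u})$ — the first-order motion of the branch point is reconstructed from $M^{\textup{(H)}}_u$ by the construction of Proposition~\ref{352_splitting_lemma}.

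With the isomorphism in hand I would show that the reduced virtual class splits as a product over the three types of factors, repeating the cone/co-cone computation of Section~\ref{anvirclass}: the normal bundle splitting of Lemma~\ref{normaltoL} forces the dependence on $L$ to cancel, the trivial tangent factors at the passive points drop out, and the $I(P)$-condition is absorbed into the $M^{\textup{(H)}}_u$-factor exactly as in the virtual class discussion following Proposition~\ref{352_splitting_lemma}. Thus $\Psi_{\ast}[M_Z]^{\text{vir}}$ is the product of the virtual classes of Proposition~\ref{W0splitprop} together with that of $M^{\textup{(H)}}_u$. Finally, the bookkeeping of Section~\ref{concl_basic_case} gives the result: summing $y^k q^{h-1} \int_{[M_Z(h,k)]^{\text{vir}}} 1$ and factoring, the $24$ nodal factors contribute $\prod_{m \geq 1}(1-q^m)^{-24}$ by \eqref{nodal_fiber_contribution}, the leftover $q^{-1}$ completes this to $1/\Delta(q)$, the $B_{y_2}$-factor produces $F^{\textup{GW}}(y,q)$ of \eqref{FGW} and the $B_u$-factor produces $H^{\textup{GW}}(y,q)$ of \eqref{hhh}, the half-integer $y$-shifts in those definitions cancelling against the $y^{-1}$ coming from $k_{y_2} + k_u = k+1$. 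This yields
\[ \big\langle \Fp_{-1}(F)^2 1_S, I(P) \big\rangle_q^{\Hilb^2(S)} = \frac{F^{\textup{GW}}(y,q)\, H^{\textup{GW}}(y,q)}{\Delta(q)}. \]

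The main obstacle will be the scheme-theoretic splitting at the special branch point $2u$: in contrast to the rigid situation of Section~\ref{basic_case}, first-order deformations of $f$ in $M_Z$ may move the branch point $u$ (and hence $L$ infinitesimally), and one must check via Lemma~\ref{410_lemma} and Proposition~\ref{352_splitting_lemma} that these are matched precisely by deformations inside $M^{\textup{(H)}}_u$, so that $\Psi$ is an isomorphism and the virtual class splits without remainder. Everything else is a direct combination of the arguments already used in the $F^{\textup{GW}}$ and $H^{\textup{GW}}$ evaluations.
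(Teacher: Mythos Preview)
Your set-theoretic analysis of $M_Z$ is correct and matches the paper, but there is a genuine gap in the scheme-theoretic splitting. You treat the generic branch point $2y_2$ ``exactly as in Section~\ref{basic_case}'', i.e.\ as a rigid point feeding into the fixed-basepoint moduli space $M^{\textup{(F)}}_{y_2}$. This does not work: the line $L$ here is pinned by the single generic point $z$ and by the diagonal point $2u$, and the latter \emph{can} move to first order via the $M^{\textup{(H)}}_u$-deformations (Lemma~\ref{410_lemma}). When $2u$ moves, $L$ pivots around $z$ and the second diagonal point $2y_2$ moves along with it. Your target factor $M^{\textup{(F)}}_{y_2}$ has a \emph{fixed} basepoint $s^{[2]}(2y_2)$ and cannot absorb this induced motion, so $\Psi$ is not an isomorphism of schemes. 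The paper states this explicitly: ``the moduli space is scheme-theoretically \emph{not} a product of the above moduli spaces''. Your closing paragraph locates the obstacle only at $2u$, but the real difficulty is the coupling: motion at $2u$ forces motion at $2y_2$.

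The paper resolves this by a different route. It passes to the open locus $N \subset M_{Z_2}$ where the non-$v$ diagonal point is allowed to vary freely, and uses a \emph{relative} version $M^{\textup{(F)}}(h_1,k_1)$ over $\Delta_{\Hilb^2(\p^1)}$; on $N$ there is an honest product splitting~\eqref{412}. Then $M_Z$ is recovered as a fiber product~\eqref{diagram_fkmvkfmv} over the incidence locus $V$ of diagonal-point pairs coming from lines through $z$. Finally, a degeneration to the normal cone of the smooth fiber $F_u$ decouples the two diagonal points (equation~\eqref{dmvfmvvbcbv}) and reduces the refined intersection $\iota^{!}$ to an honest product, see~\eqref{rrrr}. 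This degeneration step is the missing ingredient in your proposal; without it (or an equivalent mechanism) the virtual-class splitting is not justified.
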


\begin{proof}
Let $F_1, F_2$ be fibers of $\pi \colon S \to \p^1$ over generic points $x_1, x_2 \in \p^1$ respectively,
and let $P \in S$ be a generic point. Define the subschemes
\[ Z_1 = F_1[1] F_2[1] \quad \text{ and } \quad Z_2 = I(P) \,. \]
Consider the evaluation map
\[ \ev : \Mbar_{0,1}(\Hilb^2(S), \beta_h + kA ) \to \Hilb^2(S) \]
from the moduli space of stable maps with \emph{one} marked point,
let
\[ M_{Z_2} = \ev^{-1}(Z_2), \]
and let
\[ M_Z \subset M_{Z_2} \]
be the closed substack of $M_{Z_2}$ of maps which are incident to both $Z_1$ and $Z_2$.

Let $[f : C \to \Hilb^2(S), p_1] \in M_Z$
be an element, let $C_0$ be the distinguished component of $C$ on which $\pi^{[2]} \circ f$ is non-zero,
and let $L = \pi^{[2]}(f(C_0))$ be the image line.
Since $P \in S$ is generic, we have $2v \in L$ where $v = \pi(P)$. Hence,
$L$ is the line through $2v$ and $u_1 + u_2$, and has the diagonal points
\begin{equation} L \cap \Delta_{\Hilb^2(\p^1)} = \{ 2u, 2v \} \label{dmvofdvfv} \end{equation}
for some fixed $u \in \p^1 \setminus \{ v \}$.
By Lemma \ref{117}, the restriction $f|_{C_0}$ is therefore an isomorphism onto the embedded line $L \subset \Hilb^2(B_0)$.
Using arguments parallel to Section~\ref{Section_settheoretic_splitting},
the moduli space $M_Z$ is \emph{set-theoretically}
a product of the moduli space of maps to the nodal fibers,
the moduli space $M^{\textup{(F)}}_{u}(h',k')$ parametrizing maps over $2u$, 
and the moduli space $M^{\textup{(H)}}_{v}(h'',k'')$ parametrizing maps over $2v$. 

Under infinitesimal deformations of $[f : C \to \Hilb^2(S)]$ inside $M_Z$,
the line~$L$ remains incident to $x_1 + x_2$,
but may move to first order at the point $2 v$ (see Section \ref{special_case_3});
hence, it may move also at $2 u$ to first order.
In particular, the moduli space is scheme-theoretically \emph{not} a product of the above moduli spaces.
Nevertheless, by degeneration,
we will reduce to the case of a scheme-theoretic product.
For simplicity, we work on the component of $M_Z$
which parametrizes maps with no component mapping to the nodal fibers of $\pi$;
the general case follows by completely analog arguments with an extra $1/\Delta(q)$ factor appearing as contribution from the nodal fibers.

Let $N \subset M_{Z_2}$ be the \emph{open} locus of maps
$f : C \to \Hilb^2(S)$ in $M_{Z_2}$
with
\[ \pi^{[2]}(f(C)) \cap \Delta_{\Hilb^2(\p^1)} = 
\{ 2 t, 2 v \} \]
for some point $t \in \p \setminus \{ x_1, \dots, x_{24}, v \}$.
Under deformations of an element $[f] \in N$,
the intersection point $2t$ may move freely and independently of $v$.
Hence, we have a splitting \emph{isomorphism}
\begin{equation}
 \Psi : N \ra \bigsqcup_{\substack{h = h_1 + h_2 \\ k=k_1 + k_2 - 1}} M^{\textup{(F)}}(h_1, k_1) \times M^{\textup{(H)}}_{v}(h_2, k_2), \label{412}
\end{equation}
where
\begin{itemize}
 \item $M^{\textup{(F)}}(h, k)$ is the moduli space of $1$-pointed stable maps to $\Hilb^2(S)$
of genus $0$ and class $hF + kA$ such that the marked point is mapped to $s^{[2]}(2t)$ for some $t \in \p \setminus \{ x_1, \dots, x_{24}, v \}$,
 \item $M^{\textup{(H)}}_{v}(h,k)$ is the moduli space defined in Section~\ref{special_case_3}.
\end{itemize}
For every decomposition $h=h_1 + h_2$ and $k = k_1 + k_2 - 1$ separately,
let
\[ M^{\textup{(F)}}(h_1, k_1) \times M^{\textup{(H)}}_{v}(h_2, k_2) \ra \Delta_{\Hilb^2(\p^1)} \times \Spec\bigl( \widehat{\CO}_{\Delta_{\Hilb^2(\p^1)}, 2 v} \bigr) \]
be the product of the compositions of the first evaluation map with $\pi^{[2]}$ on each factor, let 
\begin{equation}
\iota \colon V \hookrightarrow
\Delta_{\Hilb^2(\p^1)} \times \Spec\big( \widehat{\CO}_{\Delta_{\Hilb^2(\p^1)}, 2 v} \big)
\label{REGMMM}
\end{equation}
be the subscheme parametrizing the intersection points
$L \cap \Delta_{\Hilb^2(\p^1)}$ of lines~$L$ which are incident to $x_1 + x_2$,
and consider the fiber product
\begin{equation} \label{diagram_fkmvkfmv}
\begin{tikzcd}
M_{Z,(h_1,h_2,k_1,k_2)} \ar{r} \ar{d} & M^{\textup{(F)}}(h_1, k_1) \times M^{\textup{(H)}}_{v}(h_2, k_2) \ar{d} \\
V \ar{r}{\iota} & \Delta_{\Hilb^2(\p^1)} \times \Spec\big( \widehat{\CO}_{\Delta_{\Hilb^2(\p^1)}, 2 v} \big)
\end{tikzcd}
\end{equation}
Then, by definition, the splitting isomorphism \eqref{412} restricts to an isomorphism
\[ \Psi : M_Z \to \bigsqcup_{\substack{h = h_1 + h_2 \\ k=k_1 + k_2 - 1}}M_{Z,(h_1,h_2,k_1,k_2)}. \]

Restricting the natural virtual class on $M_{Z_2}$ to the open locus,
we obtain a virtual class $[N]^{\text{vir}}$ of dimension $1$.
By the arguments of Section \ref{Section_analysis_of_virtual_class},
\begin{equation} \Psi_{\ast}[N]^{\text{vir}}
= \sum_{\substack{h = h_1 + h_2 \\ k=k_1 + k_2 - 1}} [ M^{\textup{(F)}}(h_1, k_1) ]^{\text{vir}} \times [M_v^{\textup{(H)}}(h_2,k_2)]^{\text{vir}}, \label{ABBAC} \end{equation}
where $[ M^{\textup{(F)}}(h_1, k_1) ]^{\text{vir}}$ is a $\Delta_{\Hilb^2(\p^1)}$-relative version
of the virtual class considered in Section~\ref{Section_analysis_of_virtual_class},
and $[M^{\textup{(H)}}_{v}(h_2, k_2)]^{\text{vir}}$ is the virtual class constructed in Section~\ref{special_case_3}.
The composition of $\iota$ with the projection to the second factor is an isomorphism.
Hence $\iota$ is a regular embedding and we obtain
\begin{multline} \label{qqqq}
\blangle \Fp_{-1}(F)^2, I(P) \brangle^{\Hilb^2(S)}_{\beta_h+kA}
= \deg( \Psi_{\ast} [M_Z]^{\text{vir}} ) \\
= \sum_{\substack{h = h_1 + h_2 \\ k=k_1 + k_2 - 1}} \deg\, \iota^{!} \Big([ M^{\textup{(F)}}(h_1, k_1) ]^{\text{vir}} \times [M_v^{\textup{(H)}}(h_2,k_2)]^{\text{vir}} \Big) .
\end{multline}

We proceed by degenerating the first factor in the product
\[ M^{\textup{(F)}}(h_1, k_1) \times M^{\textup{(H)}}_{v}(h_2, k_2), \]
while keeping the second factor fixed. Let
\[ \CS = \Bl_{F_{u} \times 0}(S \times \BA^1) \to \BA^1, \]
be a deformation of $S$ to the normal cone of $F_u$, where $u$ was defined in \eqref{dmvofdvfv}.
Let $\CS^{\circ} \subset \CS$ be the complement of the proper transform of $S \times 0$
and consider the \emph{relative} Hilbert scheme
$\Hilb^2( \CS^{\circ} / \BA^1 ) \to \BA^1$,
which ap\-pea\-red already in~\eqref{sofjosfosdf}.
Let
\begin{equation} p \colon \widetilde{M}^{\textup{(F)}}(h_1, k_1) \to \BA^1 \label{eigreigrg} \end{equation}
be the moduli space of $1$-pointed stable maps to $\Hilb^2(\CS^{\circ}/\BA^1)$ of genus $0$ and class $h_1 F + k_1 A$,
which map the marked point to the closure of 
\[ (\Delta_{\Hilb^2(B_0)} \setminus \{ x_1, \dots, x_{24}, v\}) \times (\BA^1 \setminus \{ 0 \}). \]
Over $t \neq 0$, \eqref{eigreigrg} restricts to $M^{\textup{(F)}}(h_1, k_1)$,
while the fiber over $0$, denoted
\[ M_0^{\textup{(F)}}(h_1, k_1) = p^{-1}(0), \]
parame\-tri\-zes maps into the trivial elliptic fibration $\Hilb^2(\BC \times E)$
incident to the diagonal $\Delta_{\Hilb^2(\BC \times e)}$ for a fixed $e \in E$.
Since addition by $\BC$ acts on $M_0^{\textup{(F)}}(h_1, k_1)$ we have the product decomposition
\begin{equation} M_0^{\textup{(F)}}(h_1, k_1) = M^{\textup{(F)}}_{0,\text{fix}}(h_1, k_1) \times \Delta_{\Hilb^2(\BC \times e)}, \label{dmvfmvvbcbv} \end{equation}
where $M^{\textup{(F)}}_{0,\text{fix}}(h_1, k_1)$ is a fixed fiber of
\[ M_0^{\textup{(F)}}(h_1, k_1) \to \Delta_{\Hilb^2(\BC \times e)}. \]

Consider a deformation of the diagram \eqref{diagram_fkmvkfmv} to $0 \in \BA^1$,
\[
\begin{tikzcd}
M'_{Z,(h_1,h_2,k_1,k_2)} \ar{r} \ar{d} & M_0^{\textup{(F)}}(h_1, k_1) \times M^{\textup{(H)}}_{v}(h_2, k_2) \ar{d} \\
V' \ar{r}{\iota'} & \Delta_{\Hilb^2(\BC \times E)} \times \Spec\bigl( \widehat{\CO}_{\Delta_{\Hilb^2(\p^1)}, 2 v} \bigr),
\end{tikzcd}
\]
where $(V',\iota')$ is the fiber over $0$ of a deformation of $(V,\iota)$ 
such that the composition with the projection to
$\Spec( \widehat{\CO}_{\Delta_{\Hilb^2(\p^1)}, 2 v} )$ remains an isomorphism.
By construction, the total space of the deformation
\[ M_{Z,(h_1,h_2,k_1,k_2)} \rightsquigarrow M'_{Z,(h_1,h_2,k_1,k_2)} \]
is proper over $\BA^1$. Using the product decomposition \eqref{dmvfmvvbcbv}, we find
\[ M'_{Z,(h_1,h_2,k_1,k_2)} \cong M^{\textup{(F)}}_{0,\text{fix}}(h_1, k_1) \times M^{\textup{(H)}}_{v}(h_2, k_2) \]
Hence, after degeneration, we are reduced to a scheme-theoretic product. It remains
to consider the virtual class.

By the relative construction of Section~\ref{Section_analysis_of_virtual_class}
the moduli space $\widetilde{M}^{\textup{(F)}}(h_1, k_1) $ carries a virtual class
\begin{equation} [ \widetilde{M}^{\textup{(F)}}(h_1, k_1) ]^{\text{vir}} \label{SOSOSO123} \end{equation}
which restricts to
$[ M^{\textup{(F)}}(h_1, k_1) ]^{\text{vir}}$ over $t \neq 0$,
while over $t=0$ we have
\begin{equation}
0^{!}[ \widetilde{M}^{\textup{(F)}}(h_1, k_1) ]^{\text{vir}} = {\rm pr}_1^{\ast} \left( [ M^{\text{fix}}_1(h_1,k_1) ]^{\text{vir}} \right)\,.
 \label{kgfjreufr}
\end{equation}
where ${\rm pr}_1$ is the projection to the first factor in \eqref{dmvfmvvbcbv}
and $[ M^{\text{fix}}_1(h_1,k_1) ]^{\text{vir}}$ is the virtual class obtained by the construction of Section~\ref{Section_analysis_of_virtual_class}.
We conclude,
\begin{multline} \label{rrrr}
\deg\, \iota^{!} \Big([ M^{\textup{(F)}}(h_1, k_1) ]^{\text{vir}} \times [M_v^{\textup{(H)}}(h_2,k_2)]^{\text{vir}} \Big)  \\
=
\deg \, (\iota')^{!} \Big({\rm pr}_1^{\ast} \big( [ M^{\text{fix}}_1(h_1,k_1) ]^{\text{vir}} \big) \times [M_v^{\textup{(H)}}(h_2,k_2)]^{\text{vir}} \Big) \\
=
\deg \big( [ M^{\text{fix}}_1(h_1,k_1) ]^{\text{vir}} \big) \cdot \deg \big( [M_v^{\textup{(H)}}(h_2,k_2)]^{\text{vir}} \big).
\end{multline}

By definition (see \eqref{hhh}),
\[ \deg [M_v^{\textup{(H)}}(h_2,k_2)]^{\text{vir}} = \left[ H^{\textup{GW}}(y,q) \right]_{q^{h_2} y^{k_2-1/2}}, \]
where $[\ \cdot\ ]_{q^{a} y^b}$ denotes the $q^a y^b$ coefficient.
The moduli space $M^{\text{fix}}_1(h_1,k_1)$
is isomorphic to the space $M^{(\ell)}_{y_i}(h_1,k_1)$ defined in \eqref{moduli_space_Kummer_case}.
Since the construction of the virtual class on both sides agree,
the virtual class is the same under this isomorphism. Hence, by Lemma~\ref{242_lemma},
\[ \deg [ M^{\text{fix}}_1(h_1,k_1) ]^{\text{vir}} = \left[ F^{\textup{GW}}(y,q) \right]_{q^{h_1} y^{k_1-1/2}}. \]

Inserting into \eqref{rrrr} yields
\begin{multline*}
\deg\, \iota^{!} \Big([ M^{\textup{(F)}}(h_1, k_1) ]^{\text{vir}} \times [M_v^{\textup{(H)}}(h_2,k_2)]^{\text{vir}} \Big) \\
= \left[ H^{\textup{GW}}(y,q) \right]_{q^{h_2} y^{k_2-1/2}} \cdot \left[ F^{\textup{GW}}(y,q) \right]_{q^{h_1} y^{k_1-1/2}},
\end{multline*}
which completes the proof by equation \eqref{qqqq}.
\end{proof}

\section{The Hilbert scheme of $2$ points of $\p^1 \times E$} \label{Section_Hilb2P1xE}
\subsection{Overview}
In previous sections we expressed genus $0$ Gromov-Witten invariants
of the Hilbert scheme of points of an elliptic K3 surface $S$ in terms of
universal series which depend only on specific fibers of the fibration $S \to \p^1$.
The contributions from nodal fibers have been determined
before by Bryan and Leung in their proof \cite{BL} of the Yau-Zaslow formula \eqref{YZ}.
The yet undetermined contributions from smooth fibers, denoted
\begin{equation} \label{undetermined_series} F^{\textup{GW}}(y,q),\ G^{\textup{GW}}(y,q),\ \widetilde{G}^{\textup{GW}}(y,q),\ H^{\textup{GW}}(y,q) \end{equation}
in equations \eqref{FGW}, \eqref{GGGdef}, \eqref{gggt}, \eqref{hhh} respectively,
depend only on infinitesimal data near the smooth fibers,
and not on the global geometry of the K3 surface.
Hence, one may hope to find similar contributions
in the Gromov-Witten theory of the Hilbert scheme of points
of other elliptic fibrations. 

Let $E$ be an elliptic curve with origin $0_E \in E$, and let 
\[ X = \p^1 \times E \]
be the trivial elliptic fibration.
Here, we study the genus $0$ Gromov-Witten theory of the Hilbert scheme
\[ \Hilb^2(X) \,. \]
and use our results to determine the series \eqref{undetermined_series}.

From the view of Gromov-Witten theory, the variety $\Hilb^2(X)$ has two advantages
over the Hilbert scheme of $2$ points of an elliptic K3 surface.
First, $\Hilb^2(X)$ is not holomorphic symplectic. Therefore,
we may use ordinary Gromov-Witten invariants
and in particular the main computation method which exists in genus $0$ Gromov-Witten theory -- the WDVV equation.
Second, we have an additional map
\[ \Hilb^2(X) \to \Hilb^2(E) \]
induced by the projection of $X$ to the second factor which is useful in calculations.

Our study of the Gromov-Witten theory of $\Hilb^2(X)$ will proceed in two independent directions.
First,
we directly analyse the moduli space of stable maps to $\Hilb^2(X)$ which are incident to certain geometric cycles.
Similar to the K3 case, this leads to an explicit expression of generating series of Gromov-Witten invariants of $\Hilb^2(X)$
in terms of the series \eqref{undetermined_series}.
This is parallel to the study of
the Gromov-Witten theory of the Hilbert scheme of points of a K3 surface
in Sections \ref{Section_higher_dimensional_Yau_Zaslow} and \ref{Section_More_Evaluations}.

In a second independent step, we will calculate the Gromov-Witten invariants of $\Hilb^2(X)$
using the WDVV equations and a few explicit calculations of initial data.
Then, combining both directions, we are able to solve for the functions \eqref{undetermined_series}.
This leads to the following result.

Let $F(z,\tau)$ be the Jacobi theta function \eqref{FFFdef} and, with $y = -e^{2\pi i z}$, let
\[ G(z,\tau) = F(z,\tau)^2 \left( y \frac{d}{dy} \right)^2 \log(F(z,\tau)) \]
be the function which appeared already in Section \ref{Section_More_evaluations_Introduction}. 

\begin{thm} \label{Hilb2P1e_complete_evaluation_theorem} Under the variable change $y=-e^{2\pi iz}$ and $q = e^{2 \pi i \tau}$,
\begin{align*}
F^{\textup{GW}}(y,q) & = F(z,\tau) \\
G^{\textup{GW}}(y,q) & = G(z,\tau) \\
\widetilde{G}^{\textup{GW}}(y,q) & = - \frac{1}{2} \left( y \frac{d}{dq} \right) G(z,\tau) \\
H^{\textup{GW}}(y,q) & = \left( q \frac{d}{dq} \right) F(y,q)
\end{align*}
\end{thm}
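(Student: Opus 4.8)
The plan is to determine the four universal series in \eqref{undetermined_series} by computing the genus~$0$ Gromov-Witten theory of $Y = \Hilb^2(X)$, $X = \p^1 \times E$, in two logically independent ways and matching the results. The variety $Y$ is an ideal testing ground precisely because the projection $X \to \p^1$ is a trivial elliptic fibration all of whose fibers are \emph{smooth}: the fiberwise contributions that govern the K3 computations of Sections~\ref{Section_higher_dimensional_Yau_Zaslow} and~\ref{Section_More_Evaluations} are literally the same series $F^{\textup{GW}}, G^{\textup{GW}}, \widetilde{G}^{\textup{GW}}, H^{\textup{GW}}$, since their defining integrals \eqref{FGW}, \eqref{GGGdef}, \eqref{gggt}, \eqref{hhh} depend only on an infinitesimal neighborhood of a smooth elliptic fiber and are insensitive to the ambient surface (this is exactly what the degeneration-to-the-normal-cone argument of Lemma~\ref{242_lemma} establishes). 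Moreover, since $Y$ carries no holomorphic symplectic form, its \emph{ordinary} virtual class is nonzero and the usual WDVV equation is available, as is the auxiliary projection $Y \to \Hilb^2(E)$.

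First, on the geometric side, I would repeat the moduli analysis of Sections~\ref{basic_case} and~\ref{special_case_3} for $Y$. Choosing cycles adapted to the fibration $Y \to \Hilb^2(\p^1) = \p^2$ — a fiber of class $\Fp_{-1}(F)^2 1_X$, the curve class $C(F)$, the exceptional class $A$, and the point-incidence loci $I(P)$ — and analyzing the moduli spaces of incident stable maps, the splitting statements proved earlier (the analogues of Proposition~\ref{W0splitprop} and Proposition~\ref{352_splitting_lemma}) apply verbatim: each such moduli space decomposes, scheme-theoretically and at the level of virtual classes, into factors supported over the finitely many special fibers of $Y \to \p^2$. Because $X \to \p^1$ has \emph{no} nodal fibers there is no $1/\Delta$ factor, and each local factor is exactly one of the four series — the fiber insertion producing $F^{\textup{GW}}$, the class $C(F)$ producing $G^{\textup{GW}}$, the class $A$ producing $\widetilde{G}^{\textup{GW}}$, and $I(P)$ producing $H^{\textup{GW}}$. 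The output is a collection of identities expressing generating series of genus~$0$ invariants of $Y$ as explicit monomials and $y$- or $q$-derivatives in the four universal series.

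Second, on the algebraic side, I would compute these same invariants of $Y$ by the WDVV recursion. The classical cohomology ring of $Y$ is known via the Nakajima and Lehn--Sorger descriptions, so I would assemble the quantum-corrected $2$- and $3$-point functions, use divisor insertions (fiber and exceptional classes) together with the projection $Y \to \Hilb^2(E)$ to pin down a finite list of initial invariants by direct geometry — for instance fiber-class counts reducing to the elliptic curve $E$ and to $\p^1$ — and let associativity propagate this data to all degrees. Recognizing the resulting closed generating functions as the Jacobi forms $F(z,\tau)$, $G(z,\tau)$ and their derivatives (one key series being supplied in closed form, as acknowledged in the introduction) completes this side, and matching the two computations term by term then identifies $F^{\textup{GW}} = F$, $G^{\textup{GW}} = G$, and the two derivative relations for $\widetilde{G}^{\textup{GW}}$ and $H^{\textup{GW}}$.

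I expect the main obstacle to be precisely this WDVV bootstrap: organizing the associativity relations so that a manageable set of explicitly computable initial invariants suffices to determine everything, computing those base cases correctly — in particular keeping careful track of the exceptional class $A$ and of the degree-counting structure responsible for the factors $q\frac{d}{dq}$ and $y\frac{d}{dy}$ — and \emph{proving}, rather than merely guessing, the closed-form theta expressions that make the recursion terminate. The geometric side is essentially a bookkeeping exercise building on the splitting lemmas already in hand; it is the independent closed-form evaluation, and its reconciliation with those splittings, that carries the real content.
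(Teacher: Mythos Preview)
Your proposal is correct and follows essentially the same two-step strategy as the paper: express selected genus~$0$ invariants of $\Hilb^2(\p^1\times E)$ geometrically in terms of the universal series $F^{\textup{GW}}, G^{\textup{GW}}, \widetilde{G}^{\textup{GW}}, H^{\textup{GW}}$ via the splitting analysis of Sections~\ref{basic_case}--\ref{Section_More_Evaluations}, independently compute those invariants in closed form via WDVV plus a handful of initial values, and match. The one organizational point you do not make explicit is that the paper first passes from the $4$-fold $\Hilb^2(X)$ to the $3$-fold fiber $Y=\sigma^{-1}(0_E)$ of the composition $\Hilb^2(X)\to\Hilb^2(E)\xrightarrow{+}E$; this isotrivial reduction (your ``auxiliary projection'') cuts the cohomology down to an $8$-dimensional WDVV-invariant subspace, which is what makes the associativity recursion (the paper's equations \textbf{W1}--\textbf{W6} together with Proposition~\ref{320}) genuinely finite and solvable.
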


The proof of Theorem \ref{Hilb2P1e_complete_evaluation_theorem} via the geometry of $\Hilb^2(X)$ is independent
from the Kummer K3 geometry studied in Section~\ref{section_Kummer_evaluation}.
In particular, our approach here yields a second proof of Theorem \ref{thm_F_evaluation}.

\subsection{The fiber of $\Hilb^2(\p^1 \times E) \to E$}
\subsubsection{Definition}
The projections of $X = \p^1 \times E$ to the first and second factor
induce the maps
\begin{equation} \pi : \Hilb^2(X) \to \Hilb^2(\p^1) = \p^2 \quad \text{ and } \quad \tau : \Hilb^2(X) \to \Hilb^2(E) \label{induced_morphisms} \end{equation}
respectively. Consider the composition
\[ \sigma : \Hilb^2(X) \xrightarrow{\, \tau \, } \Hilb^2(E) \xrightarrow{\, + \, } E \]
of $\tau$ with the addition map $+ \colon \Hilb^2(E) \to E$.
Since $\sigma$ is equivariant with respect to the natural action of $E$ on $\Hilb^2(X)$ by translation,
it is an isotrivial fibration with smooth fibers. We let
\[ Y = \sigma^{-1}(0_E) \]
be the fiber of $\sigma$ over the origin $0_E \in E$.

Let $\gamma \in H_2(\Hilb^2(X))$ be an effective curve class and let
\[ \Mbar_{0,m}(\Hilb^2(X), \gamma) \]
be the moduli space of $m$-pointed stable maps to $\Hilb^2(X)$ of genus $0$ and class~$\gamma$.
The map $\sigma$ induces an isotrivial fibration
\[ \sigma : \Mbar_{0,m}(\Hilb^2(X), \gamma) \to E \]
with fiber over $0_E$ equal to
\[ \bigsqcup_{\gamma'}\, \Mbar_{0,m}(Y, \gamma'), \]
where the disjoint union runs over all effective curve classes
$\gamma' \in H_2(Y;\BZ)$ with $\iota_{\ast} \gamma' = \gamma$; here $\iota : Y \to \Hilb^2(X)$ is the inclusion.

For cohomology classes $\gamma_1, \dots, \gamma_m \in H^{\ast}(\Hilb^2(X))$, we have
\begin{multline*}
 \int_{[ \Mbar_{0,m}(\Hilb^2(X), \gamma) ]^{\text{vir}}} \ev_1^{\ast} ( \gamma_1 \cup [Y] )\, \cdots \, \ev_{m}^{\ast}( \gamma_m ) \\
= \sum_{\substack{ \gamma' \in H_2(Y) \\ \iota_{\ast} \gamma' = \gamma} }
\int_{ [ \Mbar_{0,m}(Y, \gamma') ]^{\text{vir}}} (\iota \circ \ev_1)^{\ast}( \gamma_1 ) \cdots (\iota \circ \ev_{m})^{\ast}( \gamma_m ),
\end{multline*}
where we let $[\ \cdot \ ]^{\text{vir}}$ denote the virtual class defined by ordinary Gromov-Witten theory.
Hence, for calculations related to the Gromov-Witten theory of $\Hilb^2(X)$
we may restrict to the threefold $Y$.

\subsubsection{Cohomology} \label{cohomology_of_Y} \label{Section_cohomology_of_Y}
Let $D_{X} \subset X \times X$ be the diagonal and let
\begin{equation} \Bl_{D_{X}}(X \times X) \to \Hilb^2(X) \label{fmdefg} \end{equation}
be the $\BZ_2$-quotient map which interchanges the factors. Let
\[ W = \p^1 \times \p^1 \times E\ \hookrightarrow\ X \times X, \quad (x_1, x_2,e) \mapsto (x_1, e, x_2,-e) \]
be the fiber of $0_E$ under $X \times X \to E \times E \overset{+}{\to} E$ and consider the blowup
\begin{equation} \rho : \widetilde{W} = \Bl_{D_{X} \cap W} W \to W. \label{blowup_map} \end{equation}
Then, the restriction of \eqref{fmdefg} to $\widetilde{W}$ yields the $\BZ_2$-quotient map
\begin{equation} g : \widetilde{W} \to \widetilde{W} / \BZ_2 = Y. \label{gquotientmap} \end{equation}

Let $D_{X,1}, \dots, D_{X,4}$ be the components of the intersection
\[ D_{X} \cap W = \{ (x_1, x_2, f) \in \p^1 \times \p^1 \times E\ | \ x_1 = x_2 \text{ and } f = -f \} \]
corresponding to the four $2$-torsion points of $E$, and let
\[ E_1, \dots, E_4 \]
be the corresponding exceptional divisors of the blowup $\rho : \widetilde{W} \to W$.
For every $i$, the restriction of $g$ to $E_i$ is an isomorphism onto its image. Define
the cohomology classes
\[ \Delta_i = g_{\ast} [ E_i ], \quad \quad A_i = g_{\ast} [ \rho^{-1}(y_i) ] \]
for some $y_i \in D_{X,i}$.
We also set 
\[ \Delta = \Delta_1 + \dots + \Delta_4 \,, \quad \quad A = \frac{1}{4} ( A_1 + \dots + A_4 ). \]
Let $x_1, x_2 \in \p^1$ and $f \in E$ be points, and define
\[ B_1 = g_{\ast} \big[ \rho^{-1}( \p^1 \times x_2 \times f ) \big],
\quad \quad 
 B_2 = \frac{1}{2} \cdot g_{\ast} \big[ \rho^{-1}(x_1 \times x_2 \times E) \big] \,.
\]
Identify the fiber of $\Hilb^2(E) \to E$ over $0_E$ with $\p^1$, and consider the diagram
\begin{equation} \label{some_mmm_diagram}
\begin{tikzcd}
Y \ar{r}{\tau} \ar{d}{\pi} & \p^1 \\ \p^2
\end{tikzcd}
\end{equation}
induced by the morphisms \eqref{induced_morphisms}.
Let $h \in H^2(\p^2)$ be the class of a line and let $x \in \p^1$ be a point. Define the divisor classes
\[ D_1 = [ \tau^{-1}(x) ], \quad \quad D_2 = \pi^{\ast} h \,. \]

\begin{lemma} The cohomology classes
\begin{equation} D_1, D_2, \Delta_1, \dots, \Delta_4 \quad \quad (\text{resp. } B_1, B_2, A_1, \dots, A_4 \big) \label{305} \end{equation}
form a basis of $H^2(Y;\BQ)$\, (resp. of $H^4(Y;\BQ))$.
\end{lemma}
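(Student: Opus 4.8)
The plan is to compute $H^2(Y;\BQ)$ and $H^4(Y;\BQ)$ from the quotient presentation $g : \widetilde{W} \to Y = \widetilde{W}/\BZ_2$ of \eqref{gquotientmap}, and then to verify linear independence of the two families of six classes by a single intersection computation; independence together with the dimension count then gives the basis statement in both degrees at once. Since $Y$ is the quotient of the smooth variety $\widetilde{W}$ by a finite group, pullback along $g$ gives $H^{\ast}(Y;\BQ) \cong H^{\ast}(\widetilde{W};\BQ)^{\BZ_2}$, so it suffices to understand the $\BZ_2$-invariant part of $H^{\ast}(\widetilde{W};\BQ)$. Now $\widetilde{W} = \Bl_{D_X \cap W} W$ is the blow-up of $W = \p^1 \times \p^1 \times E$ along the four disjoint smooth curves $D_{X,1}, \dots, D_{X,4}$, each a diagonal $\p^1$ lying over a $2$-torsion point of $E$ and of codimension $2$. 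The blow-up formula together with the Künneth decomposition of $H^{\ast}(W)$ gives $H^2(\widetilde{W}) = H^2(W) \oplus \bigoplus_i \BQ\,[E_i]$ and $H^4(\widetilde{W}) = H^4(W) \oplus \bigoplus_i \BQ\,A_i$, both of dimension $3 + 4 = 7$, where $H^2(W) = \langle h_1, h_2, \omega_E\rangle$ with $h_1, h_2$ the point classes of the two $\p^1$-factors and $\omega_E$ the point class of $E$, and $H^4(W) = \langle h_1 h_2,\, h_1 \omega_E,\, h_2 \omega_E\rangle$.

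Next I would determine the covering involution, which in the coordinates of Section~\ref{Section_cohomology_of_Y} is $(x_1, x_2, e) \mapsto (x_2, x_1, -e)$: it swaps the two $\p^1$-factors and acts by $-1$ on $E$. On $H^2(W)$ it therefore fixes $h_1 + h_2$ and $\omega_E$ (the map $e \mapsto -e$ acts trivially on $H^2(E)$) and negates $h_1 - h_2$, and dually it fixes a two-dimensional subspace of $H^4(W)$. Since the involution preserves each centre $D_{X,i}$, indeed fixes it pointwise, it fixes each exceptional divisor $E_i$ and hence acts trivially on the classes $[E_i]$ and $A_i$. Taking invariants yields $\dim H^2(Y;\BQ) = \dim H^4(Y;\BQ) = 2 + 4 = 6$, exactly the number of classes listed in the statement.

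It remains to prove linear independence, for which I would compute the Poincaré pairing $H^2(Y) \times H^4(Y) \to H^6(Y) = \BQ$ on the given classes, ordering the degree-$4$ classes as $B_2, B_1, A_1, \dots, A_4$. Under the projections \eqref{induced_morphisms} the curve $B_1$ maps to a line in $\p^2$ and is contracted by $\tau$, while $B_2$, contracted by $\pi$, maps onto the fibre $\p^1$ of $\tau$; both $A_i$ are contracted by $\pi$ and by $\tau$, since all tangent directions at a diagonal point collapse to the single double point $2x \in \Hilb^2(\p^1)$, and a generic choice of the data defining $B_1, B_2$ keeps them off the exceptional locus. This gives $D_1 \cdot B_2 = 1$, $D_2 \cdot B_1 = 1$ and $\Delta_i \cdot A_j = -2\,\delta_{ij}$, with all remaining pairings vanishing; the factor $2$ arises because $g$ is branched along $\Delta = \sum_i \Delta_i$, so that $g^{\ast}\Delta_i = 2 E_i$. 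The resulting $6 \times 6$ matrix is $\mathrm{diag}(1, 1, -2, -2, -2, -2)$, with $\det = 16 \neq 0$, so the pairing is non-degenerate on both families; hence each family of six classes is linearly independent and, by the dimension count, a basis.

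The main obstacle is the bookkeeping in this last step rather than any conceptual difficulty. One must pin down the behaviour of $\pi$ and $\tau$ on the exceptional divisors and, crucially, observe that the involution acts as a reflection transverse to each $E_i$, so that $g$ is a double cover \emph{branched} along the $\Delta_i$ rather than étale there; this branching produces the factor $g^{\ast}\Delta_i = 2 E_i$, which must be combined correctly with the $\tfrac12$- and $\tfrac14$-normalisations in the definitions of $B_2$ and $A$. Once these local computations and normalisation factors are fixed, the argument reduces to the elementary non-degeneracy of a diagonal matrix.
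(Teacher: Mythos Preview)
Your proposal is correct and takes the same approach as the paper: both use the isomorphism $g^{\ast}\colon H^{\ast}(Y;\BQ)\xrightarrow{\sim} H^{\ast}(\widetilde{W};\BQ)^{\BZ_2}$ induced by the finite quotient, then reduce to a direct check. The paper's proof simply says ``the Lemma now follows from a direct verification,'' whereas you have spelled out that verification (the blow-up and K\"unneth decompositions, the $\BZ_2$-action, and the intersection matrix, which indeed appears in the paper immediately after the lemma).
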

\begin{proof} Since the map $g$ is the quotient map by the finite group $\BZ_2$, we have the isomorphism
\[ g^{\ast} \colon H^{\ast}(Y;\BQ) \to H^{\ast}(\widetilde{W};\BQ)^{\BZ_2}, \]
where the right hand side denotes the $\BZ_2$ invariant part of the cohomology of~$\widetilde{W}$.
The Lemma now follows from a direct verification.
\end{proof}
By straight-forward calculation, we find the following intersections between the basis elements \eqref{305}.
\begin{center}
\begin{tabular}{lcl}
\begin{tabular}{c | c c c}
\cline{1-4}
$\cdot$ & $B_1$ & $B_2$ & $A_i$ \\
\cline{1-4}
$D_1$           & $0$ & $1$ & $0$ \\
$D_2$           & $1$ & $0$ & $0$ \\
$\Delta_j$      & $0$ & $0$ & $-2 \delta_{ij}$ \\
\end{tabular}
& $\quad \quad$ &
\begin{tabular}{c | c c c }
\cline{1-4}
$\cdot$     & $D_1$ & $D_2$ & $\Delta_i$ \\
\cline{1-4}
$D_1$           & $0$     & $2 B_1$ & $0$ \\
$D_2$           & $2 B_1$ & $2 B_2$ & $2 A_i$ \\
$\Delta_j$      & $0$     & $2 A_j$ & $4 (A_i - B_1) \delta_{ij}$ \\[5pt]
\end{tabular}
\end{tabular}
\end{center}
Finally, using intersection against test curves, the canonical class of $Y$ is
\[ K_Y = -2 D_2 \,. \]

\subsubsection{Gromov-Witten invariants}
Let $r,d \geq 0$ be integers and let $\textbf{k} = (k_1, \dots, k_4)$ be a tuple of half-integers $k_i \in \frac{1}{2} \BZ$. Define the class
\[ \beta_{r,d,\textbf{k}} = r B_1 + d B_2 + k_1 A_1 + k_2 A_2 + k_3 A_3 + k_4 A_4. \]
Every algebraic curve in $Y$ has a class of this form.

For cohomology classes $\gamma_1, \dots, \gamma_m \in H^{\ast}(Y;\BQ)$ define
the genus $0$ potential
\begin{equation}
\big\langle \gamma_1, \dots, \gamma_l \big\rangle^Y = \sum_{r,d \geq 0} \sum_{\textbf{k} \in (\frac{1}{2} \BZ)^4 } \zeta^r q^d y^{\sum_i k_i}
\int_{[ \Mbar_{0,m}(Y, \beta_{r,d,\textbf{k}}) ]^{\text{vir}} } \ev_{1}^{\ast}(\gamma_1) \cdots \ev_m^{\ast}(\gamma_m),
\label{306} \end{equation}
where $\zeta, y, q$ are formal variables and the integral on the right hand side
is defined to be $0$ whenever $\beta_{r,d, \textbf{k}}$ is not effective.

The virtual class of $\Mbar_{0,m}(Y,\beta_{r,d,\textbf{k}})$
has dimension $2r+m$.
Hence, for homogeneous classes $\gamma_1, \dots, \gamma_m$
of complex degree $d_1, \dots, d_m$ respectively satisfying $\sum_i d_i = 2r + m$,
only terms with $\zeta^r$ contribute to the sum \eqref{306}. 
In this case, we often set $\zeta = 1$. 

\subsubsection{WDVV equations} 
Let $\iota \colon Y \to \Hilb^2(X)$ denote the inclusion and consider the subspace
\begin{equation} i^{\ast} H^{\ast}(\Hilb^2(X) ;\BQ) \subset H^{\ast}(Y;\BQ). \label{subspace_abc} \end{equation}
of classes pulled back from $\Hilb^2(X)$. The tuple of classes
\[ b = (T_i)_{i=1}^{8} = (e_Y, D_1, D_2, \Delta, B_1, B_2, A, \pt_Y), \]
forms
a basis of \eqref{subspace_abc};
here $e_Y = [Y]$ is the fundamental class and $\pt_Y$ is the class of point of $Y$.
Let $(g_{ef})_{e,f}$ with 
\[ g_{ef} = \langle T_e, T_f \rangle = \int_Y T_e \cup T_f \]
be the intersection matrix of $b$,
and let $( g^{ef} )_{e,f}$ be its inverse.

\begin{lemma} Let $\gamma_1, \dots, \gamma_4 \in i^{\ast} H^{\ast}(\Hilb^2(X) ;\BQ)$
be homogeneous classes of complex degree $d_1, \dots, d_4$ respectively such that $\sum_i d_i = 5$.
Then,
\begin{equation} \label{313}
 \sum_{e,f=1}^{8} \big\langle \gamma_1, \gamma_2, T_e \big\rangle^Y g^{ef} \big\langle \gamma_3, \gamma_4, T_f \big\rangle^Y
=
\sum_{e,f=1}^{8} \big\langle \gamma_1, \gamma_4, T_e \big\rangle^Y g^{ef} \big\langle \gamma_2, \gamma_3, T_f \big\rangle^Y.
\end{equation}
\end{lemma}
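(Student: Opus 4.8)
The plan is to deduce the identity from the ordinary genus~$0$ WDVV relation on the smooth projective threefold $Y$, together with an argument that restricts the internal summation to the subspace $i^{\ast} H^{\ast}(\Hilb^2(X))$ spanned by $b$. First I would invoke the standard WDVV relation: since $Y$ is nonsingular, projective, and carries no holomorphic symplectic form, ordinary Gromov--Witten theory applies and the splitting axiom for the boundary divisors of $\Mbar_{0,4}$ yields, for any four classes $\gamma_1,\dots,\gamma_4 \in H^{\ast}(Y)$,
\[
\sum_{a,c} \langle \gamma_1,\gamma_2,S_a\rangle^Y \tilde g^{ac}\langle S_c,\gamma_3,\gamma_4\rangle^Y
= \sum_{a,c}\langle \gamma_1,\gamma_4,S_a\rangle^Y \tilde g^{ac}\langle S_c,\gamma_2,\gamma_3\rangle^Y,
\]
where $\{S_a\}$ runs over a \emph{full} homogeneous basis of $H^{\ast}(Y;\BQ)$ and $(\tilde g^{ac})$ is the inverse of its intersection matrix. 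The relation holds for each curve class separately; since the data $(r,d,\textbf{k})$ is additive under gluing a node and the weights $\zeta^r q^d y^{\sum_i k_i}$ are correspondingly multiplicative, it holds at the level of the generating series \eqref{306}. The degree hypothesis $\sum_i d_i = 5$ makes the relation homogeneous in $\zeta$, so setting $\zeta = 1$ is harmless. It thus remains to show that when $\gamma_1,\dots,\gamma_4 \in i^{\ast}H^{\ast}(\Hilb^2(X))$ the internal sum may be restricted to $b = (T_e)_{e=1}^{8}$.

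Next I would complete $b$ to a full basis of $H^{\ast}(Y)$ by adjoining a basis of the complementary subspace
\[
K = \big\langle\, \Delta_i - \Delta_j,\ A_i - A_j \ :\ 1\le i,j\le 4\,\big\rangle \ \oplus\ H^{\mathrm{odd}}(Y),
\]
and record that the intersection matrix is block diagonal with respect to $H^{\ast}(Y) = \mathrm{span}(b)\oplus K$. The even part of this follows from the intersection tables of Section~\ref{cohomology_of_Y}: the differences $\Delta_i-\Delta_j$ and $A_i-A_j$ pair trivially with every element of $b$ (for instance $\langle \Delta_i-\Delta_j,\,A\rangle = \tfrac14(-2+2)=0$ and $\langle A_i-A_j,\,\Delta\rangle = -2-(-2)=0$), while $H^{\mathrm{odd}}(Y)$ is orthogonal to all even classes by degree. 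Hence the full WDVV sum splits as the sum over $b$ plus the contributions of the blocks of $K$, and it suffices to prove that the $K$-blocks vanish.

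The vanishing of the $K$-blocks I would establish by two independent mechanisms. For the odd part, a three-point function $\langle \gamma_1,\gamma_2,S\rangle^Y$ with $\gamma_1,\gamma_2$ of even real degree and $S\in H^{\mathrm{odd}}(Y)$ has insertions of odd total real degree, which cannot match the even virtual real dimension $2(2r+3)$ of $\Mbar_{0,3}(Y,\beta_{r,d,\textbf{k}})$; such invariants vanish for parity reasons. For the even complement, I would use the action of the $2$-torsion subgroup $E[2]\cong(\BZ/2)^2$ on $Y$ by translation in the elliptic factor, which preserves the fiber $Y=\sigma^{-1}(0_E)$. This action permutes $\{\Delta_1,\dots,\Delta_4\}$ and $\{A_1,\dots,A_4\}$ via the transitive translation action on the four $2$-torsion points, hence fixes $\Delta$, $A$, and indeed all of $b$; moreover, since $E$ is connected, every class pulled back from $\Hilb^2(X)$ — in particular each $\gamma_i$ — is $E[2]$-invariant. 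Averaging the automorphism-invariance of Gromov--Witten invariants over $E[2]$ (the weights $\zeta^r q^d y^{\sum_i k_i}$ being $E[2]$-invariant) gives $\langle \gamma_1,\gamma_2,S\rangle^Y = \langle \gamma_1,\gamma_2,\,PS\rangle^Y$ for the projector $P$ onto $E[2]$-invariants; as the even part of $K$ carries no $E[2]$-invariants, $PS = 0$ and the even $K$-blocks vanish. Combining the three blocks yields precisely \eqref{313}.

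The main obstacle will be the bookkeeping behind the block decomposition: one must check that $\mathrm{span}(b)$ is \emph{exactly} the $E[2]$-invariant (equivalently, pulled-back) part of the even cohomology — a dimension count $1+3+3+1=8$ matching the decomposition of the regular representation of $E[2]$ on $\{\Delta_i\}$ and on $\{A_i\}$ into a trivial summand plus a complement with no invariants — and that $K$ consequently carries no even invariants, so that the averaging argument has teeth. The parity argument disposes of the odd cohomology with no further input, so no additional symmetry (such as the inversion $e\mapsto -e$) is needed.
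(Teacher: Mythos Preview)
Your argument is correct and is a careful unpacking of what the paper records only as ``follows directly from the classical WDVV equation and direct formal manipulations.'' The paper gives no further details, so your orthogonal decomposition $\mathrm{span}(b)\oplus K$, the parity argument for $H^{\mathrm{odd}}(Y)$, and the $E[2]$-averaging for the even complement are precisely the sort of formal manipulations being invoked; in that sense your route and the paper's coincide, yours simply being explicit where the paper is silent.
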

\begin{proof}
The claim follows directly from the classical WDVV equation \cite{FP} and direct formal manipulations.
\end{proof}

We reformulate equation \eqref{313} into the form we will use.
Let 
\[ \gamma \in i^{\ast} H^2(\Hilb^2(X);\BQ) \]
be a divisor class and let
\[ Q(\zeta, y, q) = \sum_{i,d,k} a_{ikd} \zeta^i y^k q^d \]
be a formal power series.
Define the differential operator $\partial_{\gamma}$ by
\[ \partial_{\gamma} Q(\zeta, y, q) = \sum_{i,d,k} \Big( \int_{i B_1 + d B_2 + k A} \gamma \Big) a_{ikd} \zeta^i y^k q^d. \]
Explicitly, we have
\[ \partial_{D_1} = q \frac{d}{d q}, \quad \quad \partial_{D_2} = \zeta \frac{d}{d \zeta}, \quad \quad \partial_{\Delta} = -2 y \frac{d}{d y}. \]
Then, for homogeneous classes $\gamma_1, \dots, \gamma_4 \in i^{\ast} H^{\ast}(\Hilb^2(X) ;\BQ)$ of complex degree $2,1,1,1$ respectively,
the left hand side of \eqref{313} equals
\begin{multline}
 \partial_{\gamma_2} \big\langle \gamma_1, \gamma_3 \cup \gamma_4 \big\rangle^Y
+ \partial_{\gamma_4} \partial_{\gamma_3} \big\langle \gamma_1 \cup \gamma_2 \big\rangle^Y \\
+
\sum_{\substack{ T_e \in \{ B_1, B_2, A \} \\ T_f \in \{ D_1, D_2, \Delta \} }}
\partial_{\gamma_2}\left( \big\langle \gamma_1, T_e \big\rangle^Y \right) g^{ef}
\partial_{\gamma_3} \partial_{\gamma_4} \partial_{T_f} \big\langle 1 \big\rangle^Y, \label{315}
\end{multline}
where we let $\big\langle 1 \big\rangle$ denote the Gromov-Witten potential with no insertions.
The expression for the right hand side of \eqref{313} is similar.

\subsubsection{Relation to the Gromov-Witten theory of $\Hilb^2(K3)$}
Recall the power series \eqref{undetermined_series},
\[ F^{\textup{GW}}(y,q),\ G^{\textup{GW}}(y,q),\ \widetilde{G}^{\textup{GW}}(y,q),\ H^{\textup{GW}}(y,q). \]
\begin{prop} \label{comparision_proposition} There exist a power series 
\[ \widetilde{H}^{\textup{GW}}(y,q) \in \BQ((y^{1/2}))[[q]] \]
such that
\begin{align}
\big\langle B_2, B_2 \big\rangle^Y & = (F^{\textup{GW}})^2 \tag{i} \\
\big\langle \pt_{Y} \big\rangle^Y & = 2 G^{\textup{GW}} \tag{ii} \\
\big\langle B_1, B_2 \big\rangle^Y & = 2 F^{\textup{GW}} \cdot H^{\textup{GW}} + G^{\textup{GW}} \tag{iii} \\
\big\langle A, B_1 \big\rangle^Y & = \widetilde{G}^{\textup{GW}} + \widetilde{H}^{\textup{GW}} \cdot H^{\textup{GW}} \tag{iv} \\
\big\langle A, B_2 \big\rangle^Y & = \frac{1}{2} \widetilde{H}^{\textup{GW}} \cdot F^{\textup{GW}} \,. \tag{v}
\end{align}
\end{prop}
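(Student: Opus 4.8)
The plan is to establish all five identities by transporting the moduli-space splitting analysis of Sections~\ref{basic_case}--\ref{Section_CaseI_P} to the threefold $Y \subset \Hilb^2(X)$. The conceptual input is that $X = \p^1 \times E \to \p^1$ is an elliptic fibration all of whose fibers are smooth and isomorphic to $E$. Since the series $F^{\textup{GW}}, G^{\textup{GW}}, \widetilde{G}^{\textup{GW}}, H^{\textup{GW}}$ of \eqref{FGW}, \eqref{GGGdef}, \eqref{gggt}, \eqref{hhh} were defined as contributions of a \emph{single} smooth elliptic fiber and depend only on infinitesimal data near it, the deformation-to-the-normal-cone argument of Lemma~\ref{242_lemma} shows that exactly the same local integrals govern the genus~$0$ invariants of $Y$. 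I would first define $\widetilde{H}^{\textup{GW}}(y,q)$ to be the one further universal series required: the contribution of a single diagonal fiber to genus~$0$ maps that are incident to an exceptional curve of class $A$ and additionally carry the marked point used to glue onto the distinguished component. Its independence of the global geometry---hence its membership in $\BQ((y^{1/2}))[[q]]$---follows from deformation invariance just as for the other four series.

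Next I would set up the common geometric framework. The dimension formula $\dim[\Mbar_{0,m}(Y,\beta_{r,d,\textbf{k}})]^{\text{vir}} = 2r+m$ forces every bracket above to receive contributions only from classes with $B_1$-coefficient $r=1$; since $D_2 \cdot B_1 = 1$ and $D_2 \cdot B_2 = D_2 \cdot A_i = 0$, the projection $\pi \colon Y \to \p^2 = \Hilb^2(\p^1)$ then has degree one on the image of any stable map. By the analogue of Lemma~\ref{117}, each map has a distinguished component $C_0$ mapping isomorphically onto a line $L \subset \p^2$, and by the analogue of Lemma~\ref{200} the remaining genus-zero components map into the fibers of $\pi$ over the points where $L$ meets the diagonal conic $\Delta_{\Hilb^2(\p^1)}$. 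A generic line meets this conic in exactly two points $2u_1, 2u_2$, and $\pi^{-1}(2u_i) \cap Y$ is the fiber of $\Hilb^2(E_{u_i}) \to E$ over $0_E$; the attached components therefore live in precisely the local moduli spaces of classes $dB_2 + \sum_i k_i A_i$ that define our universal series. Exactly as in Lemma~\ref{doesnotsmooth}, Proposition~\ref{W0splitprop} and Proposition~\ref{352_splitting_lemma}, the gluing nodes do not smooth, the moduli space splits as a product over the two diagonal points, and the virtual class splits accordingly.

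With this framework the five identities reduce to recording, case by case, which local series sits over each diagonal point and on which components the marked points lie. In (i) the two $B_2$-insertions place both markings on $C_0$ over generic off-diagonal fibers, pinning $L$ to a fixed generic line; both diagonal bubbles then contribute the plain attachment series $F^{\textup{GW}}$, giving $(F^{\textup{GW}})^2$. In (ii) the fixed point $\pt_Y$ lies on $C_0$, and the analysis is the $\Hilb^2$-analogue of the $\langle C(F)\rangle$-evaluation of Section~\ref{CASE1}, producing $G^{\textup{GW}}$ with the overall factor $2$ coming from the two branches of the length-two subscheme $\pt_Y$. Case (iii) combines a $B_1$- and a $B_2$-condition: the configurations in which the $B_1$-marking lands on a diagonal bubble contribute $F^{\textup{GW}} \cdot H^{\textup{GW}}$, with a factor $2$ from the two choices of which diagonal point carries it, whereas the configuration in which it remains on $C_0$ contributes $G^{\textup{GW}}$, yielding $2F^{\textup{GW}}H^{\textup{GW}} + G^{\textup{GW}}$. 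Finally, (iv) and (v) involve the exceptional-curve insertion $A$, which forces one diagonal fiber to meet an exceptional curve and, correspondingly, constrains $L$ relative to the discriminant conic. The two terms on each right-hand side correspond to the two resulting regimes: a configuration with a single distinguished diagonal fiber carrying the $A$-incidence, which produces $\widetilde{G}^{\textup{GW}}$ (Section~\ref{ABCD}), and a configuration with two diagonal fibers in which the $A$-incident one also absorbs the gluing marking and contributes the new series $\widetilde{H}^{\textup{GW}}$, paired over the second fiber with $H^{\textup{GW}}$ in (iv) and with $F^{\textup{GW}}$ in (v); the factor $\frac{1}{2}$ in (v) reflects the two orderings of the length-two subscheme.

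The main obstacle is the same subtlety encountered in Section~\ref{Section_CaseI_P}: unlike the Bryan-Leung K3, the trivial fibration $X = \p^1 \times E$ carries a positive-dimensional family of sections, so the distinguished line $L$ (and the section $C_0$ over it) need \emph{not} be rigid under the incidence conditions of (iii)--(v). As in Lemma~\ref{Lemma_in_special_case_4}, the product splitting is therefore only scheme-theoretic after degenerating to the normal cone of a fiber, and one must verify---via the relative construction of Section~\ref{Section_analysis_of_virtual_class} and the Gysin and conservation-of-number arguments of Lemma~\ref{242_lemma}---that the resulting virtual classes match the universal local classes. The remaining work is the careful combinatorial bookkeeping of the numerical factors and of which local moduli space is attached at each diagonal point, which is what yields the precise shape of the five right-hand sides.
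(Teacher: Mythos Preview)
Your overall strategy---fixing $r=1$ by the dimension count, isolating a distinguished component $C_0$ with $\pi\circ f|_{C_0}$ a line, and splitting the moduli space over the points where that line meets $\Delta_{\Hilb^2(\p^1)}$---is the paper's approach. But you are missing the central geometric ingredient that produces the precise coefficients: the dichotomy the paper calls type~(A) versus type~(B). Because $Y$ is the fiber of $\sigma:\Hilb^2(X)\to E$ over $0_E$, the component $C_0$ can sit in $Y$ in two essentially different ways. Either (A) $f|_{C_0}$ is a line in $\Hilb^2(B_e)\subset Y$ for a \emph{2-torsion} point $e\in E$---four discrete choices, and the only case where $\pi(C_0)$ meets the diagonal conic in two distinct points---or (B) $f|_{C_0}$ is of the form $B_e+(x',-e)$ for some $x'\in\p^1$ and $e\in E$, in which case $\pi(C_0)=I(x')$ is \emph{tangent} to the diagonal and there is a single attached component $C_1$ over $2x'$. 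Your framework (``generic line meets the conic in two points'') only sees type~(A); yet case~(ii) is purely type~(B), and (iii)--(iv) are mixtures of both.

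This matters for every numerical factor you state. In~(i) one actually computes $\langle 2B_2,2B_2\rangle^Y=4\,(F^{\textup{GW}})^2$, the $4$ being the four 2-torsion points; you never see it. In~(iii) the $2$ in $2F^{\textup{GW}}H^{\textup{GW}}$ is $4$ torsion choices divided by $2$ (since $[Z_2]=2B_2$), not ``two choices of diagonal point'', while the lone $G^{\textup{GW}}$ comes from two type-(B) configurations divided by the same $2$. In~(iv)--(v) the series $\widetilde{H}^{\textup{GW}}$ is \emph{not} a single local contribution as you propose: the paper defines it as $\widetilde{H}_0+3\widetilde{H}_1$, where the two pieces record whether the 2-torsion point of the exceptional curve $A_i$ coincides with the 2-torsion point $e$ of the type-(A) line. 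And the $\tfrac12$ in~(v) is simply $[Z_2]=2B_2$ again, not ``two orderings of a length-two subscheme''. Your final paragraph correctly flags the positive-dimensional family of sections of $X\to\p^1$ as the new difficulty, but the resolution is not a normal-cone degeneration---it is exactly the (A)/(B) split: the constraint $\sigma=0_E$ collapses the continuous family $\Hilb^2(B_e)$ to four rigid lines and introduces the graph-type sections, after which the incidence conditions pin everything down discretely.
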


\begin{proof}
Let $d \geq 0$ be an integer, let $\textbf{k} = (k_1, \dots, k_4) \in (\frac{1}{2} \BZ)^4$ be a tuple of half-integers, and let
\[ \beta_{d, \textbf{k}} = B_1 + d B_2 + k_1 A_1 + \dots + k_4 A_4. \]
Consider a stable map $f : C \to Y$ of genus $0$ and class $\beta_{d, \textbf{k}}$.
The composition
$\pi \circ f : C \to \p^2$ has degree $1$ with image a line $L$.
Let $C_0$ be the component of $C$ on which $\pi \circ f$ is non-constant.

Let $g : \widetilde{W} \to Y$ be the quotient map \eqref{gquotientmap}, and consider the fiber diagram
\[
\begin{tikzcd}
\widetilde{C} \ar{r}{\widetilde{f}} \ar{d}    & \widetilde{W} \ar{d}{g} \ar{r}{p} & \p^1 \times E \\
C \ar{r}{f} & Y,
\end{tikzcd}
\]
where $p = \pr_{23} \circ \rho$ is the composition of the blowdown
map with the projection to the $(2,3)$-factor of $\p^1 \times \p^1 \times E$.
Then, parallel to the case of elliptic K3 surfaces,
the image of $\widetilde{C}$ under $p \circ \widetilde{f}$
is a comb curve
\[ B_e + \pr_1^{-1}(z), \]
where $B_e$ is the fiber of the projection $X \to E$ over some point $e \in E$,
the map $\pr_1 : \p^1 \times E \to \p^1$ is the projection to the first factor,
and $z \subset \p^1$ is a zero-dimensional subscheme of length $d$.

Let $G_0 \subset \widetilde{C}$
be the irreducible component which maps with degree $1$ to~$B_e$
under $p \circ \widetilde{f}$.
The projection $\widetilde{C} \to C$ induces a flat map
\begin{equation} G_0 \to C_0 \,. \label{flll_map} \end{equation}

If \eqref{flll_map} has degree $2$, then similar to the arguments of Lemma \ref{117},
the restriction $f|_{C_0}$ is an isomorphism onto an embedded line
\begin{equation*} L \subset \Hilb^2(S_e) \subset Y, \end{equation*}
where $e = -e \in E$ is a $2$-torsion point of $E$.
Since $f|_{C_0}$ is irreducible, we have $L \nsubseteq I(x)$ for all $x \in \p^1$.
The tangent line to $\Delta_{\Hilb^2(\p^1)}$ at $2x$ is $I(x)$ for every $x \in \p^1$.
Hence, $L$ intersects the diagonal $\Delta_{\Hilb^2(\p^1)}$ in two distinct points.

If \eqref{flll_map} has degree $1$,
the map $f|_{C_0}$ is the sum
of two maps $C_0 \to X$.
The first of these must map $C_0$ to a section of $X \to \p^1$, the second must be constant
since there are no non-constant maps to the fiber of $X \to \p^1$.
Hence, the restriction $f|_{C_0}$ is an isomorphism onto the embedded line\footnote{
If $e$ is a $2$-torsion point of $E$,
we take the proper transform instead of $\rho^{-1}$ in \eqref{308}.
This case will not appear below.}
\begin{equation} B_e + (x',-e) = g\left( \rho^{-1}\left( x' \times B_e \times -e \right) \right), \label{308} \end{equation}
for some $x' \in \p^1$ and $e \in E$; here we used the notation \eqref{116}.

Every irreducible component of $C$ other then $C_0$ maps into the fiber
of
\[ \pi : Y \to \Hilb^2(\p^1) = \p^2 \]
over a diagonal point $2x \in \Delta_{\Hilb^2(\p^1)}$.

Summarizing, the map $f : C \to Y$ therefore satisfies one of the following.
\begin{enumerate}
 \item[(A)]
The restriction $f|_{C_0}$
is an isomorphism onto a line 
\begin{equation} L \subset \Hilb^2(B_e) \subset Y  \label{312} \end{equation}
where $e \in E$ is a $2$-torsion point.
The line $L$ intersects the diagonal in the distinct points $2 x_1$ and $2 x_2$.
The curve $C$ has a decomposition
\begin{equation} C = C_0 \cup C_1 \cup C_2 \label{Csplitting_eqn2} \end{equation}
such that for $i=1,2$ the restriction $f|_{C_i}$ maps in the fiber $\pi^{-1}(2 x_i)$.
 \item[(B)] The restriction $f|_{C_0}$ is an isomorphism onto the line \eqref{308} for some $x' \in \p^1$ and $e \in E$.
The image $f(C_0)$ meets the fiber $\pi^{-1}(\Delta_{\Hilb^2(\p^1)})$
only in the point $(x',e) + (x',-e)$.
Hence, the curve $C$ admits the decomposition
\begin{equation} C = C_0 \cup C_1 \label{Csplitting_eqn} \end{equation}
where $f|_{C_1}$ maps to the fiber $\pi^{-1}(2 x')$.
\end{enumerate}
According to the above cases, we say that $f : C \to Y$ is of type (A) or (B).

We consider the different cases of Proposition \ref{comparision_proposition}.

\vspace{7pt}
\noindent \textbf{Case (i).} Let $Z_1, Z_2$ be generic fibers of the natural map
\[ \pi : Y \to \Hilb^2(\p^1). \]
The fibers $Z_1, Z_2$ have class $2 B_2$.
Let $f : C \to Y$ be a stable map of class~$\beta_{d, \textbf{k}}$ incident to $Z_1$ and $Z_2$.
Then $f$ must be of type (A) above, with the line $L$ in \eqref{312} uniquely determined by $Z_1, Z_2$
up the choice of the 2-torsion point $e \in E$. 
After specifying a $2$-torsion point,
we are in a case completely parallel to Section \ref{Section_higher_dimensional_Yau_Zaslow},
except for the existence of the nodal fibers in the K3 case.
Following the argument there, we find the contribution from each fixed $2$-torsion point to be $(F^{\textup{GW}})^2$.
Hence,
\[ \big\langle 2 B_2, 2 B_2 \big\rangle^Y = \left| \left\{ e \in E\ |\ 2e = 0 \right\} \right| \cdot  (F^{\textup{GW}})^2 = 4 \cdot (F^{\textup{GW}})^2. \]

\vspace{7pt}
\noindent \textbf{Case (ii).} Let $x_1, x_2 \in \p^1$ and $e \in E$ be generic, and consider the point
\[ y = (x_1,e) + (x_2,-e) \in Y\,. \]
A stable map $f : C \to Y$ of class $\beta_{d,\textbf{k}}$ incident to $y$ must be of type (B) above,
with $x' = x_1$ or $x_2$ in \eqref{308}.
In each case, the calculation proceeds completely analogous to Section \ref{CASE1} and yields the contribution $G^{\textup{GW}}$.
Summing up both cases, we therefore find $\langle y \rangle^Y = 2 G^{\textup{GW}}$.

\vspace{7pt}
\noindent \textbf{Case (iii).} Let $x', x_1, x_2 \in \p^1$ and $e \in E$ be generic points. Let
\begin{equation} \label{owrogr}
 Z_1 = g(\rho^{-1}( \p^1 \times x' \times e )) = (\p^1 \times e) + (x', -e)
\end{equation}
and let $Z_2$ be the fiber of $\pi$ over the point $x_1 + x_2$.
The cycles $Z_1, Z_2$ have the cohomology classes $[Z_1] = B_1$ and $[Z_2] = 2 B_2$ respectively.
Let
\[ f \colon C \to Y \]
be a $2$-marked stable map of genus $0$ and class $\beta_{d, \textbf{k}}$ with markings $p_1, p_2 \in C$
incident to $Z_1, Z_2$ respectively.
Since $f(p_1) \in Z_1$, we have
\[ f(p_1) = (x'', e) + (x',-e) \]
for some $x'' \in \p^1$. Since also $f(p_2) \in Z_2$ and $e$ is generic, $x'' \in \{ x', x_1, x_2 \}$.

Assume $x'' = x_1$. Then, $f$ is of type (B) and the restriction $f|_{C_0}$ is an isomorphism onto the line $\ell = B_e + (x_1,-e)$.
The line $\ell$ meets the cycle $Z_2$ in the point $(x_2,e) + (x_1,-e)$
and no marked point of $C$ lies on the component~$C_1$ in the splitting \eqref{Csplitting_eqn}.
Parallel to (ii), the contribution of this case is $G^{\textup{GW}}$. The case $x'' = x_2$ is identical.

Assume $x'' = x'$. Then, $\pi(f(p_1)) = 2x'$.
Since $\pi(f(p_2)) = x_1 + x_2$, we have $\pi(f(p_1)) \cap \pi(f(p_2)) = \varnothing$. Hence, $f$ is of type (A) and we have
the decomposition
\[ C = C_0 \cup C_1 \cup C_2, \]
where $f|_{C_0}$ maps to a line $L \subset \Hilb^2(B_{e'})$ for a $2$-torsion point $e' \in E$,
the restriction $f|_{C_1}$ maps to $\pi^{-1}(2 x')$,
and $f|_{C_2}$ maps to the fiber of $\pi$ over the diagonal point of $L$ which is not $2x'$.
We have $p_1 \in C_1$ with $f(p_1) \in Z_1$, and $p_2 \in C_0$ with $f(p_2) = (x_1,e') + (x_2, -e')$.
The contribution from maps to the fiber over $2x'$ matches the contribution $H^{\textup{GW}}$ considered in Section \ref{Section_CaseI_P}.
Since there is no marking on $C_2$, the contribution from maps $f|_{C_2}$ is $F^{\textup{GW}}$.
For each fixed $2$-torsion point $e' \in E$, 
we therefore find the contribution $F^{\textup{GW}} \cdot H^{\textup{GW}}$.

In total, we obtain
\[ \big\langle B_1, 2 B_2 \big\rangle = 2 \cdot G^{\textup{GW}} + 4 \cdot F^{\textup{GW}} \cdot H^{\textup{GW}} \,. \]

\vspace{7pt}
\noindent \textbf{Case (iv).}
Let $x, x' \in \p^1$ and $e' \in E$ be generic points,
and let $e \in E$ be the $i$-th $2$-torsion point.
Consider the exceptional curve at $(x,e)$,
\[ Z_1 = g(\rho^{-1} (x,x,e) ) \]
and the cycle which appeared in \eqref{owrogr} above,
\begin{equation*}
Z_2 = g(\rho^{-1}( \p^1 \times x' \times e' )) = (\p^1 \times e') + (x', -e').
\end{equation*}
We have $[Z_1] = A_i$ and $[Z_2] = B_1$.
Consider a $2$-marked stable map $f : C \to Y$ of class $\beta_{d, \textbf{k}}$ with markings $p_1, p_2 \in C$
incident to $Z_1, Z_2$ respectively.

If $f$ is of type (B), we must have $\pi(f(p_1)) \cap \pi(f(p_2)) \neq \varnothing$.
Hence, $f(p_2) = (x, e') + (x', -e')$ and the restriction $f|_{C_0}$ is an isomorphism onto
\[ \ell = ( \rho^{-1} ( x \times \p^1 \times e') ) = B_{(-e')} + (x,e') \]
In the splitting \eqref{Csplitting_eqn}, the component $C_1$ is attached to the component $C_0 \equiv \ell$ at $(x, -e') + (x, e')$.
Then, the contribution here matches precisely the contribution of the point $u_0$ in the K3 case of Section \ref{ABCD};
it is $\widetilde{G}^{\textup{GW}}$.

Assume $f$ is of type (A).
The line $L$ in \eqref{312} lies inside $\Hilb^2(B_{e''})$ for some $2$-torsion point $e'' \in E$.
Since $e'$ is generic, $\pi(L)$ is the line through $2x$ and $2x'$.
Consider the splitting \eqref{Csplitting_eqn2} with $C_1$ and $C_2$ mapping to the fibers of $\pi$
over $2x$ and $2x'$ respectively.
The contribution from maps $f|_{C_2}$ over $2x'$ is parallel to Section \ref{special_case_3}; it is $H^{\textup{GW}}$.
Let $\widetilde{H}_0$ (resp. $\widetilde{H}_1$) be the contribution from maps $f|_{C_1}$ over $2x$ if $e'' = e$ (resp. if $e'' \neq e)$.
Then, summing up over all $2$-torsion points, the total contribution is
$\widetilde{H}^{\textup{GW}} \cdot H^{\textup{GW}}$, where $\widetilde{H}^{\textup{GW}} = \widetilde{H}_0 + 3 \widetilde{H}_1$.

Adding up both cases, we obtain $\langle A_i, B_1 \rangle^Y = \widetilde{G}^{\textup{GW}} + \widetilde{H}^{\textup{GW}} \cdot H^{\textup{GW}}$.

\vspace{7pt}
\noindent \textbf{Case (v).} This is identical to the second case of (iv) above, with the difference that the second marked point does lie on $C_0$, not $C_2$.
\end{proof}

\subsection{Calculations}
\subsubsection{Initial Conditions} \label{Section_Initial_conditions}
Define the formal power series
\begin{align*}
 H & = \sum_{d \geq 0} \sum_{k \in \BZ} H_{d,k} y^k q^d = \big\langle B_2, B_2 \big\rangle^Y \\
 I & = \sum_{d \geq 0} \sum_{k \in \BZ} I_{d,k} y^k q^d = \big\langle \pt_Y \big\rangle^Y \\
 T & = \sum_{d \geq 0} \sum_{k \in \BZ} T_{d,k} y^k q^d = \big\langle 1 \big\rangle^Y,
\end{align*}
where $\big\langle 1 \big\rangle^Y$ is the Gromov-Witten potential \eqref{306} with no insertion,
and we have set $\zeta = 1$ in \eqref{306}. We have the following
initial conditions.
\begin{prop} \label{320} We have
\begin{enumerate}
\item[(i)] {\makebox[2.5cm]{$T_{0,k} = 8/k^3$ \hfill}} for all $k \geq 1$
\item[(ii)] {\makebox[2.5cm]{$T_{d,-2d} = 2/d^3$\hfill}} for all $d \geq 1$
\item[(iii)] {\makebox[2.5cm]{$H_{-1,0} = 1$\hfill}}
\item[(iv)] {\makebox[2.5cm]{$H_{d,k} = 0$\hfill}} if $(d = 0, k \leq -2)$ or $( d > 0, k < -2d )$
\item[(v)] {\makebox[2.5cm]{$T_{d,k} = 0$\hfill}} if $k < -2d$
\item[(vi)] {\makebox[2.5cm]{$I_{d,k} = 0$\hfill}} if $k < -2d$. 
\end{enumerate}
\end{prop}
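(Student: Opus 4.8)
The plan is to evaluate each coefficient by directly analysing the genus $0$ stable maps to $Y$ that contribute to it, exploiting the fibration $\pi \colon Y \to \Hilb^2(\p^1) = \p^2$. Since $D_2 = \pi^{\ast} h$ is nef with $D_2 \cdot B_1 = 1$ and $D_2 \cdot B_2 = D_2 \cdot A_i = 0$, the $B_1$-degree $r = D_2 \cdot \beta$ of a class equals the degree of its $\pi$-image. Because the virtual dimension of $\Mbar_{0,m}(Y, \beta_{r,d,\textbf{k}})$ is $2r + m$, only $r = 0$ classes contribute to $T = \langle 1 \rangle^Y$, whereas $r = 1$ for both $I = \langle \pt_Y \rangle^Y$ and $H = \langle B_2, B_2 \rangle^Y$. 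The first geometric input I would establish is the structure of the special fibres of $\pi$: over the diagonal conic $\Delta_{\Hilb^2(\p^1)} \cong \p^1$ the fibre degenerates to a curve of Kodaira type $I_0^{\ast}$ (type $\widetilde{\mathsf D}_4$), with central component $C_0$ of class $B_2 - \tfrac12 \sum_i A_i$ and the four exceptional curves $A_i$ as outer components, satisfying $2 C_0 + \sum_i A_i = 2 B_2$ (the general fibre class). As $\textbf{k}$ varies these components sweep out ruled surfaces over the diagonal $\p^1$.

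For the vanishings (v) and (vi) I would use the divisor $4 D_1 - \Delta$. From the intersection table one computes $(4 D_1 - \Delta) \cdot A_i = 2$, $(4 D_1 - \Delta) \cdot C_0 = 0$, and $(4 D_1 - \Delta) \cdot 2 B_2 = 8$; since $A_i$ and $C_0$ generate the cone of $\pi$-vertical effective curves, $4 D_1 - \Delta$ is nef there, giving $\sum_i k_i = -\tfrac12 \Delta \cdot \beta \geq -2 d$ for every effective $\pi$-vertical $\beta$ (using $D_1 \cdot \beta = d$). This is exactly (v). For (vi) I would observe that a map contributing to $\langle \pt_Y \rangle^Y$ passes through a generic point of $Y$, which lies off every $\pi$-vertical fibre and off every locus $\Hilb^2(B_e)$ with $e$ a $2$-torsion point; hence the distinguished horizontal component must be of type (B) (a translate $B_e + (x', -e)$ with $e$ generic), of class exactly $B_1$, and the remaining $\pi$-vertical components of total class $d B_2 + \sum k_i A_i$ again force $\sum_i k_i \geq -2d$.

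Statement (iv) is similar but needs the $I_0^{\ast}$ bookkeeping. The two insertions $B_2$ impose incidence to fibres of $\pi$, which is met both by type (B) maps (distinguished component of class $B_1$) and by type (A) maps (distinguished component an embedded line $L \subset \Hilb^2(B_e)$, $e$ a $2$-torsion point, of class $B_1 - A_{j_0}$, contributing $\sum_i k_i = -1$). The type (B) case and the $d = 0$ type (A) case immediately give the thresholds $-2d$ and $-1$. For $d > 0$ in the type (A) case the $B_2$-degree is carried entirely by $\pi$-vertical components over the diagonal points of $L$; to form a connected genus $0$ domain, such a component can only be attached to $L$ through the exceptional curve $A_{j_0}$ carrying the node of $L$, and including $A_{j_0}$ costs $+1$ in $\sum_i k_i$, restoring $\sum_i k_i \geq -2d$. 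The leading value (iii) then follows from the $d = 0$ type (A) analysis: the line $L$ is rigid and transverse to the two incidence cycles, and the four $2$-torsion contributions combine with the bilinearity factor $\tfrac14$ (coming from $\langle 2B_2, 2B_2\rangle = 4 \langle B_2, B_2\rangle$) to give the stated coefficient $1$.

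The remaining and most delicate statements are the values (i) and (ii), which are pure multiple-cover computations. By disjointness of the $A_i$ (distinct $2$-torsion heights) a connected genus $0$ map of class $\sum_i k_i A_i$ is a degree-$k$ cover of a single $A_i$, while extremality $\sum_i k_i = -2d$ forces the class $d C_0$ supported on a single central curve; in each case the target is a $(0,-2)$-curve (normal bundle $\CO \oplus \CO(-2)$ in $Y$) moving in the $\p^1$-family over the diagonal. The plan is to evaluate the local contribution of such a family of $(0,-2)$-curves, which I expect to equal $e(\p^1)/k^3 = 2/k^3$ per family: the Aspinwall--Morrison factor $1/k^3$ summed against the Euler characteristic of the base. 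The four families $A_i$ then yield $T_{0,k} = 8/k^3$ and the single family $C_0$ yields $T_{d,-2d} = 2/d^3$. The main obstacle I anticipate is precisely this local computation: setting up the obstruction bundle over the non-reduced moduli space of covers of a moving $(0,-2)$-curve and extracting the factor $2/k^3$ rigorously rather than heuristically, together with confirming that no other connected configurations contribute to these two extremal coefficients.
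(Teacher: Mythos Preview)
Your plan matches the paper's approach in every case: the fibre analysis over the diagonal conic, the type (A)/(B) dichotomy for horizontal components, the effective-cone bounds for (iv)--(vi), and the reduction of (i)--(ii) to multiple covers of a moving $(0,-2)$-curve are exactly what the paper does. Your nef-divisor phrasing via $4D_1-\Delta$ is a clean repackaging of the paper's explicit class computation $f_{\ast}[C]=d[\Sigma_x]+\sum b_jA_j$.

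For the anticipated obstacle in (i)--(ii), here is how the paper resolves it. The moduli space of degree-$k$ covers is identified with $\Mbar_0(\Sigma,k\Ff)$ for the relevant ruled surface $\Sigma$ (either $\Delta_i=\p(\CO(2)\oplus\CO)$ or $Z\cong\p^1\times\p^1$), which is smooth of dimension $2k-1$; the obstruction bundle is $R^1q_{\ast}f^{\ast}N_{\Sigma/Y}$. One then feeds the normal bundle $N_{\Sigma/Y}$ into the relative Euler sequence of $\Sigma\to\p^1$ (suitably twisted), obtaining a short exact sequence that expresses $R^1q_{\ast}f^{\ast}N$ as an extension of the Aspinwall--Morrison bundle $R^1q_{\ast}f^{\ast}(\CO(-1)^{\oplus 2})$ by a line bundle pulled back from the base. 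The Euler class therefore factors as $c_1(p'^{\ast}\CO_{\p^1}(2))\cdot c_{2k-2}(\text{AM bundle})$, yielding $2\cdot 1/k^3$. Note that the factor $2$ is the degree of a specific line bundle (ultimately the normal bundle of the diagonal $\p^1\subset\p^1\times\p^1$) rather than literally $e(\p^1)$; your heuristic happens to give the right number here, but the obstruction-bundle computation is what makes it rigorous.
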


\begin{proof}
\noindent \textbf{Case (i).} The moduli space $\Mbar_{0}(Y,\sum_i k_i A_i)$ is non-empty
only if there exists a $j \in \{1, \dots, 4\}$ with $k_i = \delta_{ij} k$ for all $i$.
Hence,
\[
T_{0,k}
= \sum_{k_1 + \dots + k_4 = k} \int_{[\Mbar_{0}(Y,\sum_{i} k_i A_i)]^{\text{vir}}} 1
\ =\ \sum_{i = 1}^{4} \int_{[\Mbar_{0}(Y,k A_i)]^{\text{vir}}} 1 \,.
\]
Since the term in the last sum is independent of $i$,
\begin{equation} \label{ssssss} T_{0,k} = 4 \int_{[\Mbar_{0}(Y,k A_1)]^{\text{vir}}} 1 \,. \end{equation}

Let $e \in E$ be the first $2$-torsion point,
let
\[ D_{X,1} = \{\, (x,x,e)\, |\, x \in \p^1\, \} \subset \p^1 \times \p^1 \times E \]
and consider the subscheme
\[ \Delta_1 = g( \rho^{-1}( D_{X,1} ) ) \]
which already appeared in Section \ref{Section_cohomology_of_Y}.
The divisor $\Delta_1$ is isomorphic to the exceptional divisor $E_1$ of the blowup $\rho : \widetilde{W} \to W$, see \eqref{blowup_map}.
Hence $\Delta_1 = \p(V)$, where
\[ V = \CO_{\p^1}(2) \oplus \CO_{\p^1} \to \p^1. \]
Under the isomorphism $\Delta_1 = \p(V)$, the map
\begin{equation} \pi : \Delta_1 \to \Delta_{\Hilb^2(\p^1)} \equiv \p^1. \label{ffibrt_map} \end{equation}
is identified  with the natural $\p(V) \to \p^1$.

The normal bundle of the exceptional divisor $E_1 \subset \widetilde{W}$ is $\CO_{\p(V)}(-1)$.
Hence, taking the $\BZ_2$ quotient \eqref{gquotientmap} of $\widetilde{W}$, the normal bundle of $\Delta_1 \subset Y$ is
\[ N = N_{\Delta_1/Y} = \CO_{\p(V)}(-2). \]

For $k \geq 1$, the moduli space
\[ M = \Mbar_0(Y, kA_1) \]
parametrizes maps to the fibers of the fibration \eqref{ffibrt_map}.
Since the normal bundle $N$ of $\Delta_{1}$ has degree $-2$ on each fiber,
there is no infinitesimal deformations of maps out of $\Delta_1$. Hence,
$M$ is isomorphic to $\Mbar_0( \p(V), d \Ff)$, where
$\Ff$ is class of a fiber of $\p(V)$.
In particular, $M$ is smooth of dimension $2k-1$.

By smoothness of $M$ and convexity of $\p(V)$ in class $k \Ff$, the virtual class of~$M$ is the Euler class of the obstruction bundle ${\rm Ob}$
with fiber
\[ {\rm Ob}_f = H^1(C, f^{\ast} T_Y) \]
over the moduli point $[f : C \to Y] \in M$.
The restriction of the tangent bundle $T_Y$ to a fixed fiber $A_0$ of \eqref{ffibrt_map} is
\[ T_{Y}|_{A_0} \cong \CO_{A_0}(2) \oplus \CO_{A_0} \oplus \CO_{A_0}(-2), \]
Hence, 
\[ {\rm Ob}_f = H^1(C, f^{\ast} T_Y) = H^1(C, f^{\ast} N). \]

Consider the relative Euler sequence of $p : \p(V) \to \p^1$,
\begin{equation} 0 \to \Omega_{p} \to p^{\ast} V \otimes \CO_{\p(V)}(-1) \to \CO_{\p(V)} \to 0. \label{euler} \end{equation}
By direct calculation, $\Omega_{p} = \CO_{\p(V)}(-2) \otimes p^{\ast} \CO_{\p^1}(-2)$. Hence, twisting \eqref{euler} by $p^{\ast} \CO_{\p^1}(2)$, we obtain the sequence
\begin{equation} 0 \to N \to p^{\ast} V(2) \otimes \CO_{\p(V)}(-1) \to p^{\ast} \CO_{\p^1}(2) \to 0. \label{euler2} \end{equation}
Let $q : \CC \to M$ be the universal curve and let $f : \CC \to \Delta_1 \subset Y$ be the universal map.
Pulling back \eqref{euler2} by $f$, pushing forward by $q$ and taking cohomology we obtain the exact sequence
\begin{equation}
 0 \to R^0q_{\ast} f^{\ast} p^{\ast} \CO_{\p^1}(2) \to R^1q_{\ast} f^{\ast} N \to R^1q_{\ast} f^{\ast} p^{\ast} V(2) \otimes \CO_{\p(V)}(-1) \to 0.
\end{equation}
The bundle $R^1q_{\ast} f^{\ast} N$ is the obstruction bundle ${\rm Ob}$, and
\[ R^0q_{\ast} f^{\ast} p^{\ast} \CO_{\p^1}(2) = q_{\ast} q^{\ast} p^{\prime \ast} \CO_{\p^1}(2) = p^{\prime \ast} \CO_{\p^1}(2), \]
where $p' : M \to \p^1$ is the map induced by $p : \p(V) \to \p^1$.
We find 
\[ c_1(p^{\prime \ast} \CO_{\p^1}(2)) = 2 p^{\prime \ast} \pt_{\p^1}, \]
where $\pt_{\p^1}$ is the class of a point on $\p^1$. Taking everything together, we have
\begin{align}
\int_{[\Mbar_{0}(Y,k A_1)]^{\text{vir}}} 1
& = \int_M e(R^1q_\ast f^{\ast} N) \notag \\
& = \int_M c_1(p^{\prime \ast} \CO_{\p^1}(2)) c_{2k-2}( R^1q_{\ast} f^{\ast} p^{\ast} V(2) \otimes \CO_{\p(V)}(-1) ) \notag \\
 & = 2 \int_{M_x} c_{2k-2}( R^1q_{\ast} f^{\ast} p^{\ast} V(2) \otimes \CO_{\p(V)}(-1) )|_{M_x}, \label{rree}
\end{align}
where $M_x = \Mbar_{0}(\p^1,k)$ is the fiber of $p' : M \to \p^1$ over some $x \in \p^1$. Since
\[ p^{\ast} V(2) \otimes \CO_{\p(V)}(-1)|_{p^{-1}(x)} = \CO_{\p(V)_x}(-1) \oplus \CO_{\p(V)_x}(-1), \]
the term \eqref{rree} equals $2 \int_{\Mbar_{0,0}(\p^1, k)} c_{2k-2}(\CE)$,
where $\CE$ is the bundle with fiber
\[ H^1(C, f^{\ast} \CO_{\p^1}(-1)) \oplus H^1(C, f^{\ast} \CO_{\p^1}(-1)). \]
over a moduli point $[f \colon C \to \p^1] \in M_x$.
Hence, using the Aspinwall-Morrison formula \cite[Section 27.5]{MirSym} the term \eqref{rree} is
\[ \int_{[\Mbar_{0}(Y,k A_1)]^{\text{vir}}} 1 = 2 \cdot \int_{\Mbar_{0,0}(\p^1, k)} c_{2k-2}(\CE) = \frac{2}{k^3}. \]
Combining with \eqref{ssssss}, the proof of case (i) is complete.

\vspace{8pt}
\noindent \textbf{Case (ii) and (v).}
Let $f \colon C \to Y$ be a stable map of genus $0$ and class $d B_2 + \sum k_i A_j$.
Then, $f$ maps into the fiber of
\[ \pi : Y \to \Hilb^2(\p^1) \]
over some diagonal point $2x \in \Delta_{\Hilb^2(\p^1)}$.
The reduced locus of such a fiber is the union
\begin{equation} \Sigma_{x} \cup A_{x,e_1} \cup \ldots \cup A_{x,e_4} \label{310} \end{equation}
where $e_1, \dots, e_4 \in E$ are the $2$-torsion points of $E$,
\[ A_{x,e} = g(\rho^{-1}( x \times x \times e )) \]
is the exceptional curve of $\Hilb^2(X)$ at $(x,e) \in X$,
and $\Sigma_{x}$ is the fiber of the addition map $\Hilb^2(F_x) \to F_x = E$
over the origin $0_E$.
Hence,
\[ f_{\ast} [ C ] = a [\Sigma_x] + \sum_i b_i [ A_{x,e_i} ] \]
for some $a, b_1, \dots, b_4 \geq 0$.
Since $[ A_{x,e_i} ] = A_i$ and
\[ [ \Sigma_x ] = B_2 - \frac{1}{2} ( A_1 + A_2 + A_3 + A_4 ) \]
we must have $d = a$ and therefore
\[ f_{\ast} [ C ] = d B_2 + \sum_i (b_i - d/2) A_i. \]
Since $b_i \geq 0$ for all $i$, we find $\sum_i k_i \geq -2d$ with equality
if and only if $k_i = -d/2$ for all $i$. This proves (v) and shows
\begin{equation} T_{d, -2d} = \int_{[\Mbar_{0}(Y, d B_2 - \sum_{i} (d/2) A_i )]^{\text{vir}}} 1 \,. \label{xxvvb}\end{equation}
Moreover, if $f : C \to Y$ has class $d B_2 - \sum_i (d/2) A_i$,
it is a degree $d$ cover of the curve $\Sigma_x$ for some $x$.

We evaluate the integral \eqref{xxvvb}.
Let $Z'$ be the proper transform of
\[ \p^1 \times E \hookrightarrow W, \ (x,e) \mapsto (x,x,e) \]
under the blowup map $\rho \colon \widetilde{W} \to W$, and let 
\[ Z = g(Z') = Z' / \BZ_2 \subset Y \]
be its image under $g : \widetilde{W} \to Y$.
The projection map $\pr_{1,3} \circ \rho : Z' \to \p^1 \times E$ descends by $\BZ_2$ quotient to the isomorphism
\begin{equation} (\tau_{|Z}, \pi_{|Z}) : Z \to \p^1 \times \p^1, \label{gfrmgfg} \end{equation}
where $\tau : Y \to \p^1$ is the morphism defined in \eqref{some_mmm_diagram}.
Under the isomorphism \eqref{gfrmgfg},
the curve $\Sigma_x$ equals $\p^1 \times x$.
Since moreover the normal bundle of $Z \subset Y$ has degree $-2$ on $\Sigma_x$, we find
\[ \Mbar_{0}(Y, d B_2 - 2 d A) \cong \Mbar_0(\p^1, d) \times \p^1. \]
The normal bundle $Z \subset Y$ is the direct sum
\[ N = N_{Z/Y} = \pr_{1}^{\ast} \CO_{\p^1}(a) \otimes \pr_{2}^{\ast} \CO_{\p^1}(-2). \]
for some $a$. We determine $a$. 
Under the isomorphism \eqref{gfrmgfg}, the curve 
\[ R = x \times \p^1 \subset \p^1 \times \p^1 \]
corresponds
to the diagonal in a generic fiber of $\tau : Y \to \p^1$.
The generic fiber of $\tau$ is isomorphic to $\p^1 \times \p^1$, hence
$c_1(N) \cdot R = 2$ and $a=2$.
The result now follows by an argument parallel to (i).

\vspace{8pt}
\noindent \textbf{Case (iii).}
This follows directly from the proof of Proposition \ref{comparision_proposition} Case (i)
since the line in \eqref{312} has class $B_1 - A_i$ for some $i$.

\vspace{8pt}
\noindent \textbf{Case (iv).} Let $f : C \to Y$ be a stable map of genus $0$ and class $\beta_{d, \textbf{k}}$
incident to the cycles $Z_1, Z_2$ of the proof of Proposition \ref{comparision_proposition} Case (i).
Then, there exists an irreducible component $C_0 \subset C$ which maps isomorphically to the line $L$ considered in \eqref{312}.
We have $[L] = B_1 - A_i$ for some $i$

Since all irreducible components of $C$ except for $C_0$ gets mapped under $f$ to curves of the form $\Sigma_x$ or $A_{x,i}$,
we have
\begin{align*}
f_{\ast} [ C ] = \beta_{d, \textbf{k}}
& = [L] + d [\Sigma_x ] + \sum_j b_j A_j \\
& = B_1 + d B_2 + \sum_j (-d/2 - \delta_{ij} + b_j) 
\end{align*}
for some $b_1, \dots, b_4 \geq 0$.
If $d = 0$ we find $k = \sum_{i} k_i \geq -1$.
If $d > 0$, then $f$ maps to at least one curve of the form $\Sigma_x$ with non-zero degree.
Since $L$ and $\Sigma_x$ are disjoint, we must have $b_j > 0$ for some $j$.
This shows
$k = \sum_j k_j \geq  -2d$.

\vspace{8pt}
\noindent \textbf{Case (vi).} This case follows by an argument parallel to (iv).
\end{proof}

\subsubsection{The system of equations} \label{system_of_equations}
Let $\frac{d}{d q}$ and $\frac{d}{d y}$ be the formal differentiation operators with respect to $q$ and $y$ respectively.
We will use the notation
\[ \partial_{\tau} = q \frac{d}{d q} \quad \text{ and } \quad \partial_z = y \frac{d}{d y}. \]

The WDVV equation \eqref{313},
applied to the cohomology insertions
\[ \xi = (\gamma_1, \dots, \gamma_4) \]
specified below yield the following relations:
\begin{equation} \label{relations_first_batch}
\begin{alignedat}{2}
 & \xi = (B_2, D_2, D_2, \Delta)     && : \quad \quad \big\langle B_2, A \big\rangle = - \frac{1}{2} \partial_z(H) \\
 & \xi = (B_2, D_2, D_2, D_1) && : \quad \quad \big\langle B_1, B_2 \big\rangle = \partial_{\tau} H + \frac{1}{2} I \\
 & \xi = (A, D_2, D_2, \Delta) && : \quad \quad \big\langle A, A \big\rangle = \frac{1}{4} \partial_z^2 H - \frac{1}{4} I \\
 & \xi = (A, D_2, D_2, D_1) && : \quad \quad \big\langle B_1 , A \big\rangle = - \frac{1}{2} \partial_z \partial_{\tau} H \\
 & \xi = (B_1, D_2, D_2, \Delta) && : \quad \quad \big\langle B_1 , A \big\rangle - \frac{1}{4} \partial_z I = - \frac{1}{2} \partial_z \big\langle B_1, B_2 \big\rangle \\
 & \xi = (B_1, D_2, D_2, D_1) && : \quad \quad 2 \big\langle B_1, B_1 \big\rangle + \partial_{\tau} I = 2 \partial_{\tau} \big\langle B_1, B_2 \big\rangle \\
 &                                    && \quad \quad \quad \Leftrightarrow \big\langle B_1, B_1 \big\rangle = \partial_{\tau}^2 H
\end{alignedat}
\end{equation}
Using \eqref{relations_first_batch} and the WDVV equations \eqref{313} with insertions $\xi$ further yields:
\vspace{3pt}

\noindent \textbf{W1.} $\xi = (B_2, D_1, D_1, D_2)$:
\begin{equation*} 0 = 2 \partial_{\tau}^2 H + 2 \partial_{\tau} I - H \cdot \partial_{\tau}^3 T + \frac{1}{2} \partial_{z} H \cdot \partial_{z} \partial_{\tau}^2 T \end{equation*}

\noindent \textbf{W2.} $\xi = (B_2, \Delta, \Delta, D_2)$:
\begin{equation*} 0 = 2 \partial_{z}^2 H + 4 \partial_{\tau} H + 2 I - H \cdot \partial_{z}^2 \partial_{\tau} T + \frac{1}{2} \partial_{z} H \cdot ( 4 + \partial_{z}^3 T ) \end{equation*}

\noindent \textbf{W3.} $\xi = (B_2, \Delta, \Delta, D_1)$:
\begin{multline*}
0 = 4 \partial_{\tau}^2 H + 2 \partial_{\tau} I - \partial_{z}^2 I + \frac{1}{2} \partial_{z} \partial_{\tau} H \cdot (4 + \partial_{z}^3 T) - \partial_{\tau} H \cdot \partial_{z}^2 \partial_{\tau} T \notag \\
  - \frac{1}{2} \partial_{z}^2 H \cdot \partial_{z}^2 \partial_{\tau} T + \partial_{z} H \cdot \partial_{z} \partial_{\tau}^2 T \end{multline*}

\noindent \textbf{W4.} $\xi = (A, \Delta, \Delta, D_2)$:
\begin{equation*}
 0 = -8 \partial_{z} \partial_{\tau} H - 4 \partial_{z}^3 H + 8 \partial_{z} I + 2 \partial_{z} H \cdot \partial_{z}^2 \partial_{\tau} T - \partial_{z}^2 H \cdot (4 + \partial_{z}^3 T) + I \cdot (4 + \partial_{z}^3 T) 
\end{equation*}

\noindent \textbf{W5.} $\xi = (A, \Delta, D_1, D_1)$:
\begin{multline*}
 0 = -2 \partial_{\tau}^2 I + \frac{1}{2} \partial_{z}^2 \partial_{\tau} H \cdot \partial_{z}^2 \partial_{\tau} T - \partial_{z} \partial_{\tau} H \cdot \partial_{z} \partial_{\tau}^2 T - \frac{1}{2} \partial_{z}^3 H \cdot \partial_{z} \partial_{\tau}^2 T \notag \\
 + \partial_{z}^2 H \cdot \partial_{\tau}^3 T - \frac{1}{2} \partial_{\tau} I \cdot \partial_{z}^2 \partial_{\tau} T + \frac{1}{2} \partial_{z} I \cdot \partial_{z} \partial_{\tau}^2 T 
\end{multline*}

\noindent \textbf{W6.} $\xi = (B_1, D_1, D_1, D_2)$:
\begin{equation*}
 0 = 2 \partial_{\tau}^3 H - \partial_{\tau}^2 I - \partial_{\tau} H \cdot \partial_{\tau}^3 T - \frac{1}{2} I \cdot \partial_{\tau}^3 T + \frac{1}{2} \partial_{z} \partial_{\tau} H \cdot \partial_{z} \partial_{\tau}^2 T 
\end{equation*}

\subsubsection{Non-degeneracy of the equations}

\begin{prop} \label{321}
The initial conditions of
Proposition \ref{320}
and the equations \textbf{W1}~-~\textbf{W6}
together determine $H_{d,k}, I_{d,k}, T_{d,k}$ for all $d$ and $k$.
\end{prop}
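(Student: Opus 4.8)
The plan is to prove determination by uniqueness: since the geometric series $H,I,T$ satisfy \textbf{W1}--\textbf{W6} and the conditions of Proposition~\ref{320} by construction, existence is automatic, and it suffices to show that these data admit at most one solution. I would prove this by a lexicographic induction on the pair $(d,k)$, ordered first by the $q$-degree $d$ and then by the $y$-degree $k$. The inductive hypothesis is that all coefficients $H_{d',k'},I_{d',k'},T_{d',k'}$ with $d'<d$, together with $H_{d,k'},I_{d,k'},T_{d,k'}$ for $k'<k$, are already pinned down. The base of the induction is supplied by Proposition~\ref{320}: parts (iv)--(vi) give the vanishing below the diagonal $k=-2d$ (and below $k=-1$ when $d=0$), so each row $d$ has a well-defined bottom, while (i)--(iii) fix the bottom coefficients $T_{0,k}$, $T_{d,-2d}$, and $H_{0,-1}$ themselves.

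The engine of the recursion is the interaction of the two operators $\partial_\tau=q\,d/dq$ and $\partial_z=y\,d/dy$ with the products appearing in \textbf{W1}--\textbf{W6}. Since $\partial_\tau$ annihilates the $q^0$ part of any series, in a product such as $H\cdot\partial_\tau^3 T$ the coefficient of $q^d$ only receives contributions from pairs $(d_1,d_2)$ with $d_1+d_2=d$ and $d_2\geq 1$; the terms with both $d_1,d_2\geq 1$ involve only rows of $q$-degree $<d$, while the single term $d_1=0$ couples the current rows $H_0$ and $T_d$. As $H,I,T$ are indexed by $d\geq 0$, no row of $q$-degree $>d$ can ever enter. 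Thus, extracting the coefficient of $q^d$ from each equation yields a relation that is \emph{linear} in the current-row $y$-series $(H_d,I_d,T_d)$, with all nonlinear contributions absorbed into an explicitly known right-hand side built from rows of lower $q$-degree. The coefficients of this linear relation are the finite-order operators $\partial_z$ together with multiplication by the already-computed $q^0$ series $H_0,I_0,T_0$ and their $\partial_z$-derivatives.

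It then remains a purely $y$-graded problem: one must show that an appropriate three of the six relations \textbf{W1}--\textbf{W6} determine $(H_{d,k},I_{d,k},T_{d,k})$ from lower data. Here the support bounds of Proposition~\ref{320} are used again: $T_0$ is supported in $k\geq 1$ and $H_0$ in $k\geq -1$, so multiplication by these series shifts the $y$-degree upward by a bounded amount (at most $1$, from $H_{0,-1}$) and otherwise strictly downward. Consequently, in the $k$-grading each selected equation is a banded linear relation whose extreme index appears with a coefficient that is a fixed polynomial in $(d,k)$; solving the resulting square system amounts to checking that the determinant of its leading symbol is nonzero. I expect this determinant to be a product of linear factors in $d$ and $k$---already at $q^0$ the pair $(H_{0,k},I_{0,k})$ is forced by a $2\times 2$ system whose determinant is proportional to $k(k+1)(3k+1)$---so that it vanishes only for finitely many small $(d,k)$.

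The main obstacle is exactly this non-degeneracy check: verifying that for all but finitely many $(d,k)$ the chosen subsystem of \textbf{W1}--\textbf{W6} has invertible leading coefficient, identifying which triple of equations to use (the system is overdetermined, six equations for three unknowns, which gives room but requires a choice), and handling the finitely many ``resonant'' nodes where the leading determinant degenerates by falling back on the explicit values in Proposition~\ref{320}. A secondary bookkeeping nuisance is the $+1$ upward shift in $k$ coming from the $H_{0,-1}$ term, which must be organized so that each equation outputs a single new coefficient rather than coupling two undetermined ones; once the induction order is aligned with this shift, the recursion closes and determines $H_{d,k},I_{d,k},T_{d,k}$ for all $(d,k)$.
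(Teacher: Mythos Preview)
Your plan is essentially the same approach the paper takes: lexicographic induction on $(d,k)$, extraction of the $q^dy^k$-coefficient from \textbf{W1}--\textbf{W6} to obtain a linear system in the current unknowns, and a non-degeneracy check on the leading determinant. The paper simply executes the steps you outline: for the main inductive step it selects \textbf{W1}, \textbf{W5}, \textbf{W6}, solves for $(H_{d,k},I_{d,k},T_{d,k+1})$ (the $+1$ shift you anticipated), and finds the determinant $(2d-3)(k+2d+1)d$; the resonant nodes you flag are handled exactly as you suggest, the most delicate being that $H_{0,0}$ cannot be extracted at the $d=0$ stage and is instead determined from the $(d,k)=(1,-2),(1,-1)$ equations.
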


\begin{proof}[Proof of Proposition \ref{321}]
For all $d,k$, taking the coefficient of $q^d y^k$ in equations \textbf{W1} - \textbf{W6} yields
\begin{multline}
2 d^2 H_{d,k} + 2 d I_{d,k} = \sum_{j,l} (d-l)^2 \Big( (d-l) - \frac{1}{2} j (k-j) \Big) H_{l,j} T_{d-l,k-j} \tag{W1}
\end{multline}
\begin{multline}
(2 k (k+1) + 4 d) H_{d,k} + 2 I_{d,k} = \sum_{j,l} (k-j)^2 \Big( (d-l) - \frac{1}{2} j (k-j) \Big) H_{l,j} T_{d-l,k-j} \tag{W2}
\end{multline}
\begin{multline}
2d (2d + k) H_{d,k} + (2d - k^2) I_{d,k} = \tag{W3} \\
- \sum_{j,l} (k-j) \Big( j (d-l) - l (k-j) \Big) \Big( (d-l) - \frac{1}{2} j (k-j) \Big) H_{l,j} T_{d-l,k-j} \notag
\end{multline}
\begin{multline}
(2 k + 1) I_{d,k} - k( k^2 + k + 2d )H_{d,k} = \tag{W4} \\ 
- \frac{1}{2} \sum_{j,l} (k-j)^2 \Big( (j (d-l) - \frac{1}{2} (k-j)) H_{l,j} + \frac{1}{2} (k-j) I_{l,j} \Big) T_{d-l,k-j} \notag
\end{multline}
\begin{multline}
 2 d^2 I_{d,k} =
\sum_{j,l} (d-l) \Big( j (d-l) - l (k-j) \Big) \cdot \tag{W5} \\
\Big( j (d-l) H_{l,j} - \frac{1}{2} j^2 (k-j) H_{l,j} + \frac{1}{2} (k-j) I_{l,j} \Big) T_{d-l,k-j}
\end{multline}
\begin{multline}
 2 d^3 H_{d,k} - d^2 I_{d,k} =
\sum_{j,l} (d-l)^2 \cdot \tag{W6} \\
\Big( (d-l) ( l H_{l,j} + \frac{1}{2} I_{l,j} ) - \frac{1}{2} j l (k-j) H_{l,j} \Big) T_{d-l,k-j}.
\end{multline}

\noindent \emph{Claim 1.} The initial conditions and \textbf{W1} - \textbf{W6} determine
$H_{0,k}, I_{0,k}, T_{0,k}$ for all $k$, except for $H_{0,0}$

\vspace{1pt}
\noindent \emph{Proof of Claim 1.}
The values $T_{0,k}$ are determined by the initial conditions.
Consider the equation \textbf{W2} for $(d,k) = (0,0)$. Plugging in $(d,k) = (0,0)$ and using $H_{0,-1} = 1, T_{0,1} = 8$, we find $I_{0,0} = 2$.

Let $d = 0$ and $k > 0$, and assume we know the values $H_{0,j}, I_{0,j}$ for all $j < k$ except for $H_{0,0}$. Then, equations \textbf{W3} and \textbf{W4} read
\begin{align*} 
-4 k^2 I_{0,k} + \text{ (known terms) } & = 0  \\
b -4 k^2 (k + 1) H_{0,k} + \text{ (known terms) } & = 0.
\end{align*}
Hence, also $I_{0,k}$ and $H_{0,k}$ are uniquely determined. By induction, the proof of Claim 1 is complete.\qed

Let $d >0$. We argue by induction.
Calculating the first values of $H_{0,k}, I_{0,k}$ and $T_{0,k}$, and plugging them into equations \textbf{W1} - \textbf{W6}
for $(d,k) = (1,-2)$ and $(d,k) = (1,-1)$, we find by direct calculation that the values
\[ H_{0,0},\ H_{1,-2},\ H_{1,-1},\ I_{1,-2},\ I_{1,-1},\ T_{1,-1},\ T_{1,0} \]
are determined.

Let now $(d = 1, k \geq 0)$ or $(d > 1, k \geq -2d)$, and assume we know the values $H_{l,j}, I_{l,j}, T_{l,j}$ for all $l < d, j \leq k + 2 (d-l)$ and for all $l = d, j < k$.
Also assume, that we know $T_{d,k}$.
The proof of Proposition \ref{321} follows now from the following claim.

\vspace{4pt}
\noindent \emph{Claim 2:} The values $H_{d,k}, I_{d,k}, T_{d,k + 1}$ are determined.
\vspace{4pt}

\noindent \emph{Proof of Claim 2.}
Solving for the terms $H_{d,k}, I_{d,k}, T_{d,k+1}$ in the equations \textbf{W1}, \textbf{W6} and \textbf{W5}, we obtain:
\begin{align}
 2 d^2 H_{d,k} + 2d I_{d,k} - d^2 \left(d + \frac{1}{2} (k+1)\right) T_{d,k+1} & = \text{ (known terms) } \tag{W1} \\
 2 d^3 H_{d,k} - d^2 I_{d,k} & = \text{ (known terms) } \tag{W6} \\
 -2 I_{d,k} + \left(d + \frac{1}{2} (k + 1)\right) T_{d,k+1} & = \text{ (known terms) }, \tag{W5}
\end{align}
where in the last line we divided by $d^2$. These equations in matrix form read
\[
\begin{pmatrix}
2d & 2 & -d (d + \frac{1}{2} (k+1)) \\ 2d & -1 & 0 \\ 0 & -2 & d + \frac{1}{2} (k+1) 
\end{pmatrix} \cdot
\begin{pmatrix}
 H_{d,k} \\ I_{d,k} \\ T_{d,k+1}
\end{pmatrix} = \text{ (known terms) }
\]
The matrix on the left hand side has determinant $(2d - 3) (k + 2d + 1) d$.
It vanishes if $d = \frac{3}{2}$ or $k = -2d - 1$ or $d = 0$.
By assumption, each of these cases were excluded.
Hence the values $H_{d,k}, I_{d,k}, T_{d,k + 1}$ are uniquely determined.
\end{proof}

\noindent \textbf{Remark.} We have selected very particular WDVV equations for $Y$ above. Using
additional equations, one may show that the values
\[ H_{0,-1} = 1, \quad T_{0,0} = 0, \quad T_{0,1} = 8, \quad T_{1,-2} = 2 \]
together with the vanishings of Proposition~\ref{320} (iv) - (vi) suffice to determine the series $H, I, T$.

\subsubsection{Solution of the equations}
Let $z \in \BC$ and $\tau \in \BH$ and consider the actual variables
\begin{equation} y = -e^{2 \pi i z} \quad \text{ and } \quad q = e^{2 \pi i \tau} \,. \label{var_change_1000} \end{equation}
Let $F(z,\tau)$ and $G(z,\tau)$ be the functions \eqref{FFFdef} and \eqref{G_Function_def} respectively.
\begin{thm} \label{fIT} \label{HIT} We have
\begin{align*}
 H & = F(z,\tau)^2 \\
 I & = 2\, G(z,\tau) \\
 T & = 8 \sum_{k \geq 1} \frac{1}{k^3} y^k + 12 \sum_{k,n \geq 1} \frac{1}{k^3} q^{kn} \\
 & \quad + 8 \sum_{k,n \geq 1} \frac{1}{k^3} (y^k + y^{-k}) q^{kn}  + 2 \sum_{k,n \geq 1} \frac{1}{k^3} (y^{2k} + y^{-2k}) q^{(2n-1) k}.
\end{align*}
under the variable change $y = -e^{2 \pi i z}$ and $q=e^{2 \pi i \tau}$.
\end{thm}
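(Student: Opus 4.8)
The plan is to combine the non-degeneracy statement of Proposition \ref{321} with a direct verification. By Proposition \ref{321}, the series $H$, $I$, $T$ are the \emph{unique} solution (as formal power series in $y$ and $q$) of the system consisting of the initial conditions of Proposition \ref{320} together with the six equations \textbf{W1}--\textbf{W6}. It therefore suffices to exhibit the three closed forms on the right-hand side as a candidate solution and to check that they satisfy all of these constraints; uniqueness then forces equality. Accordingly, set $H_0 = F(z,\tau)^2$, $I_0 = 2\,G(z,\tau)$, and let $T_0$ denote the explicit trilogarithm series in the statement.

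First I would verify the initial conditions. The series $T_0$ is assembled precisely out of the sums $\sum_k k^{-3}(\cdots)$ dictated by the Aspinwall--Morrison multiple-cover contributions computed in the proof of Proposition \ref{320}: reading off the $q^0$-part gives $T_{0,k}=8/k^3$ for $k\geq 1$ (condition \ref{320}(i)), the extreme $y$-coefficients give $T_{d,-2d}=2/d^3$ (condition \ref{320}(ii)), and the absence of terms with $k<-2d$ gives the vanishing \ref{320}(v). For $H_0=F^2$, the product expansion \eqref{FFFdef} yields $H_{0,-1}=1$ (condition \ref{320}(iii)) and the vanishing \ref{320}(iv) at once, while for $I_0=2G$ the expansion \eqref{G_Function_def} gives $I_{0,0}=2$ and the vanishing \ref{320}(vi). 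These checks are purely a matter of reading off low-order coefficients.

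The heart of the proof, and the step I expect to be the main obstacle, is to verify that the triple $(H_0,I_0,T_0)$ satisfies \textbf{W1}--\textbf{W6} as an identity of power series. The essential observation is that each of \textbf{W1}--\textbf{W6} involves $T$ only through its third derivatives $\partial_\tau^3 T$, $\partial_z\partial_\tau^2 T$, $\partial_z^2\partial_\tau T$ and $\partial_z^3 T$; applying $\partial_z = y\,\tfrac{d}{dy}$ and $\partial_\tau = q\,\tfrac{d}{dq}$ three times to $T_0$ cancels the $k^{-3}$ denominators and produces elementary Eisenstein- and theta-type series. Concretely, these third derivatives of $T_0$, together with the derivatives of $F^2$ and $G$ appearing in the equations, are all quasi-Jacobi forms expressible through $\wp$, $\wp^{\bullet}$, $E_2$, $E_4$, $J_1$ and $F$. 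Each of the six equations thereby becomes a polynomial identity among elements of $\QJac$, which I would establish either by direct manipulation using the defining differential equations of the Weierstrass and theta functions (the heat equation for $\vartheta_1$, the Weierstrass relation $(\wp^{\bullet})^2 = 4\wp^3 - g_2\wp - g_3$, and the Ramanujan identities for the $E_{2k}$), or, since the two sides of each equation lie in a single finite-dimensional space $\QJac_{k,m}$ of prescribed weight and index, by comparing finitely many Fourier coefficients.

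With the initial conditions and \textbf{W1}--\textbf{W6} verified for $(H_0,I_0,T_0)$, the uniqueness in Proposition \ref{321} gives $H=H_0$, $I=I_0$ and $T=T_0$, which is the assertion of the theorem. The only delicate bookkeeping will be the normalization $\zeta=1$ and the translation between the curve-class indexing $\beta_{r,d,\mathbf{k}}$ and the $(d,k)$-indexing of the coefficients, but no genuinely new input beyond the quasi-Jacobi identities underlying \textbf{W1}--\textbf{W6} is required.
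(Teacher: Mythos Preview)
Your approach is essentially the same as the paper's: reduce via Proposition~\ref{321} to checking that the candidate triple satisfies the initial conditions and \textbf{W1}--\textbf{W6}, then verify those identities using the observation that $T$ enters only through its third derivatives and that the heat equation for $\vartheta_1$ relates $I_0$ to $H_0$ and $E_2$. The paper carries out the final verification slightly differently than you propose: rather than placing the third derivatives of $T_0$ in $\QJac$ and invoking finite-dimensionality, it rewrites them via the deformed Eisenstein series $J_{2,n}$ and $G_n=J_{4,n}(2z,2\tau)$ of \cite{O} (the $G_n$ piece, coming from the $(y^{2k}+y^{-2k})q^{(2n-1)k}$ terms, does not obviously lie in $\QJac$ as defined here), divides each equation by $H$, and then runs a classical elliptic-function argument---showing double periodicity in $z$ from the known transformation laws, checking that all poles cancel, and evaluating the remaining constant at $z=1/2$. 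This is the same strategy in spirit as your ``direct manipulation'' option, just executed in a slightly larger function space than $\QJac$.
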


\begin{proof}
By Proposition \ref{321}, it suffices to show that the functions defined in the
statement of Theorem \ref{fIT} satisfy the initial conditions of Proposition \ref{320}
and the WDVV equations \textbf{W1} - \textbf{W6}.
By a direct check, the initial conditions are satisfied. We consider the WDVV equations.

For the scope of this proof,
define $H = F(z,\tau)^2$ and $I = 2\, G(z,\tau)$ and
\begin{multline*} T = 8 \sum_{k \geq 1} \frac{1}{k^3} y^k + 12 \sum_{k,n \geq 1} \frac{1}{k^3} q^{kn} \\
+ 8 \sum_{k,n \geq 1} \frac{1}{k^3} (y^k + y^{-k}) q^{kn}  + 2 \sum_{k,n \geq 1} \frac{1}{k^3} (y^{2k} + y^{-2k}) q^{(2n-1) k}.
\end{multline*}
considered as a function in $z$ and $\tau$ under the variable change \eqref{var_change_1000}.
We show these functions satisfy the equations \textbf{W1} - \textbf{W6}.

For a function $A(z,\tau)$, we write
\begin{equation*}
A^{\bullet} = \partial_z A := \frac{1}{2 \pi i} \frac{\partial A}{\partial z} = y \frac{d}{d y} A, \quad \quad
A' = \partial_\tau A := \frac{1}{2 \pi i} \frac{\partial A}{\partial \tau} = q \frac{d}{d q} A
\end{equation*}
for the differential of $A$ with respect to $z$ and $\tau$ respectively.

For $n \geq 1$, define the deformed Eisenstein series \cite{O}
\begin{align*}
 J_{2,n}(z,\tau) & = \delta_{n,1} \frac{y}{y - 1} + B_n - n \sum_{k, r \geq 1} r^{n - 1} (y^k + (-1)^n y^{-k}) q^{k r} \\
 J_{3,n}(z, \tau) & = -B_n \Big( 1 - \frac{1}{2^{n - 1}} \Big) - n \sum_{k , r \geq 1} \left( r - 1/2 \right)^{n - 1} (y^k + (-1)^n y^{-k}) q^{k (r - \frac{1}{2})},
\end{align*}
where $B_{n}$ are the Bernoulli numbers (with $B_1 = -\frac{1}{2}$) and we used the variable change \eqref{var_change_1000}. We also let
\begin{align*} G_n(z, \tau) & = J_{4,n}(2z, 2 \tau) \\
& = -B_n \Big( 1 - \frac{1}{2^{n - 1}} \Big) - n \sum_{k , r \geq 1} (r - 1/2)^{n - 1} (y^{2k} + (-1)^n y^{-2k}) q^{k (2 r - 1)}. \end{align*}
Then we have
\begin{equation}
\begin{aligned}
 \partial_z^3 T & = -4 - 8 J_{2,1} - 16 G_1 \\
 \partial_z^2 \partial_\tau T & = -4 J_{2,2} - 8 G_2 \\
 \partial_z \partial_\tau^2 T & = - \frac{8}{3} J_{2,3} - \frac{16}{3} G_3 \\
 \partial_\tau^3 T & = -2 J_{2,4} - 4 G_4 + \frac{1}{20} E_4.
\end{aligned} \label{Trelations}
\end{equation}
Since $T(z,\tau)$ appears only as a third derivative in the equations \textbf{W1} - \textbf{W6},
we may trade it for deformed Eisenstein series using equations \eqref{Trelations}.

The first theta function $\vartheta_1(z,\tau)$ satisfies the heat equation
\[ \partial_z^2 \vartheta_1 = 2 \partial_{\tau} \vartheta_1, \]
which implies that $F = F(z,\tau) = \vartheta_1(z,\tau) / \eta^3(\tau)$ satisfies
\begin{equation}
 \partial_{\tau}F = \frac{1}{2} \partial_z^2 F - \frac{1}{8} E_2(\tau) F, \label{yyyy}
\end{equation}
where $E_2(\tau) = 1 - 24 \sum_{d \geq 1} \sum_{k|d} k q^d$ is the second Eisenstein series.
With a small calculation, we obtain the relation
\begin{equation}
 I = 4 \partial_{\tau}(H) - \partial_z^2(H) + E_2 \cdot H. \label{ItoH}
\end{equation}

Hence, using equations \eqref{Trelations} and \eqref{ItoH},
we may replace in the equations \textbf{W1} - \textbf{W6}
the function $T$ with deformed Eisenstein Series and $I$ with terms involving only $H$ and $E_2$.
Hence, we are left with a system of partial differential equations
between the square of the Jacobi theta function $F$, deformed Eisenstein series and classical modular forms.

These new equations may now be checked directly by methods of complex analysis as follows.
Divide each equation by $H$; derive how the quotients
\[ \frac{H^{k \bullet}}{H} \quad \text{ and } \quad \frac{H^{k \prime}}{H} \]
(with $H^{k \bullet}$ and $H^{k \prime}$ the $k$-th derivative of $H$ with respect to $z$ resp. $\tau$ respectively)
transform under the variable change
\[ (z,\tau) \mapsto (z + \lambda \tau + \mu, \tau) \quad \quad (\lambda, \mu \in \BZ) \,; \]
using the periodicity properties of the deformed Eisenstein series proven in \cite{O},
show that each equation is is double periodic in $z$;
calculate all appearing poles using the expansions of the deformed Eisenstein series in \cite{O};
prove all appearing poles cancel;
finally prove that the constant term is $0$ by evaluating at $z = 1/2$.
Using this procedure, the proof reduces to a long, but standard calculation.
\end{proof}

\subsubsection{Proof of Theorem \ref{Hilb2P1e_complete_evaluation_theorem}}
We will identify functions in $(z,\tau)$ with their expansion in $y,q$ under the variable change \eqref{var_change_1000}.
By Proposition \ref{comparision_proposition},
the definition of $H$ in \eqref{Section_Initial_conditions}, and Theorem~\ref{HIT}, we have
\[ (F^{\textup{GW}})^2 = \langle B_2, B_2 \rangle^Y = H = F(z,\tau)^2 \]
which implies
\begin{equation} F^{\textup{GW}}(y,q) = \pm F(z,\tau) \,. \label{12355} \end{equation}
By definition \eqref{FGW}, the $y^{-1/2} q^0$-coefficient of $F^{\textup{GW}}(y,q)$ is~$1$.
Hence, there is a positive sign in \eqref{12355}, and we have equality.
This proves the first equation of Theorem \ref{Hilb2P1e_complete_evaluation_theorem}.
The case $G^{\textup{GW}} = G$ is parallel.

Finally, the two remaining cases follow  directly from
Proposition \ref{comparision_proposition},
the relations \eqref{relations_first_batch}
and Theorem~\ref{HIT}.
This completes the proof of Theorem \ref{Hilb2P1e_complete_evaluation_theorem}.

\section{Quantum Cohomology} \label{Section_Quantum_Cohomology}
\subsection{Overview}
Let $S$ be a K3 surface.
In section \ref{FockSpace_section} we recall basic facts about the Fock space
\[ \CF(S) = \bigoplus_{d \geq 0} H^{\ast}( \Hilb^d(S) ; \BQ ) \,. \]
In Section \ref{Section_Main_Conjecture_WDVV} we define a $2$-point quantum operator $\CE^{\Hilb}$,
which encodes the quantum multiplication with a divisor class.
In section \ref{Main_conjecture_section} we introduce natural operators $\CE^{(r)}$ acting on $\CF(S)$.
In Section \ref{Main_conjecture_section2}, we state a series of conjectures
which link $\CE^{(r)}$ to the operator $\CE^{\Hilb}$.
In section \ref{Main_conjecture_examples_section} we present several example calculations and 
prove our conjectures in the case of $\Hilb^2(S)$.
Here, we also discuss the relationship of the K3 surface case
to the case of $\CA_1$-resolution studied by Maulik and Oblomkov in \cite{MO2}.

\subsection{The Fock space} \label{FockSpace_section}
The Fock space of the K3 surface $S$,
\begin{equation} \CF(S) = \bigoplus_{d \geq 0} \CF_d(S) = \bigoplus_{d \geq 0} H^{\ast}(\Hilb^d(S),\BQ) \label{P1}, \end{equation}
is naturally bigraded with the $(d,k)$-th summand given by
\[ \CF_{d}^k(S) = H^{2 (k + d)}(\Hilb^d(S),\BQ) \]
For a bihomogeneous element $\mu \in \CF_{d}^k(S)$, we let
\[ | \mu | = d, \quad \quad k(\mu) = k. \]
The Fock space
$\CF(S)$ carries a natural scalar product $\big\langle \cdot\, \big|\, \cdot \big\rangle$
defined by declaring the direct sum \eqref{P1} orthogonal and setting 
\[ \big\langle \mu\, \big|\, \nu \big\rangle = \int_{\Hilb^d(S)} \mu \cup \nu \]
for $\mu, \nu \in H^{\ast}(\Hilb^d(S),\BQ)$.
If $\alpha, \alpha' \in H^{\ast}(S,\BQ)$ we also write
\[ \langle \alpha, \alpha' \rangle = \int_S \alpha \cup \alpha'. \]
If $\mu, \nu$ are bihomogeneous, then $\langle \mu | \nu \rangle$ is nonvanishing only if $|\mu| = |\nu|$ and $k(\mu) + k(\nu) = 0$.
For all $\alpha \in H^{\ast}(S,\BQ)$ and $m \neq 0$, the Nakajima operators $\Fp_m(\alpha)$ act on $\CF(S)$
bihomogeneously of bidegree $(-m, k(\alpha))$,
\[ \Fp_{m}(\alpha) : \CF_d^k \to \CF_{d-m}^{k+ k(\alpha)} \,. \]
The commutation relations
\begin{equation} [ \Fp_{m}(\alpha), \Fp_{n}(\beta) ] = -m \delta_{m + n,0} \langle \alpha, \beta \rangle\, {\rm id}_{\CF(S)}, \label{N1} \end{equation}
are satisfied for all $\alpha, \beta \in H^{\ast}(S)$ and all $m, n \in \BZ \setminus \{ 0 \}$.

The inclusion of the diagonal $S \subset S^m$ induces a map
\[ \tau_{\ast m} : H^{\ast}(S,\BQ) \to H^{\ast}(S^m,\BQ) \stackrel{\sim}{=} H^{\ast}(S,\BQ)^{\otimes m} \, .\]
For $\tau_{\ast} = \tau_{\ast 2}$, we have 
\[ \tau_{\ast}(\alpha) = \sum_{i,j} g^{ij} \, (\alpha \cup \gamma_i) \otimes \gamma_j, \]
where $\{ \gamma_i \}_i$ is a basis of $H^{\ast}(S)$
and $g^{ij}$ is the inverse of the intersection matrix $g_{ij} = \langle \gamma_i, \gamma_j \rangle$.

For $\gamma \in H^{\ast}(S,\BQ)$ and $n \in \BZ$ define the Virasoro operator
\[ L_n(\gamma) = - \frac{1}{2} \sum_{k \in \BZ} : \Fp_k \Fp_{n-k} : \tau_{\ast}(\gamma), \]
where $: -- :$ is the normal ordered product \cite{Lehn}
and we used
\[ \Fp_{k}\Fp_{l} \cdot \alpha \otimes \beta = \Fp_{k}(\alpha) \Fp_{l}(\beta). \]
We are particularly interested in the degree $0$ Virasoro operator
\begin{align*}
L_0(\gamma)
& = - \frac{1}{2} \sum_{k \in \BZ \setminus 0} : \Fp_k \Fp_{-k} : \tau_{\ast}(\gamma) \\
& = - \sum_{k \geq 1} \sum_{i,j} g^{ij} \Fp_{-k}(\gamma_i \cup \gamma) \Fp_k(\gamma_j) \,,
\end{align*}
The operator $L_0(\gamma)$ is characterized by the commutator relations
\[ \big[ \Fp_{k}(\alpha), L_0(\gamma) \big] = k \, \Fp_{k}(\alpha \cup \gamma). \]
Let $e \in H^{\ast}(S)$ denote the unit. The restriction of $L_0(\gamma)$ to $\CF_d(S)$,
\[ L_0(\gamma)\big|_{\CF_d(S)} : H^{\ast}(\Hilb^d(S),\BQ) \to H^{\ast}(\Hilb^d(S),\BQ) \]
is the cup product by the class
\begin{equation} D(\gamma) = \frac{1}{(d-1)!} \Fp_{-1}(\gamma) \Fp_{-1}(e)^{d-1} \in H^{\ast}(\Hilb^d(S),\BQ) \label{divisor_class_hilbd} \end{equation}
of subschemes incident to $\gamma$, see \cite{Lehn2}.
In the special case $\gamma = e$, the operator $L_0 = L_0(e)$ is the \emph{energy operator},
\begin{equation} L_0\big|_{\CF_d(S)} = d \cdot {\rm id}_{\CF_d(S)} \,. \label{energy_operator} \end{equation}

Finally, define Lehn's diagonal operator \cite{Lehn2}
\[ \partial = - \frac{1}{2} \sum_{i,j \geq 1} ( \Fp_{-i} \Fp_{-j} \Fp_{i+j} + \Fp_i \Fp_j \Fp_{-(i+j)} ) \tau_{3 \ast}( [S] ) \,. \]
For $d \geq 2$, the operator $\partial$ acts on $\CF_d(S)$ by cup product with $-\frac{1}{2} \Delta_{\Hilb^d(S)}$, where
\[ \Delta_{\Hilb^d(S)} = \frac{1}{(d-2)!} \Fp_{-2}(e) \Fp_{-1}(e)^{d-2} \]
is the class of the diagonal in $\Hilb^d(S)$.

\subsection{The WDVV equation} \label{Section_Main_Conjecture_WDVV}
Let $S$ be an elliptic K3 surface with a section. Let $B$ and $F$ be the section and fiber class respectively,
and let
\[ \beta_h = B + hF \,. \]
For $d \geq 1$ and cohomology classes $\gamma_1, \dots, \gamma_m \in H^{\ast}(\Hilb^d(S);\BQ)$ define the quantum bracket
\begin{equation*}
 \big\langle \gamma_1, \dots, \gamma_m \big\rangle_q^{\Hilb^d(S)}
= \sum_{h \geq 0} \sum_{k \in \BZ} y^k q^{h-1} \blangle \gamma_1, \dots, \gamma_m \brangle^{\Hilb^d(S)}_{\beta_h + k A} \,,
\end{equation*}
where the bracket on the right hand side was defined in \eqref{bbbm}.

Define the 2-point quantum operator
\[ \CE^{\Hilb} : \CF(S) \otimes \BQ((y))((q)) \ra \CF(S) \otimes \BQ((y))((q)) \]
by the following two conditions.
\begin{itemize}
 \item for all homogeneous $a,b \in \CF(S)$,
\[ \big\langle a\ |\ \CE^{\Hilb} b \big\rangle =
\begin{cases}
\big\langle a , b \big\rangle_q & \text{ if } |a| = |b| \\
0 & \text{ else, }
\end{cases}
\]
\item $\CE^{\Hilb}$ is linear over $\BQ((y))((q))$.
\end{itemize}
Since $\Mbar_{0,2}(\Hilb^d(S),\alpha)$ has reduced
virtual dimension $2d$,
the operator $\CE^{\Hilb}$ is self-adjoint of bidegree $(0,0)$.

For $d \geq 0$, consider a divisor class
\[ D \in H^2(\Hilb^d(S)), \]
and the operator of primitive quantum multiplication\footnote{defined in \eqref{vsdfdsjf}} with $D$,
\[ \mathsf{M}_D \colon a \mapsto D \ast a \]
for all $a \in \CF_d(S) \otimes \BQ((y))((q)) \otimes \BQ[\qpar]/\qpar^2$.
If
\begin{equation*} D = D(\gamma) \text{ for some } \gamma \in H^2(S)\quad \text{ or } \quad D = -\frac{1}{2} \Delta_{\Hilb^d(S)}, \label{fomofddf} \end{equation*}
by the divisor axiom we have
\begin{align*}
\mathsf{M}_{D(\gamma)} \big|_{\CF_d(S)}
& =  \Big( L_0(\gamma) + \qpar\, \Fp_{0}(\gamma) \CE^{\Hilb} \Big)\Big|_{\CF_d(S)} \\
-\frac{1}{2} \mathsf{M}_{\Delta_{\Hilb^d(S)}}\big|_{\CF_d(S)}
& = \Big( \partial    + \qpar\, y \frac{d}{d y} \CE^{\Hilb} \Big)\Big|_{\CF_d(S)} \,,
\end{align*}
where $\frac{d}{dy}$ is formal differentiation with respect to the variable $y$,
and $\Fp_0(\gamma)$ for $\gamma \in H^{\ast}(S)$ is the degree $0$ Nakajima operator defined by the following
conditions:\footnote{
The definition precisely matches the action of the extended Heisenberg algebra
$\big\langle\, \Fp_k(\gamma),\ k \in \BZ\, \big\rangle$ on the full Fock space
$\CF(S) \otimes \BQ[ H^{\ast}(S,\BQ) ]$
under the embedding $q^{h-1} \mapsto q^{B + hF}$, see \cite[section 6.1]{KY}.}
\begin{equation} \label{dowofgefg}
\begin{aligned}
\bullet\ \,  & [ \Fp_0(\gamma), \Fp_{m}(\gamma') ] = 0 && \text{ for all } \gamma' \in H^{\ast}(S),\ m \in \BZ, \\
\bullet\ \,  & \Fp_0(\gamma)\, q^{h-1} y^k \, 1_S = \big\langle \gamma, \beta_h \big\rangle q^{h-1} y^k\, 1_S && \text{ for all } h,k.
\end{aligned}
\end{equation}
Since the classes $D(\gamma)$ and $\Delta_{\Hilb^d(S)}$ span $H^2(\Hilb^d(S)$, the operator $\CE^{\Hilb}$ therefore
determines quantum multiplication $\mathsf{M}_D$ for every divisor class $D$.

Let $D_1, D_2 \in H^2(\Hilb^d(S),\BQ)$ be divisor classes. 
By associativity and commutativity of quantum multiplication, we have
\begin{equation} D_1 \ast ( D_2 \ast a ) = D_2 \ast ( D_1 \ast a ) \label{12333} \end{equation}
for all $a \in \CF_d(S)$.
After specializing $D_1$ and $D_2$,
we obtain the main commutator relations for the operator $\CE^{\Hilb}$:

For all $\gamma, \gamma' \in H^2(S,\BQ)$, after restriction to $\CF(S)$, we have
\begin{equation} \label{P4}
\begin{aligned}
\Fp_0(\gamma)\, \big[ \CE^{\Hilb}, L_0(\gamma') \big]  & = \Fp_0(\gamma')\, \big[ \CE^{\Hilb}, L_0(\gamma) \big] \\
\Fp_0(\gamma)\, \big[ \CE^{\Hilb}, \partial \big]  & = y \frac{d}{dy}\, \big[ \CE^{\Hilb}, L_0(\gamma) \big]  \,.
\end{aligned}
\end{equation}

The equalities \eqref{P4} hold
only after restricting to $\CF(S)$.
In both cases, the extension of these equations to $\CF(S) \otimes \BQ((y))((q))$ does \emph{not} hold,
since $\Fp_0(\gamma)$ is not $q$-linear, and $y \frac{d}{dy}$ is not $y$-linear.

Equations \eqref{P4} show that the commutator of $\CE^{\Hilb}$ with a divisor intersection operator 
is essentially independent of the divisor.

\subsection{The operators $\CE^{(r)}$} \label{Main_conjecture_section}
For all $(m,\ell) \in \BZ^2 \setminus \{ 0 \}$ consider fixed formal power series
\begin{equation} \varphi_{m,\ell}(y,q)\, \in \BC((y^{1/2}))[[q]] \label{006} \end{equation}
which satisfy the symmetries
\begin{equation}
\begin{aligned} \label{symmetries_phi}
 \varphi_{m,\ell} & = - \varphi_{-m, -\ell} \\
 \ell \varphi_{m,\ell} & = m \varphi_{\ell, m} \,.
\end{aligned}
\end{equation}
Let $\Delta(q) = q \prod_{m \geq 1} (1-q^m)^{24}$ be the modular discriminant
and let
\[ F(y,q) =  (y^{1/2} + y^{-1/2}) \prod_{m \geq 1} \frac{ (1 + yq^m) (1 + y^{-1}q^m)}{ (1-q^m)^2 } \]
be the Jacobi theta function which appeared in Section \eqref{YZ_section_statement_of_results},
considered as formal power series in $q$ and $y$ in the region $|q| < 1$.

Depending on the functions \eqref{006},
define for all $r \in \BZ$ operators
\begin{equation} \CE^{(r)} : \CF(S) \otimes \BC((y^{1/2}))((q)) \ra \CF(S) \otimes \BC((y^{1/2}))((q)) \label{Def_CEr_operators}\end{equation}
by the following recursion relations:

\vspace{8pt}
\noindent{\bf Relation 1.}
For all $r \geq 0$,
\begin{equation*}
\CE^{(r)} \Big|_{\CF_0(S) \otimes \BC((y^{1/2}))((q))} = \frac{\delta_{0r}}{F(y,q)^2 \Delta(q)} \cdot {\rm id}_{\CF_0(S) \otimes \BC((y^{1/2}))((q))},
\end{equation*}

\vspace{8pt}
\noindent{\bf Relation 2.}
For all $m \neq 0$, $r \in \BZ$ and homogeneous $\gamma \in H^{\ast}(S)$,
\[ [ \Fp_{m}(\gamma), \CE^{(r)} ] = \sum_{\ell \in \BZ} \frac{\ell^{k(\gamma)}}{m^{k(\gamma)}} : \Fp_\ell(\gamma) \CE^{(r+m-\ell)} : \, \varphi_{m,\ell}(y,q), \]
where $k(\gamma)$ denotes the shifted complex cohomological degree of $\gamma$,
\[ \gamma \in H^{2(k(\gamma) + 1)}(S;\BQ) \,, \]
and $: -- :$ is a variant of the normal ordered product defined by
\[ : \Fp_\ell(\gamma) \CE^{(k)} : = \begin{cases}
                             \Fp_{\ell}(\gamma) \CE^{(k)} & \text{ if } \ell \leq 0 \\
                             \CE^{(k)} \Fp_{\ell}(\gamma) & \text{ if } \ell > 0 \,.
                            \end{cases}
\]
\vspace{6pt}

By definition, the operator $\CE^{(r)}$ is homogeneous of bidegree $(-r,0)$; it is $y$-linear, but \emph{not} $q$ linear.

\begin{lemma} The operators $\CE^{(r)}, r \in \BZ$ are well-defined. \end{lemma}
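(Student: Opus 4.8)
The plan is to construct each $\CE^{(r)}$ by induction on the number of points and then to verify that the prescription is consistent. Recall that $\CF(S)$ is an irreducible module over the Heisenberg algebra \eqref{N1} with cyclic vacuum $1_S$, so every element is a finite $\BC((y^{1/2}))((q))$-combination of monomials $\Fp_{-m_1}(\gamma_1)\cdots\Fp_{-m_\ell}(\gamma_\ell)1_S$ with $m_i>0$. First I would define $\CE^{(r)}$ on $\CF_0(S)$ by Relation~1, and then extend it to all of $\CF(S)$ by the rule
\[ \CE^{(r)}\Fp_{-m}(\gamma)\nu' = \Fp_{-m}(\gamma)\,\CE^{(r)}\nu' - \big[\Fp_{-m}(\gamma),\CE^{(r)}\big]\nu', \qquad m>0, \]
in which the commutator is rewritten using Relation~2 (with index $-m$). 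Since $|\nu'|=|\nu|-m<|\nu|$ and every $\CE^{(s)}$ occurring on the right is applied to a vector of strictly smaller $|\cdot|$, this is a genuine recursion on $|\nu|$ with no circularity, and Relation~1 provides the base case.

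Next I would check that the recursion produces an element of $\CF_{|\nu|-r}(S)\otimes\BC((y^{1/2}))((q))$, i.e.\ that the sum over $\ell$ in Relation~2 is effectively finite. Two bidegree constraints do this. On one hand, $\CE^{(s)}$ has bidegree $(-s,0)$, so $\CE^{(s)}\nu'=0$ whenever $s>|\nu'|$ because the target $\CF_{|\nu'|-s}(S)$ vanishes; this truncates the creation range $\ell\le 0$ from below. On the other hand, for $\ell>0$ the operator $\Fp_\ell(\gamma)$ lowers $|\cdot|$ by $\ell$ and annihilates $\nu'$ once $\ell>|\nu'|$, truncating the annihilation range from above. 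Hence only finitely many $\ell$ contribute to each matrix coefficient $\langle\mu\,|\,\CE^{(r)}\nu\rangle$, and each contribution is a finite product of the series $\varphi_{m,\ell}$ with lower matrix coefficients; together with the base value $1/(F^2\Delta)\in\BC((y^{1/2}))((q))$ this shows $\CE^{(r)}\nu$ is well-defined with values in the stated space.

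The substantive point is that the extension must be independent of the order in which creation operators are stripped off; equivalently, the commutators prescribed by Relation~2 must be compatible with the Heisenberg relations \eqref{N1}. Since $S$ is a K3 surface we have $H^{\mathrm{odd}}(S)=0$, so all commutators are ordinary and, by the Jacobi identity, the required compatibility is
\[ \big[\Fp_m(\alpha),[\Fp_n(\beta),\CE^{(r)}]\big] - \big[\Fp_n(\beta),[\Fp_m(\alpha),\CE^{(r)}]\big] = \big[[\Fp_m(\alpha),\Fp_n(\beta)],\CE^{(r)}\big] = 0, \]
the last equality because $[\Fp_m(\alpha),\Fp_n(\beta)]$ is central. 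I would prove this by inserting Relation~2 twice and sorting the resulting double sum into two families: the iterated terms, in which both brackets land on $\CE$, and the contraction terms, in which the outer Nakajima operator pairs with the inner one through \eqref{N1} (these occur at $\ell=-m$ and at $\ell=-n$ respectively, and the normal ordering $:\!-\!:$ is designed precisely to track them). The iterated terms are symmetric under $(m,\alpha)\leftrightarrow(n,\beta)$ after using $\ell\varphi_{m,\ell}=m\varphi_{\ell,m}$ to move the weight factors $\ell^{k(\gamma)}/m^{k(\gamma)}$ across the reordering, while the contraction terms cancel between the two orderings after applying $\varphi_{m,\ell}=-\varphi_{-m,-\ell}$ together with the same weight identity.

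The main obstacle is exactly this last computation: correctly bookkeeping the normal-ordered products—which contribute an extra central term precisely when an annihilation index is reordered across $\CE$—and checking that the two symmetries in \eqref{symmetries_phi} are exactly what is needed for the contraction terms to cancel and the iterated terms to match. Once this symmetry is established, consistency of the recursion follows, and combined with the finiteness above it yields that each $\CE^{(r)}$ is a well-defined operator of bidegree $(-r,0)$ on $\CF(S)\otimes\BC((y^{1/2}))((q))$.
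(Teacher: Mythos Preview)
Your proposal is correct and follows essentially the same route as the paper: define $\CE^{(r)}$ recursively from the vacuum via Relations~1 and~2, then verify consistency by showing the Jacobi identity $\big[[\Fp_m(\alpha),\Fp_n(\beta)],\CE^{(r)}\big]=0$ holds using the symmetries \eqref{symmetries_phi}. One small correction: in the paper's organization the iterated double-sum terms match \emph{directly} because the nested normal-ordered products $:\!\Fp_\ell(\beta)(:\!\Fp_{\ell'}(\alpha)\CE\!:)\!:$ and $:\!\Fp_{\ell'}(\alpha)(:\!\Fp_\ell(\beta)\CE\!:)\!:$ are equal, with no need for the relation $\ell\varphi_{m,\ell}=m\varphi_{\ell,m}$; both symmetries are instead used together to kill the contraction terms (after also invoking $k(\alpha)=-k(\beta)$ when $\langle\alpha,\beta\rangle\neq 0$). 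Your explicit finiteness argument for the $\ell$-sum is a welcome addition that the paper leaves implicit.
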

\begin{proof}
By induction, Relation 1 and 2 uniquely determine the operators $\CE^{(r)}$.
It remains to show that the Nakajima commutator relations \eqref{N1} are preserved by $\CE^{(r)}$.
Hence, we need to show
\[ \Big[ \big[ \Fp_m(\alpha), \Fp_n(\beta) \big] , \CE^{(r)} \Big]
 = \big[ -m \delta_{m + n,0} \langle \alpha, \beta \rangle\, {\rm id}_{\CF(S)}, \CE^{(r)} \big] = 0
\]
for all homogeneous $\alpha, \beta \in H^{\ast}(S)$ and all $m, n \in \BZ \setminus \{ 0 \}$. We have
\begin{equation} \Big[ \big[ \Fp_m(\alpha), \Fp_n(\beta) \big] , \CE^{(r)} \Big]
 = \Big[ \Fp_m(\alpha), \big[ \Fp_n(\beta), \CE^{(r)} \big] \Big]
- \Big[ \Fp_n(\beta) , \big[ \Fp_m(\alpha) , \CE^{(r)} \big] \Big].
\label{Muasfdagd} \end{equation}
Using Relation~2, we obtain
\begin{align}
\notag & \Big[ \Fp_m(\alpha), \big[ \Fp_n(\beta), \CE^{(r)} \big] \Big] \\
\notag =\ &
\Big[ \Fp_m(\alpha),\ \sum_{\ell \in \BZ} \frac{\ell^{k(\beta)}}{n^{k(\beta)}} : \Fp_\ell(\beta) \CE^{(r+m-\ell)} : \, \varphi_{m,\ell}(y,q) \Big] \\
\label{fovmdfd} =\ & \frac{(-m)^{k(\beta) + 1}}{n^{k(\beta)}} \langle \alpha, \beta \rangle \CE^{(r+n+m)} \varphi_{n,-m} \\
\notag & \quad + \sum_{\ell, \ell' \in \BZ} \frac{ \ell^{k(\beta)} (\ell')^{k(\alpha)} }{ n^{k(\beta)} m^{k(\alpha)} }
: \Fp_\ell(\beta) \bigl( : \Fp_{\ell'}(\alpha) \CE^{(r+n+m-\ell- \ell')} : \bigr)\! :\varphi_{m,\ell'} \varphi_{n,\ell}.
\end{align}
Similarly, we have
\begin{multline} \label{equation2_epsr_check}
 \Big[ \Fp_n(\beta) , \big[ \Fp_m(\alpha) , \CE^{(r)} \big] \Big]
= \frac{(-n)^{k(\alpha) + 1}}{m^{k(\alpha)}} \langle \alpha, \beta \rangle \CE^{(r+n+m)} \varphi_{m,-n} \\
+ \sum_{\ell, \ell' \in \BZ} \frac{ \ell^{k(\beta)} (\ell')^{k(\alpha)} }{ n^{k(\beta)} m^{k(\alpha)} }
: \Fp_{\ell'}(\alpha) \bigl( : \Fp_{\ell}(\beta) \CE^{(r+n+m-\ell- \ell')} : \bigr)\! :\varphi_{m,\ell'} \varphi_{n,\ell}.
\end{multline}
Since for all $\ell, \ell' \in \BZ$ we have
\[
: \Fp_\ell(\beta) \bigl( : \Fp_{\ell'}(\alpha) \CE^{(r+n+m-\ell- \ell')} : \bigr)\! :
\ \, =\  \,
: \Fp_{\ell'}(\alpha) \bigl( : \Fp_{\ell}(\beta) \CE^{(r+n+m-\ell- \ell')} : \bigr)\! :
\]
the second terms
in \eqref{fovmdfd} and \eqref{equation2_epsr_check}
agree. Hence, \eqref{Muasfdagd} equals
\begin{equation}
\langle \alpha, \beta \rangle \CE^{(r+m+n)} 
\bigg\{ \frac{(-m)^{k(\beta) + 1}}{n^{k(\beta)}} \varphi_{n,-m}
- \frac{(-n)^{k(\alpha) + 1}}{m^{k(\alpha)}} \varphi_{m,-n}  \bigg\}
\label{hovsmdovsv}
\end{equation}
If $\langle \alpha, \beta \rangle = 0$ we are done, hence we may assume otherwise.
Then, for degree reasons, $k(\alpha) = - k(\beta)$.
Using the symmetries \eqref{symmetries_phi}, we find
\[ \varphi_{m,-n} = - \frac{m}{n} \varphi_{-n,m} = \frac{m}{n} \varphi_{n,-m} \]
Inserting both equations into \eqref{hovsmdovsv}, this yields
\[
\langle \alpha, \beta \rangle \CE^{(r+m+n)} \varphi_{n,-m} \bigg\{
- \frac{m^{-k(\alpha) + 1}}{n^{-k(\alpha)}}
\ +\  \frac{n^{k(\alpha) + 1}}{m^{k(\alpha)}} \cdot \frac{m}{n}  
\bigg\} = 0. \qedhere
\]
\end{proof}


\subsection{Conjectures} \label{Main_conjecture_section2}
Let $G(y,q)$ be the formal expansion in the variables $y,q$ of the function $G(z,\tau)$
which already appeared in Section \ref{Section_More_evaluations_Introduction},
\begin{align*}
G(y,q) & = F(y,q)^2 \left( y \frac{d}{dy} \right)^2 \log( F(y,q) ) \\
& = F(y,q)^2 \cdot \bigg\{ \frac{y}{(1+y)^2} - \sum_{d \geq 1} \sum_{m | d} m \big( (-y)^{-m} + (-y)^m \big) q^d \bigg\}.
\end{align*}

\vspace{8pt}
\noindent{\bf Conjecture A.}
{\em There exist unique series $\varphi_{m,\ell}$ for $(m,\ell) \in \BZ^2 \setminus \{ 0\}$ such that
the following hold:
\begin{enumerate}
\item[(i)] the symmetries \eqref{symmetries_phi} are satisfied,
\item[(ii)] the initial conditions
\[ \varphi_{1,1} = G(y,q) - 1, \quad \varphi_{1,0} = -i \cdot F(y,q), \quad \varphi_{1,-1} = - \frac{1}{2} \, q \frac{d}{dq}\big( F(y,q)^2 \big) \,, \]
hold, where $i=\sqrt{-1}$ is the imaginary number,
\item[(iii)] Let $\CE^{(r)}, r \in \BZ$ be the operators \eqref{Def_CEr_operators} defined by the functions $\varphi_{m,\ell}$.
Then, $\CE^{(0)}$ satisfies after restriction to $\CF(S)$ the WDVV equations
\begin{equation} \label{WDVV_eqn_for_CEr}
\begin{aligned}
\Fp_0(\gamma)\, [ \CE^{(0)}, L_0(\gamma') ] & = \Fp_0(\gamma')\, [ \CE^{(0)}, L_0(\gamma) ] \\
\Fp_0(\gamma)\, [ \CE^{(0)}, \partial ]     & = y \frac{d}{dy}\, [ \CE^{(0)}, L_0(\gamma) ]
\end{aligned}
\end{equation}
for all $\gamma, \gamma' \in H^2(S,\BQ)$.
\end{enumerate}
}

\vspace{8pt}
Conjecture A is a purely algebraic, non-degeneracy statement for the WDVV equations \eqref{WDVV_eqn_for_CEr}.
It has been checked numerically on $\CF_d(S)$ for all $d \leq 5$.
The first values of the series $\varphi_{m,\ell}$ are 
given in Appendix \ref{Appendix_Numerical_Values}.
For the remainder of Section \ref{Section_Quantum_Cohomology}, we \emph{assume} conjecture A to be true,
and we let $\CE^{(r)}$ denote the operators defined by the (hence unique) functions $\varphi_{m,\ell}$ satisfying (i)-(iii) above.
Since Conjecture A has been shown to be true for $\CF_d(S)$ for all $d \leq 5$,
the restriction of $\CE^{(0)}$ to the subspace $\oplus_{d \leq 5} \CF_d(S)$ is well-defined unconditionally.

The following conjecture relates $\CE^{(0)}$ to the quantum operator $\CE^{\Hilb}$.
Let $L_0$ be the energy operator \eqref{energy_operator}.
Define the operator
\[ G(y,q)^{L_0} : \CF(S) \otimes \BQ((y))((q)) \ra \CF(S) \otimes \BQ((y))((q)) \]
by the assignment
\[ G(y,q)^{L_0}( \mu ) = G(y,q)^{|\mu|} \cdot \mu \]
for any homogeneous $\mu \in \CF(S)$.

\vspace{8pt}
\noindent{\bf Conjecture B.} After restriction to $\CF(S)$,
\begin{equation} \CE^{\Hilb} \ = \ \CE^{(0)} - \frac{1}{F(y,q)^2 \Delta(q)} G(y,q)^{L_0} \, . \label{MMMDCDVD} \end{equation}
\vspace{4pt}

Combining Conjectures A and B we obtain an algorithmic procedure
to determine the 2-point quantum bracket $\langle \cdot , \cdot \rangle_q$.
The equality of Conjecture B is conjectured to hold only after restriction to $\CF(S)$.
The extension of \eqref{MMMDCDVD} to $\CF(S) \otimes \BQ((y))((q))$ is clearly false:
The operators $\CE^{\Hilb}$ and $G^{L_0}/(F^2 \Delta)$ are $q$-linear by definition, but $\CE^{(0)}$ is not.

Let $\QJac$ be the ring of holomorphic quasi-Jacoi forms defined in Appendix~\ref{Appendix_Quasi_Jacobi_Forms},
and let
\[ \QJac = \bigoplus_{m \geq 0} \bigoplus_{k \geq -2m} \QJac_{k,m} \]
be the natural bigrading of $\QJac$ by index $m$ and weight $k$,
where $m$ runs over all non-negative half-integers $\frac{1}{2} \BZ^{\geq 0}$.

\vspace{8pt}
\noindent{\bf Conjecture C.}
{\em For every $(m,\ell) \in \BZ^2 \setminus \{ 0 \}$, the series
\[ \varphi_{m,\ell} + {\rm sgn}(m) \delta_{m \ell} \]
is a quasi-Jacobi form of index $\frac{1}{2} (|m|+|\ell| )$ and weight $-\delta_{0\ell}$.}
\vspace{8pt}

Define a new degree functions $\underline{\deg}(\gamma)$ on $H^{\ast}(S)$ by the assignment
\begin{itemize}
 \item $\gamma \in \BQ F \mapsto \underline{\deg}(\gamma) = -1$
 \item $\gamma \in \BQ (B+F) \mapsto \underline{\deg}(\gamma) = 1$
 \item $\gamma \in \{ F,\, B+F \}^{\perp} \mapsto \underline{\deg}(\gamma) = 0$,
\end{itemize}
where the orthogonal complement $\{ F,\, B+F \}^{\perp}$ is defined with respect to the inner product $\langle \cdot , \cdot \rangle$.

\begin{lemmastar} \label{oiejgojeogerg}
Assume Conjectures A and C hold.
Let $\gamma_i, \widetilde{\gamma}_i \in H^{\ast}(S)$ be $\underline{{\rm deg}}$-homogeneous classes,
and let
\begin{equation} \label{munu_coh_classes}
\mu = \prod_{i} \Fp_{-m_i}(\gamma_i) 1_S, \quad \quad \nu = \prod_{j} \Fp_{-n_j}(\widetilde{\gamma}_j) 1_S
\end{equation}
be cohomology classes of $\Hilb^m(S)$ and $\Hilb^n(S)$ respectively. Then
\[ \Big\langle \mu\ \Big|\ \CE^{(n-m)} \nu \Big\rangle = \frac{\Phi}{F(y,q)^2 \Delta(q)} \]
for a quasi-Jacobi form $\Phi \in \QJac$ of index $\frac{1}{2} (|m|+|n|)$ and weight
\[ \sum_{i} \underline{\deg}(\gamma_i) + \sum_j \underline{\deg}(\gamma'_j). \]
\end{lemmastar}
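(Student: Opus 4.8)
The plan is to compute $\langle\mu\,|\,\CE^{(n-m)}\nu\rangle$ by reducing it to a vacuum expectation and expanding through the commutation rule of Relation 2, tracking index and weight along the way. First I would pass to the adjoint form: writing $\mu=\Fp_{-m_1}(\gamma_1)\cdots\Fp_{-m_a}(\gamma_a)1_S$ and using $\Fp_{-m}(\gamma)^{\dagger}=(-1)^m\Fp_{m}(\gamma)$ (a consequence of \eqref{N1}),
\[ \langle\mu\,|\,\CE^{(n-m)}\nu\rangle=(-1)^{\sum_i m_i}\big\langle 1_S\,\big|\,\Fp_{m_1}(\gamma_1)\cdots\Fp_{m_a}(\gamma_a)\,\CE^{(n-m)}\,\Fp_{-n_1}(\widetilde\gamma_1)\cdots\Fp_{-n_b}(\widetilde\gamma_b)\,1_S\big\rangle. \]
It is convenient to replace $\varphi_{m,\ell}$ by $\widetilde\varphi_{m,\ell}=\varphi_{m,\ell}+{\rm sgn}(m)\delta_{m\ell}$, which by Conjecture C lies in $\QJac$ and is of \emph{pure} index $\tfrac12(|m|+|\ell|)$ and weight $-\delta_{0\ell}$. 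With this substitution Relation 2 becomes, for $m>0$, the clean rule $\Fp_{m}(\gamma)\CE^{(r)}=\sum_{\ell}\tfrac{\ell^{k(\gamma)}}{m^{k(\gamma)}}:\Fp_\ell(\gamma)\CE^{(r+m-\ell)}:\widetilde\varphi_{m,\ell}$, the $\ell=m$ term absorbing the superscript-preserving pass-through.

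\textbf{Wick expansion.} I would then move the annihilation operators $\Fp_{m_i}$ ($m_i>0$) to the right one at a time. Each commutes freely past the others (no contraction, as $m_i+m_{i'}>0$) and then crosses $\CE$, producing for every $\ell$ a factor $\widetilde\varphi_{m_i,\ell}$ and a residual $\Fp_\ell(\gamma_i)$. Three things can happen: if $\ell>0$ the residual emerges to the right and either contracts with some $\Fp_{-n_j}(\widetilde\gamma_j)$ (forcing $\ell=n_j$, scalar $-n_j\langle\gamma_i,\widetilde\gamma_j\rangle$) or annihilates the final vacuum; if $\ell=0$ the operator $\Fp_0(\gamma_i)$ stays at $\CE$ and evaluates on $\CE^{(0)}1_S=\tfrac{1}{F^2\Delta}1_S$ via \eqref{dowofgefg}; if $\ell<0$ it remains a creation operator to the left of $\CE$ and must contract with a preceding $\Fp_{m_{i'}}$ (forcing $\ell=-m_{i'}$). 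Since $\CE^{(r)}1_S$ pairs with $\langle 1_S|$ only for $r=0$ (Relation 1) and every $\nu$-operator must be contracted, an induction on $a+b$ gives a finite expansion
\[ \langle\mu\,|\,\CE^{(n-m)}\nu\rangle=\frac{1}{F(y,q)^2\Delta(q)}\sum_{\Gamma}c_\Gamma\prod_{\Gamma}\widetilde\varphi_{\bullet,\bullet}, \]
where $\Gamma$ runs over complete contraction diagrams, $c_\Gamma$ collects rational prefactors and scalar pairings $\langle\gamma,\widetilde\gamma\rangle$, and each $\ell=0$ leg contributes besides $\widetilde\varphi_{m_i,0}$ the operator $\Fp_0(\gamma_i)$, which acts on the $q$-expansion as multiplication by $\langle\gamma_i,B\rangle+\langle\gamma_i,F\rangle\,q\tfrac{d}{dq}$.

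\textbf{Membership in $\QJac$ and the index.} By Conjecture C every $\widetilde\varphi_{m,\ell}$ lies in the ring $\QJac$, which (Appendix B) is closed under multiplication and under the derivations $q\tfrac{d}{dq},\,y\tfrac{d}{dy}$; as $q\tfrac{d}{dq}\log(F^2\Delta)=2\,\tfrac{q\frac{d}{dq}F}{F}+E_2\in\QJac$, applying $\Fp_0(\gamma_i)$ preserves the prefactor $\tfrac{1}{F^2\Delta}$ and leaves a $\QJac$ numerator, so $\Phi:=F^2\Delta\cdot\langle\mu|\CE^{(n-m)}\nu\rangle\in\QJac$. For the index I would argue by conservation: split the index $\tfrac12(|m|+|\ell|)$ of each $\widetilde\varphi_{m,\ell}$ between its incoming leg $m$ and outgoing leg $\ell$. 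Every external leg occurs exactly once — as the incoming index of its own crossing, or (when $\ell<0$) as the outgoing index $|\ell|=m_{i'}$ of a $\mu$–$\mu$ contraction, or (when $\ell=n_j$) as the outgoing index of a $\mu$–$\nu$ contraction — while $\ell=0$ legs contribute $0$. Summing yields total index $\tfrac12(\sum_i m_i+\sum_j n_j)=\tfrac12(|m|+|n|)$ for every $\Gamma$.

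\textbf{The weight, and the main obstacle.} The weight is the delicate point. I would first record that the intersection form is anti-diagonal for $\underline{\deg}$: $\langle\gamma,\gamma'\rangle\neq0$ forces $\underline{\deg}(\gamma)=-\underline{\deg}(\gamma')$ (check on $\BQ F$, $\BQ(B+F)$ and $\{F,B+F\}^{\perp}$, using $\beta_h\in\langle F,B+F\rangle$). Hence each scalar pairing in $c_\Gamma$ joins two legs of opposite $\underline{\deg}$, contributing $\underline{\deg}$-sum $0$, and its $\widetilde\varphi$ (with $\ell\neq0$) has weight $0$. For an $\ell=0$ leg, $\widetilde\varphi_{m_i,0}$ has weight $-1$, while $\Fp_0(\gamma_i)$ is the zero operator when $\underline{\deg}(\gamma_i)=0$ (as $\langle\gamma_i,\beta_h\rangle=0$), acts as multiplication by $\langle\gamma_i,B\rangle$ (weight $0$) when $\gamma_i\in\BQ F$, and as $q\tfrac{d}{dq}$ (weight $+2$) when $\gamma_i\in\BQ(B+F)$; in the two surviving cases the leg carries weight $0-1=-1=\underline{\deg}(\gamma_i)$ resp. $2-1=1=\underline{\deg}(\gamma_i)$. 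Thus every contributing diagram has total weight $\sum_{\ell=0\text{ legs}}\underline{\deg}(\gamma_i)$, and since the contraction-resolved legs have $\underline{\deg}$-sum $0$ this equals $\sum_i\underline{\deg}(\gamma_i)+\sum_j\underline{\deg}(\widetilde\gamma_j)$ uniformly over $\Gamma$, giving $\Phi$ of pure index $\tfrac12(|m|+|n|)$ and the stated weight. The main obstacle is making the Wick expansion fully rigorous — in particular controlling how the $\ell<0$ and $\ell=0$ branches interact with the normal ordering and verifying that each external leg is resolved exactly once — after which the index and weight statements reduce to the bookkeeping above.
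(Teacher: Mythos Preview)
Your approach is correct and is exactly the one the paper has in mind: its entire proof reads ``By a straight-forward induction on $|\mu|+|\nu|$.'' Your Wick expansion is precisely what that induction unfolds to, and your index/weight bookkeeping (splitting the index of each $\widetilde\varphi_{m,\ell}$ across its two legs, and using that the pairing is $\underline{\deg}$-anti-diagonal) is the right way to carry it through.

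The ``main obstacle'' you flag is not a real one. If instead of expanding all $\Fp_{m_i}$ at once you peel off a single creation operator --- say write $\mu=\Fp_{-m_1}(\gamma_1)\mu'$ and use your clean rule $\Fp_{m_1}(\gamma_1)\CE^{(r)}=\sum_\ell\tfrac{\ell^{k}}{m_1^{k}}:\Fp_\ell(\gamma_1)\CE^{(r+m_1-\ell)}:\widetilde\varphi_{m_1,\ell}$ --- then each $\ell$-branch, after taking the bracket, lands on a pair $(\mu'',\nu'')$ with $|\mu''|+|\nu''|<|\mu|+|\nu|$ (the $\ell>0$ branch removes a factor from $\nu$, the $\ell<0$ branch removes a further factor from $\mu'$ via the adjoint, and the $\ell=0$ branch just drops $\gamma_1$ and leaves a $\Fp_0$ to act on coefficients). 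So termination and the ``each leg is resolved exactly once'' property are automatic, and the index/weight check reduces to the single-step computation you already did. Phrasing it this way also makes the interaction of $\ell\le 0$ with the normal ordering transparent: there is nothing to control, because the residual $\Fp_\ell$ acts on $\mu'$ through the adjoint rather than waiting to be contracted later.
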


\vspace{5pt}
\begin{proof}[Proof of Lemma \ref{oiejgojeogerg}] By a straight-forward induction on $|\mu| + |\nu|$. \end{proof}

Let $\mu, \nu \in H^{\ast}(\Hilb^d(S))$. By Lemma \eqref{oiejgojeogerg} and Conjecture B, we have
\begin{equation} \langle \mu, \nu \rangle_q = \frac{\varphi(y,q)}{F(y,q)^2 \Delta(q)} \label{xxxzzz} \end{equation}
for a quasi-Jacobi form $\varphi(y,q)$. Since $F(y,q)$ has a simple zero at $z=0$,
we expect the function \eqref{xxxzzz} to have a pole of order $2$ at $z=0$.
Numerical experiments (Conjecture~J) or deformation invariance\footnote{See \cite{thesis} for a discussion
of the monodromy action by deformations of $\Hilb^d(S)$ in the moduli space of irreducible holomorphic-symplectic varieties.}
suggest that the series $\langle \mu, \nu \rangle_q$ is nonetheless holomorphic at $z=0$.
Combining everything, we obtain the following prediction.

\begin{lemmastar} \label{index_weight_lemma_jacforms} Assume Conjectures A,\, B,\, C,\, J \, hold.
Let $\mu, \nu \in H^{\ast}(\Hilb^d(S))$ be cohomology classes of the form \eqref{munu_coh_classes}.
Then,
\[ \big\langle \mu , \nu \big\rangle^{\Hilb^d(S)}_q = \frac{\Phi(y,q)}{\Delta(q)} \]
for a quasi-Jacobi form $\Phi(y,q)$ of index $d-1$ and weight
\[ 2 + \sum_{i} \underline{\deg}(\gamma_i) + \sum_j \underline{\deg}(\gamma'_j). \]
\end{lemmastar}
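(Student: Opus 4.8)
The plan is to read off the statement by combining the defining property of the quantum operator $\CE^{\Hilb}$, Conjecture B, and the preceding Lemma \ref{oiejgojeogerg}, and then to invoke Conjecture J to control the behaviour at $z=0$. I take $\mu,\nu$ to be $\underline{\deg}$-homogeneous of the form \eqref{munu_coh_classes}, so in particular $|\mu|=|\nu|=d$. By the definition of $\CE^{\Hilb}$ we have $\langle \mu,\nu\rangle_q = \langle \mu\,|\,\CE^{\Hilb}\nu\rangle$, and Conjecture B \eqref{MMMDCDVD} lets me split this as
\[ \langle \mu,\nu\rangle_q = \big\langle \mu\,\big|\,\CE^{(0)}\nu\big\rangle \;-\; \frac{1}{F(y,q)^2\Delta(q)}\big\langle \mu\,\big|\,G(y,q)^{L_0}\nu\big\rangle. \]
The second bracket simplifies at once: since $\nu\in\CF_d(S)$ we have $G^{L_0}\nu = G(y,q)^{d}\,\nu$, so $\big\langle \mu\,|\,G^{L_0}\nu\big\rangle = G(y,q)^{d}\,c$ with $c:=\langle\mu\,|\,\nu\rangle$ the classical intersection number.

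For the first bracket I would apply Lemma \ref{oiejgojeogerg} with $m=n=d$, which gives $\langle \mu\,|\,\CE^{(0)}\nu\rangle = \Phi_1/(F^2\Delta)$ for a quasi-Jacobi form $\Phi_1$ of index $\tfrac12(d+d)=d$ and weight $w:=\sum_i\underline{\deg}(\gamma_i)+\sum_j\underline{\deg}(\widetilde\gamma_j)$. Placing both terms over the common denominator $F^2\Delta$ and setting $\Phi:=(\Phi_1 - G^{d}c)/F^2$ yields $\langle \mu,\nu\rangle_q = \Phi/\Delta$. The index and weight then follow from the bookkeeping $F^2\in\QJac_{-2,1}$ and $G\in\QJac_{0,1}$: dividing the index-$d$, weight-$w$ form $\Phi_1$ by $F^2$ lowers the index to $d-1$ and raises the weight to $w+2 = 2+\sum_i\underline{\deg}(\gamma_i)+\sum_j\underline{\deg}(\widetilde\gamma_j)$, which is exactly the claimed value.

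Two points need attention. For $\Phi$ to carry a single well-defined weight I must know that the correction $G^{d}c$, which has weight $2$ after division by $F^2$, can only appear when the main term also has weight $2$, i.e. that $c\neq 0$ forces $w=0$. This I would verify directly: expanding $\langle\mu\,|\,\nu\rangle$ through the Heisenberg relations \eqref{N1}, a nonzero value requires a perfect matching of the factors $\Fp_{-m_i}(\gamma_i)$ against $\Fp_{-n_j}(\widetilde\gamma_j)$ in which each matched pair contributes $\langle\gamma_i,\widetilde\gamma_j\rangle$; and on a basis of $H^\ast(S)$ adapted to $\underline{\deg}$ the intersection form pairs a class of $\underline{\deg}$-degree $a$ only with one of degree $-a$, so any nonvanishing matching forces $w=0$. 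The second and genuinely harder point is that $1/F^2$ has a double pole at $z=0$, so a priori $\Phi$ is only meromorphic there, whereas membership in $\QJac$ requires holomorphicity at $z=0$. I expect this to be the main obstacle, and it is not formal: it amounts to the numerator $\Phi_1 - G^{d}c$ vanishing to order two at $z=0$. Here I would appeal to Conjecture J, which asserts precisely that $\langle\mu,\nu\rangle_q=\psi/\Delta$ for a quasi-Jacobi form $\psi$ holomorphic at $z=0$; identifying $\psi=\Phi$ shows the pole cancels and that $\Phi\in\QJac$ has the stated index $d-1$ and weight $2+\sum_i\underline{\deg}(\gamma_i)+\sum_j\underline{\deg}(\widetilde\gamma_j)$. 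This dependence on Conjecture J is exactly why the statement must be phrased conditionally.
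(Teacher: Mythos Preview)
Your approach is correct and matches the paper's own justification, which is not given as a formal proof but as the discussion paragraph immediately preceding the statement: combine Lemma~\ref{oiejgojeogerg} with Conjecture~B to write $\langle\mu,\nu\rangle_q = \varphi/(F^2\Delta)$ for a quasi-Jacobi form $\varphi$ of index $d$, then invoke Conjecture~J to absorb the potential double pole at $z=0$ coming from $1/F^2$. You have in fact filled in a detail the paper leaves implicit, namely that the correction term $G^d\langle\mu\,|\,\nu\rangle$ arising from $G^{L_0}$ has the same weight as the main term whenever it is nonzero; your argument via the Heisenberg commutation relations and the $\underline{\deg}$-isotropy of the intersection form is the right one.
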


\subsection{Examples} \label{Main_conjecture_examples_section}
\subsubsection{The higher-dimensional Yau-Zaslow formula} \label{Examples_higher_dim_YZ}
\noindent \textbf{(i)} Let $F$ be the fiber of the elliptic fibration $\pi : S \to \p^1$. Then
{\allowdisplaybreaks
\begin{align*}
& \blangle \Fp_{-1}(F)^d 1_S\ \Big|\ \Big( \CE^{(0)} - \frac{1}{F^2 \Delta} G^{L_0} \Big) \Fp_{-1}(F)^d 1_S \brangle \\[4pt]
= & \blangle \Fp_{-1}(F)^d 1_S\ \Big|\ \CE^{(0)} \Fp_{-1}(F)^d 1_S \brangle \\
=\ &  (-1)^d \blangle 1_S\ \Big|\ \Fp_{1}(F)^d \CE^{(0)} \Fp_{-1}(F)^d 1_S \brangle \\
=\ &  (-1)^d \blangle 1_S\ \Big|\ \Fp_0(F)^d \CE^{(d)} \varphi_{1,0}^d \Fp_{-1}(F)^d 1_S \brangle \\
=\ &  (-1)^d \blangle 1_S\ \Big|\ \Fp_0(F)^{2d} \CE^{(0)} (-1)^d \varphi_{1,0}^{d} \varphi_{-1,0}^d 1_S \brangle \\
=\ &  \frac{\varphi_{1,0}^{d} \varphi_{-1,0}^d}{F(y,q)^2 \Delta(q)} \\
=\ &  \frac{F(y,q)^{2d-2}}{\Delta(q)}
\end{align*}
shows Conjecture B to be in agreement with Theorem \ref{MThm0}; here we have used $\Fp_0(F) = 1$ above.
}

\vspace{8pt}
\noindent \textbf{(ii)} Let $B$ be the class of the section of $\pi : S \to \p^1$ and consider the class
\[ W = B + F. \]
We have $\langle W, W \rangle = 0$ and $\langle W, \beta_h \rangle = h-1$.
Hence, $\Fp_0(W)$ acts as $q \frac{d}{dq}$ on functions in $q$.
We have
{\allowdisplaybreaks
\begin{align*}
& \blangle \Fp_{-1}(W)^d 1_S\ \Big|\ \Bigl( \CE^{(0)} - \frac{1}{F^2 \Delta} G^{L_0} \Bigr) \Fp_{-1}(W)^d 1_S \brangle \\[4pt]
=\ & \blangle \Fp_{-1}(W)^d 1_S\ \Big|\ \CE^{(0)} \Fp_{-1}(W)^d 1_S \brangle \\
=\ &  (-1)^d \blangle 1_S\ \Big|\ \Fp_0(W)^d \CE^{(d)} \varphi_{1,0}^d \Fp_{-1}(W)^d 1_S \brangle \\
=\ &  \blangle 1_S\ \Big|\ \Fp_0(W)^{2d} \CE^{(0)} \varphi_{1,0}^{d} \varphi_{-1,0}^d 1_S \brangle \\
=\ &  \left( q \frac{d}{dq} \right)^{2d} \left( \frac{\varphi_{1,0}^{d} \varphi_{-1,0}^d}{F(y,q)^2 \Delta(q)} \right)\\
=\ &  \left( q \frac{d}{dq} \right)^{2d} \left( \frac{F(y,q)^{2d-2}}{\Delta(q)} \right).
\end{align*}
}
\subsubsection{Further Gromov-Witten invariants} \label{Section_Examples_More_evaluations}
\vspace{8pt}
\noindent \textbf{(i)} Let $\pt \in H^4(S;\BZ)$ be the class of a point. For $d \geq 1$, let
\[ C(F) = \Fp_{-1}(F) \Fp_{-1}(\pt)^{d-1} 1_S \in H_2(\Hilb^2(S),\BZ) \]
and
\[ D(F) = \Fp_{-1}(F) \Fp_{-1}(e)^{d-1} 1_S \in H^2(\Hilb^2(S),\BZ) \,. \]
Then,
{\allowdisplaybreaks
\begin{align*}
& \blangle C(F) \ \Big|\ \bigl( \CE^{(0)} - \frac{1}{F^2 \Delta} G^{L_0} \bigr) D(F) \brangle \\[4pt]
& = \frac{1}{(d-1)!} \blangle \Fp_{-1}(F) \Fp_{-1}(\pt)^{d-1} 1_S \ \Big|\ \CE^{(0)} \Fp_{-1}(F) \Fp_{-1}(e)^{d-1} 1_S \brangle \\
& = \frac{1}{(d-1)!}  \blangle \Fp_{-1}(\pt)^{d-1} 1_S \ \Big|\ \CE^{(0)} \varphi_{1,0} \varphi_{-1,0} \Fp_{-1}(e)^{d-1} 1_S \brangle \\
& = \frac{(-1)^{d-1}}{(d-1)!}  \blangle 1_S \ \Big|\ \CE^{(0)} \varphi_{1,0} \varphi_{-1,0} (\varphi_{1,1} + 1)^{d-1} \Fp_{1}(\pt)^{d-1} \Fp_{-1}(e)^{d-1} 1_S \brangle \\
& = \frac{ \varphi_{1,0} \varphi_{-1,0} (\varphi_{1,1} + 1)^{d-1} }{F(y,q)^2 \Delta(q)} \\
& = \frac{ G(y,q)^{d-1} }{\Delta(q)}.
\end{align*}
By the divisor equation and $\langle D(F) , \beta_h + kA \rangle = 1$ for all $h,k$,
Conjecture B is in full agreement with Theorem \ref{ellthm2} equation 1.
}

\vspace{8pt}
\noindent \textbf{(ii)} Let $A = \Fp_{-2}(\omega) \Fp_{-1}(\omega)^{d-2} 1_S$ be the class of an exceptional curve. Then,
\begin{align*}
& \blangle A\ \Big|\ \bigl( \CE^{(0)} - \frac{1}{F^2 \Delta} G^{L_0} \bigr) D(F) \brangle \\[4pt]
& = \frac{(-1)^d}{(d-1)!} \blangle 1_S \ \Big|\ \Fp_{2}(\omega) \CE^{(0)} \Fp_1(\omega)^{d-2} \Fp_{-1}(F) \Fp_{-1}(e)^{d-1} (\varphi_{1,1} + 1)^{d-2} 1_S \brangle \\
& = \frac{(-1)^d}{(d-1)!} \blangle 1_S \ \Big|\ \frac{1}{2} \CE^{(1)} \Fp_1(\omega)^{d-1} \Fp_{-1}(F) \Fp_{-1}(e)^{d-1} \varphi_{2,1}  (\varphi_{1,1} + 1)^{d-2} 1_S \brangle \\
& = - \frac{1}{2} \blangle 1_S \ \Big|\ \CE^{(1)} \Fp_{-1}(F) \varphi_{2,1}  (\varphi_{1,1} + 1)^{d-2} \brangle \\
& = - \frac{1}{2} \frac{ (-\varphi_{-1,0}) \varphi_{2,1} (\varphi_{1,1} + 1)^{d-2}}{F^2(y,q) \Delta} \\
& = - \frac{1}{2} \frac{ \Big( y \frac{d}{dy} G \Big) \cdot G^{d-2} }{\Delta}.
\end{align*}
Hence, again, Conjecture B is in full agreement with Theorem \ref{ellthm2} equation 2.

\vspace{8pt}
\noindent \textbf{(iii)} For a point $P \in S$, the incidence subscheme
\[ I(P) = \{ \xi \in \Hilb^2(S) \ |\ P \in \xi \} \]
has class $[I(P)] = \Fp_{-1}(\omega) \Fp_{-1}(e) 1_S$.
We calculate
\begin{align*}
& \blangle I(P)\ \Big|\ \bigl( \CE^{(0)} - \frac{1}{F^2 \Delta} G^{L_0} \bigr) I(P) \brangle \\[4pt]
& = - \blangle \Fp_{-1}(e) 1_S \ \Big|\ \Fp_1(\omega) \CE^{(0)} I(P) \brangle - \frac{G^2}{F^2 \Delta} \\
& = - \blangle \Fp_{-1}(e) 1_S \ \Big|\ \Big( \CE^{(0)} \Fp_{1}(\omega) (\varphi_{1,1} + 1) - \Fp_{-1}(\omega) \CE^{(2)} \varphi_{1,-1} \Big) I(P) \brangle - \frac{G^2}{F^2 \Delta} \\
& = \blangle \Fp_{-1}(e) 1_S\ \Big|\ \CE^{(0)} \Fp_{-1}(\omega) (\varphi_{1,1} + 1) 1_S \brangle \\
& \quad \quad \quad + \blangle 1_S\ \Big|\ \CE^{(2)} \Fp_{-1}(\omega) \Fp_{-1}(e) \varphi_{1,-1} 1_S \brangle - \frac{G^2}{F^2 \Delta} \\
& = \frac{ (\varphi_{1,1} + 1)^2 }{ F^2 \Delta } +  \frac{- \varphi_{-1,1} \varphi_{1,-1} }{ F^2 \Delta} - \frac{G^2}{F^2 \Delta} \\
& = \frac{ \Big( q \frac{d}{dq} F \Big)^2 }{\Delta(q)} .
\end{align*}
Hence, Conjecture B agrees with Theorem \ref{ellthm2} equation 3, case $d=2$.

\vspace{8pt}
\noindent \textbf{(iv)} For a point $P \in S$, we have
\begin{align*}
& \blangle \Fp_{-1}(F)^2 1_S \ \Big|\ \bigl( \CE^{(0)} - \frac{1}{F^2 \Delta} G^{L_0} \bigr) I(P) \brangle \\[4pt]
& = - \blangle 1_S \ \Big|\ \CE^{(2)} \varphi_{1,0}^2 I(P) \brangle \\
& = \frac{- \varphi_{1,0}^2 \varphi_{-1,1}}{F^2 \Delta} \\
& = \frac{F(y,q) \cdot q \frac{d}{dq} F(y,q) }{ \Delta(\tau) } .
\end{align*}
Hence, Conjecture B is in agreement with Theorem \ref{extra_eval}.

\subsubsection{The Hilbert scheme of $2$ points} \label{Section_Conj_in_Hilb2}
We check conjectures A, B, C, J in the case $\Hilb^2(S)$.
Conjecture A is seen to hold for $\Hilb^2(S)$ by direct calculation.
The corresponding functions $\varphi_{m,\ell}$ are given in Appendix \ref{Appendix_Numerical_Values}.
This implies Conjecture C by inspection.
Conjecture B and J hold by the following result.

\begin{thm} \label{ConjC_for_Hilb2} \label{Theorem_for_Hilb2} For all $\mu, \nu \in H^{\ast}(\Hilb^2(S))$,
\[ \langle \mu, \nu \rangle_q = \Big\langle\, \mu\ \Big|\ \Big( \CE^{(0)} - \frac{G^{L_0}}{F^2 \Delta} \Big)\, \nu\, \Big\rangle. \]
\end{thm}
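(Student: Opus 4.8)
The plan is to read the asserted equality as the statement that, after restriction to $\CF_2(S)$, the operators $\CE^{\Hilb}$ and $\CE^{(0)} - \frac{1}{F^2\Delta}G^{L_0}$ coincide. Both are homogeneous of bidegree $(0,0)$, so for a Nakajima basis $\{\mu_i\}$ of $H^{\ast}(\Hilb^2(S))$ it suffices to compare the matrix elements $\langle \mu_i, \mu_j\rangle_q = \langle \mu_i \,|\, \CE^{\Hilb}\mu_j\rangle$ with $\langle \mu_i \,|\, (\CE^{(0)} - \frac{1}{F^2\Delta}G^{L_0})\mu_j\rangle$. By the bidegree constraint on the Fock-space pairing these vanish unless $k(\mu_i)+k(\mu_j)=0$, which confines attention to the blocks $H^0\times H^8$, $H^2\times H^6$ and $H^4\times H^4$; using the self-adjointness of $\CE^{\Hilb}$, Poincar\'e duality, and deformation invariance (which lets me pass to the elliptic K3 with a section and split $H^2(S)=\langle B,F\rangle\oplus\{B,F\}^{\perp}$) this leaves a finite and explicit list of pairs to check.

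For the right-hand side I would emphasize that the computation is entirely algebraic and, in this range, unconditional: Conjecture~A has been verified on $\CF_d(S)$ for all $d\le 5$, so the operators $\CE^{(r)}$ are well-defined on $\bigoplus_{d\le 2}\CF_d(S)$, and only the finitely many series $\varphi_{m,\ell}$ tabulated in the Appendix enter the recursion for $\CF_2$. Each matrix element $\langle \mu_i \,|\, \CE^{(0)}\mu_j\rangle$ is then obtained by commuting the Nakajima operators through Relation~2 down to $\CF_0$ and applying Relation~1, precisely as carried out in the sample computations of Sections~\ref{Examples_higher_dim_YZ} and~\ref{Section_Examples_More_evaluations}; subtracting the scalar $G(y,q)^{|\mu_j|}/(F^2\Delta)$ supplies the diagonal correction.

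For the left-hand side the base cases are the evaluations already proved: Theorem~\ref{MThm0}, Theorem~\ref{ellthm2} and Theorem~\ref{extra_eval} compute $\langle \mu,\nu\rangle_q$ for the pairs $(\Fp_{-1}(F)^21_S,\Fp_{-1}(F)^21_S)$, $(C(F),D(F))$, $(A,D(F))$, $(I(P),I(P))$ and $(\Fp_{-1}(F)^21_S,I(P))$, and the computations of Section~\ref{Section_Examples_More_evaluations} already match these against the operator side. The remaining matrix elements — in particular those involving the diagonal class $\Fp_{-2}(\cdot)1_S$, the exceptional class $A$, and classes built from $\{B,F\}^{\perp}$ — I would determine from the reduced WDVV equations of Appendix~\ref{section_reducedWDVV} together with the divisor axiom, feeding in the divisors $D(\gamma)$ and $-\tfrac12\Delta_{\Hilb^2(S)}$ as the quantum-multiplication inputs. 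Because $\Hilb^2(S)$ has virtual dimension $2$ and small cohomology, these recursions close up on the base evaluations above and pin down every $\langle \mu_i,\mu_j\rangle_q$.

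A structural shortcut makes the comparison clean: both sides satisfy the same WDVV relations \eqref{P4}, the left by construction, and the right because $\CE^{(0)}$ satisfies \eqref{WDVV_eqn_for_CEr} by Conjecture~A(iii), while the correction term contributes trivially — $G^{L_0}$ is scalar on each energy eigenspace and $L_0(\gamma)$, $\partial$ preserve that grading, so $[\,G^{L_0}/(F^2\Delta),\,L_0(\gamma)\,]=[\,G^{L_0}/(F^2\Delta),\,\partial\,]=0$. Thus once the two operators agree on the base matrix elements, the shared WDVV recursion forces agreement on all of $\CF_2(S)$. The main obstacle I anticipate is the bookkeeping for classes in the orthogonal complement $\{B,F\}^{\perp}$, where $\Fp_0(\gamma)$ acts as zero on the formal variables $q,y$ and the needed data must instead be extracted from the commutator relations; verifying that the WDVV recursion genuinely propagates to every such matrix element from the handful of explicit evaluations — rather than leaving an undetermined constant — is the delicate non-degeneracy point underlying the theorem, and is what the finite direct calculation ultimately confirms.
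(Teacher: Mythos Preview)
Your proposal is correct and follows essentially the same route as the paper: verify the base cases via the explicit evaluations of Theorems~\ref{ellthm}, \ref{ellthm2}, \ref{extra_eval} (matched against the operator side in Sections~\ref{Examples_higher_dim_YZ}--\ref{Section_Examples_More_evaluations}), then propagate to all of $\CF_2(S)$ using that both $\CE^{\Hilb}$ and $\CE^{(0)} - G^{L_0}/(F^2\Delta)$ satisfy the same WDVV relations~\eqref{P4}. You have been more explicit than the paper about why the correction term commutes with $L_0(\gamma)$ and $\partial$ and about the non-degeneracy concern for the recursion, but the overall structure is identical.
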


\begin{thm} \label{ConjJ_for_Hilb2} Let $\mu, \nu \in H^{\ast}(\Hilb^2(S)$ be cohomology classes of the form \eqref{munu_coh_classes}.
Then,
\[ \big\langle \mu , \nu \big\rangle_q = \frac{\Phi}{\Delta(q)} \]
for a quasi-Jacobi form $\Phi$ of index $1$ and weight
\[ 2 + \sum_{i} \underline{\deg}(\gamma_i) + \sum_j \underline{\deg}(\gamma'_j). \]
\end{thm}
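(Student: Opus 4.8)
The plan is to deduce the statement from the already-established case of Conjecture~B on $\Hilb^2(S)$ together with the index/weight bookkeeping of the operators $\CE^{(r)}$. First I would invoke Theorem~\ref{ConjC_for_Hilb2} to write, for $\mu,\nu$ as in \eqref{munu_coh_classes} with $|\mu|=|\nu|=2$,
\[
\langle \mu, \nu \rangle_q = \Big\langle \mu\ \Big|\ \CE^{(0)} \nu \Big\rangle - \frac{G(y,q)^2}{F(y,q)^2 \Delta(q)}\, \langle \mu\,|\,\nu\rangle,
\]
where I have used $G^{L_0}\nu = G(y,q)^{|\nu|}\nu = G(y,q)^2\nu$ and $\langle\mu\,|\,\nu\rangle$ denotes the classical Poincar\'e pairing, a scalar. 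Since Conjectures~A and~C hold unconditionally on $\CF_d(S)$ for $d\leq 5$ (Section~\ref{Section_Conj_in_Hilb2}), and the proof of Lemma~\ref{oiejgojeogerg} is an induction on $|\mu|+|\nu|$ that for our classes only invokes $\CE^{(r)}$ on $\CF_d(S)$ with $d\leq 2$, I may apply Lemma~\ref{oiejgojeogerg} with $m=n=2$ and $r=0$ to obtain $\big\langle \mu\,\big|\,\CE^{(0)}\nu\big\rangle = \Phi_1/(F(y,q)^2\Delta(q))$ for a quasi-Jacobi form $\Phi_1\in\QJac$ of index $2$ and weight $w:=\sum_i \underline{\deg}(\gamma_i)+\sum_j \underline{\deg}(\widetilde{\gamma}_j)$.

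Next I would collect the two contributions. The function $G(y,q)$ is a quasi-Jacobi form of index $1$ and weight $0$, so $G^2$ has index $2$ and weight $0$. The scalar $\langle\mu\,|\,\nu\rangle$ is computed from the Nakajima relations \eqref{N1} as a sum over matchings of the creation operators, each matched pair contributing a factor $\langle\gamma_i,\widetilde{\gamma}_j\rangle$; since $\underline{\deg}$ is chosen so that the pairing on $H^{\ast}(S)$ is nonzero only between classes of opposite $\underline{\deg}$, any nonzero matching forces $w=0$. Hence $G^2\langle\mu\,|\,\nu\rangle$, whenever nonzero, is a quasi-Jacobi form of index $2$ and weight $0=w$, and
\[
\langle\mu,\nu\rangle_q = \frac{\Phi_2}{F(y,q)^2\Delta(q)}, \qquad \Phi_2 := \Phi_1 - G(y,q)^2\langle\mu\,|\,\nu\rangle \in \QJac,
\]
is homogeneous of index $2$ and weight $w$.

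It then remains only to pass from the denominator $F^2\Delta$ to $\Delta$. Formally $\Phi:=\Phi_2/F^2$ has index $2-1=1$ and weight $w-(-2)=w+2$, exactly as claimed; and the weight bound $\leq 6$ of Theorem~\ref{jac_thm} is recovered since each of the at most four creation factors contributes $\underline{\deg}\in\{-1,0,1\}$, whence $w\leq 4$. The real content of the theorem is that $\Phi$ actually lies in $\QJac$, i.e.\ is holomorphic at $z=0$: because $F(z,\tau)$ has a simple zero at $z=0$, I must show the numerator $\Phi_2$ vanishes to order at least $2$ there, so that the apparent double pole cancels. This is the main obstacle, and it is precisely the $d=2$ case of the holomorphy asserted by Conjecture~J.

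For this vanishing I would use either of two routes. The conceptual route is deformation invariance: the series $\langle\mu,\nu\rangle_q$ is unchanged under the monodromy of $\Hilb^2(S)$ inside its deformation family of irreducible holomorphic-symplectic fourfolds (cf.\ the footnote to \cite{thesis}); since $y=-e^{2\pi i z}$ records the pairing against the exceptional class $A$, the monodromy mixing of $A$ with $\beta_h$ is incompatible with a genuine pole at $y=-1$, i.e.\ at $z=0$, forcing finiteness there. The hands-on route, which I expect to be cleanest for $d=2$, is a finite verification: $H^{\ast}(\Hilb^2(S))$ is spanned by the explicit Nakajima monomials \eqref{munu_coh_classes} with $|\mu|=2$, so one computes each bracket $\langle\mu,\nu\rangle_q$ from the recursion defining $\CE^{(0)}$ and the explicit seed functions $\varphi_{m,\ell}$ (Appendix~\ref{Appendix_Numerical_Values}), then checks directly that $\Phi_2$ has a double zero at $z=0$ in each of the finitely many cases. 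The evaluations of Section~\ref{Section_Examples_More_evaluations}, which already present the diagonal brackets in the form $(\,\cdot\,)/\Delta$ with holomorphic numerators, serve both as input and as a consistency check. Once the order-two vanishing is known, $\Phi=\Phi_2/F^2\in\QJac_{\,w+2,\,1}$ and the theorem follows.
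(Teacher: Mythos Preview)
Your proposal is correct and matches the paper's approach. The paper's own proof of Theorem~\ref{ConjJ_for_Hilb2} is the single sentence ``Finally, Theorem~\ref{ConjJ_for_Hilb2} follows now from direct inspection,'' invoked after Theorem~\ref{ConjC_for_Hilb2} is established; your write-up simply unpacks what that inspection entails---Lemma~\ref{oiejgojeogerg} for the index/weight bookkeeping, the observation that $\langle\mu\,|\,\nu\rangle\neq 0$ forces $w=0$, and then the finite verification of the double zero of $\Phi_2$ at $z=0$ using the explicit $\varphi_{m,\ell}$ of Appendix~\ref{Appendix_Numerical_Values}. Your ``hands-on route'' is exactly the paper's direct inspection; the deformation-invariance route is a heuristic the paper alludes to but does not use.
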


By Sections \ref{Examples_higher_dim_YZ} and \ref{Section_Examples_More_evaluations} above,
Theorem \ref{ConjC_for_Hilb2} holds in the cases considered in Theorems \ref{ellthm}, \ref{ellthm2} and \ref{extra_eval} respectively.
Applying the WDVV equation \eqref{dowofgefg}
successively to these base cases, one evaluates the bracket
$\langle \mu, \nu \rangle_q$ for all $\mu, \nu \in H^{\ast}(\Hilb^2(S))$
in finitely many steps.
Since for $\Hilb^2(S)$ the WDVV equation also holds for
$\CE^{(0)} - G^{L_0}/(F^2 \Delta)$,
this implies Theorem \ref{ConjC_for_Hilb2}.
Finally, Theorem \ref{ConjJ_for_Hilb2} follows now from direct inspection.

\begin{proof}[Proof of Proposition \ref{rgergegssa}]
By degenerating $(E,0)$ to the nodal elliptic curve and using the divisor equation we may rewrite $\mathsf{H}_d(y,q)$ as
\begin{equation} \label{aaasd}
 \mathsf{H}_d(y,q) = 
\sum_{k \in \BZ} \sum_{h\geq 0}
y^k q^{h-1} 
\int_{[ \overline{M}_{0,2}(\Hilb^d(S), \beta_h + kA) ]^{\text{red}}}  
(\ev_1 \times \ev_2)^{\ast} [\Delta^{[d]}] 
\end{equation}
where $[\Delta^{[d]}] \in H^{2d}( \Hilb^d(S) \times \Hilb^d(S))$ is the diagonal class.
Proposition now follows from calculating the right hand side of \eqref{aaasd} using Theorem~\ref{Theorem_for_Hilb2}.
\end{proof}

\subsubsection{The $\CA_1$ resolution.}
Let $[q^{-1}]$ be the operator that extracts the $q^{-1}$ coefficient, and let
\[ \CE_B^{\Hilb} = [q^{-1}] \CE^{\Hilb} \]
be the restriction of $\CE^{\Hilb}$ to the case of the section class $B$.
The corresponding local case was considered before in \cite{MO, MO2}.

Define operators $\CE_{B}^{(r)}$ by the relations
\begin{itemize}
\item $\big\langle 1_S\ \big|\ \CE_{B}^{(r)} 1_S \big\rangle = \frac{y}{(1+y)^2} \delta_{0r}$
\item $\big[ \Fp_{m}(\gamma), \CE_B^{(r)} \big] = \langle \gamma, B \rangle  \left( (-y)^{-m/2} - (-y)^{m/2} \right) \CE_B^{(r+m)}$
\end{itemize}
for all $m \neq 0$ and all $\gamma \in H^{\ast}(S)$, see \cite[Section 5.1]{MO2}.
Translating the results of \cite{MO,MO2} to the 
$K3$ surface leads to the following evaluation.

\begin{thm}[Maulik, Oblomkov] \label{8765} After restriction to $\CF(S)$,
\[ \CE_B^{\Hilb} + \frac{y}{(1+y)^2} {\rm id_{\CF(S)}} = \CE_{B}^{(0)} \,. \]
\end{thm}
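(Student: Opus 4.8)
The plan is to reduce the section-class operator $\CE_B^{\Hilb}$ to the local geometry of the $\CA_1$-resolution and then to import the quantum multiplication formula of Maulik and Oblomkov. Extracting the coefficient of $q^{-1}$ from $\CE^{\Hilb}$ selects the curve classes $\beta_0 + kA = B + kA$, so $\CE_B^{\Hilb}$ is assembled from the reduced $2$-point genus $0$ invariants of $\Hilb^d(S)$ in classes $B + kA$. First I would localize these invariants: by Lemma~\ref{pullback_lemma}, a stable map $f : C \to \Hilb^d(S)$ of such a class has associated curve $\widetilde{C} \to S$ of class $B$, and since $B^2 = -2$ with $B$ primitive the only effective representative is the rigid section $B_0$. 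Hence every contributing configuration is supported in an arbitrarily small neighborhood of $\Hilb^d(B_0) \subset \Hilb^d(S)$. A tubular neighborhood of the $(-2)$-curve $B_0 \subset S$ is isomorphic to a neighborhood of the zero section in $\mathrm{Tot}(\CO_{\p^1}(-2))$, the minimal resolution $\CA_1$ of $\BC^2/\pm 1$, so the class-$B$ invariants of $\Hilb^d(S)$ agree with the corresponding local invariants of $\Hilb^d(\CA_1)$.

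Next I would match the two virtual theories, which is the technical heart of the argument. The K3 side carries the reduced obstruction theory, while the $\CA_1$ side is noncompact and its invariants are defined $T$-equivariantly. I would deform $S$ to the normal cone of $B_0$, exactly as in the proof of Lemma~\ref{242_lemma}, and use conservation of number to identify the reduced class-$B$ contributions with the localized $\CA_1$ contributions, the single reduced direction on the K3 playing the role of the nontrivial equivariant weight on $\CA_1$. Under this identification the variable $y$, which tracks the $A$-class, corresponds to the K\"ahler/equivariant variable of the Maulik--Oblomkov computation, and the factors $(-y)^{\pm m/2}$ appearing in the defining commutators of $\CE_B^{(r)}$ are the image of the weights occurring in their operator.

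Third, I would invoke the evaluation of \cite{MO, MO2}. Their computation of the divisor operator on $\Hilb^n(\CA_1)$ yields precisely an operator obeying the commutation relation $[\Fp_m(\gamma), \CE_B^{(r)}] = \langle \gamma, B \rangle\big( (-y)^{-m/2} - (-y)^{m/2}\big)\CE_B^{(r+m)}$ together with the vacuum value $\tfrac{y}{(1+y)^2}$. Since these relations and the vacuum normalization determine an operator on $\CF(S)$ uniquely --- by the same inductive determination that shows the operators $\CE^{(r)}$ are well defined (Relations 1 and 2) --- it suffices to check that $\CE_B^{\Hilb} + \tfrac{y}{(1+y)^2}\,\mathrm{id}$ satisfies them, which is exactly the output of the first two steps. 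The additive constant $\tfrac{y}{(1+y)^2}\,\mathrm{id}$ reconciles the vacuum normalizations: on $\CF_0(S)$ the reduced quantum operator vanishes, whereas $\CE_B^{(0)}$ takes the value $\tfrac{y}{(1+y)^2} = [q^{-1}]\,(F(y,q)^2\Delta(q))^{-1}$, the section-class truncation of the vacuum value of the full operator $\CE^{(0)}$.

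The main obstacle is the second step: transporting the Maulik--Oblomkov formula, stated in equivariant local Gromov--Witten theory, into the reduced theory of the compact holomorphic symplectic variety $\Hilb^d(S)$. One must verify that the deformation to the normal cone of $B_0$ carries the reduced virtual class to the correct specialization of the equivariant class, and that no fiber-class ($q$) contributions leak in --- equivalently, that restricting to $q^{-1}$ genuinely isolates the local geometry and the correct equivariant parameter specialization. I expect this to follow from the splitting and degeneration techniques already developed in Sections~\ref{basic_case} and~\ref{section_Kummer_evaluation}, but it requires careful bookkeeping of the obstruction theories and of the normalization of the equivariant parameters.
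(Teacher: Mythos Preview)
The paper does not actually prove this theorem: it is stated as a result of Maulik--Oblomkov, with the single sentence ``Translating the results of \cite{MO,MO2} to the K3 surface leads to the following evaluation'' preceding the statement. No details of the translation are given.

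Your proposal is therefore more detailed than anything in the paper, and its outline --- localize class-$B$ maps to a neighborhood of the rigid section $B_0$, identify that neighborhood with the $\CA_1$-resolution, match the reduced obstruction theory on the K3 side with the equivariant theory on the local side via degeneration to the normal cone, and then import the operator formula from \cite{MO,MO2} --- is the natural way to carry out what the paper asserts. The observation that the recursion relations and vacuum normalization determine the operator uniquely, so it suffices to check these for $\CE_B^{\Hilb} + \tfrac{y}{(1+y)^2}\,\mathrm{id}$, is the right closing step. Your identification of the main technical obstacle (matching reduced and equivariant obstruction theories under degeneration) is honest and accurate; the paper simply does not address it.
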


\noindent By the numerical values of Appendix \ref{Appendix_Numerical_Values}, we expect the expansions
\begin{alignat*}{2}
 \varphi_{m,0} & = \big( (-y)^{-m/2} - (-y)^{m/2} \big) + O(q) \quad \quad && \text{ for all } m \neq 0 \\
 \varphi_{m,\ell} & = O(q) && \text{ for all } m \in \BZ, \ell \neq 0 \,.
\end{alignat*}
Because of
\[ [q^{-1}] \frac{G^{L_0}}{F^2 \Delta} = \frac{y}{(1+y)^2} \text{id}_{\CF(S)}\, , \]
we find conjectures A and B in agreement with Theorem \ref{8765}.

\appendix
\section{The reduced WDVV equation} \label{section_reducedWDVV}
Let $\Mbar_{0,4}$ be the moduli space of stable genus $0$ curves with $4$ marked points.
The boundary of $\Mbar_{0,4}$ is the union of the divisors
\begin{equation} D(12|34),\ D(14|23),\ D(13|24) \label{530} \end{equation}
corresponding to a broken curve with the respective prescribed splitting of the marked points.
Since $\Mbar_{0,4}$ is isomorphic to $\p^1$, any two of the divisors \eqref{530} are rationally equivalent.

Let $Y$ be a smooth projective variety and let $\Mbar_{0,n}(Y,\beta)$
be the moduli space of stable maps to $Y$ of genus $0$ and class $\beta$. Let 
\[ \pi : \Mbar_{0,n}(Y,\beta) \to \Mbar_{0,4} \]
be the map that forgets all but the last four points.
The pullback of the boundary divisors \eqref{530} under $\pi$ defines
rationally equivalent divisors on $\Mbar_{0,n}(Y,\beta)$.
The intersection of these divisors with curve classes obtained from the virtual class yields
relations among Gromov-Witten invariants of $Y$,
the WDVV equations \cite{FP}.
We derive the precise form of these equations
for reduced Gromov-Witten theory.
For simplicity, we restrict to the case $n = 4$.

Let $Y$ be a holomorphic symplectic variety and let
\[ \blangle \gamma_1, \ldots, \gamma_n \brangle_\beta^{\text{red}} = \int_{[ \Mbar_{0,n}(Y,\beta) ]^{\text{red}} } \ev_1^{\ast}(\gamma_1) \cup \cdots \cup \ev_n^{\ast}(\gamma_n) \]
denote the \emph{reduced} Gromov-Witten invariants
of $Y$ of genus $0$ and class $\beta \in H_2(Y;\BZ)$
with primary insertions $\gamma_1, \cdots, \gamma_m \in H^{\ast}(Y)$.

\begin{prop}\label{WDVVprop}
Let $\gamma_1, \dots, \gamma_4 \in H^{2 \ast}(Y;\BQ)$ be cohomology classes with
\[ \sum_i \deg(\gamma_i) = \mathop{\rm vdim}\nolimits \Mbar_{0,4}(Y,\beta) - 1 = \dim Y + 1, \]
where $\deg(\gamma_i)$ denotes the complex degree of $\gamma_i$. Then,
\begin{equation*}
\blangle \gamma_1, \gamma_2, \gamma_3 \cup \gamma_4 \brangle^{\text{red}}_{\beta} + \blangle \gamma_1 \cup \gamma_2, \gamma_3, \gamma_4 \brangle^{\text{red}}_{\beta}
=
\blangle \gamma_1, \gamma_4, \gamma_2 \cup \gamma_3 \brangle^{\text{red}}_{\beta} + \blangle \gamma_1 \cup \gamma_4, \gamma_2, \gamma_3 \brangle^{\text{red}}_{\beta}.
\end{equation*}
\end{prop}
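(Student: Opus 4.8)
The plan is to run the classical derivation of the WDVV equation \cite{FP}, with the usual splitting axiom replaced by its reduced counterpart. First I would introduce the forgetful morphism
\[
\pi : \Mbar_{0,4}(Y,\beta) \ra \Mbar_{0,4} \cong \p^1
\]
and pull back the rational equivalence $D(12|34) \sim D(14|23)$ of boundary points of $\p^1$, obtaining $\pi^{\ast}D(12|34) \sim \pi^{\ast}D(14|23)$ as divisor classes on $\Mbar_{0,4}(Y,\beta)$. Capping this equivalence against $[\Mbar_{0,4}(Y,\beta)]^{\text{red}}$ and $\ev_1^{\ast}\gamma_1 \cup \cdots \cup \ev_4^{\ast}\gamma_4$, the hypothesis $\sum_i \deg(\gamma_i) = \dim Y + 1$ ensures both resulting integrals are numbers. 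The statement follows once each boundary integral is identified with the corresponding pair of reduced three-point invariants.

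The key input is a splitting formula for the reduced class along a boundary divisor $D(ij|kl)$. Writing $\iota$ for the gluing morphism attaching the node-branches $\bullet, \star$, I would prove
\[
\iota^{\ast}[\Mbar_{0,4}(Y,\beta)]^{\text{red}} = \sum_{\beta_1 + \beta_2 = \beta} \Big( [\Mbar_{0,\{i,j,\bullet\}}(Y,\beta_1)]^{\text{red}} \times_Y [\Mbar_{0,\{\star,k,l\}}(Y,\beta_2)]^{\text{vir}} + (\text{red} \leftrightarrow \text{vir}) \Big),
\]
where in each summand exactly one factor carries the reduction. This should be extracted from the reduced obstruction theory of Section~\ref{Section_analysis_of_virtual_class}: the semiregularity map $b$ to $V[-1]$, with $V = H^0(Y,\Omega^2_Y)^{\vee}$, depends only on the total class $f_{\ast}[C]$ and is therefore additive over the normalization sequence of the nodal domain. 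Consequently, over $D(ij|kl)$ the cocone defining $E_{\bullet}$ restricts to the reduced obstruction theory on the component of nonzero class and to the ordinary one on the other, and comparison with the unreduced splitting axiom \cite[Proposition 5.10]{BF} yields the displayed formula.

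The decisive simplification is the vanishing of ordinary genus~$0$ Gromov-Witten classes of a holomorphic symplectic variety in nonzero curve classes \cite{KL}: the factor labelled $\text{vir}$ vanishes unless its curve class is $0$. Hence in the splitting formula only the summands with $(\beta_1,\beta_2) = (\beta,0)$ and $(0,\beta)$ survive, the reduction in class $0$ being understood to agree with the ordinary theory so that no spurious contribution appears. In both surviving terms the class-$0$ factor is $\Mbar_{0,3}(Y,0) \cong Y$ with its fundamental class, and the standard degree-$0$ reconstruction of the cup product at the node collapses the sum over a basis of $H^{\ast}(Y)$ inserted along $\bullet \sim \star$. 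This identifies
\[
\int_{[\Mbar_{0,4}(Y,\beta)]^{\text{red}}} \ev_1^{\ast}\gamma_1 \cup \cdots \cup \ev_4^{\ast}\gamma_4 \cup \pi^{\ast}D(ij|kl) = \blangle \gamma_i, \gamma_j, \gamma_k \cup \gamma_l \brangle^{\text{red}}_{\beta} + \blangle \gamma_i \cup \gamma_j, \gamma_k, \gamma_l \brangle^{\text{red}}_{\beta}.
\]
Equating the values for $D(12|34)$ and $D(14|23)$ then gives exactly the asserted identity.

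The main obstacle is the reduced splitting formula itself; the rational-equivalence input and the final bookkeeping are formal. The delicate point is to check that the reduction, controlled by the one-dimensional space $V$ and the semiregularity map, attaches to the unique component of nonzero class and degenerates compatibly with the ordinary theory in class $0$, so that the symmetric-looking formula above carries no double counting once \cite{KL} is applied. With this compatibility established, the proof proceeds in complete parallel to the classical case.
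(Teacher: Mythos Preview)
Your proposal is correct and follows essentially the same approach as the paper: pull back the rational equivalence of boundary divisors on $\Mbar_{0,4}$ via the forgetful map, invoke a reduced splitting formula in which exactly one factor carries the reduction and the other the ordinary virtual class, and then use the vanishing of ordinary genus~$0$ invariants in nonzero classes \cite{KL} to collapse the sum to the two degenerate terms that reconstruct the cup product. The paper states the reduced splitting axiom without justification, whereas you sketch an argument via the semiregularity map; this is indeed the point requiring care, and your identification of it as the main obstacle is accurate.
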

\begin{proof}
Consider the fiber of $\pi$ over $D(12|34)$,
\[ D = \pi^{-1}( D(12|34) ) \,. \]
The intersection of $D$ with the class
\begin{equation} \left( \prod_{i = 1}^{4} \ev_i^{\ast}(\gamma_i) \right) \cap [\Mbar_{0,4}(Y,\beta)]^{\text{red}}. \label{wdvv_kcmvknvcv} \end{equation}
splits into a sum of integrals over the product 
\[ M' = \Mbar_{0,3}(Y,\beta_1) \times \Mbar_{0,3}(Y, \beta_2), \]
for all effective decompositions $\beta = \beta_1 + \beta_2$.

The reduced virtual class $[\Mbar_{0,4}(Y,\beta)]^{\text{red}}$ restricts to $M'$ as the sum of
\[ (\ev_3 \times \ev_3)^{\ast} \Delta_Y \cap [\Mbar_{0,3}(Y, \beta_1)]^{\text{red}} \times [\Mbar_{0,3}(Y,\beta_2)]^{\text{ord}} \]
with the same term, except for 'red' and 'red' interchanged;
here \[ \Delta_Y \in H^{2 \dim Y}(Y \times Y;\BZ) \]
is the class of the diagonal
and $[\ \cdot \ ]^{\text{ord}}$ denotes the ordinary virtual class.

Since $[\Mbar_{0,3}(Y,\beta)]^{\text{ord}} = 0$ unless $\beta = 0$, we find
\begin{align}
 \notag \int_{[ \Mbar_{0,4}(Y,\beta) ]^{\text{red}} } D \cup \prod_{i} \gamma_i & =  
\sum_{e,f} \blangle \gamma_1, \gamma_2, T_e \brangle^{\text{red}}_{\beta} g^{ef} \blangle \gamma_3, \gamma_4, T_f \brangle^{\text{ord}}_{0} + \\
 \notag & \quad \quad + \blangle \gamma_1, \gamma_2, T_e \brangle^{\text{ord}}_{0} g^{ef} \blangle \gamma_3, \gamma_4, T_f \brangle^{\text{red}}_{\beta} \\
& = \blangle \gamma_1, \gamma_2, \gamma_3 \cup \gamma_4 \brangle^{\text{red}}_{\beta} + \blangle \gamma_1 \cup \gamma_2, \gamma_3, \gamma_4 \brangle^{\text{red}}_{\beta},
\label{ifijvofj}
\end{align}
where $\{ T_e \}_e$ is a basis of $H^{\ast}(Y;\BZ)$ and $(g^{ef})_{e,f}$ is the inverse of the intersection matrix
$g_{ef} =\int_Y T_e\cup T_f$.

After comparing \eqref{ifijvofj} with the integral of \eqref{wdvv_kcmvknvcv}
over the pullback of $D(14|23)$, the proof of Proposition \ref{WDVVprop} is complete.
\end{proof}

We may use the previous proposition to define reduced quantum cohomology. 
Let $\qpar$ be a formal parameter with $\qpar^2 = 0$.
Let ${\rm Eff}_{Y}$ be the cone of effective curve class on $Y$, and for any $\beta \in {\rm Eff}_{Y}$ let
$q^{\beta}$ be the corresponding element in the semi-group algebra $\BQ[ {\rm Eff}_{Y} ]$.
Define the \emph{reduced} quantum product $\ast$ on
\[ H^{\ast}(Y;\BQ) \otimes \BQ[[ {\rm Eff}_{Y} ]] \otimes \BQ[\qpar]/\qpar^2 \,. \]
by
\[
\big\langle \gamma_1 \ast \gamma_2, \gamma_3 \big\rangle
= \big\langle \gamma_1 \cup \gamma_2, \gamma_3 \big\rangle +
\qpar \sum_{\beta > 0} q^{\beta} \blangle \gamma_1, \gamma_2, \gamma_3 \brangle_\beta^{\text{red}}
\]
for all $a,b,c \in H^{\ast}(Y)$, where $\langle \gamma_1, \gamma_2 \rangle = \int_Y \gamma_1 \cup \gamma_2$ is the standard inner product on $H^{\ast}(Y;\BQ)$
and $\beta$ runs over all non-zero effective curve classes of $Y$.
Then, Proposition~\ref{WDVVprop} implies that $\ast$ is associative.

\section{Quasi-Jacobi forms} \label{Appendix_Quasi_Jacobi_Forms}
\subsubsection{Definition}
Let $(z,\tau) \in \BC \times \BH$, and let $y = - p = - e^{2 \pi i z}$ and $q = e^{2 \pi i \tau}$.
For all expansions below, we will work in the region $|y| < 1$.

Consider the Jacobi theta functions
\[ F(z,\tau) = \frac{\vartheta_1(z,\tau)}{\eta^3(\tau)}  = (y^{1/2} + y^{-1/2}) \prod_{m \geq 1} \frac{ (1 + yq^m) (1 + y^{-1}q^m)}{ (1-q^m)^2 }, \]
the logarithmic derivative
\[ J_1(z,\tau) = y \frac{d}{dy} \log( F(y,q) ) = 
 \frac{y}{1 + y} - \frac{1}{2} - \sum_{d \geq 1} \sum_{m|d} \big( (-y)^m - (-y)^{-m} \big) q^{d},
\]
the Weierstrass elliptic function
\begin{equation}
\wp(z,\tau) = \frac{1}{12} - \frac{y}{(1+y)^2} + \sum_{d \geq 1} \sum_{m|d} m ((-y)^m - 2 + (-y)^{-m}) q^{d},
\label{wp_function_def}
\end{equation}
the derivative
\[ \wp^{\bullet}(z,\tau) = y \frac{d}{dy} \, \wp(z,\tau) = \frac{y (y-1)}{(1+y)^3} 
 + \sum_{d \geq 1} \sum_{m|d} m^2 ((-y)^m - (-y)^{-m}) q^{d}, \]
and for $k \geq 1$ the Eisenstein series
\begin{equation} E_{2k}(\tau) = 1 - \frac{4k}{B_{2k}} \sum_{d \geq 1} \Big( \sum_{m |d} m^{2k-1} \Big) q^d, \label{Eisenstein_function_def} \end{equation}
where $B_{2k}$ are the Bernoulli numbers.
Define the free polynomial algebra
\[ \mathsf{V} = \BC\big[ F(z,\tau), E_2(\tau),\, E_4(\tau),\, J_1(z,\tau),\, \wp(z,\tau),\, \wp^{\bullet}(z,\tau) \big] . \]
Define the weight and index of the generators of $\mathsf{V}$ by the following table.
Here, we list also their pole order at $z = 0$ for later use.
\begin{center}
\begin{longtable}[h!]{|l | c | c | c | c | c |}
\hline
& $F(z,\tau)$ & $E_{2k}(\tau)$ & $J_1(z,\tau)$ & $\wp(z,\tau)$ & $\wp^{\bullet}(z,\tau)$ \\
\hline
\text{pole order at $z=0$} & $0$ & $0$ & $1$ & $2$ & $3$ \\
\text{weight} & $-1$ & $2k$ & $1$ & $2$ & $3$ \\
\text{index} & $1/2$ & $0$ & $0$ & $0$ & $0$ \\
\hline
\caption{Weight and pole order at $z=0$}
\label{weight_pole_order_table}
\end{longtable}
\end{center}
\vspace{-1cm}
The grading on the generators induces a natural bigrading on $\mathsf{V}$ by weight $k$ and index $m$,
\[ \mathsf{V} = \bigoplus_{m \in (\frac{1}{2} \BZ)^{\geq 0}} \bigoplus_{k \in \BZ} \mathsf{V}_{k,m}, \]
where $m$ runs over all non-negative half-integers.

In the variable $z$, the functions
\begin{equation} E_{2k}(\tau),\ J_1(z,\tau),\ \wp(z,\tau),\ \wp^{\bullet}(z,\tau) \label{ckvmsvff} \end{equation}
can have a pole in the fundamental region
\begin{equation} \big\{ x + y \tau\  \big|\ 0 \leq x,y < 1 \big\} \label{fundamental_region} \end{equation}
only at $z = 0$.
The function $F(z,\tau)$ has a simple zero at $z = 0$
and no other zeros (or poles) in the fundamental region \eqref{fundamental_region}.

\begin{definition}
Let $m$ be a non-negative half-integer and let $k \in \BZ$.
A function
\[ f(z,\tau) \in \mathsf{V}_{k,m} \]
which is holomorphic at $z=0$ for generic $\tau$ is called a \emph{quasi-Jacobi form} of weight $k$ and index $m$.
\end{definition}

The subring $\QJac \subset \mathsf{V}$ of quasi-Jacobi forms is graded by index~$m$ and weight $k$,
\[ \QJac = \bigoplus_{m \geq 0} \bigoplus_{k  \geq -2m} \QJac_{k,m} \]
with finite-dimensional summands $\QJac_{k,m}$.

By the classical relation
\[ \big( \wp^{\bullet}(z) \big)^{2} = 4 \wp(z)^3 - \frac{1}{12} E_4(\tau) \wp(z) + \frac{1}{216} E_6(\tau). \]
we have $E_6(\tau) \in \mathsf{V}$ and therefore $E_6(\tau) \in \QJac$.
Hence, $\QJac$ contains the ring of quasi-modular forms $\BC[E_2, E_4, E_6]$.
Since the functions
\[ \varphi_{-2,1} = - F(z,\tau)^2, \quad \varphi_{0,1} = -12 F(z,\tau)^2 \wp(z,\tau), \]
lie both in $\QJac$, it follows from \cite[Theorem 9.3]{EZ} that $\QJac$ also contains the ring of weak Jacobi forms.

\begin{lemma} \label{closed_under_diff} The ring $\QJac$ is closed under differentiation by $z$ and $\tau$. \end{lemma}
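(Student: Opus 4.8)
The plan is to reduce the statement to two independent facts: that the ambient polynomial algebra $\mathsf{V}=\BC[F,E_2,E_4,J_1,\wp,\wp^{\bullet}]$ is itself closed under the two derivations $\partial_z = y\frac{d}{dy}$ and $\partial_\tau = q\frac{d}{dq}$, and that the subspace of elements holomorphic at $z=0$ (for generic $\tau$) is preserved by both. Since $\mathsf{V}$ is a free polynomial algebra and $\partial_z,\partial_\tau$ are derivations, the Leibniz rule shows it suffices to check that $\partial_z$ and $\partial_\tau$ send each of the six generators into $\mathsf{V}$.

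First I would record the $z$-derivatives. By definition $J_1=\partial_z\log F$ and $\wp^{\bullet}=\partial_z\wp$, so $\partial_z F = J_1 F$ and $\partial_z\wp = \wp^{\bullet}$, while the Eisenstein series are $z$-independent, so $\partial_z E_{2k}=0$. Comparing $q$-expansions of the defining series in \eqref{wp_function_def}, \eqref{Eisenstein_function_def} and the series for $J_1$ yields the two remaining identities $\partial_z J_1 = -\wp + \tfrac{1}{12}E_2$ and, after differentiating the Weierstrass relation $(\wp^{\bullet})^2 = 4\wp^3 - \tfrac{1}{12}E_4\,\wp + \tfrac{1}{216}E_6$ and cancelling a factor of $\wp^{\bullet}$, $\partial_z\wp^{\bullet} = 6\wp^2 - \tfrac{1}{24}E_4$. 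All six images lie in $\mathsf{V}$, hence $\partial_z(\mathsf{V})\subseteq\mathsf{V}$.

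For the $\tau$-derivatives I would combine the heat equation \eqref{yyyy} with $\partial_z F = J_1 F$ to obtain $\partial_\tau F = (\tfrac12 J_1^2 - \tfrac12\wp - \tfrac{1}{12}E_2)F\in\mathsf{V}$, and use the classical Ramanujan identities for $\partial_\tau E_2,\partial_\tau E_4,\partial_\tau E_6$ (recall $E_6\in\mathsf{V}$ by the Weierstrass relation). The last three generators I would handle by commuting the derivations: since $\wp = \tfrac{1}{12}E_2 - \partial_z J_1$, $J_1=\partial_z\log F$ and $\wp^{\bullet}=\partial_z\wp$, the commutation $\partial_\tau\partial_z = \partial_z\partial_\tau$ together with the closure $\partial_z(\mathsf{V})\subseteq\mathsf{V}$ just proved expresses $\partial_\tau J_1,\partial_\tau\wp,\partial_\tau\wp^{\bullet}$ as elements of $\mathsf{V}$. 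Thus $\partial_\tau(\mathsf{V})\subseteq\mathsf{V}$ as well (one checks in passing that $\partial_z$ raises the weight by $1$ and $\partial_\tau$ by $2$, both preserving index).

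It remains to see that holomorphy at $z=0$ is inherited. If $f\in\QJac$, then near $z=0$ it has a Taylor expansion $f(z,\tau)=\sum_{n\geq 0}c_n(\tau)z^n$ whose coefficients $c_n(\tau)=\frac{1}{2\pi i}\oint f(z,\tau)\,z^{-n-1}\,dz$ depend holomorphically on $\tau$; applying $\partial_z$ or $\partial_\tau$ acts termwise and produces no negative powers of $z$, so both derivatives remain holomorphic at $z=0$ and hence lie in $\QJac$. The only genuinely delicate point is the bookkeeping in this last step, namely confirming that every element of $\mathsf{V}$ is meromorphic in $z$ with poles supported at the lattice so that the Taylor argument is legitimate; but this is exactly what the pole-order table records, and the derivation identities above are then routine verifications.
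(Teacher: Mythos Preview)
Your proof is correct and follows essentially the same approach as the paper: both arguments reduce via the Leibniz rule to verifying that $\partial_z$ and $\partial_\tau$ send each of the six generators of $\mathsf{V}$ back into $\mathsf{V}$. The paper simply lists the explicit derivative formulas for all twelve cases, whereas you derive them (and, for $\partial_\tau J_1,\partial_\tau\wp,\partial_\tau\wp^{\bullet}$, bypass the explicit computation via the commutation $\partial_\tau\partial_z=\partial_z\partial_\tau$) and also make explicit the preservation of holomorphy at $z=0$, which the paper leaves tacit.
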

\begin{proof}
We write 
\[ \partial_{\tau} = \frac{1}{2 \pi i} \frac{\partial}{\partial \tau} = q \frac{d}{dq}
 \quad \text{ and } \quad 
\partial_z = \frac{1}{2 \pi i} \frac{\partial}{\partial z} = y \frac{d}{dy}
\]
for differentiation with respect to $\tau$ and $z$ respectively.
The lemma now direct follows from the relations.
\begin{align*}
\partial_{\tau}(F) & = F \cdot \left( \frac{1}{2} J_1^{2} - \frac{1}{2} \wp - \frac{1}{12} E_2 \right),
& \partial_z(F) & = J_1 \cdot F, \\
\partial_{\tau}(J_1) & = J_1 \cdot \left( \frac{1}{12} E_2 - \wp \right) - \frac{1}{2} \wp^{\bullet},
& \partial_z(J_1) & = - \wp + \frac{1}{12} E_2, \\
\partial_{\tau}(\wp) & = 2 \wp^{2} + \frac{1}{6} \wp E_2 + J_1 \wp^{\bullet} - \frac{1}{36} E_4,
& \partial_z(\wp) & = \wp^{\bullet}, \\
\partial_{\tau}(\wp^{\bullet})
& = 6 J_1 \wp^{2} - \frac{1}{24} J_1 E_4 + 3 \wp \wp^{\bullet} + \frac{1}{4} E_2 \wp^{\bullet},
\quad \quad & \partial_z(\wp^{\bullet}) & = 6 \wp^2 - \frac{1}{24} E_4 \,. \qedhere
\end{align*}
\end{proof}

\subsubsection{Numerical values} \label{Appendix_Numerical_Values}
We present the first values of the functions $\varphi_{m,\ell}$
satisfying the conditions of Conjecture A of Section \ref{Main_conjecture_section2}.
Let $K = iF$, where $i = \sqrt{-1}$. Then,
{\allowdisplaybreaks
\small{
\begin{align*}
\varphi_{1,-1} & = K^2 \left( \frac{1}{2} J_1^{2} - \frac{1}{2} \wp - \frac{1}{12} E_2 \right) \\
\varphi_{1,0} & = -K \\
\varphi_{1,1} & = K^2 \left( \wp - \frac{1}{12} E_2 \right) \\[10pt]
\varphi_{2,-2} & = 2 K^4 \left( J_1^{4} - 2 J_1^{2} \wp - \frac{1}{12} J_1^{2} E_2 - \frac{1}{2} J_1 \wp^{\bullet} \right) \\
\varphi_{2,-1} & = 2 K^3 \left( \frac{2}{3} J_1^{3} -  J_1 \wp - \frac{1}{12} J_1 E_2 - \frac{1}{6} \wp^{\bullet} \right) \\
\varphi_{2,0} & = -2 \cdot J_1 \cdot K^{2} \\
\varphi_{2,1} & = 2 K^{3} \cdot \left( J_1 \wp - \frac{1}{12} J_1 E_2 + \frac{1}{2} \wp^{\bullet} \right) \\
\varphi_{2,2} + 1& = 2 K^{4} \cdot \left( J_1^{2} \wp - \frac{1}{12} J_1^{2} E_2 + \frac{3}{2} \wp^{2} + J_1 \wp^{\bullet} - \frac{1}{96} E_4 \right) \\[10pt]
\varphi_{3,-2} & =
3 K^5 \cdot \bigg(
\frac{9}{5} J_1^{5} - \frac{9}{2} J_1^{3} \wp - \frac{1}{8} J_1^{3} E_2 + \frac{1}{2} J_1 \wp^{2} \\
& \quad \quad \quad \quad \quad \quad \quad \quad \quad \quad + \frac{1}{24} J_1 \wp E_2 - \frac{5}{4} J_1^{2} \wp^{\bullet} + \frac{1}{180} J_1 E_4 + \frac{3}{20} \wp \wp^{\bullet}
\bigg) \\
\varphi_{3,-1} & = 3 K^4 \cdot \bigg(
\frac{9}{8} J_1^{4} - \frac{9}{4} J_1^{2} \wp - \frac{1}{8} J_1^{2} E_2 + \frac{1}{8} \wp^{2} + \frac{1}{24} \wp E_2 - \frac{1}{2} J_1 \wp^{\bullet} + \frac{1}{288} E_4
\bigg) \\
\varphi_{3,0} & = K^3 \cdot \left( -\frac{9}{2} J_1^{2} + \frac{3}{2} \wp \right) \\
\varphi_{3,1} & =
3 K^4 \cdot 
\left( \frac{3}{2} J_1^{2} \wp - \frac{1}{8} J_1^{2} E_2 + \frac{1}{2} \wp^{2} + \frac{1}{24} \wp E_2 + J_1 \wp^{\bullet} - \frac{1}{144} E_4 \right) \\
\varphi_{3,2} & = 3 K^5 \cdot \bigg( \frac{3}{2} J_1^{3} \wp - \frac{1}{8} J_1^{3} E_2 + \frac{7}{2} J_1 \wp^{2} \\
& \quad \quad \quad \quad \quad \quad \quad \quad \quad \quad
 + \frac{1}{24} J_1 \wp E_2 + \frac{7}{4} J_1^{2} \wp^{\bullet} - \frac{1}{36} J_1 E_4 + \frac{3}{4} \wp \cdot  \wp^{\bullet} \bigg) \\
\varphi_{3,3} + 1& =
3 K^6 \cdot \bigg( \frac{9}{4} J_1^{4} \wp - \frac{3}{16} J_1^{4} E_2 + \frac{15}{2} J_1^{2} \wp^{2} + \frac{1}{8} J_1^{2} \wp E_2 + 3 J_1^{3} \wp^{\bullet} \\
& \quad \quad + \frac{5}{4} \wp^{3} - \frac{1}{48} \wp^{2} E_2 - \frac{1}{16} J_1^{2} E_4 + 3 J_1 \wp \cdot \wp^{\bullet} - \frac{1}{144} \wp E_4 + \frac{1}{3} (\wp^{\bullet})^{2} \bigg) \\
\varphi_{4,0} & = K^4 \cdot \left( -\frac{32}{3} J_1^{3} + 8 J_1 \wp + \frac{2}{3} \wp^{\bullet} \right)
\end{align*}
}
\normalsize{
In the variables
\[ q = e^{2 \pi i \tau} \quad \text{ and } \quad s = (-y)^{1/2} = e^{\pi i z} \]
the first coefficients of the functions above are
}
\small{
\begin{align*}
\varphi_{1,-1} & = \left(- s^{-4} + 4 s^{-2} - 6 + 4s^{2} - s^{4}\right)q + O(q^{2}) \\
\varphi_{1,0} & = \left( s^{-1} - s \right) + \left( - s^{-3} + 3 s^{-1} - 3s + s^{3}\right)q + O(q^{2}) \\
\varphi_{1,1} & = \left( s^{-4} - 4 s^{-2} + 6 - 4s^{2} + s^{4}\right)q + O(q^{2}) \\[5pt]
\varphi_{2,-2} & = \left( -2s^{-6} + 4 s^{-4} + 2 s^{-2} - 8 + 2s^{2} + 4s^{4} - 2s^{6}\right)q + O(q^{2}) \\
\varphi_{2,-1} & = \left( -2 s^{-5}  + 6 s^{-3} - 4 s^{-1} - 4s + 6s^{3} - 2s^{5}\right)q + O(q^{2}) \\
\varphi_{2,-0} & = \left( s^{-2} - s^{2} \right) + \left( -4 s^{-4} + 8 s^{-2} - 8s^{2} + 4s^{4}\right)q + O(q^{2}) \\
\varphi_{2,1} & = \left( 2 s^{-5} - 6 s^{-3} + 4 s^{-1} + 4s - 6s^{3} + 2s^{5}\right)q + O(q^{2}) \\
\varphi_{2,2} + 1 & = 1 + \left( 2 s^{-6} - 4 s^{-4} - 2 s^{-2} + 8 - 2s^{2} - 4s^{4} + 2s^{6}\right)q + O(q^{2}) \\[5pt]
\varphi_{3,-2} & = \left(-3s^{-7} + 6s^{-5} - 3s^{-1} - 3s + 6s^5 - 3s^7 \right)q + O(q^2) \\
\varphi_{3,-1} & = \left(-3s^{-6} + 9s^{-4} - 9s^{-2} + 6 - 9s^2 + 9s^4 - 3s^6 \right)q + O(q^2) \\
\varphi_{3,0} & = \left(s^{-3} - s^{3} \right) + \left(-9s^{-5} + 18s^{-3} - 9s^{-1} + 9s - 18s^3 + 9s^5 \right)q + O(q^2) \\
\varphi_{3,1} & = \left(3s^{-6} - 9s^{-4} + 9s^{-2} - 6 + 9s^2 - 9s^4 + 3s^6 \right)q + O(q^2) \\
\varphi_{3,2} & = \left(3s^{-7} - 6s^{-5} + 3s^{-1} + 3s - 6s^5 + 3s^7 \right)q + O(q^2) \\
\varphi_{3,3} + 1 & = 1 + \left(3s^{-8} - 6s^{-6} + 3s^{-4} - 6s^{-2} + 12 - 6s^2 + 3s^4 - 6s^6 + 3s^8 \right)q + O(q^2) \\[10pt]
\varphi_{4,0} & = \left( s^{-4} - s^4 \right) + \left( -16 s^{-6} + 32s^{-4} - 16s^{-2} + 16s^2 - 32s^4 + 16s^6 \right)q + O(q^2).
\end{align*}
}
}


\newcommand{\etalchar}[1]{$^{#1}$}

\vspace{+14 pt}
\noindent
Departement Mathematik\\
ETH Z\"urich\\
georgo@math.ethz.ch

\end{document}